\numberwithin{equation}{section}
\title{An $SU(2n)$-valued nonlinear Fourier transform}
\author{Michel Alexis, Lars Becker, Diogo Oliveira e Silva, Christoph Thiele}
\newtheorem{theorem}{Theorem}[section]
\newtheorem{lemma}[theorem]{Lemma}
\newtheorem{remark}[theorem]{Remark}
\newtheorem{cor}[theorem]{Corollary}
\newtheorem{definition}[theorem]{Definition}
\let\c@algocf\c@theorem
\def\T{\mathbb{T}} 
\def\D{\mathbb{D}}
\def\Z{\mathbb{Z}}
\def\C{\mathbb{C}}
\def\R{\mathbb{R}}
\def\contract{\mathcal{C}}
\newcommand{\Id}{\operatorname{Id}}
\newcommand{\spce}{\mathbf{L}_{+} ^{*}}
\newcommand{\phl}{\mathcal{P}_{\spce}}
\newcommand{\dense}{\mathbf{D}}
\newcommand{\ddense}{\mathbf{E}}
\newcommand{\adj}{\operatorname{adj}}
\newcommand{\outr}{\mathcal{O}}
\newcommand{\of}{\omega} 
\newcommand{\weights}{\mathbf{P}}
\newcommand{\weight}{P}
\renewcommand{\epsilon}{\varepsilon}
\newcommand{\trc}{\operatorname{tr}}
\newcommand{\grp}{\mathcal{G}} 
\newcommand{\ad}{\mathrm{Ad}} 
\newcommand{\aarrow}{\curvearrowright}
\renewcommand{\Re}{\mathrm{Re}}
\newcommand{\op}{\mathcal{A}} 
\newcommand{\nlf}{M} 
\newcommand{\metric}{d} 
\newcommand{\con}{\mathcal{C}}
\newcommand{\sqrtt}{\mathrm{sqrt}}
\newcommand{\ma}[1]{\textcolor{blue}{MA : #1}}
\newcommand{\lb}[1]{\textcolor{blue}{LB : #1}}
\begin{document}

\date{}\maketitle
\begin{abstract}
    We define a nonlinear Fourier transform which maps sequences of contractive  $n \times n$ matrices to $SU(2n)$-valued functions on the circle $\T$. We characterize the image of finitely supported sequences and square-summable sequences on the half-line, and construct an inverse for $SU(2n)$-valued functions whose diagonal $n \times n$ blocks are outer matrix functions.   As an application, we relate this nonlinear Fourier transform with quantum signal processing over $U(2n)$ and multivariate quantum signal processing. 
\end{abstract}
\tableofcontents

\section{Introduction}

In \cite{Alexis+24}, a close connection was pointed out between an important algorithm in quantum signal processing and the $SU(2)$ nonlinear Fourier series. This led to a flow of ideas in both directions.

On the one hand, techniques for nonlinear Fourier series such as Riemann--Hilbert factorization 
were used for computational tasks in quantum signal processing \cite{Alexis+25}. Remarkable improvement in computational performance was subsequently obtained, for example using fast Toeplitz solvers in \cite{ni2024fast} and most recently the discovery of a fast inverse nonlinear Fourier transform (NLFT) of complexity order $N\log(N)^2$ using Riemann--Hilbert factorization to cut a signal in half followed by down-sampling of the data towards half the amount on each piece before iterating \cite{ni2025inverse}. For more information about how quantum signal processing relates to the NLFT, see for instance \cite{laneve25}, and for more on the former, see \cite{lin25_survey} for recent developments and \cite{rossi_survey} for its role in other quantum algorithms.   

On the other hand, quantum signal processing (QSP) motivated a particular and rather stringent analytic setup for the $SU(2)$ nonlinear Fourier series, whose study up till recently had been eschewed in favor of its more famous $SU(1,1)$ counterpart \cite{MTT, KoSiRu, Si17, BeGu24}; see also \cite{BeCo, AKNS} for other NLFTs. This stringent $SU(2)$ setup has led to an existence and uniqueness result for the inverse NLFT \cite{Alexis+25} in a subspace of $L^2$ and to 
results in the theory of one-sided orthogonal polynomials \cite{alexisb+25}, a theory that mirrors  that of the $SU(1,1)$ NLFT \cite{simon}. The present paper continues this drive by establishing an analogous existence and uniqueness result in a subspace of $L^2$ for an $SU(2n)$-valued nonlinear Fourier series.

  We recall the $SU(2)$-valued NLFT of a finitely supported  sequence 
  $(F_j)_{j\in \Z}$ of contractive complex numbers, that is, numbers of modulus less than one. Write for a complex number $z$ on the unit circle $\T$, 
   \begin{equation}\label{defineZ}
       Z:=
    \begin{pmatrix}
        z^{\frac 12}& 0 \\ 0 & z^{-\frac 12}
    \end{pmatrix}\ .
   \end{equation}
The fractional power $z^{\frac 12}$ is formal as the forthcoming expressions simplify towards involving only integer powers of $z$.
The $SU(2)$-valued NLFT  of $(F_j)_{j \in \Z}$ is then
    \begin{equation}\label{def:NLFTsu2}
    \mathcal{F}(F)(z):=\prod\limits_{j \in \mathbb{Z}}^{\curvearrowright} Z^j
    \begin{pmatrix}
        \sqrt{1-|F_j|^2}& F_j \\ -\overline{F_j} & \sqrt{1-|F_j|^2}
    \end{pmatrix} Z^{-j}
    \end{equation}
   \begin{equation}\label{def:NLFTalt}
   =Z^{j_0}\left(\prod\limits_{j_0\le j\le j_1}^{\curvearrowright} 
    \begin{pmatrix}
        \sqrt{1-|F_j|^2}& F_j \\ -\overline{F_j} & \sqrt{1-|F_j|^2}
    \end{pmatrix} Z\right)Z^{-j_1-1}
    \end{equation}
for $j_0$ the first and $j_1$ the last nonzero entries of the sequence $F_j$.
Here the noncommutative product is a finite product  in increasing order
of $j$ from left to right.   Note that, within the brackets of \eqref{def:NLFTalt}, one has
an alternating product between elements of two one-parameter subgroups of $SU(2)$, which
makes the expression attractive for a quantum computer. The elements of one subgroup are
determined by the nonlinear Fourier coefficients $F_j$, while the elements of the other subgroup
are determined by the argument $z$.

The diagonal and the off-diagonal terms in the above expressions play very different roles.
For example, the off-diagonal elements of the coefficient matrices carry the full information
of the coefficient matrix, while the diagonal elements carry no additional information.
It is then natural to maintain the $2\times 2$ block structure when going to $SU(2n)$.
The off-diagonal blocks of the coefficient matrices will be rather general contractive matrices,
while the diagonal blocks will carry essentially no additional information (see Theorem \ref{factorization-theorem} below). As for the analog of \eqref{defineZ}, we  generalize $Z$ to have diagonal blocks $z^{\pm \frac 1 2}$ times the identity.

Fix a dimension $n\ge 1$
and denote by $\mathcal{M}$ the set of complex $n\times n$ matrices.
For $F\in \mathcal{M}$, write $\|F\|_2$ for the Hilbert--Schmidt norm and $\|F\|_\infty$
for the operator norm, that is, the largest singular value of $F$.  
We call $F$ contractive if $\|F\|_\infty<1$ and 
denote by $\con$ the set of contractive matrices in $\mathcal{M}$. 

Let $H^\infty(\D; \mathcal{M})$ denote the analytic Hardy space with values in $\mathcal{M}$,
that is, the set of bounded measurable functions $A:\T\to \mathcal{M}$ whose entries have
analytic extensions in the sense of  scalar $H^\infty(\D)$ \cite{garnett}. As usual, we shall identify measurable functions which only differ on a set of measure zero. We shall also identify functions in $H^\infty(\D; \mathcal{M})$ with their analytic extensions to the unit disc $\D$. In denoting these spaces, we will drop the $\mathcal{M}$ and just write $H^{\infty} (\D)$, as whether a function is matrix- or scalar-valued will be clear from context. We define $H^p (\D)$ similarly for all $0<p<\infty$. 

A matrix-valued function\footnote{In what follows, matrix-valued functions denote equivalence classes of measurable functions, identified up to almost everywhere (a.e.) equality.} $B:\T\to \con$ is called Szeg\H o if 
\begin{equation}\label{eq:Szego_condition}
\int\limits_{\T} \log \det (\Id - B B^*) > - \infty \, ,
\end{equation}
where we adopt the convention that integrals over $\T$ are with respect to the uniform probability measure on  $\T$. 
We denote by $\mathbf{S}$ the set of Szeg\H o functions. 

An $H^{p} (\D)$ function $O$ is called outer if its determinant satisfies the logarithmic mean value property
\[
\log \lvert \det O(0)\rvert =  \int\limits_{\T} \log \lvert \det O \rvert \, .
\]
We write matrices in $SU(2n)$
as $2\times 2$ block matrices with blocks
in $\mathcal{M}$.
We denote by $\grp_0$ and $\grp_1$ the groups of upper and lower triangular complex matrices with positive diagonal entries, respectively.
Let $\Z_2:=\{0,1\}$.

\begin{definition}
    For $\alpha:\Z_2\to \Z_2$, 
denote by $\mathbf{B}_\alpha$ the set of measurable $SU(2n)$-valued matrix functions on $\T$
 \[
    \begin{pmatrix}
    A & B \\ C & D
\end{pmatrix}\, ,
\]
such that $B$ and $C$ are Szeg\H o, $A^*$ and $D$ are outer and normalized so that $A(\infty)$
is in $\mathcal{G}_{\alpha(0)}$
and $D(0)$
is in $\mathcal{G}_{\alpha(1)}$. 
\end{definition}
Note that the function $\alpha$ encodes one of four possible normalizations for the blocks $A$ and $D$. 
Our first theorem states that every Szeg\H o function $B$ can be uniquely extended to a matrix function in $\mathbf{B}_\alpha$. 

\begin{theorem}\label{factorization-theorem}
    For each $\alpha:\Z_2\to \Z_2$ and $B \in \mathbf{S}$, there is a unique 
    $Y_\alpha(B) \in \mathbf{B}_\alpha$
    such that the upper right block of $Y_\alpha(B)$ is equal to $B$.
\end{theorem}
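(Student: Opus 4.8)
The plan is to turn the statement into a pair of matrix-valued outer (Szeg\H o) factorizations, one for each diagonal block, and then to let unitarity dictate the lower-left block. Set $W_1:=\Id-BB^*$ and $W_2:=\Id-B^*B$; since $B$ is contractive a.e.\ these are positive definite a.e.\ and $\le\Id$, and since $\det W_1=\det W_2=\det(\Id-BB^*)$ the hypothesis $B\in\mathbf{S}$ says precisely that $\int_\T\log\det W_i>-\infty$ for $i=1,2$. I would invoke the matrix-valued Szeg\H o factorization theorem to obtain outer functions $G_1,G_2$ with $G_i^*G_i=W_i$ on $\T$, each unique up to left multiplication by a constant unitary. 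Because $W_i\le\Id$ we get $\|G_i\|_\infty\le1$, hence $G_i\in H^\infty$, and because the Szeg\H o bound forces $|\det G_i(0)|>0$ the matrix $G_i(0)$ is invertible; using the uniqueness of the factorization of an invertible matrix as a unitary times a triangular matrix with positive diagonal, I can further normalize $G_1(0)\in\mathcal{G}_{1-\alpha(0)}$ and $G_2(0)\in\mathcal{G}_{\alpha(1)}$, and since a triangular unitary with positive diagonal is the identity these normalizations single out $G_1,G_2$ uniquely. Put $A:=G_1^*$ and $D:=G_2$: then $AA^*=W_1$, $D^*D=W_2$, the blocks $A^*,D$ are outer, and $A(\infty)=G_1(0)^*\in\mathcal{G}_{\alpha(0)}$, $D(0)\in\mathcal{G}_{\alpha(1)}$. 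Note $A$ and $D$ are invertible a.e.

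I would then set $C:=-DB^*(A^*)^{-1}$ and check that $Y:=\left(\begin{smallmatrix}A&B\\ C&D\end{smallmatrix}\right)$ is $SU(2n)$-valued a.e. Unitarity $YY^*=\Id$ is a short computation based on the push-through identity $B^*(\Id-BB^*)^{-1}B=(\Id-B^*B)^{-1}-\Id$; the same computation gives $CC^*=\Id-DD^*$, which is strictly below $\Id$ a.e.\ (as $D$ is invertible), so $C$ is contractive, and $\int_\T\log\det(\Id-CC^*)=\int_\T\log\det W_2>-\infty$, i.e.\ $C\in\mathbf{S}$. For the determinant, block row reduction gives $\det Y=\det A\cdot\det D/\det W_2$ a.e.; here $\overline{\det A}=\det(A^*)$ and $\det D$ are scalar outer functions --- since in the paper ``$O$ outer'' means exactly that $\det O$ is a scalar outer function --- with the same boundary modulus $(\det W_1)^{1/2}=(\det W_2)^{1/2}$, and both are positive at the origin because the diagonals of the $\mathcal{G}_j$ are positive; as a scalar outer function is determined by its boundary modulus together with positivity at $0$, this forces $\overline{\det A}=\det D$, hence $\det A\,\det D=|\det D|^2=\det W_2$ and $\det Y=1$ a.e. Therefore $Y\in\mathbf{B}_\alpha$ with upper-right block $B$, which proves existence.

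For uniqueness, let $Y=\left(\begin{smallmatrix}A&B\\ C&D\end{smallmatrix}\right)\in\mathbf{B}_\alpha$ have upper-right block $B$. Unitarity forces $AA^*=W_1$ and $D^*D=W_2$; since the blocks of a unitary have operator norm at most $1$, the outer functions $A^*$ and $D$ lie in $H^\infty\subset H^2$ and are thus outer factors of $W_1$ and $W_2$. The uniqueness part of the matrix Szeg\H o theorem pins them to $G_1^*,G_2$ up to a left constant unitary, and the normalizations built into $\mathbf{B}_\alpha$ remove that unitary exactly as in the first paragraph, so $A$ and $D$ are forced. Finally the identity $AC^*+BD^*=0$ coming from $YY^*=\Id$, together with the a.e.\ invertibility of $A$, forces $C=-DB^*(A^*)^{-1}$; hence $C$ is forced too, and $Y$ is unique.

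The crux, and the only genuinely analytic ingredient, is the matrix-valued Szeg\H o factorization: existence and --- above all --- uniqueness up to a constant unitary of the outer factor of a positive weight with $\int_\T\log\det W>-\infty$. Everything else is block-matrix algebra, the push-through identity, the elementary structure theory of scalar outer functions, and routine regularity checks ($G_i\in H^\infty$ with $G_i(0)$ invertible; $C$ bounded and contractive a.e.) needed for the normalizations and the word ``outer'' to make sense. I do not expect real surprises beyond carefully matching the side --- left versus right --- on which the constant-unitary ambiguity acts with the side on which the normalizations $A(\infty)\in\mathcal{G}_{\alpha(0)}$ and $D(0)\in\mathcal{G}_{\alpha(1)}$ are imposed.
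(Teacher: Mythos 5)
Your proof is correct and follows essentially the same route as the paper's: it obtains $A^*$ and $D$ from the matrix Szeg\H{o}/spectral factorization of $\Id - BB^*$ and $\Id - B^*B$ with the triangular normalizations, lets unitarity determine $C$, and deduces $\det Y = 1$ by observing that $\det A^*$ and $\det D$ are scalar outer functions with matching boundary modulus and positive value at $0$. The only superficial differences are that you verify unitarity via $YY^*$ with the push-through identity (the paper uses $Y^*Y$ and an equivalent manipulation) and that you fold the determinant argument into the proof rather than isolating it as a separate lemma.
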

We present a short proof of Theorem \ref{factorization-theorem} in Section \ref{sec:B_to_Y_outer}. The main ingredient in the proof is the  known spectral factorization theorem
for matrix functions, for which we
refer to the simple proof in \cite{Barclay_cty_spec}
and further references therein.

\begin{definition}[Forward finite matrix NLFT]
    Let $\alpha:\Z_2\to \Z_2$.
    Let $F=(F_j)_{j\in \Z}$ be a sequence of contractive matrices,
    identified with constant elements in $\mathbf{S}$, and assume only finitely many $F_j$ are nonzero.
Given $z \in \T$, define $Z$ analogously to  \eqref{defineZ} for block matrices.
Define the $\alpha$-$SU(2n)$ nonlinear Fourier transform  of $F$ as     \begin{equation}\label{def:NLFT_alpha}
    \mathcal{F}_\alpha(F)(z):=\prod\limits_{j \in \mathbb{Z}}^{\curvearrowright} Z^j Y_\alpha(F_j) Z^{-j}. 
    \end{equation}
Here the product is in increasing order
of $j$ from left to right and it is finite because $Y_\alpha(0)$ is the identity.    
\end{definition}

Next, we extend the definition to certain sequences with infinite support. When $n=1$, multilinear expansion extends the NLFT to the set of $\ell^p$ sequences for $1 \leq p<2$; see \cite[Lectures 1.3--1.4]{TT03} and \cite[Theorem 2.5]{tsai}. However, motivated by the $\ell^2$ theory which was applied in quantum signal processing \cite{Alexis+24, Alexis+25}, we do not generalize this process for $n \geq2$, but instead jump directly to the larger space of square summable sequences using an approximation argument. 
This extension of the NLFT requires a good target space with a suitable metric, which we are about to define. 
Let $\ell^2(\Z ; \con)$ denote the space of sequences $(F_j)_{j\in \Z}$ of 
contractive matrices 
such that $\sum \|F_j\|_2 ^2 < \infty$. We endow $\ell^2(\Z ; \con)$ with the  $\ell^2$ metric.
Analogously we define $\ell^2(\Z_{\ge 0} ; \con)$ and $\ell^2(\Z_{<0};\con)$. In what follows, we denote the identity matrix by $\Id$.
\begin{definition}
    \label{defH}
    Let $\alpha:\Z_2\to \Z_2$. The space $\mathbf{H}_\alpha ^+$ consists of all matrix functions
    \begin{equation}\label{labeling_M_intro}
        \nlf: \mathbb{T} \to SU(2n), \qquad M = \begin{pmatrix}
            A&B\\
            C&D
        \end{pmatrix}
    \end{equation}
    satisfying the following properties:
    \begin{enumerate}
        \item \label{item:1st_prop_H} $A^*,C^*, B, D  \in H^2(\mathbb{D})$;
        \item \label{item:2nd_prop_H} $A(\infty)\in \mathcal{G}_{\alpha(0)}$ and $D(0)\in \mathcal{G}_{\alpha(1)}$;
        \item \label{item:3rd_prop_H}  if there exist $I_1 ^*, I_2 \in H^2 (\D)$, both  unitary a.e.\ 
    on $\T$,
    and there exists $\nlf '$ of the form \eqref{labeling_M_intro} satisfying Properties \ref{item:1st_prop_H} and \ref{item:2nd_prop_H}, for which 
    \begin{equation}
        \label{eq:inner_factor}
       \nlf = \nlf' \begin{pmatrix} I_1 & 0 \\ 0 & I_2 \end{pmatrix} \, ,
    \end{equation}  
    then $I_1 = I_2 =\Id$. 
    \end{enumerate}
  On $\mathbf{H}_\alpha ^+$ we define the metric 
    \[
        \metric(\nlf, \nlf'):= \left(\int_\mathbb{T} \|\nlf - \nlf'\|^2_2 \right)^{\frac12} + \lvert \log  \det A(\infty) - \log \det A'(\infty)\rvert.
    \]
\end{definition}
Our next result identifies $\mathbf{H}_{\alpha} ^+$ as the image of the square summable data on the right half-line under the $\alpha$-$SU(2n)$ NLFT.
\begin{theorem}\label{thm:l2_extension}
   The map $\mathcal F_\alpha$ extends to a homeomorphism from $\ell^2 (\mathbb{Z}_{ \geq 0} ; \con)$ onto $\mathbf{H}_{\alpha} ^+$. 
\end{theorem}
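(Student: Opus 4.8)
The plan is to establish the homeomorphism in several stages, following the template of the $SU(2)$ theory from \cite{Alexis+25} but accounting for the $2n \times 2n$ block structure and the noncommutativity of matrix multiplication. First I would verify that $\mathcal{F}_\alpha$ maps finitely supported sequences into $\mathbf{H}_\alpha^+$: membership in $H^2(\D)$ of the blocks $A^*, B, C^*, D$ follows because each factor $Z^j Y_\alpha(F_j) Z^{-j}$ has this structure (by Theorem \ref{factorization-theorem}, $A^*, D$ are outer hence in the Hardy space, $B, C$ are Szeg\H o hence bounded, and conjugation by $Z^j$ shifts Fourier support in the favorable direction for data on $\Z_{\ge 0}$), and this property is preserved under the group multiplication; the normalization Property \ref{item:2nd_prop_H} is immediate from multiplicativity of the leading/trailing coefficients; and Property \ref{item:3rd_prop_H} (no nontrivial inner factor) should follow from an explicit computation of $A(\infty)$ and $D(0)$ together with a degree/Blaschke-product count showing the finite NLFT is already "reduced." Next I would prove the key a priori estimate: a quantitative bound relating $d(\mathcal{F}_\alpha(F), \mathcal{F}_\alpha(F'))$ to the $\ell^2$ distance of $F, F'$ on $\Z_{\ge 0}$, and conversely a bound recovering $\|F\|_{\ell^2}$ from the function $M = \mathcal{F}_\alpha(F)$ — this is the nonlinear Plancherel-type identity, presumably $\prod_j \det(\Id - F_j F_j^*) = \lvert \det A(\infty)\rvert^2$ or an analogue, combined with the layer-stripping recursion that reads off $F_0$ from $M$ via an evaluation or residue at a distinguished point.

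With the estimates in hand, the extension to all of $\ell^2(\Z_{\ge 0}; \con)$ proceeds by density: finitely supported sequences are dense in $\ell^2(\Z_{\ge 0}; \con)$ in the $\ell^2$ metric, and the uniform continuity estimate shows $\mathcal{F}_\alpha$ is Cauchy-continuous, so it extends uniquely to a continuous map on the closure; one must then check the image still lands in $\mathbf{H}_\alpha^+$, which requires showing Properties \ref{item:1st_prop_H}--\ref{item:3rd_prop_H} are closed under the relevant limits (the Hardy-space membership passes to $L^2$ limits, the normalization survives by continuity of $\det A(\infty)$ built into the metric, and the no-inner-factor condition must be argued to persist — this is where one uses that the limiting $M$ still arises as a genuine NLFT, so a nontrivial inner factor would contradict uniqueness of the layer-stripping reconstruction). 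For surjectivity, given $M \in \mathbf{H}_\alpha^+$ I would run the inverse/layer-stripping procedure: extract $F_0$ from $M$, peel off the factor $Y_\alpha(F_0)$, shift by $Z^{-1}$, and show the remainder $Z^{-1} Y_\alpha(F_0)^{-1} M$ again lies in $\mathbf{H}_\alpha^+$ (with the support now on $\Z_{\ge 1}$), iterate, and prove the resulting sequence $(F_j)$ is square-summable (again via the nonlinear Plancherel identity, which forces $\sum \|F_j\|_2^2 < \infty$ because $\lvert \det A(\infty)\rvert$ is bounded below) and that the infinite product reconverges to $M$. Injectivity follows because the layer-stripping reconstruction is deterministic, so two sequences with the same transform agree termwise.

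The main obstacle I anticipate is Property \ref{item:3rd_prop_H} and its interaction with surjectivity: one must show precisely that the inner-factor-free condition is exactly what characterizes the image, i.e.\ that \emph{every} $M$ satisfying Properties \ref{item:1st_prop_H}--\ref{item:2nd_prop_H} can be written as $\mathcal{F}_\alpha(F)$ times a block-diagonal inner factor $\operatorname{diag}(I_1, I_2)$, and that this inner factor is trivial precisely when the Plancherel defect vanishes. Handling this cleanly in the matrix setting requires matrix-valued inner–outer factorization (Potapov factorization) rather than scalar Blaschke products, and controlling how the inner factor behaves under the recursion — in particular ruling out that the layer-stripping "leaks" mass into an inner part at infinity — is the delicate point. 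A secondary difficulty is proving convergence of the infinite NLFT product in the function metric $d$ rather than just pointwise, which needs the Plancherel identity to control the tail $\prod_{j \ge N} Z^j Y_\alpha(F_j) Z^{-j}$ uniformly; once the nonlinear Plancherel identity is established, both difficulties become tractable, so I would prioritize proving that identity first.
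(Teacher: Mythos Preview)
Your plan matches the paper's architecture closely: Plancherel-type identity, extension by density of finite sequences, layer stripping for both injectivity and surjectivity, and an inner-factor argument to pin down Property~\ref{item:3rd_prop_H}. The paper executes exactly this, organized as Lemmas~\ref{lem:ext_l2} (extension), \ref{lem:injectivity_half_line} (injectivity into $\mathbf{H}_\alpha^+$), \ref{lem surj} (surjectivity onto $\mathbf{U}_\alpha^+$), Corollary~\ref{cor:H_equals_U} ($\mathbf{H}_\alpha^+=\mathbf{U}_\alpha^+$), and Lemma~\ref{lem:inverse_NLFT_cts} (inverse continuity).

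There is one genuine gap in your extension step. You write that a ``uniform continuity estimate shows $\mathcal{F}_\alpha$ is Cauchy-continuous,'' but no such estimate exists: $\mathcal{F}_\alpha$ is \emph{not} uniformly continuous on $\ell^2(\Z_{\ge 0};\con)$, already in the $SU(2)$ case for sequences supported at a single point (because $F_0\mapsto \sqrt{1-|F_0|^2}$ blows up in the metric $d$ as $|F_0|\to 1$). The paper sidesteps this by never bounding $d(\mathcal{F}_\alpha(F),\mathcal{F}_\alpha(F'))$ directly in terms of $\|F-F'\|_{\ell^2}$. Instead it proves two separate facts: (i) for a \emph{fixed} $F$, the truncations $\mathcal{F}_\alpha(F\mathbf{1}_{[0,N]})$ are Cauchy in $(\mathbf{L}_\alpha^+,d)$, using the exact identity $d(MM',M)=d(M',\Id)$ for ordered products (Lemma~\ref{lem:submult}) together with the Plancherel bound $d(\mathcal{F}_\alpha(G),\Id)\lesssim \sum_j|\log\det E_j|+(\sum_j|\log\det E_j|)^{1/2}$ (Lemma~\ref{lem:dist_Id}); and (ii) continuity at each fixed $F$, with the $\delta$ depending on $F$ through the choice of a truncation level $N$. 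Your later remark about ``controlling the tail via Plancherel'' is exactly the right ingredient, but it replaces uniform continuity rather than proving it.

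A minor organizational point: the paper does not verify Property~\ref{item:3rd_prop_H} separately for finite sequences via a degree count. It proves it once, for all of $\ell^2(\Z_{\ge 0};\con)$, by noting that a nontrivial inner factor $I_1$ would force $|\det I_1(\infty)|<1$ and hence a strict inequality in the Plancherel identity $\sum_j|\log\det E_j|=|\log\det A(\infty)|$, contradicting the equality that holds for any NLFT. This is cleaner than a Blaschke count and is what makes the argument uniform across finite and infinite data.
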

In the proof of Theorem \ref{thm:l2_extension}, we show injectivity of $\mathcal{F}_{\alpha}$ into $\mathbf{H}_{\alpha} ^+$ and surjectivity onto the seemingly larger space $\mathbf{U}_\alpha^+$, defined as in Definition \ref{defH} but with $SU(2n)$ replaced by $U(2n)$ in \eqref{labeling_M_intro}. A curious consequence of our proof is that the spaces $\mathbf{H}_{\alpha} ^+$ and $\mathbf{U}_{\alpha} ^+$ in fact coincide. It would be interesting to have a direct proof of this fact. While we claim the equality here, we will prove it in Corollary \ref{cor:H_equals_U}, avoiding any semblance of circular reasoning.

In what follows, let $\mathbf{S}^{\epsilon}$ denote those elements $B \in \mathbf{S}$ such that
\[
\|B(z)\|_{\infty} \leq 1- \epsilon
\]
for almost every $z \in \T$. Motivated by the application to QSP of the $SU(2)$-valued NLFT, we show that the nonlinear Fourier coefficients can be uniquely recovered whenever $A$ (or $D$) is outer, and furthermore that this process is ``stable'', that is, Lipschitz continuous, whenever the function $B$ is bounded away from $1$.

\begin{theorem}\label{thm:NLFT_outer_inverse}
For every $\alpha:\Z_2\to \Z_2$ and $B \in \mathbf{S}$, there exists a unique $F \in \ell^2 (\Z ; \con)$
such that
\[\mathcal{F}_\alpha(F)=Y_\alpha(B)\,.\]
For this $F$, we have 
the nonlinear Plancherel identity 
\begin{equation}\label{eq:Plancherel_QSP}
\sum_{j \in \mathbb{Z}} \log  \det(\Id - F_jF_j^*) =\int_{\mathbb{T}} \log \det(\Id-BB^*) \, .
\end{equation}
Furthermore, for every $\epsilon > 0$, there exists a constant $C_{\epsilon, n}$ for which  we have the Lipschitz bounds
\begin{equation}\label{eq:Lip_bds}
\sup\limits_{j\in\mathbb Z} \|F_j-F_j '\|_{\infty} \leq C_{\epsilon, n} \left (\int\limits_{\T}\|B-B'\|_2 ^2 \right )^{\frac 1 2} 
\end{equation}
for all $B, B'$ in $\mathbf{S}^{\epsilon}$. 
\end{theorem}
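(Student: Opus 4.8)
To prove existence and uniqueness I would combine Theorems~\ref{factorization-theorem} and~\ref{thm:l2_extension} with a Riemann--Hilbert factorization. Set $M:=Y_\alpha(B)$, available from Theorem~\ref{factorization-theorem}. The substitution $j\mapsto -1-j$, combined with $Y_\alpha(F)^{-1}=Y_\alpha(F)^{*}$, converts an $\ell^2(\Z_{<0};\con)$ sequence into an $\ell^2(\Z_{\ge 0};\con)$ one; thus Theorem~\ref{thm:l2_extension} also yields a homeomorphism from $\ell^2(\Z_{<0};\con)$ onto a space $\mathbf{H}_\alpha^{-}$ defined by the conjugate analyticity and normalization conditions (namely $A,C,B^{*},D^{*}\in H^2(\D)$). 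Because $\mathcal F_\alpha$ is multiplicative over disjoint supports, its full-line image contains every product $M^{-}M^{+}$ with $M^{\pm}$ in $\mathbf{H}_\alpha^{\pm}$, so it suffices to factor $M=M^{-}M^{+}$ in this form. I would obtain such a factorization from a Wiener--Hopf/Riemann--Hilbert factorization of $M$: the outerness of $A^{*}$ (equivalently $\det A^{*}$) and of $D$ should force the factorization to have no diagonal middle term, and the $\mathcal G_{\alpha(0)}$- and $\mathcal G_{\alpha(1)}$-normalizations at $\infty$ and $0$ pin down the remaining constant ambiguity. Writing $\mathcal F_\alpha(F^{\pm})=M^{\pm}$ and $F:=F^{-}+F^{+}\in\ell^2(\Z;\con)$ then gives $\mathcal F_\alpha(F)=Y_\alpha(B)$.

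\textbf{Uniqueness.} If $F$ and $F'$ both map to $M$, split each as above. Two such factorizations satisfy $((M')^{-})^{-1}M^{-}=(M')^{+}(M^{+})^{-1}$, whose left side is anti-analytic type and right side analytic type; hence both equal a single constant matrix, which the $\mathcal G_\alpha$-normalizations force to be $\Id$. So $M^{\pm}=(M')^{\pm}$, and the injectivity in Theorem~\ref{thm:l2_extension} (and in its mirror) gives $F=F'$.

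\textbf{Plancherel.} Let $A$ be the top-left block of $M=Y_\alpha(B)$. From $MM^{*}=\Id$ we get $AA^{*}=\Id-BB^{*}$ on $\T$, so $\det(\Id-BB^{*})=|\det A|^{2}$; since $\det A^{*}$ is outer and $A(\infty)\in\mathcal G_{\alpha(0)}$ has positive determinant, the mean value property gives $\int_{\T}\log\det(\Id-BB^{*})=2\log\det A(\infty)$. On the other hand, for the truncations $F^{(N)}:=F\,\mathbf{1}_{[-N,N]}$ the block structure of \eqref{def:NLFT_alpha} shows that the top-left block of $\mathcal F_\alpha(F^{(N)})$ evaluated at $\infty$ is $\prod_{|j|\le N}^{\curvearrowright}A_j$, where $A_j$ is the top-left block of $Y_\alpha(F_j)$, lies in $\mathcal G_{\alpha(0)}$, and satisfies $A_jA_j^{*}=\Id-F_jF_j^{*}$; hence the logarithm of its determinant equals $\tfrac12\sum_{|j|\le N}\log\det(\Id-F_jF_j^{*})$. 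Letting $N\to\infty$, continuity of $\mathcal F_\alpha$ in the metric controls the left side while monotone convergence of the nonpositive terms handles the right side, giving $\sum_{j}\log\det(\Id-F_jF_j^{*})=2\log\det A(\infty)$ and hence \eqref{eq:Plancherel_QSP}.

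\textbf{Lipschitz bound, the main obstacle.} Fix $\epsilon>0$ and $B,B'\in\mathbf S^{\epsilon}$, so $\Id-BB^{*}\ge\epsilon(2-\epsilon)\Id$ and likewise for $B'$. On this nondegenerate regime the matrix spectral factorization producing $M=Y_\alpha(B)$ is Lipschitz, and combined with the mean value property computation above and Cauchy--Schwarz this yields $\metric(M,M')\le C_{\epsilon,n}\big(\int_{\T}\|B-B'\|_{2}^{2}\big)^{1/2}$. It then remains to show that $M\mapsto F$ is Lipschitz from this region into $\ell^2(\Z;\con)$, since $\sup_{j}\|F_j-F_j'\|_{\infty}\le\|F-F'\|_{\ell^2}$. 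I would do this by layer stripping: recover $F_0,F_1,\dots$ from $M^{+}$ and $F_{-1},F_{-2},\dots$ from $M^{-}$, each coefficient being a bounded block-functional of the current remainder. The per-step error amplification is a power of $(1-\|F_j\|_{\infty}^{2})^{-1}$ and of its primed analog, and the total is controlled by
\[
\prod_{j}(1-\|F_j\|_{\infty}^{2})^{-1}\le\prod_{j}\det(\Id-F_jF_j^{*})^{-1}=\exp\!\Big(-\!\int_{\T}\log\det(\Id-BB^{*})\Big)\le(\epsilon(2-\epsilon))^{-n},
\]
using the Plancherel identity just proved, and similarly for $B'$. The delicate points — which is where I expect the bulk of the work to lie — are identifying the block-functional that extracts each $F_j$, bounding its Lipschitz constant uniformly in $j$, and controlling the convergence of the infinite stripping procedure on $\mathbf S^{\epsilon}$; the energy bound above is precisely what prevents the accumulated Lipschitz constants from blowing up.
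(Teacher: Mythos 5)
Your overall plan — reduce to a Riemann--Hilbert factorization $M = M_-M_+$, then invoke the half-line theory from Theorem~\ref{thm:l2_extension} — is exactly the paper's strategy, and the Plancherel computation matches the paper's (outerness of $\det A^*$ plus $AA^*=\Id-BB^*$ and \eqref{eq:constant_Fourier_A_D}). The gaps are in how you propose to carry out the two hard steps.

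\textbf{Existence of the factorization.} Saying you would ``obtain such a factorization from a Wiener--Hopf/Riemann--Hilbert factorization of $M$'' takes for granted the main content of Section~\ref{sec:RH}. The classical matrix Wiener--Hopf theorems require continuity (or H\"older/Wiener-algebra) hypotheses on $M$, but here $M=Y_\alpha(B)$ with $B\in\mathbf S$ merely has entries in appropriate $H^2$ spaces, and $A^{-1}B$ may be unbounded. The paper constructs the factorization directly: it defines a densely defined unbounded operator $\op$ on the Hilbert space $\spce$ (via a regularization $\of_\eta$ of the outer function $\of=\det A^*$) and shows $i\op$ is self-adjoint (Lemma~\ref{lem:selfadjoint}), so $\Id+\op$ is invertible with $\|(\Id+\op)^{-1}\|\le1$. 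Solving $(\Id+\op)X=\Id$ and Cholesky-factoring the constant block-diagonal $X^*X$ then produces $M_+$ (Lemma~\ref{lem:solve_fixed_pt_eqn}); identifying the Hardy-space membership of $M_-:=MM_+^*$ and ruling out nontrivial inner factors via outerness is the work of Lemma~\ref{lem:RH_ex}. Without something of this sort your existence step has no content.

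\textbf{Uniqueness.} Your Liouville-type argument — left side of $((M')^-)^{-1}M^-=(M')^+(M^+)^{-1}$ is ``anti-analytic type'', right side ``analytic type'', hence constant — does not go through for $H^2$ factors. Write out the $(1,1)$ entry of, say, $(M')^+(M^+)^*$: you get $A'_+ (A_+)^* + B'_+ (B_+)^*$, a sum of products of an $H^2(\D^*)$ function with an $H^2(\D)$ function and vice versa. Such a product lies only in $L^1(\T)$ and need not be analytic or co-analytic; the Fourier supports of the two blocks overlap from both sides, so no one-sided analyticity survives. The paper's uniqueness argument (Lemma~\ref{lem:RH_nec}) avoids this: it shows that any factorization forces $X := M_+^*\diag(A_+(\infty),D_+(0))$ to solve $(\Id+\op)X=\Id$, and invertibility of $\Id+\op$ then pins $X$, hence $M_+$, down. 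You should expect to need the outerness of $A^*$ and $D$ somewhere essential; the Liouville heuristic never uses it.

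\textbf{Lipschitz bound.} You aim to prove the stronger $\ell^2$-Lipschitz bound ``since $\sup_j\|F_j-F_j'\|_\infty\le\|F-F'\|_{\ell^2}$''. But whether the $\ell^2$ bound holds is explicitly left open in the paper, and your multiplicative-accumulation argument does not deliver it: controlling $\prod_j (1-\|F_j\|_\infty^2)^{-1}$ bounds the total Lipschitz constant along the stripping chain, which caps $\sup_j\|F_j-F_j'\|$, not $\sum_j\|F_j-F_j'\|_2^2$. The paper instead uses translation symmetry to reduce entirely to bounding $\|F_0-F_0'\|_\infty$, and then combines two uniform (in $j$) estimates: a Lipschitz bound for the Riemann--Hilbert factor $M_+$ on $\mathbf B_\alpha^\epsilon$ (Lemma~\ref{lem:Lip_cty_RH}, using that for $B\in\mathbf S^\epsilon$ the operator $\op$ is bounded on all of $\spce$) and a Lipschitz bound for the extraction of $F_0$ (Lemma~\ref{lem:first_layer_strip_Lip}, conditioned on a lower bound for the singular values of $D_+(0)$ that follows from $\|B\|_\infty\le1-\epsilon$). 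This avoids any accumulation argument and is what your per-step bookkeeping should be aiming for.
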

An alternate characterization of the sequence $F$ in Theorem \ref{thm:NLFT_outer_inverse} is that it is the unique $F \in \ell^2 (\Z; \con)$ for which $B$ is the upper right block of $\mathcal{F}_{\alpha} (F)$ and the nonlinear Plancherel identity \eqref{eq:Plancherel_QSP} holds, since  \eqref{eq:Plancherel_QSP} can hold if and only if the diagonal blocks of $\mathcal{F}_{\alpha} (F)$ are outer functions. It is an interesting open question whether the $\ell^\infty$-norm on the left side of  \eqref{eq:Lip_bds} can be upgraded to the $\ell^2$-norm.

We emphasize that there are elements in $\mathbf{H}_\alpha ^+$ such that $A^*$ is not outer, and hence not  every element of
$\ell^2(\Z_{\ge 0}; \con)$
arises in Theorem \ref{thm:NLFT_outer_inverse}. We also point out that the triangular nature of the diagonal $n \times n$-blocks of elements of $\mathbf{S}$ allows for a more precise, component-wise version of the Plancherel identity; see Remark \ref{rem:comp plan}.

 In Appendix \ref{section:qsv}, we relate the NLFT to  QSP over $SU(2^n)$, and give an application to multivariate QSP.

As this paper proposes a higher dimensional model of the NLFT, we are forced to make several notational choices in our exposition. For the convenience of the reader, we include a glossary at the very end of the paper.

\section{Analytic matrix-valued functions and the proof of Theorem \ref{factorization-theorem}}\label{sec:B_to_Y_outer}

\subsection{Inner and outer functions}

Here, we outline the basic theory of inner and outer functions. Useful references for this section are \cite{garnett} (scalar case) and \cite{Roos} (matrix case). 

We discuss the scalar case first. Recall that any bounded analytic function on $\D$ has pointwise a.e.\@ defined boundary values on the unit circle $\T$ \cite[Theorem 3.1, Chapter 2]{garnett}. An  inner function is a bounded analytic function $i$ on $\D$ such that $|i(z)|=1$ for almost every $z$ on the unit circle $\T$. Inner functions may be further factored into a  Blaschke product, that is, a convergent product of functions of the form 
\[b_j(z)=\frac{z_j-z}{1-z\overline{z_j}}\] with $z_j\in\mathbb D$, and a singular inner function $s$, that is, a nonvanishing inner function.  A bounded analytic function $o$ on $\D$ is called outer if
\begin{equation}\label{eq:outer_defn}
\log |o (0)|=  \int\limits_{\T} \log |o| \, .
\end{equation}
The inner-outer factorization for scalar-valued functions 
\cite[Corollary 5.7]{garnett} ensures that any bounded analytic function $f:\D\to\C$ factors as
\[f=b_f \cdot s_f \cdot o_f \]
where $b_f$ is a Blaschke product, $s_f$ is a singular inner function, and $o_f$ is an outer function. All three functions are unique up to multiplication by a unimodular constant. Inner-outer factorization is analogous to the polar representation of a complex number, with inner functions carrying the phase information, and outer functions carrying the modulus information.



We now to turn to the matrix case. An inner function $I$ is an element of the unit ball of $H^{\infty} (\D)$, that is, 
\begin{equation}\label{eq:unit_ball}
\|I\|_{H^\infty}:=\sup_{z\in\mathbb D} \|I(z)\|_{\infty} \leq 1 \, ,
\end{equation}
which has unitary boundary values a.e.\@ on $\mathbb T$, that is, 
\[
I I^* = \Id \, .
\]
It turns out that any $I \in H^{\infty}(\D)$ satisfying \eqref{eq:unit_ball} is inner if and only if $\det I$ is a scalar inner function; see \cite[Theorem 4.10]{Roos}.
In this vein, we call an analytic matrix-valued function (mvf) outer if $\det A$ is a scalar outer function.
As discovered by Ginzburg \cite{ginzburg} and  explained in detail in \cite[Ch.~4]{Roos}, a bounded analytic mvf can 
factored into an inner mvf and an outer mvf, both unique up to a constant unitary factor.


\subsection{Spectral Factorization}
Let $\weights$ denote the set of measurable mvfs $\weight$ on $\T$ which are pointwise a.e.\ positive definite and hermitian, and  which satisfy
\begin{equation}\label{eq:Szego_std}
\int\limits_{\T} \log \det \weight  > - \infty \, .
\end{equation}
Equation \eqref{eq:Szego_std} is sometimes called the Szeg\H o condition in the complex analysis literature. We take $\weight = \Id - B B^*$ in \eqref{eq:Szego_condition}, hence we also refer to the latter as  the Szeg\H o condition. 

The following lemma is known as the F\'ejer--Riesz, or spectral factorization, theorem for mvfs.
It states that any positive mvf is the ``modulus'' of an outer mvf. In order to obtain the normalization needed for this paper, we use the QR factorization of Lemma \ref{lem:QR}. For more details on spectral factorization, see the exposition of  \cite{Barclay_cty_spec}.
\begin{lemma}[\cite{wiener}]\label{lem:outer_func_construction}

Let $a \in \mathbb Z_2$. If $\weight \in \weights$, then there exists a unique outer mvf $\outr_a $ on $\D$ for which
   $\outr _a (0) \in \grp_a$ and the boundary values of $\outr _a$ satisfy
   \begin{equation}\label{eq:spec_fact}
   \weight = \outr _a ^* \outr_a .
   \end{equation}
\end{lemma}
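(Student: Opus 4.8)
The plan is to deduce the lemma from the classical matrix-valued spectral factorization theorem together with an elementary normalization step. First I would invoke the Wiener--Masani factorization (this is the content of \cite{wiener}; see also \cite[Ch.~4]{Roos} and \cite{Barclay_cty_spec}): since $\weight \in \weights$ is pointwise a.e.\ positive definite hermitian and satisfies the Szeg\H o condition \eqref{eq:Szego_std}, there exists an outer mvf $\outr$ on $\D$ whose boundary values satisfy $\weight = \outr^* \outr$ a.e.\ on $\T$. This $\outr$ is unique only up to a constant unitary left factor, and the whole point of the present lemma is to pin down that ambiguity by prescribing the behavior at the origin.

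For existence, note that $\det \outr$ is a scalar outer function, hence nonvanishing on $\D$, so in particular $\outr(0)$ is invertible. Applying the QR-type factorization of Lemma \ref{lem:QR} to $\outr(0)$ produces a unitary matrix $Q$ and a matrix $R \in \grp_a$ with $\outr(0) = Q R$. Set $\outr_a := Q^* \outr$. Then $\outr_a$ is analytic on $\D$ (multiplication by the constant matrix $Q^*$), it is outer because $\det \outr_a = \overline{\det Q}\,\det \outr$ differs from $\det \outr$ only by a unimodular constant, its boundary values satisfy $\outr_a^* \outr_a = \outr^* Q Q^* \outr = \outr^* \outr = \weight$ a.e.\ on $\T$, and $\outr_a(0) = Q^* \outr(0) = R \in \grp_a$. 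This produces the desired factorization.

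For uniqueness, suppose $\outr_a$ and $\ti\outr_a$ both satisfy the conclusion. Then $\outr_a^* \outr_a = \ti\outr_a^* \ti\outr_a = \weight$ a.e.\ on $\T$, so the mvf $W := \ti\outr_a\, \outr_a^{-1}$ has unitary boundary values a.e.\ on $\T$; since $\outr_a$ and $\ti\outr_a$ are both outer, the uniqueness statement underlying matrix inner--outer factorization (\cite[Ch.~4]{Roos}, \cite{Barclay_cty_spec}) forces $W$ to be a constant unitary matrix. Evaluating $\ti\outr_a = W \outr_a$ at the origin gives $W = \ti\outr_a(0)\,\outr_a(0)^{-1} \in \grp_a$, using that $\grp_a$ is a group. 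But a unitary matrix that is upper (resp.\ lower) triangular with positive diagonal entries is necessarily $\Id$, so $W = \Id$ and $\ti\outr_a = \outr_a$.

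I do not expect a genuine obstacle here: the only deep inputs — the existence of the spectral factorization and its uniqueness up to a constant unitary factor — are quoted from \cite{wiener, Roos, Barclay_cty_spec}, while the part specific to this lemma (the QR normalization and the observation that the only unitary triangular matrix with positive diagonal is the identity) is elementary linear algebra.
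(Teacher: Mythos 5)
Your proof is correct and follows essentially the same route as the paper's: existence is obtained by taking the Wiener--Masani spectral factor $\outr$, applying the QR factorization of Lemma~\ref{lem:QR} to the invertible matrix $\outr(0)$, and setting $\outr_a := Q^{-1}\outr = Q^*\outr$; uniqueness follows by observing that the ratio of two such factors is an outer mvf that is unitary on $\T$, hence a constant unitary, and then that the only unitary element of $\grp_a$ is the identity. The paper states the argument in precisely this form, so no substantive differences to report.
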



\begin{proof}
To see uniqueness, let $\outr_a$ and $\outr_a '$ both satisfy \eqref{eq:spec_fact}, and assume both belong to $\grp_a$ at the origin. Then $U:=  \outr_a ' \outr_a ^{-1}$ must be an outer mvf, and  unitary on $\T$. It follows that $U$ is constant. Evaluation at the origin yields $U = \outr_a (0) ^{-1} \outr_a ' (0) $ is triangular with positive diagonal entries. The only  unitary matrix possible is $U=\Id$, and so $\outr_a = \outr_a '$.

As for existence, by \cite[Theorem 3.3]{Barclay_cty_spec} (originally proved by Masani--Wiener \cite{wiener}), there exists an outer mvf $\outr$ satisfying \eqref{eq:spec_fact}. By outerness, $\outr$ is invertible at the origin. Thus QR factorization yields $\outr(0)=QR$, where  $Q$ is unitary and  $R \in \grp_a$. 
Define ${\outr}_a:=Q^{-1}\mathcal O$, and note $\outr_a (0) = R$. Since $Q$ is unitary, using the notation $Q^{-*} := (Q^*)^{-1}$,
    \[\outr_a ^\ast \outr_a=\mathcal O^\ast Q^{-\ast} Q^{-1}\mathcal O=\mathcal O^\ast \mathcal O=\weight\, .\]
This completes the proof of existence. 
\end{proof}

In what follows, for $p\in\{1, 2, \infty\}$, the $L^p$ norm of an mvf $T$ is given by
\begin{equation}\label{defn:Lp_norm_def}
\|T\|_{L^p} ^p : = \int\limits_{\T} \|T(z)\|_p ^p \,  \, ,
\end{equation}
where when $p=1$, the trace norm $\|\nlf\|_{1}$ of $\nlf$ is defined as the sum of its singular values.


Given $P$ as in Lemma \ref{lem:outer_func_construction} and $a  \in \Z_2$, denote by  $\outr_a(P)$ the outer mvf  described by Lemma \ref{lem:outer_func_construction}.
    Spectral factorization is not continuous in $L^1$ since convergence $\|\weight_m-\weight\|_{L^1}\to 0$ does not imply $\|\outr_a (\weight_m)-\outr_a (\weight)\|_{L^2}\to 0$. 
The latter convergence does hold if we further assume $\|\log\det \weight_m - \log\det \weight\|_{L^1}\to 0$ (or any of the equivalent conditions listed in \cite[Prop.\@ 4.2]{Barclay_cty_spec}); see \cite[Theorem 3.5]{Barclay_cty_spec}. The following lemma shows spectral factorization satisfies an $L^1 \to L^2$ Lipschitz bound under the further assumption that  the eigenvalues of $\weight$ are bounded above and below. 
\begin{lemma}[\cite{Barclay_cty_spec, ESS_Quant_cty_spec}] \label{lem:cty_spec_factors}
Let  $\epsilon>0$ and $a \in \Z_2$.
There exists a constant $C_{\epsilon, n}$ such that for all $\weight, \weight' \in \weights$ whose eigenvalues lie in $[\epsilon, \epsilon^{-1}]$ a.e., we have the Lipschitz  bound 
\begin{equation}\label{eq_Lip}
\|\outr_a (\weight) - \outr_a (\weight')\|_{L^2} \leq C_{\epsilon, n} \| \weight - \weight ' \|_{L^1} \, . 
\end{equation}

\end{lemma}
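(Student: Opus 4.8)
The plan is to reduce the claimed Lipschitz bound to the qualitative continuity statement of \cite[Theorem 3.5]{Barclay_cty_spec} by a compactness-free argument, exploiting that the two-sided spectral bound $\epsilon \le \weight \le \epsilon^{-1}$ makes the map $\weight \mapsto \log\det \weight$ Lipschitz from $L^1$ to $L^1$ and simultaneously makes $\outr_a(\weight)$ uniformly bounded in $L^\infty$. First I would record the elementary scalar inequality: for hermitian positive definite $n\times n$ matrices $P, P'$ with eigenvalues in $[\epsilon,\epsilon^{-1}]$, one has $\|\log\det P - \log\det P'\|\le \trc|\log P - \log P'| \lesssim_{\epsilon,n} \|P-P'\|_1$, since $t\mapsto \log t$ has derivative bounded by $\epsilon^{-1}$ on $[\epsilon,\epsilon^{-1}]$. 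Integrating over $\T$ gives $\|\log\det\weight - \log\det\weight'\|_{L^1}\le C_{\epsilon,n}\|\weight-\weight'\|_{L^1}$, so the hypothesis of \cite[Theorem 3.5]{Barclay_cty_spec} (convergence of $\log\det$ in $L^1$) is automatically controlled by $\|\weight-\weight'\|_{L^1}$.

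Next I would set up the difference $\outr_a(\weight) - \outr_a(\weight')$ and try to estimate it directly rather than merely qualitatively. Writing $\outr = \outr_a(\weight)$, $\outr' = \outr_a(\weight')$, both are outer and invertible with $\outr^*\outr = \weight$, $(\outr')^*\outr' = \weight'$; the two-sided spectral bound forces $\|\outr\|_\infty, \|\outr'\|_\infty \le \epsilon^{-1/2}$ and $\|\outr^{-1}\|_\infty, \|(\outr')^{-1}\|_\infty \le \epsilon^{-1/2}$ a.e.\ on $\T$, and by the maximum principle (outerness of $\det\outr$, hence no zeros, plus boundedness) the same bounds hold on $\D$. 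The matrix $V := \outr' \outr^{-1}$ then satisfies $V^* V = \outr^{-*}\weight'\outr^{-1} = \Id + \outr^{-*}(\weight'-\weight)\outr^{-1}$, so $\|V^*V - \Id\|_{L^1} \le \epsilon^{-1}\|\weight-\weight'\|_{L^1}$, and $V$ is a bounded analytic mvf (product of two $H^\infty$ matrices, as $\outr^{-1}\in H^\infty$ by outerness) with bounded analytic inverse. From $\outr' - \outr = (V - \Id)\outr$ I would reduce the whole estimate to bounding $\|V - \Id\|_{L^2}$ in terms of $\|V^*V - \Id\|_{L^1}$.

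The main obstacle is exactly this last reduction: passing from control of $V^*V - \Id$ (a "modulus" statement) to control of $V - \Id$ itself, i.e.\ controlling the inner/unitary part. Here I would use that $V$ is, up to a constant unitary, itself the $\outr_{a}$-normalized spectral factor of $V^*V \in \weights$ (whose eigenvalues again lie in a fixed interval $[\epsilon',\epsilon'^{-1}]$ determined by $\epsilon$), and that $V^*V$ is close to $\Id$ whose spectral factor is $\Id$; so this is a \emph{linearization of spectral factorization near the identity}. One clean route is to write $V - \Id = \frac12(V^*V - \Id) + \text{(skew-hermitian-ish remainder)}$ on $\T$ and observe that the remainder is the boundary value of a function in $H^2$ whose value matches the antiholomorphic projection of $\frac12(V^*V-\Id)$, hence is controlled by the Riesz projection bound $L^1 \to$ weak-$L^1$ combined with the uniform $L^\infty$ bound on $V$ to upgrade to $L^2$; alternatively, one simply invokes \cite[Theorem 3.5]{Barclay_cty_spec} together with \cite[Prop.\ 4.2]{Barclay_cty_spec} in quantitative form, since the cited source \cite{ESS_Quant_cty_spec} already contains the quantitative version under a spectral gap hypothesis. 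I would present the argument by citing \cite{ESS_Quant_cty_spec} for the hard quantitative step and supplying the short reduction above (bounding $\log\det$ differences and the $L^\infty$ normalization of $\outr_a$) to see that it applies in our normalization $\outr_a(0)\in\grp_a$.
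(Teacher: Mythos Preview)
Your final plan—invoke the quantitative result of \cite{ESS_Quant_cty_spec} after checking that the two-sided spectral bound controls both $\|\log\det\weight-\log\det\weight'\|_{L^1}$ and the relevant $L^\infty$ norms—is exactly what the paper does. The paper cites \cite[Theorem~1.5]{ESS_Quant_cty_spec} directly for the canonical (unnormalized) spectral factor, and then passes to the $\grp_a$-normalized factor $\outr_a$ by applying the Lipschitz continuity of the $QR$ factorization (their Lemma~\ref{lem:QR}) at the single point $z=0$; your treatment of the normalization step is vaguer, and this $QR$ step is the clean way to finish.

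One genuine gap worth flagging: the Riesz-projection route you sketch as your primary approach does \emph{not} give a Lipschitz bound. Writing $\Delta=V-\Id$ analytic with $\Delta+\Delta^*\approx V^*V-\Id$, the weak-$(1,1)$ bound for the Riesz projection combined with the uniform $L^\infty$ bound on $\Delta$ yields, via the distribution-function computation $\|\Delta\|_{L^2}^2\le\int_0^M 2\lambda\,\min(1,\,C\|h\|_{L^1}/\lambda)\,d\lambda$, only
\[
\|\Delta\|_{L^2}\ \lesssim_{\epsilon,n}\ \|V^*V-\Id\|_{L^1}^{1/2},
\]
i.e.\ a H\"older-$\tfrac12$ estimate rather than the linear one in \eqref{eq_Lip}. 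So this route cannot close the argument without further input; the paper avoids it entirely by citing the quantitative source and does not introduce $V$ at all.
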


\begin{proof}[Proof of Lemma \ref{lem:cty_spec_factors}]
Let $a,\epsilon,\weight,\weight'$ be given as in the lemma.
The Lipschitz bound \eqref{eq_Lip} with the canonical $\outr$ of \cite{Barclay_cty_spec} in place of $\outr_a$ follows from \cite[Theorem 1.5]{ESS_Quant_cty_spec}. Indeed, our assumptions imply that $\weight$ and $\ell_{\weight}:=\log\det \weight-n\log_+\|\weight\|_{\infty}$ are bounded (with bounds depending on $\varepsilon, n$), and that 
\[\left\|\log\frac{\det \weight'}{\det \weight}\right\|_{L^1}\leq C_{\varepsilon,n} \|\weight'-\weight\|_{L^1}.\]
 The Lipschitz bound \eqref{eq_Lip} then follows from Lipschitz continuity of the QR factorization proved in Lemma \ref{lem:QR}. 
\end{proof}

\subsection{Determinants of unitary block matrices}
As in \eqref{labeling_M_intro}, we will often label the blocks  of a given $2n \times 2n$ mvf $\nlf$ on $\T$  as
\begin{equation}\label{eq:convention}
\begin{pmatrix}
    A & B \\ C & D
\end{pmatrix} := \nlf \, .
\end{equation}
This notational convention extends to mvfs  $\nlf_{-}$, $\nlf_{+}$, $\nlf'$ , $\nlf_j$, $\ldots$, whose upper left blocks will be labeled $A_{-}, A_{+}, A', A_{j}, \ldots  $, respectively. In what follows, we let $U(m)$ denote the set of $m \times m$ unitary matrices.

\begin{lemma}\label{lem:det_holo_formula}
    If $\nlf$ is an a.e.\ $U(2n)$-valued matrix function on $\T$, with $A, D$ invertible a.e.\ on $\T$, then  
    a.e.\ on $\T$ we have
    \begin{equation}\label{eq:det_formula_Y}
    \det \nlf = \frac{\det D}{\det A^*} = \frac{\det A}{\det D^*}\, . 
    \end{equation}
\end{lemma}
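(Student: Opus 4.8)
The plan is to exploit the block structure of a unitary matrix together with the classical Schur complement formula for determinants. Let $\nlf = \begin{pmatrix} A & B \\ C & D \end{pmatrix}$ be $U(2n)$-valued a.e.\ on $\T$, with $A$ and $D$ invertible a.e. Since $\nlf$ is unitary, $\det \nlf$ is unimodular, but more importantly $\nlf^{-1} = \nlf^*$, so $\nlf^* \nlf = \Id$ and $\nlf \nlf^* = \Id$. Writing out the block equations from $\nlf^*\nlf = \Id$ gives $A^* A + C^* C = \Id$, $A^* B + C^* D = 0$, $B^* B + D^* D = \Id$, and from $\nlf\nlf^* = \Id$ we get $A A^* + B B^* = \Id$, $A C^* + B D^* = 0$, $C C^* + D D^* = \Id$. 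The key relation I would extract is $C^* D = -A^* B$, equivalently $D = -(A^*)^{-1} C^* D \cdots$; more usefully, $C = -D^{*-1} B^* A$ type identities, but the cleanest route is via Schur complements.

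First I would apply the Schur complement formula for the determinant of a block matrix with respect to the $A$ block: since $A$ is invertible a.e.,
\[
\det \nlf = \det A \cdot \det\bigl(D - C A^{-1} B\bigr).
\]
Then I would simplify the Schur complement $D - C A^{-1} B$ using unitarity. From $A C^* + B D^* = 0$ we get $C^* = -A^{-1} B D^*$, hence $C = -D B^* A^{-*}$, so
\[
C A^{-1} B = -D B^* A^{-*} A^{-1} B = -D B^* (A A^*)^{-1} B = -D B^*(\Id - B B^*)^{-1} B.
\]
Thus $D - C A^{-1} B = D\bigl(\Id + B^*(\Id - BB^*)^{-1} B\bigr) = D (\Id - B^* B)^{-1}$, using the push-through identity $\Id + B^*(\Id - BB^*)^{-1}B = (\Id - B^*B)^{-1}$. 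Taking determinants, $\det(D - CA^{-1}B) = \det D / \det(\Id - B^*B)$. Similarly, applying the Schur complement with respect to $D$ gives $\det \nlf = \det D \cdot \det(A - B D^{-1} C)$, and the analogous manipulation using $A^*B + C^*D = 0$ yields $A - BD^{-1}C = (\Id - BB^*)^{-1} A$, wait—I should be careful with left versus right factors here, so $\det(A - BD^{-1}C) = \det A / \det(\Id - BB^*)$. Finally I combine these: from $\det \nlf = \det A \cdot \det D / \det(\Id - B^*B)$ and the fact that $\det(\Id - B^*B) = \det(\Id - BB^*) = \det A \cdot \det A^* / 1$...

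Actually the slickest finish is to note $\det(\Id - BB^*) = \det(AA^*) = |\det A|^2 = \det A \det A^*$ from $AA^* + BB^* = \Id$ — no wait, that gives $\det(\Id - BB^*) = \det(AA^*)$ only if... it does not in general since $\Id - BB^* = AA^*$ exactly, so $\det(\Id - BB^*) = \det A \det A^*$, and likewise $\det(\Id - B^*B) = \det(D^*D) = \det D^* \det D$ from $B^*B + D^*D = \Id$. Substituting into $\det \nlf = \det A \det D / \det(\Id - B^*B) = \det A \det D/(\det D^* \det D) = \det A / \det D^*$, and symmetrically $\det \nlf = \det D/\det A^*$, which is exactly \eqref{eq:det_formula_Y}.

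The main obstacle is purely bookkeeping: getting the left/right placement of inverses correct in the Schur complement simplifications and choosing which unitarity relations to substitute so that the push-through identity applies cleanly. There is no analytic difficulty here since everything is pointwise a.e.\ on $\T$ and $A, D$ are assumed invertible a.e.; one only needs the elementary observation that $\Id - BB^*$ and $\Id - B^*B$ are then automatically invertible a.e.\ (they equal $AA^*$ and $D^*D$ respectively), so all inverses written above make sense almost everywhere.
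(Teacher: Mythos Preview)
Your proof is correct and follows essentially the same route as the paper: both use the block Schur-complement determinant formula together with the unitarity relations to simplify. The paper's simplification is marginally more direct---after writing $\det\nlf=\det D\cdot\det(A-BD^{-1}C)$, it inserts $A^{-*}A^*$ and uses $C^*=-A^*BD^{-1}$ to obtain $A-BD^{-1}C=A^{-*}(A^*A+C^*C)=A^{-*}$ in one stroke, avoiding the push-through identity---but the substance is the same.
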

\begin{proof}
Since $\nlf$ is unitary, we have $A^*B+ C^*D=0$, or equivalently,  
\begin{equation}\label{eq_CstarForLater}
 -A^* B D^{-1}=C^* \, .
\end{equation}
We compute
\[
\nlf
\begin{pmatrix}
        \Id  & 0 \\ -D^{-1}C & \Id
    \end{pmatrix}  = \begin{pmatrix}
        A & B \\ C & D
    \end{pmatrix}\begin{pmatrix}
        \Id  & 0 \\ -D^{-1}C & \Id
    \end{pmatrix} = \begin{pmatrix}
        A - BD^{-1}C & B \\ 0  & D 
    \end{pmatrix} \, .
\]
Taking determinants of both sides yields
\[
\det \nlf = \det (D) \det (A - B D^{-1} C) = 
 \frac{\det D}{\det A^*} \det (A^* A + C^* C) = \frac{\det D}{\det A^*} \, ,
\]
where we inserted $\Id=A^{-*}A^*$ and recalled \eqref{eq_CstarForLater} in the second step, and used again that $\nlf$ is unitary in the last step. The second equality in \eqref{eq:det_formula_Y} follows similarly.
\end{proof}


\begin{lemma}\label{lem:det_1_Y}
    Let $\nlf$
    be an a.e.\ $U(2n)$-valued matrix function on $\T$. If $A^*$ and $D$ are outer mvfs on $\D$ with positive determinant at $z=0$, then $\nlf$ is a.e.\ $SU(2n)$-valued.  
\end{lemma}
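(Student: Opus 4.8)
The plan is to combine the determinant formula of Lemma~\ref{lem:det_holo_formula} with the defining mean-value property of outer functions. Since $\nlf$ is a.e.\ $U(2n)$-valued, we have $\lvert \det \nlf(z)\rvert = 1$ for a.e.\ $z\in\T$, so it suffices to show that the scalar function $\det\nlf$, which a priori is only measurable on $\T$, is in fact the constant $1$. The key observation is that Lemma~\ref{lem:det_holo_formula} gives $\det\nlf = \det D/\det A^*$ a.e.\ on $\T$, and under the hypotheses both $\det D$ and $\det A^*$ are \emph{scalar outer} functions (this is exactly the definition of an outer mvf given just before Lemma~\ref{lem:outer_func_construction}), hence in particular nonvanishing analytic functions on $\D$ with nontangential boundary values a.e.

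First I would record that $g:=\det D$ and $h:=\det A^*$ are scalar outer functions on $\D$ (invertibility of $A,D$ a.e.\ on $\T$ is automatic from unitarity of $\nlf$ together with $A^*,D$ being outer, since outer functions are nonzero a.e.\ on $\T$). Then on $\T$ we have $\lvert g/h\rvert = \lvert \det\nlf\rvert = 1$ a.e., i.e.\ $\lvert g\rvert = \lvert h\rvert$ a.e.\ on $\T$. Two scalar outer functions with the same modulus on $\T$ differ by a unimodular constant: indeed $g/h$ extends to a nonvanishing analytic function on $\D$ (as $h$ is outer, hence zero-free on $\D$) whose boundary modulus is $1$; by the inner-outer factorization, a zero-free $H^p$-type function of unimodular boundary values is a singular inner function, and such a function that is also a ratio of outer functions — equivalently, itself outer — must be a unimodular constant. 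Concretely, $\log\lvert g/h\rvert \equiv 0$ on $\T$, so by the mean-value property $\log\lvert g(0)/h(0)\rvert = 0$, and the harmonic conjugate argument (or directly: $g/h$ is outer and inner, hence constant) forces $g/h$ to equal a constant $c$ with $\lvert c\rvert = 1$.

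Next I would pin down the constant. Evaluating at $z=0$: $\det D(0) = c \,\det A^*(0)$, and both determinants are \emph{positive} by hypothesis, so $c = \det D(0)/\det A^*(0) > 0$. A positive unimodular constant is $1$, hence $\det\nlf \equiv 1$ on $\D$, and in particular $\det\nlf = 1$ a.e.\ on $\T$. Combined with $\nlf(z)\in U(2n)$ a.e., this gives $\nlf(z)\in SU(2n)$ a.e., as claimed.

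The only mildly delicate point — the ``main obstacle'' such as it is — is justifying rigorously that a zero-free analytic function on $\D$ with unimodular boundary values and with the logarithmic mean-value property (i.e.\ whose reciprocal times an outer function is again of the same type) must be constant; this is a standard consequence of the scalar inner-outer factorization recalled in Section~\ref{sec:B_to_Y_outer} (a function that is simultaneously inner and outer is a unimodular constant), so I would simply cite that. Everything else is a short computation. One should take a moment to confirm the integrability needed to invoke the mean-value property: $D\in H^2(\D)$ with values in $\mathcal M$ implies $\det D \in H^{2/n}(\D)$ (a product of $n$ entries each in $H^2$), and likewise for $A^*$, so $\det D$ and $\det A^*$ are genuine Nevanlinna-class functions for which the inner-outer theory and the outer mean-value property apply; this is implicit in the standing setup but worth a sentence.
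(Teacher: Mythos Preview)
Your proposal is correct and follows essentially the same route as the paper: use Lemma~\ref{lem:det_holo_formula} to write $\det\nlf=\det D/\det A^*$, observe this is a scalar outer function with unimodular boundary values, hence a unimodular constant, and use positivity at $0$ to conclude the constant is $1$. Your argument for $|\det\nlf|=1$ on $\T$ (directly from unitarity of $\nlf$) is in fact slightly more direct than the paper's, which computes $|\det\nlf|^2=\det(\Id-B^*B)/\det(\Id-BB^*)=1$ via the block identities.
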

\begin{proof}
We must show $\det \nlf=1$ a.e.\ on $\T$.
By Lemma \ref{lem:det_holo_formula},  
\[
\det \nlf=\frac{\det D}{\det A^*} \, .
\]
Thus $\det \nlf$ extends to an outer function on $\D$ which is positive at $z=0$. On $\T$, it has modulus $1$, because 
\[
\left|{\det \nlf}\right|^2=\frac{\det D^* D}{\det A A^*} = \frac{\det (\Id  - B^* B)}{\det (\Id - B B^*)}  = 1 \, ,
\]
where the last equality followed from the fact that  $\Id - B B^*$ and $\Id - B^* B$  are positive and have the same singular values. But any outer function with modulus $1$ on $\T$ must be constant, and combined with the positivity of $\det M$ at $0$, we get $\det M=1$ everywhere. 
\end{proof}

Later on in the proof of Lemma \ref{lem:injectivity_half_line} and near \eqref{outer_func_mean_value_fails}, we will also need the following result for inner mvfs.

\begin{lemma}\label{lem:cst_det_matrix_inner}
    Let $I$ be an inner mvf. If $\det I$ is constant, then $I$ is constant.
\end{lemma}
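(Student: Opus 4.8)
The plan is to exploit the inner--outer factorization theory recalled in the previous subsection, applied to the \emph{matrix} inner function $I$ and also to its determinant. Let $I$ be an inner mvf with $\det I$ constant. Since $I$ is inner, it lies in the unit ball of $H^\infty(\D)$, and it is invertible a.e.\ on $\T$ (being unitary there). Because $\det I$ is a bounded analytic function on $\D$ which is constant, it is in particular nowhere vanishing, so the scalar inner function $\det I$ has no Blaschke factors and no singular part beyond a unimodular constant; equivalently $\det I$ is a nonzero constant of modulus $1$.

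First I would use this to see that $I$ is invertible on all of $\D$, not just on $\T$: if $\det I(z_0)=0$ for some $z_0 \in \D$ then $\det I$ would vanish somewhere in the disc, contradicting that it is a nonzero constant. Hence $I^{-1}$ exists as a matrix-valued analytic function on $\D$, given by Cramer's rule as $I^{-1} = (\det I)^{-1}\,\adj(I)$, where $\adj(I)$ has polynomial (hence bounded analytic) entries in the entries of $I$. Thus $I^{-1} \in H^\infty(\D)$. On $\T$, since $I$ is unitary, $I^{-1} = I^*$, so $\|I^{-1}(z)\|_\infty = \|I(z)^*\|_\infty = 1$ for a.e.\ $z \in \T$. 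By the maximum principle applied to the bounded analytic function $I^{-1}$ (for instance, the scalar function $z \mapsto \langle I^{-1}(z) u, v\rangle$ for unit vectors $u,v$, or directly the subharmonicity of $\|I^{-1}(z)\|_\infty$), we get $\|I^{-1}(z)\|_\infty \le 1$ for all $z \in \D$. Therefore $I^{-1}$ is itself an inner mvf, and in particular $\|I(z)\|_\infty \le 1$ and $\|I(z)^{-1}\|_\infty \le 1$ on $\D$.

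The conclusion is then immediate: for each fixed $z \in \D$, the matrix $I(z)$ satisfies $\|I(z)\|_\infty \le 1$ and $\|I(z)^{-1}\|_\infty \le 1$, which forces $I(z)$ to be unitary (all singular values of $I(z)$ are $\le 1$ and all singular values of $I(z)^{-1}$ are $\le 1$, so all singular values of $I(z)$ equal $1$). Thus $I$ is a unitary-matrix-valued analytic function on the connected domain $\D$. Finally, an analytic function into the compact group $U(2n)$, or more elementarily a bounded analytic mvf that is unitary throughout $\D$, must be constant: apply the scalar maximum principle to $z \mapsto \langle I(z) u, v \rangle$, whose modulus is bounded by $1$ on $\D$ and equals $\|I(z^*)u\| \cdot$ (something) --- more cleanly, each entry $I_{k\ell}$ is a bounded analytic function, and $\sum_{\ell} |I_{k\ell}(z)|^2 = 1$ for all $z \in \D$ since rows of $I(z)$ are unit vectors; a finite sum of $|{\cdot}|^2$ of analytic functions that is constant forces each summand's analytic function to be constant (e.g.\ because $|I_{k\ell}|^2$ is subharmonic with constant sum, hence each is harmonic, hence $I_{k\ell}$ is constant). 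Alternatively, and most cleanly: $I$ and $I^*$ are both in $H^\infty$, so $I^* = I^{-1} \in H^\infty$ has an analytic extension, but $I^*$ also being the boundary value of the co-analytic extension of $I$ forces all nonconstant Fourier modes of $I$ to vanish.

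\textbf{Main obstacle.} The only genuinely delicate point is justifying that $I^{-1} \in H^\infty(\D)$ and that it too is inner --- i.e.\ that the \emph{pointwise in $\D$} bound $\|I(z)^{-1}\|_\infty \le 1$ holds. This is where one must invoke the characterization $\det I$ inner $\iff$ $I$ inner (cited from \cite[Theorem 4.10]{Roos}) to control the zeros of $\det I$ inside $\D$, together with the maximum principle for the matrix-norm of the analytic function $I^{-1}$; alternatively one can cite directly that an inner mvf with no inner (Blaschke/singular) part in its inner--outer factorization is outer and unitary-valued, hence constant. Everything after that is routine linear algebra and the scalar maximum principle.
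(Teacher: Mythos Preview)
Your proposal is correct, and in fact your final ``most cleanly'' alternative is exactly the paper's proof: use the adjugate formula to see $I^{-1}\in H^\infty(\D)$, note that $I^{-1}=I^*$ on $\T$ so that $I^*$ has a bounded analytic extension to $\D$, and conclude that a function with both $I$ and $I^*$ in $H^\infty(\D)$ must be constant.

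Your main route --- showing $\|I^{-1}(z)\|_\infty\le 1$ on $\D$ via subharmonicity, deducing that $I(z)$ is unitary for every $z\in\D$, and then arguing that a unitary-valued analytic function is constant --- is also valid but unnecessarily long. The step you flag as the ``main obstacle'' (the pointwise bound $\|I(z)^{-1}\|_\infty\le 1$) is not actually needed: once $I^{-1}\in H^\infty(\D)$ and agrees with $I^*$ on $\T$, you are done by the Fourier-coefficient argument you yourself sketch at the end. So the paper's version is strictly shorter, skipping the detour through pointwise unitarity entirely. Your row-norm argument ($\sum_\ell |I_{k\ell}|^2\equiv 1$ forces each $I_{k\ell}$ constant via $\Delta|f|^2=4|f'|^2$) is a nice self-contained alternative for the last step, but again superfluous given the cleaner route.
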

\begin{proof}
   Since $\det I$ is constant on  $\D$, then $I^{-1}$ is bounded and analytic on $\D$ by the adjugate\footnote{Recall $(\adj I)_{i j} := (-1)^{i +j} M_{j i}$, where $M_{xy}$ denotes the $xy$-th minor of $I$.} formula
  \begin{equation}\label{eq:adjugate_2}
I^{-1} = \frac{1}{\det I} \adj I \, .
\end{equation}
    Because  $I^{-1}$ agrees with $I^*$ on $\T$, we get that $I^*$ extends to a bounded analytic function on  $\D$. But for $I$ and $I^*$ to both extend to bounded analytic functions on $\D$, we must then have that $I$ is constant.   
\end{proof}

\subsection{Proof of Theorem \ref{factorization-theorem}}

We are now ready to prove Theorem \ref{factorization-theorem}. Fix $\alpha : \Z_2 \to \Z_2$.


We begin with existence. Given $B \in \mathbf{S}$, define $A^*$ and $D$ to be the unique solutions of the spectral factorization problems 
\begin{equation}\label{eq:spec_fact_A}
A A^* = \Id - B B^* \, , \qquad A^*(0) \in \grp_{1-\alpha(0)} \, , 
\end{equation}
and
\begin{equation}\label{eq:spec_fact_D}
 D ^* D   = \Id- B^* B \, , \qquad D(0) \in \grp_{\alpha(1)}  \, ,
\end{equation}
whose existence and uniqueness is guaranteed by Lemma \ref{lem:outer_func_construction}. Then define 
\begin{equation}\label{eq:C_unique}
C := -D ^{-*} B ^* A  \, , 
\end{equation}
and finally set \begin{equation}\label{eq:sety}
    Y_{\alpha} (B) := \begin{pmatrix}
    A & B \\ C & D
    \end{pmatrix} \, .
\end{equation}
We have
\[
    Y_{\alpha} (B)^* Y_{\alpha} (B) =  \begin{pmatrix} A^*A + C^*C & A^*B + C^*D\\ B^*A + D^*C & B^* B + D^* D\end{pmatrix}.
\]
The bottom right block equals the identity matrix by \eqref{eq:spec_fact_D}. The off-diagonal blocks are zero by \eqref{eq:C_unique}. For the upper left block, we compute with
\eqref{eq:C_unique}
\begin{equation}
    \label{eq:upperleft}
    A^*A + C^*C = A^*(\Id + B D^{-1} D^{-*} B^*) A     
\end{equation}
To show that this equals the identity matrix, it suffices to show that
\begin{equation}\label{eq:goal_id}
\Id + B D^{-1}D^{-*} B^* =(AA^*)^{-1} \, .
\end{equation}
We compute with \eqref{eq:spec_fact_A} and \eqref{eq:spec_fact_D}
\[
 (\Id+B(D^* D)^{-1} B^*) A A^*= (\Id+B(D^* D)^{-1} B^*)(\Id-BB^*)\]\[
=\Id -BB^*+B(D^* D)^{-1} (\Id-B^*B)B^*=\Id \, ,
\]
yielding \eqref{eq:goal_id}. Thus $Y_{\alpha}(B) \in U(2n)$. 
We are left with checking $\det Y_{\alpha}(B) = 1$. Because $A(\infty) \in \grp_{\alpha(0)}$ and $D(0) \in \grp_{\alpha(1)}$, they have positive determinant. Combining this with outerness of  $A^*$ and $D$, the claim that $\det Y_{\alpha} (B) =1$ now follows from Lemma \ref{lem:det_1_Y}. This concludes the proof of existence.

We now prove uniqueness. For a matrix 
\eqref{eq:sety} to be unitary a.e.\ on $\T$, we necessarily have  a.e.\ 
\[
Y_{\alpha} (B)Y_{\alpha} (B)^*=Y_{\alpha} (B)^*Y_{\alpha} (B)=\Id \, ,
\]
from which \eqref{eq:spec_fact_A}, \eqref{eq:spec_fact_D} and \eqref{eq:C_unique} follow. Since the outer mvfs $A^*$ and $D$ solve the spectral factorization problems \eqref{eq:spec_fact_A} and \eqref{eq:spec_fact_D}, respectively, then Lemma \ref{lem:outer_func_construction} implies $A$ and $D$ are unique. And then \eqref{eq:C_unique}, which follows from unitariness of $Y_{\alpha} (B)$, implies $C$ is unique. This concludes the proof of Theorem \ref{factorization-theorem}. 


%
%

\section{The NLFT and basic properties}

\subsection{Cholesky and \texorpdfstring{$QR$}{QR} factorization}

Recall that given a positive definite hermitian matrix $\weight$ and $a \in \Z_2$, there exists a unique $U \in \grp_{\alpha}$ for which
\begin{equation}\label{eq:cholesky}
\weight = U ^* U   \, .  
\end{equation}
This follows from Lemma \ref{lem:outer_func_construction} for the constant matrix function $\weight$.
The factorization \eqref{eq:cholesky} is called the Cholesky factorization \cite[Section 9.4]{schatzman2002numerical}.  

\begin{lemma}\label{lem_cholesky}
    For each $\epsilon > 0$ and $a \in \Z_2$, the map from $P$ to $U$ as in the Cholesky factorization \eqref{eq:cholesky} is Lipschitz continuous on the set of positive definite hermitian matrices with all eigenvalues in the interval $[\epsilon, \epsilon^{-1}]$.
\end{lemma}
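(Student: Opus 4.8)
The plan is to prove Lipschitz continuity of the Cholesky map $\weight \mapsto U$ (where $\weight = U^*U$, $U \in \grp_a$) on the set $K_\epsilon$ of positive definite Hermitian matrices with spectrum in $[\epsilon, \epsilon^{-1}]$. Note that $\grp_a$ is a fixed triangular group ($\grp_0$ upper, $\grp_1$ lower), so by replacing $U$ with $JUJ$ for the antidiagonal permutation $J$ we may assume $a = 0$, i.e.\ $U$ is upper triangular with positive diagonal. First I would record the a priori bounds: since $\weight \in K_\epsilon$, the factor $U$ satisfies $\|U\|_\infty = \|\weight\|_\infty^{1/2} \le \epsilon^{-1/2}$ and $\|U^{-1}\|_\infty = \|\weight^{-1}\|_\infty^{1/2} \le \epsilon^{-1/2}$, so $U$ itself ranges in a compact subset of $\grp_0$ bounded away from the boundary (in particular its diagonal entries lie in $[\sqrt\epsilon, 1/\sqrt\epsilon]$). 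This is what makes a global Lipschitz constant $C_{\epsilon,n}$ possible.

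Next I would derive the key linearized identity. Given $\weight, \weight' \in K_\epsilon$ with Cholesky factors $U, U'$, write $\Delta = \weight' - \weight$ and $E = U' - U$. From $\weight = U^*U$ and $\weight' = U'^*U'$ we get
\[
\Delta = E^* U + U^* E + E^* E \, .
\]
Conjugating by $U^{-*}$ and $U^{-1}$: set $T = E U^{-1}$, so $E = T U$ and the identity becomes
\[
U^{-*} \Delta U^{-1} = T^* + T + U^{-*} E^* E U^{-1} = T^* + T + T^* T \, .
\]
The crucial structural observation is that $T = EU^{-1} = U'U^{-1} - \Id$; since both $U'$ and $U^{-1}$ are upper triangular with positive diagonal, $U'U^{-1}$ is upper triangular with positive diagonal, so $T$ is upper triangular and $T^* + T + T^*T$ has diagonal part $2\,\mathrm{Re}(T_{ii}) + \sum_k |T_{ki}|^2$ on the $i$-th diagonal entry. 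The map $L: T \mapsto T^* + T + T^*T$ restricted to upper triangular $T$ with small enough norm is a local diffeomorphism near $T = 0$: its derivative at $0$ is $T \mapsto T^* + T$, which on the space of upper triangular matrices is injective (an upper triangular matrix whose symmetrization vanishes is zero — the strictly upper part must vanish from the off-diagonal and the diagonal is forced to be purely imaginary, but real diagonal forces it to zero; more carefully one should note $T$ has real diagonal here since $U'U^{-1}$ does, which is exactly what pins down $T$). Hence $\|T\| \lesssim_{\epsilon,n} \|U^{-*}\Delta U^{-1}\| \lesssim_{\epsilon,n} \|\Delta\|$ once $\|T\|$ is a priori small; and then $\|E\| = \|TU\| \le \|T\|\,\|U\|_\infty \lesssim_{\epsilon,n} \|\Delta\|$.

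The one gap in the above is the \emph{a priori smallness} of $\|T\|$ (equivalently that $U'$ is close to $U$) — the linearization argument only gives Lipschitz control in a neighborhood of each point, and I would need to upgrade this to a uniform bound on all of $K_\epsilon$. I expect this to be the main obstacle, and I would handle it by a standard compactness/continuity bootstrap: the Cholesky map is continuous on the connected set $K_\epsilon$ (continuity follows, e.g., from uniqueness plus the a priori compactness of the factors, or from the implicit function theorem applied to $L$), $K_\epsilon$ is a compact metric space, so the map is uniformly continuous; thus there is $\delta = \delta(\epsilon,n) > 0$ such that $\|\weight - \weight'\| \le \delta$ implies $\|U - U'\|$ is as small as we need to run the linearization. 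For $\|\weight - \weight'\| \le \delta$ this yields the Lipschitz bound directly; for $\|\weight - \weight'\| > \delta$ the bound is automatic since $\|U - U'\| \le 2\epsilon^{-1/2} \le 2\epsilon^{-1/2}\delta^{-1}\|\weight - \weight'\|$. Combining the two regimes gives a single constant $C_{\epsilon,n}$, completing the proof. (One could alternatively avoid the dichotomy by a direct Neumann-series manipulation of the identity $U^{-*}\Delta U^{-1} = T + T^* + T^*T$ solving for $T$ column by column using the triangular structure, but the compactness route is cleaner to write.)
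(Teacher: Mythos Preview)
Your proof is correct. Both your argument and the paper's ultimately rest on the inverse function theorem plus compactness, but the implementations differ.

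The paper applies the inverse function theorem once, globally, to the forward map $\Phi: \grp_a \to \{\text{positive definite Hermitian}\}$, $\Phi(U)=U^*U$. It shows $D\Phi$ has full rank at \emph{every} $U$ by a short injectivity trick: if $D\Phi(U)V=0$ for some triangular $V$, then $t\mapsto\Phi(U+tV)$ is a quadratic polynomial with vanishing linear term, hence even, contradicting injectivity of $\Phi$. This gives $\Phi^{-1}$ globally $C^1$, and Lipschitz continuity on the compact set $K_\epsilon$ follows immediately.

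You instead work in the ``moving frame'' $T=U'U^{-1}-\Id$, reducing every pair $(U,U')$ to a single local problem near $T=0$ governed by $L(T)=T^*+T+T^*T$. You correctly identify that the domain of $L$ is upper triangular matrices \emph{with real diagonal} (this is where injectivity of $dL(0):T\mapsto T^*+T$ actually holds, and you catch this). Since the inverse function theorem only gives a local bound, you then need the compactness/uniform-continuity dichotomy to globalize. This is more hands-on but has the virtue of making the linear-algebraic content explicit: one sees exactly how the triangular structure and the positivity of the diagonal (forcing $\operatorname{diag} T$ real) combine to make the linearized map invertible. The paper's approach is shorter and avoids the two-regime argument, at the cost of hiding this structure inside the abstract injectivity-of-$\Phi$ step.
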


\begin{proof}
    The map $\Phi$ sending $U$ to $ U^* U$ from $\grp_a$ to the set of positive definite hermitian matrices is a quadratic polynomial in the entries and one-to-one.
    Hence its derivative $D \Phi$ has full rank at every point. For if not, there is  a point $U$ and a triangular matrix $V$ for which  
\[ \left. \partial_t\Phi(U+tV)\right|_{t=0} =0.\]
But then $\Phi(U+tV)$ is a quadratic polynomial in $t$ with vanishing linear term, defined on a small neighborhood of $t=0$. In particular it is even and thus not injective. This contradicts injectivity of $\Phi$.

The inverse function theorem now implies that the inverse function $\Phi^{-1}$ is continuously differentiable, hence Lipschitz continuous on the compact set of positive hermitian matrices with all eigenvalues in the interval $[\epsilon, \epsilon^{-1}]$.
\end{proof}

We remark that the eigenvalues and the singular values of a positive definite hermitian matrix coincide. As a corollary of Lemma \ref{lem_cholesky}, we record the following standard facts about the $QR$ factorization \cite[Section 12.1]{schatzman2002numerical}.  

\begin{lemma}\label{lem:QR}
   Let $a \in \Z_2$. For every invertible matrix $A$, there exists a unique $Q \in U(n)$ and a unique $R \in \grp_{a}$ such that
    \[
    A = QR.
    \]
    The map $A \mapsto (Q, R)$ is Lipschitz continuous on the set of matrices $A$ with all singular values in the interval $[\epsilon, \epsilon^{-1}]$.
\end{lemma}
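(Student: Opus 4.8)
The plan is to derive Lemma~\ref{lem:QR} directly from Lemma~\ref{lem_cholesky} by relating the $QR$ factorization of an invertible matrix $A$ to the Cholesky factorization of the positive definite hermitian matrix $A^*A$. First I would establish existence and uniqueness. Given an invertible $A$, the matrix $P := A^*A$ is positive definite and hermitian, so Lemma~\ref{lem_cholesky} (or rather its underlying existence statement, which is the specialization of Lemma~\ref{lem:outer_func_construction} to constant matrix functions) produces a unique $R \in \grp_a$ with $A^*A = R^*R$. Then $Q := AR^{-1}$ satisfies $Q^*Q = R^{-*}A^*AR^{-1} = R^{-*}R^*RR^{-1} = \Id$, so $Q \in U(n)$, and $A = QR$ is the desired factorization. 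For uniqueness, if $A = QR = Q'R'$ with $Q,Q' \in U(n)$ and $R,R' \in \grp_a$, then $A^*A = R^*R = (R')^*R'$, and uniqueness of the Cholesky factorization (Lemma~\ref{lem_cholesky}) forces $R = R'$, whence $Q = AR^{-1} = A(R')^{-1} = Q'$.

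Next I would address the Lipschitz continuity. Fix $\epsilon > 0$ and restrict to matrices $A$ whose singular values all lie in $[\epsilon,\epsilon^{-1}]$. I want to track the two maps $A \mapsto R$ and $A \mapsto Q$ separately and check each is Lipschitz. The map $A \mapsto P = A^*A$ is Lipschitz on this set: it is a quadratic map, hence Lipschitz on bounded sets, and our set is bounded since $\|A\|_\infty \le \epsilon^{-1}$. Moreover the eigenvalues of $P = A^*A$ are the squares of the singular values of $A$, so they lie in $[\epsilon^2, \epsilon^{-2}]$; thus the image lands in the domain where Lemma~\ref{lem_cholesky} gives a Lipschitz Cholesky map $P \mapsto R$. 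Composing, $A \mapsto R$ is Lipschitz. For $A \mapsto R^{-1}$, I would note that $R$ ranges over a compact set of invertible triangular matrices (compactness because $R$ is a continuous image of the compact set of admissible $A$, or directly because $\|R\|_\infty = \|P\|_\infty^{1/2} \le \epsilon^{-1}$ and $\|R^{-1}\|_\infty \le \epsilon^{-1}$), and matrix inversion is Lipschitz on any set of matrices with uniformly bounded inverses. Finally $Q = AR^{-1}$ is a product of two Lipschitz, uniformly bounded matrix-valued functions of $A$, hence Lipschitz. Combining, $A \mapsto (Q,R)$ is Lipschitz.

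I do not anticipate a serious obstacle here; this is a standard linear-algebra fact and the excerpt has already set up all the machinery (the constant-matrix case of spectral factorization for existence/uniqueness, and Lemma~\ref{lem_cholesky} for the quantitative Cholesky statement). The only point requiring a little care is the bookkeeping of the various uniform bounds: one must make sure that the singular-value bounds on $A$ propagate to eigenvalue bounds on $A^*A$ in the range $[\epsilon^2,\epsilon^{-2}]$ so that Lemma~\ref{lem_cholesky} actually applies, and that $R$ and $R^{-1}$ stay uniformly bounded so that inversion is Lipschitz on the relevant set. A minor cosmetic choice is whether to state the Lipschitz constant in terms of $\epsilon$ alone or to absorb it into the constants already floating around; I would phrase it so the constant depends only on $\epsilon$ and $n$, consistent with the rest of the paper.
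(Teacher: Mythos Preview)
Your proposal is correct and follows essentially the same approach as the paper: derive existence and uniqueness of the $QR$ factorization from the Cholesky factorization of $A^*A$, then obtain Lipschitz continuity by composing the Lipschitz maps $A \mapsto A^*A \mapsto R$ with matrix inversion and multiplication on a compact set. The paper's version is slightly terser (it simply invokes compactness of the admissible set for the Lipschitz part), but the substance is the same.
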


\begin{proof}
  Let $A\in\mathcal M$ be an invertible matrix. 
  
  We first show existence of $R$ and $Q$.
  The matrix $A^*A$ is positive definite hermitian, and so 
  there exists a unique Cholesky factorization, $A^\ast A=R^\ast R,$ with $R \in \grp_a$. In particular, $R$ is invertible. Let $Q:=AR^{-1}$.
  Then $Q$ is unitary:
 \[Q^\ast Q=R^{-\ast}A^\ast A R^{-1}=R^{-\ast} R^\ast R R^{-1}=\Id.\]

We next show uniqueness. Assume we have two factorizations $A=Q_1R_1=Q_2R_2$ as in the statement of the lemma. Then, for $j\in \{1,2\}$,
\[R_j^\ast R_j = R_j ^\ast Q_j ^\ast Q_j R_j= A^\ast A\ ,\]
yielding a Cholesky factorization of $A^* A$. But the Cholesky factorization is unique, so $R_1=R_2$,  hence $Q_1=Q_2$ as well.

Finally, note that the maps $A\mapsto A^\ast A\mapsto R$ are locally Lipschitz continuous, as  is $A\mapsto AR^{-1}$. This finishes the proof since the space of matrices $A$ with all singular values in $[\epsilon, \epsilon^{-1}]$ is compact.
\end{proof}

\subsection{Basic properties of the nonlinear Fourier transform}

For $\alpha :\Z_2 \to \Z_2$, let $SU_{\alpha} (2n)$ denote the set of all matrices
\begin{equation}\label{eq:label_SU_alpha}
\begin{pmatrix}
    E & F \\ G & H
\end{pmatrix} \in SU(2n) 
\end{equation}
for which $E \in \grp_{\alpha(0)}$ and $H \in \grp_{\alpha(1)}$. 
Theorem \ref{factorization-theorem} applied with constant matrix functions provides a parametrization of $SU_\alpha(2n)$  by the upper right block $F$ in  \eqref{eq:label_SU_alpha}.
We denote by $Y_\alpha(F)$ the unique $SU_\alpha(2n)$ matrix with upper right block $F$. When $\alpha$ is evident from context, we also adopt the notational convention 
\begin{equation}\label{eq:notation_conv}
\begin{pmatrix}
    E_j & F_j \\ G_j & H_j
\end{pmatrix}:= Y_{\alpha} (F_j),
\end{equation}
where $F_j$ denotes an $n \times n$ contractive matrix; the subscript $j$ may sometimes be dropped. Finally, we will use  the notation
\begin{equation}\label{defineAd}
    \ad (z) X:=ZXZ^{-1}\,,  
\end{equation}
where $Z$ is the block version of \eqref{defineZ}.

Similarly to \cite[Lemma 1]{TT03}, \cite[Lemma 2.2]{tsai} and \cite[Theorem 2]{Alexis+24}, the following algebraic properties and symmetries of the NLFT hold.

\begin{lemma}
\label{lem:properties}
Let $\alpha:\Z_2 \to \Z_2$ and let $F=(F_j)_{j\in\mathbb Z}$ denote a finitely supported sequence of contractive matrices. The following properties of the \textup{NLFT}  hold.
\begin{enumerate}[label=\alph*)]
\item \label{item:dirac} \underline{Dirac delta sequence}: If $F_j=0$ for all $j\neq 0$, then  
\[
\mathcal{F}_{\alpha} (F) = Y_\alpha(F_0).
\]
\item \label{item:ordered_mult} 
\underline{Ordered multiplicativity}: If the support of $F$ is entirely to the left of the support of $F'$, then
\[
\mathcal{F}_{\alpha} (F+F') = \mathcal{F}_{\alpha} (F) \mathcal{F}_{\alpha} (F') \, .
\]
\item \label{item:cconjugation}
\underline{Complex conjugation}: For the  conjugated sequence $\overline{F}$ we have 
\[\mathcal{F}_{\alpha}(\overline{F})(z)=\overline{\mathcal{F}_{\alpha} (F) (\overline z)} \, .\]

\item \label{item:translation} \underline{Translation}: Let $m\in\mathbb Z$. Define the translation map
\[
    T: (F_j)_{j \in \mathbb{Z}} \mapsto (F_{j-1})_{j \in \mathbb{Z}}.
\]
Then 
\[
    \mathcal{F}_{\alpha} \circ T^m = \ad(z)^m \circ \mathcal{F}_{\alpha}.
\]

\item \label{item:phaserotation} 
\underline{Phase rotation}: If $c\in\mathbb \T$, then 
\[
\mathcal{F}_{\alpha} (cF) = \ad(c) \circ \mathcal{F}_{\alpha} (F).
\]

\item \label{item:modulation} \underline{Modulation}: Given $\theta\in\mathbb R$, for the modulated sequence $(e^{ij\theta}F_{j})_{j\in \Z}$ we have
\[
\mathcal{F}_{\alpha} ((e^{ij\theta}F_{j})_{j \in \Z} )(z)
=\mathcal{F}_{\alpha} (F) (e^{i\theta}z).
\]
\item \label{item:matrixconjugation} 
\underline{Matrix conjugation}: If $U$ is unitary and diagonal, then 
\begin{equation}\label{eq_Uconj}
    \mathcal{F}_{\alpha} (UFU^{-1}) =\begin{pmatrix}
    U & 0 \\ 0 & U
\end{pmatrix}\mathcal{F}_{\alpha} (F) \begin{pmatrix}
    U^{-1} & 0 \\ 0 & U^{-1}
\end{pmatrix}.
\end{equation}
\end{enumerate}

\end{lemma}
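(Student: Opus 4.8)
The plan is to verify each of the seven properties \ref{item:dirac}--\ref{item:matrixconjugation} directly from the product definition \eqref{def:NLFT_alpha} of $\mathcal F_\alpha$, treating $Z$ and the factors $Y_\alpha(F_j)$ as formal matrices and using the characterizations of $Y_\alpha$ established in Theorem \ref{factorization-theorem}. First I would dispense with \ref{item:dirac}: when only $F_0$ is nonzero, the product \eqref{def:NLFT_alpha} has a single factor $Z^0 Y_\alpha(F_0) Z^0 = Y_\alpha(F_0)$, so nothing is needed beyond unwinding notation. For \ref{item:ordered_mult}, write the product for $F+F'$ and split it at the gap between the supports; each conjugated factor $Z^j Y_\alpha((F+F')_j) Z^{-j}$ equals $Z^j Y_\alpha(F_j) Z^{-j}$ on the left part (where $F'_j = 0$ so $Y_\alpha$ of the sum is $Y_\alpha(F_j)$, using additivity of the arguments since one summand vanishes) and similarly on the right, and the two blocks of the product are exactly $\mathcal F_\alpha(F)$ and $\mathcal F_\alpha(F')$ with no extra $Z$-powers left over because the product is reindexed cleanly. (One should be slightly careful here: the formal half-integer powers in $Z^j$ must cancel telescopically, which they do because the ordered product collapses as in \eqref{def:NLFTalt}.)

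Next, for the translation property \ref{item:translation}, substituting the shifted sequence into \eqref{def:NLFT_alpha} and reindexing $j \mapsto j+m$ produces $\prod^\curvearrowright Z^{j+m} Y_\alpha(F_j) Z^{-j-m} = Z^m \big(\prod^\curvearrowright Z^{j} Y_\alpha(F_j) Z^{-j}\big) Z^{-m} = \ad(z)^m \mathcal F_\alpha(F)$, again invoking the telescoping of the $Z$-powers so that only the outermost $Z^{\pm m}$ survive; iterating the $m=1$ case suffices. The modulation property \ref{item:modulation} is the assertion that replacing $F_j$ by $e^{ij\theta}F_j$ is the same as replacing $z$ by $e^{i\theta}z$ in the output; I would check this by observing that $Y_\alpha(e^{ij\theta}F_j)$ is conjugate to $Y_\alpha(F_j)$ by the diagonal unitary $W_j = \mathrm{diag}(e^{ij\theta/2}\Id, e^{-ij\theta/2}\Id)$ (this uses that $Y_\alpha$ commutes with such conjugations, which follows from the uniqueness in Theorem \ref{factorization-theorem} applied to the conjugated spectral-factorization problems, exactly as in property \ref{item:matrixconjugation}), and that $Z(e^{i\theta}z) = W_1 Z(z)$; then matching powers of $z$ versus $e^{i\theta}z$ in each factor and telescoping yields the claim. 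Property \ref{item:phaserotation} is the special case-like statement for a global unimodular constant $c$: $Y_\alpha(cF_j)$ relates to $Y_\alpha(F_j)$ via $\ad(c^{1/2})$-type conjugation, and collecting these gives $\ad(c) \circ \mathcal F_\alpha(F)$; the bookkeeping is the same telescoping-of-diagonal-conjugations argument.

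For the matrix-conjugation property \ref{item:matrixconjugation}, the key point is that for a diagonal unitary $U$ the mvf $\mathrm{diag}(U,U)\, Y_\alpha(F)\, \mathrm{diag}(U^{-1},U^{-1})$ has upper-right block $U F U^{-1}$, is $SU(2n)$-valued, has outer diagonal blocks with the correct normalization $\mathcal G_{\alpha(0)}, \mathcal G_{\alpha(1)}$ at $\infty$ and $0$ (here one uses that conjugating an upper- or lower-triangular matrix with positive diagonal by a diagonal unitary preserves triangularity and positivity of the diagonal — indeed $U R U^{-1}$ has the same diagonal as $R$), and hence by the uniqueness part of Theorem \ref{factorization-theorem} equals $Y_\alpha(UFU^{-1})$; applying this to each factor of \eqref{def:NLFT_alpha} and noting $\mathrm{diag}(U,U)$ commutes with every $Z^j$ gives \eqref{eq_Uconj}. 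Finally, complex conjugation \ref{item:cconjugation}: entrywise conjugation of \eqref{def:NLFT_alpha} turns each $Z^j(z)$ into $Z^j(\bar z)$ (since $\overline{z^{1/2}} = \bar z^{1/2}$ formally, and the true integer powers behave correctly) and each $Y_\alpha(F_j)$ into $\overline{Y_\alpha(F_j)} = Y_\alpha(\overline{F_j})$ — the latter again by uniqueness in Theorem \ref{factorization-theorem}, since the conjugate of an outer mvf is outer, the conjugate of a Szeg\H o function is Szeg\H o, and conjugation preserves the triangular classes $\mathcal G_a$ — while reversing the matrix product is not needed because conjugation is not an anti-automorphism; reading off the two blocks yields the identity. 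I expect the main obstacle to be purely organizational: making the telescoping of the formal half-integer $Z$-powers rigorous and uniform across properties \ref{item:ordered_mult}, \ref{item:translation}, \ref{item:modulation}, \ref{item:phaserotation}, so that one is genuinely manipulating $SU(2n)$-valued functions of $z$ and not spurious square roots; once the identity $Y_\alpha(W F W^{-1}) = \mathrm{diag}(W,W)\,Y_\alpha(F)\,\mathrm{diag}(W^{-1},W^{-1})$ for diagonal unitary $W$ is isolated as a lemma (an immediate consequence of uniqueness in Theorem \ref{factorization-theorem}), the remaining arguments are short and parallel to the $SU(2)$ case in \cite{TT03, tsai, Alexis+24}.
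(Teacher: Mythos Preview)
Your proposal is correct and follows essentially the same route as the paper: both unwind the product definition \eqref{def:NLFT_alpha} and reduce each property to the single factor identity $Y_\alpha(\text{transformed }F_j) = (\text{corresponding conjugation of }Y_\alpha(F_j))$, justified via the uniqueness in Theorem~\ref{factorization-theorem}. The paper's presentation is slightly cleaner because it works throughout with the $\ad(z)^j$ notation (so the ``telescoping of formal half-integer $Z$-powers'' you worry about never arises explicitly), and it derives \ref{item:modulation} directly from the one-line identity $\ad(e^{i\theta})^j Y_\alpha(F_j) = Y_\alpha(e^{ij\theta}F_j)$ rather than introducing the auxiliary diagonal unitaries $W_j$; otherwise the arguments coincide.
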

\begin{proof}
Properties \ref{item:dirac} and \ref{item:ordered_mult} follow directly from the definition of the NLFT. Property \ref{item:cconjugation} follows by observing that $Y_{\alpha} (\overline{F}) = \overline{Y_{\alpha} (F)}$. Property \ref{item:translation}  follows from 
\[
    \ad(z) \mathcal{F}_\alpha(F) = \ad(z) \Big[\prod_{j \in \mathbb{Z}}^{^{\curvearrowright}} \ad(z)^j Y_\alpha(F_j) \Big]
    = \prod_{j \in \mathbb{Z}}^{\curvearrowright} \ad(z)^{j+1} Y_\alpha(F_j)
    \]
    \[
    = \prod_{j \in \mathbb{Z}}^{\curvearrowright} \ad(z)^{j} Y_\alpha(F_{j-1}),
\]
using that $\ad(z)$ is a group homomorphism, and induction on $m$.
Similarly, property \ref{item:phaserotation} holds by observing for $c \in \mathbb{T}$
\begin{equation}\label{eq:modulate_F}
\ad(c) Y_\alpha(F) = Y_\alpha(cF).
\end{equation}
For property \ref{item:modulation}, we observe using \eqref{eq:modulate_F} that
\[
\mathcal{F}_{\alpha} (F) (e ^{i \theta} z) = \prod\limits_{j\in\mathbb Z} ^{\aarrow} \ad(e^{i \theta} z) ^j Y_{\alpha} (F_j) = \prod\limits_{j\in\mathbb Z} ^{\aarrow} \ad(z) ^j  \left [ \ad (e^{i \theta}) ^j Y_{\alpha} (F_j) \right ] 
\]
\[
=  \prod\limits_{j\in\mathbb Z} ^{\aarrow} \ad(z) ^j    Y_{\alpha} (e ^{i j \theta} F_j) = \mathcal{F_{\alpha}} ( (e^{i j \theta} F_j)_{j \in \Z}) \, . 
\]
Property \ref{item:matrixconjugation} is proved similarly. Indeed, with a slight abuse of notation, we may formally write
\[
\ad (U) Y_{\alpha} (F_j) = Y_{\alpha} (U F_j U^{-1}),
\]
and hence
\[
\mathcal{F}_{\alpha} (U F U^{-1}) = \prod\limits_{j \in \Z} ^{\aarrow} \ad (z) ^j Y_{\alpha} (U F_j U^{-1}) =  \prod\limits_{j \in \Z} ^{\aarrow} \ad (z) ^j \ad (U) Y_{\alpha} (F_j ) 
\]
\[= \ad (U) \Big[\prod\limits_{j \in \Z} ^{\aarrow} \ad (z) ^j  Y_{\alpha} (F_j ) \Big]= \ad (U) \mathcal{F}_{\alpha} (F) \, ,
\]
which we identify with the right side of \eqref{eq_Uconj}.
\end{proof}

The following reflection symmetry intertwines the various NLFTs.
\begin{lemma}[Reflection symmetry]
     \label{item:reflection_2} 
Let $\alpha:\Z_2 \to \Z_2$ and let $F=(F_j)_{j\in\mathbb Z}$ denote a finitely supported sequence of contractive matrices. Let $F_{-} := (F_{-j})_{j \in \Z}$ denote the reflected sequence. Then
\[
\begin{pmatrix}
    0 & \Id \\ \Id & 0 
\end{pmatrix}\mathcal{F}_{\alpha} (F) ^* \begin{pmatrix}
    0 & \Id \\ \Id & 0 
\end{pmatrix} = \mathcal{F}_{\beta} (F_{-} ^* ) \, , 
\]
where $\beta(x) := 1-\alpha(1-x)$. 
\end{lemma}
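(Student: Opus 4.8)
The plan is to reduce the identity to the level of the individual factors in the noncommutative product defining $\mathcal{F}_\alpha$, and then to assemble the pieces, tracking the normalization encoded by $\alpha$ versus $\beta$. First I would introduce the flip matrix $J := \begin{pmatrix} 0 & \Id \\ \Id & 0\end{pmatrix}$, which is its own inverse, and observe the algebraic conjugation rules $J \ad(z) X J = \ad(z^{-1})(JXJ)$ and $J X^* J = (JXJ)^*$, together with $(\ad(z) X)^* = \ad(z^{-1})(X^*)$ on $\T$ (since $Z^* = Z^{-1}$ there). Applying these to \eqref{def:NLFT_alpha} and reversing the order of the product under the adjoint, one computes
\[
J \, \mathcal{F}_\alpha(F)^* \, J = \prod_{j\in\Z}^{\curvearrowright} \ad(z^{-1})^{-j} \, J\, Y_\alpha(F_j)^* \, J = \prod_{j\in\Z}^{\curvearrowright} \ad(z)^{-j} \, J\, Y_\alpha(F_j)^* \, J,
\]
where the reversal of the product order exactly cancels the sign flip $j \mapsto -j$ coming from the adjoint; a change of index $j \mapsto -j$ then turns the exponent $-j$ into $j$ and replaces $Y_\alpha(F_j)$ by $Y_\alpha(F_{-j})$, i.e. by the factors of the reflected sequence $F_-$.

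The remaining, and really the only substantive, step is the pointwise block-matrix identity
\[
J\, Y_\alpha(F_j)^*\, J = Y_\beta\big((F_{-})_j^{\,*}\big) \qquad \text{with } \beta(x) = 1 - \alpha(1-x),
\]
which, since all factors here are constant matrices, is just a statement about $SU_\alpha(2n)$: if $Y = \begin{pmatrix} A & B \\ C & D\end{pmatrix} \in SU_\alpha(2n)$ then $J Y^* J = \begin{pmatrix} D^* & B^* \\ C^* & A^*\end{pmatrix}$, and I must check this lies in $SU_\beta(2n)$ with upper right block $B^*$. Unitarity and $\det = 1$ are immediate since $J \in SU(2n)$ and $Y \in SU(2n)$ with $n$ even in the block count... more precisely $\det J = (-1)^n$ but $J Y^* J$ has determinant $\det Y^* = \overline{\det Y} = 1$. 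The content is the normalization: the upper left block of $J Y^* J$ is $D^*$, and we need $D^*(\infty) \in \grp_{\beta(0)}$; but $D(0) \in \grp_{\alpha(1)}$, and taking adjoints swaps upper and lower triangular while keeping positive diagonal, so $D^*(0) \in \grp_{1 - \alpha(1)} = \grp_{\beta(0)}$ — here I use that for constant matrices $D^*(\infty) = D^*(0)$, and for the genuinely analytic assembly it is the value at the appropriate point (the $D^*$ coming from reflection and adjunction has its relevant normalization point swapped from $0$ to $\infty$). Similarly the lower right block $A^*$ satisfies $A^*(0) = (A(\infty))^* \in \grp_{1-\alpha(0)} = \grp_{\beta(1)}$. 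Finally, $J Y_\alpha(F_j) J$ has upper right block $C_j^* = -\big(D_j^{-*}B_j^* A_j\big)^*$... I should instead simply note that the upper right block of $\begin{pmatrix} D^* & B^* \\ C^* & A^*\end{pmatrix}$ is $B^*$, and that $B^*$ evaluated along the reflected sequence is exactly $(F_{-})_j^{\,*}$, so by the uniqueness half of Theorem \ref{factorization-theorem} applied to $SU_\beta(2n)$ we conclude $J Y_\alpha(F_j)^* J = Y_\beta((F_-)_j^*)$.

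Putting the two parts together yields
\[
J\, \mathcal{F}_\alpha(F)^*\, J = \prod_{j\in\Z}^{\curvearrowright} \ad(z)^{j}\, Y_\beta\big((F_-)_j^{\,*}\big) = \mathcal{F}_\beta(F_-^{\,*}),
\]
which is the claim. I expect the main obstacle to be bookkeeping rather than anything deep: getting the order-reversal of the noncommutative product to interact correctly with both the adjoint and the index reflection $j \mapsto -j$, and checking that the four $\grp$-normalizations transform under the adjoint-and-flip exactly by $\alpha(x) \mapsto 1 - \alpha(1-x)$ and not by some other involution. A clean way to handle the normalization check is to verify it first on a single Dirac factor $Y_\alpha(F_0)$ using Lemma \ref{lem:properties}\ref{item:dirac}, where the analytic continuation is trivial, and then invoke ordered multiplicativity (Lemma \ref{lem:properties}\ref{item:ordered_mult}), noting that reflection reverses the spatial order of the supports so the product order is reversed consistently with the adjoint.
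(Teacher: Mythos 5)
Your overall strategy matches the paper's proof: conjugate the defining product \eqref{def:NLFT_alpha} by the flip matrix $J$, push $J$ through each factor using $J^*=J$, $J^2=\Id$ and $J\ad(z)^jXJ = \ad(z)^{-j}(JXJ)$, re-index $j\mapsto -j$ so the order-reversal from the adjoint cancels, and reduce to the pointwise identity $J\,Y_\alpha(F_{-j})^*\,J = Y_\beta(F_{-j}^*)$ for $SU_\alpha(2n)$ matrices, established via uniqueness in Theorem~\ref{factorization-theorem} after checking that taking adjoints and flipping blocks sends $\grp_{\alpha(0)},\grp_{\alpha(1)}$ to $\grp_{\beta(1)},\grp_{\beta(0)}$ with $\beta(x)=1-\alpha(1-x)$. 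The final display is correct and the key normalization check is correct.

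Two bookkeeping slips, though, the first of which is an outright wrong statement that would derail the computation if applied consistently. The claimed rule $(\ad(z) X)^* = \ad(z^{-1})(X^*)$ on $\T$ is false; the correct identity is $(\ad(z) X)^* = \ad(z)(X^*)$. To see this, write $\ad(z)\begin{pmatrix} a & b \\ c & d\end{pmatrix} = \begin{pmatrix} a & zb \\ z^{-1}c & d\end{pmatrix}$, whose adjoint on $\T$ (using $\bar z = z^{-1}$) is $\begin{pmatrix} a^* & zc^* \\ z^{-1}b^* & d^*\end{pmatrix} = \ad(z)(X^*)$. Equivalently, $(ZXZ^{-1})^* = (Z^{-1})^*X^*Z^* = Z X^* Z^{-1}$. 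If one actually used $(\ad(z)X)^* = \ad(z^{-1})(X^*)$, the exponent in the final product would come out as $\ad(z)^{-j}$ rather than $\ad(z)^j$, and the result would not be $\mathcal{F}_\beta(F_-^*)$. Relatedly, the intermediate display $\prod^{\curvearrowright}_j \ad(z^{-1})^{-j}\,J Y_\alpha(F_j)^*J = \prod^{\curvearrowright}_j\ad(z)^{-j}\,JY_\alpha(F_j)^*J$ is internally inconsistent: once you have rewritten the $\curvearrowleft$ product from the adjoint as a $\curvearrowright$ product, the coefficient must simultaneously become $F_{-j}$; and $\ad(z^{-1})^{-j}=\ad(z)^{+j}$, not $\ad(z)^{-j}$. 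Since your last display is correct, these two slips compensate, but the chain of equalities as written does not actually derive it. Finally, the talk of $D^*(\infty)$ versus $D^*(0)$ is a red herring here because each $Y_\alpha(F_j)$ is a \emph{constant} matrix; it suffices to say that passing to adjoints swaps upper and lower triangularity while preserving positive diagonals, hence sends $\grp_a$ to $\grp_{1-a}$.
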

\begin{proof}
We compute 
\[
\begin{pmatrix}
    0 & \Id \\ \Id & 0 
\end{pmatrix} \mathcal{F}_{\alpha} (F)^* (z) \begin{pmatrix}
    0 & \Id \\ \Id & 0 
\end{pmatrix} =\begin{pmatrix}
    0 & \Id \\ \Id & 0 
\end{pmatrix} \left [ \prod\limits_{j \in \Z} ^{\aarrow} \ad (z) ^j Y_{\alpha} (F_j) \right ]^* \begin{pmatrix}
    0 & \Id \\ \Id & 0 
\end{pmatrix} 
\]
\[= \begin{pmatrix}
    0 & \Id \\ \Id & 0 
\end{pmatrix}  \prod\limits_{j \in \Z} ^{\aarrow} \left [\ad (z) ^{-j} Y_{\alpha} (F_{-j})  \right ] ^* \begin{pmatrix}
    0 & \Id \\ \Id & 0 
\end{pmatrix} 
\]
\[
=  \prod\limits_{j \in \Z} ^{\aarrow} \left [  \begin{pmatrix}
    0 & \Id \\ \Id & 0 
\end{pmatrix} \ad (z) ^{-j} \big[Y_{\alpha} (F_{-j}) ^*\big]  \begin{pmatrix}
    0 & \Id \\ \Id & 0 
\end{pmatrix} \right ]  
\]
\[
=  \prod\limits_{j \in \Z} ^{\aarrow}   \ad (z) ^{j} \Big[\begin{pmatrix}
    0 & \Id \\ \Id & 0 
\end{pmatrix} Y_{\alpha} (F_{-j}) ^*  \begin{pmatrix}
    0 & \Id \\ \Id & 0 
\end{pmatrix}\Big]  =  \prod\limits_{j \in \Z} ^{\aarrow}  \ad (z) ^{j} Y_{\beta} (F_{-j} ^*)   \, ,
\]
which we recognize as $\mathcal{F}_{\beta} (F_{-} ^*) (z)$.   
\end{proof}

 The NLFT defined in \eqref{def:NLFT_alpha} has the following Fourier analytic properties. 
 
\begin{lemma}\label{lem_MultilinearExpansion}
       Let $c,d\in\Z$.
       Let $F$ be a sequence supported on the interval $[c,d]$ and let $\alpha : \Z_2 \to \Z_2$. Then the $n \times n$ matrix functions $A, B,C, D$, defined by 
       \begin{equation}\label{eq:abcd}
      \begin{pmatrix}
          A & B \\ C & D 
      \end{pmatrix} := \mathcal{F}_{\alpha} (F) \, ,
       \end{equation}
       have frequency support on $[c-d, 0]$, $[c,d]$, $[-d,-c]$, $[0, d-c]$, respectively. Furthermore, the formulas 
    \begin{equation}\label{eq:constant_Fourier_A_D}
    A (\infty) = \prod\limits_{j\in\mathbb Z}^{\aarrow} E_j \, , \qquad 
    D(0) = \prod\limits_{j\in\mathbb Z}^{\aarrow} H_j  \,
    \end{equation}
    hold,   from which it follows that
    \begin{equation}
    \label{eq:constant_Fourier_A_D_2}
    A(\infty) \in \grp_{\alpha(0)} \, , \qquad D(0) \in \grp_{\alpha(1)} \, . 
    \end{equation}
    Furthermore, if $c=0$, then
    \begin{equation}\label{eq:layerstripping_1}
    B(0) D^{-1} (0) = F_0 H_0 ^{-1} \, , \qquad  C (\infty) A(\infty)^{-1} = G_0 E_0 ^{-1} \, .
    \end{equation}
\end{lemma}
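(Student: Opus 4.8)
The plan is to induct on the size of the support of $F$, using the ordered multiplicativity property (Lemma \ref{lem:properties}\ref{item:ordered_mult}) to reduce to the case of a single nonzero entry, and then to track how each factor $\ad(z)^j Y_\alpha(F_j)$ affects the frequency support of the four blocks. First I would record the base case: if $F$ is supported at a single point $j$, then by Lemma \ref{lem:properties}\ref{item:dirac} and \ref{item:translation} we have $\mathcal F_\alpha(F) = \ad(z)^j Y_\alpha(F_j)$, which in block form is
\[
\begin{pmatrix} E_j & z^j F_j \\ z^{-j} G_j & H_j \end{pmatrix},
\]
so $A = E_j$ is a constant (frequency support $\{0\}$), $B = z^j F_j$ has frequency support $\{j\}$, $C = z^{-j} G_j$ has support $\{-j\}$, and $D = H_j$ has support $\{0\}$. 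This matches the claimed intervals when $c = d = j$.

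Next I would do the inductive step. Write $F = F' + F''$ where $F'$ is supported on $[c, d-1]$ and $F''$ is the single entry at $d$ (or, if the last entry is zero, shrink $d$; one may as well assume $F_c, F_d \neq 0$ or handle the degenerate case separately). By ordered multiplicativity, $\mathcal F_\alpha(F) = \mathcal F_\alpha(F') \mathcal F_\alpha(F'')$. Writing both factors in block form and multiplying out, each block of the product is a sum of two products of blocks from the factors; using the inductive hypothesis on the frequency supports of the blocks of $\mathcal F_\alpha(F')$ together with the explicit supports of the blocks of $\mathcal F_\alpha(F'') = \ad(z)^d Y_\alpha(F_d)$, one checks that the four blocks of $\mathcal F_\alpha(F)$ have frequency support contained in $[c-d, 0]$, $[c,d]$, $[-d,-c]$, $[0,d-c]$ respectively. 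Here I would note that the frequency support of a product of two matrix functions is contained in the sumset of their supports, so this is just interval arithmetic: e.g. the $B$-block of the product picks up contributions from $A' \cdot z^d F_d$ (support in $[c-d,0] + \{d\} = [c, d]$) and from $B' \cdot H_d$ (support in $[c, d-1] \subset [c,d]$), and similarly for the others. Alternatively, rather than induction I could invoke the multilinear expansion directly, expanding the whole ordered product and reading off the frequency support of each multilinear term.

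For the formulas \eqref{eq:constant_Fourier_A_D}, I would use that the constant (zeroth) Fourier coefficient of $A$ is its value at $z = \infty$ (since $A^*$ extends analytically to $\D$, i.e. $A$ is antianalytic), and that evaluation at $z = \infty$ is a ring homomorphism applied to the ordered product $\prod^{\aarrow} \ad(z)^j Y_\alpha(F_j)$: since $\ad(z)^j Y_\alpha(F_j)$ at $z = \infty$ is the block-upper-triangular matrix $\begin{pmatrix} E_j & * \\ 0 & * \end{pmatrix}$ (the off-diagonal $z^{\pm j}$ factors vanish or blow up, but in the unitary block structure the $z^{-j} G_j$ entry vanishes at $\infty$ and the product of such upper-triangular matrices has upper-left block $\prod^{\aarrow} E_j$), we extract $A(\infty) = \prod^{\aarrow} E_j$. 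The analogous computation at $z = 0$ gives $D(0) = \prod^{\aarrow} H_j$. Since each $E_j \in \grp_{\alpha(0)}$ and each $H_j \in \grp_{\alpha(1)}$, and $\grp_{\alpha(0)}, \grp_{\alpha(1)}$ are groups (closed under products), \eqref{eq:constant_Fourier_A_D_2} follows immediately. Finally, for \eqref{eq:layerstripping_1} in the case $c = 0$: write $\mathcal F_\alpha(F) = Y_\alpha(F_0) \cdot \mathcal F_\alpha(\tilde F)$ where $\tilde F$ is supported on $[1, d]$; since by the already-established support claims the blocks $\tilde B, \tilde C$ of $\mathcal F_\alpha(\tilde F)$ have frequency support in $[1,d]$ and $[-d,-1]$ respectively, we get $\tilde B(0) = 0 = \tilde C(\infty)$, while $\tilde A(\infty) = \prod_{j \geq 1}^{\aarrow} E_j$ and $\tilde D(0) = \prod_{j\geq 1}^{\aarrow} H_j$. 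Multiplying the block matrices and evaluating at $z = 0$ gives $B(0) = F_0 \tilde D(0)$ and $D(0) = H_0 \tilde D(0)$, whence $B(0) D(0)^{-1} = F_0 H_0^{-1}$; evaluating at $z = \infty$ gives $C(\infty) = G_0 \tilde A(\infty)$ and $A(\infty) = E_0 \tilde A(\infty)$, whence $C(\infty) A(\infty)^{-1} = G_0 E_0^{-1}$.

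The main obstacle I anticipate is bookkeeping rather than conceptual: carefully justifying that evaluation at $z = \infty$ (resp. $z = 0$) of the ordered noncommutative product equals the ordered product of the evaluated factors, given that the individual factors $\ad(z)^j Y_\alpha(F_j)$ are \emph{not} individually analytic at $\infty$ (the entry $z^j F_j$ blows up for $j > 0$). The clean way around this is to not evaluate the factors separately but rather to first establish the frequency-support statement for the blocks of the full product $\mathcal F_\alpha(F)$ — which guarantees $A$ is a polynomial in $z^{-1}$ and $D$ a polynomial in $z$, so that $A(\infty)$ and $D(0)$ are genuinely their constant terms — and only then compute those constant terms by the induction/ordered-multiplicativity argument, where at each stage one multiplies a block-triangular limit matrix by the next block-triangular limit matrix. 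I would make sure the induction is set up to prove the frequency-support claim and the product formulas \eqref{eq:constant_Fourier_A_D} simultaneously, so there is no circularity.
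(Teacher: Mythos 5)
Your proof is correct and matches the paper's approach: induction on the length of the support via ordered multiplicativity, together with tracking how each block's frequency support evolves under multiplication by a single factor $\ad(z)^j Y_\alpha(F_j)$. The one minor variation is in \eqref{eq:layerstripping_1}, where you peel off the left factor $Y_\alpha(F_0)$ and read the identity directly from the vanishing of $\tilde B(0)$ and $\tilde C(\infty)$, whereas the paper instead shows inductively that appending a factor on the right leaves $B(0)D(0)^{-1}$ unchanged and then invokes the base case; your route is slightly more direct but uses the same ingredients.
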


\begin{proof}
By the translation 
property \ref{item:translation} of Lemma \ref{lem:properties},  and in particular noting that translating $F$ by $m$ to the left means multiplying $B$ and $C$ by $z^{-m}$ and $z^m$, respectively, we can assume without loss of generality that $c=0$.

We now proceed by induction on $d$. For the base case when $d = 0$, we have 
\[
\mathcal{F}_{\alpha}(F) (z) = \begin{pmatrix}
    E_0 & F_0  \\ G_0 & H_0
\end{pmatrix} \, .
\]
so all the conclusions of the lemma  hold.

Now suppose as our induction hypothesis that the claim holds for all nonnegative integers up to $d$. We show it must hold for $d +1$. By definition of the NLFT, if
\[
\begin{pmatrix}
    A_{d} & B_{d} \\ C_{d} & D_{d} 
\end{pmatrix} := \mathcal{F}_{\alpha} ( (F_j \mathbf{1}_{\{j  \leq d \}} )_{j \in \Z} ) \, ,  
\]
then
\[
\begin{pmatrix}
    A & B \\ C & D
\end{pmatrix}= \begin{pmatrix}
    A_{d} & B_{d} \\ C_{d} & D_{d} 
\end{pmatrix} \begin{pmatrix}
    E_{d+1} & z^{d+1} F_{d+1} \\ z^{-d-1} G_{d+1} & H_{d+1} 
\end{pmatrix}  
\]
\begin{equation}\label{eq:induction}
= \begin{pmatrix}
    A_{d} E_{d+1} + z^{-d-1} B_{d}  G_{d+1} & z^{d+1} A_{d}  F_{d+1} + B_{d} H_{d+1} \\ C_{d} E_{d+1} +  z^{-d-1} D_{d}  G_{d+1} & z^{d+1} C_{d}  F_{d+1} + D_{d} H_{d+1} 
\end{pmatrix} \, . 
\end{equation}
The claims about the frequency supports follow by inspection, using the induction hypothesis.

Noting that $z^{-d-1}B_{d} G_{d+1}$ has Fourier support on $[-d-1, -1]$, we have
\[
A (\infty) = A_{d} (\infty) E_{d+1} (\infty) = \left ( \prod\limits_{j=0}^{d} E_j \right ) E_{d+1}  \, , 
\]
which proves \eqref{eq:constant_Fourier_A_D} for $A$, and \eqref{eq:constant_Fourier_A_D} for $D$ follow similarly. 


To see \eqref{eq:layerstripping_1}, we again use \eqref{eq:induction} to write
\begin{equation}\label{eq:BD_inv_induct}
BD^{-1} = (z^{d+1} A_{d}  F_{d+1} + B_{d} H_{d+1}) (z^{d+1} C_{d}  F_{d+1} + D_{d} H_{d+1})^{-1} \, .
\end{equation}
By the induction hypothesis, both $z^{d+1}A_{d}$ and $z^{d+1}C_d$ vanish at $0$, giving
\[
B(0)D^{-1} (0) = ( B_d(0)H_{d+1})(D_d(0) H_{d+1})^{-1} = B_{d} (0) D_{d}(0)^{-1}\, ,
\]
which by  induction shows the first equality of \eqref{eq:layerstripping_1}. The second equality of \eqref{eq:layerstripping_1} follows similarly.
\end{proof}

\subsection{The layer stripping algorithm}\label{sec_LayerStripping}
In this section we define layer stripping, that is, the sequential recovery of nonlinear Fourier coefficients from the NLFT. This is  sometimes known as {\it peeling} in the QSP literature.

We  need some preliminary lemmas. 
Given a positive definite hermitian matrix $\weight \in \mathcal{M}$ and $a \in \Z_2$, denote by $\sqrtt_a(P)$ the unique solution of
\begin{equation}
    \label{e:sqrt}
    \sqrtt_a(P) \sqrtt_a(P)^* = P, \qquad \sqrtt_a(P) \in \grp_{a},
\end{equation}
that is, the upper or lower triangular Cholesky factor of $P$, depending on $a$. 
\begin{lemma} \label{lem:layer_strip_recover}
    Let $\alpha: \Z_2 \to \Z_2$. The map   \begin{equation}\label{eq:strip_cst_forward_2}
    \begin{pmatrix}
        E & F \\ G & H
    \end{pmatrix} \mapsto F H^{-1}
    \end{equation}
is a homeomorphism from $SU_{\alpha} (2n)$ onto $\mathcal{M}$, with inverse given by 
\begin{equation}\label{eq:inverse_strip}
S_{\alpha} (K):= \begin{pmatrix}
  \sqrtt_{\alpha(0)}  [ (\Id + K K^*)^{-1}]  & K \sqrtt_{\alpha(1)}   [ (\Id + K^* K)^{-1} ] \\
  - K^* \sqrtt_{\alpha(0)} [ (\Id + K K^*)^{-1}]& \sqrtt_{\alpha(1)}   [ (\Id + K^* K)^{-1} ]  
\end{pmatrix} \, .
\end{equation}
The map $S_{\alpha}$ is Lipschitz continuous on the set of matrices with singular values bounded by $\epsilon^{-1}$,  with Lipschitz constant only depending on $\epsilon$ and $n$.
\end{lemma}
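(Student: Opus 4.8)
The plan is to verify that the two maps \eqref{eq:strip_cst_forward_2} and $S_\alpha$ in \eqref{eq:inverse_strip} are mutually inverse, and then establish the Lipschitz estimate. First I would check that $S_\alpha(K) \in SU_\alpha(2n)$ for every $K \in \mathcal{M}$. Since $K$ is an arbitrary (not necessarily contractive) matrix, $\Id + KK^*$ and $\Id + K^*K$ are positive definite, so the Cholesky factors $\sqrtt_{\alpha(0)}[(\Id+KK^*)^{-1}]$ and $\sqrtt_{\alpha(1)}[(\Id+K^*K)^{-1}]$ exist and lie in $\grp_{\alpha(0)}$, $\grp_{\alpha(1)}$ respectively; this gives the required normalization of the diagonal blocks. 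To see that $S_\alpha(K)$ is unitary, observe that writing $E = \sqrtt_{\alpha(0)}[(\Id+KK^*)^{-1}]$ and $H = \sqrtt_{\alpha(1)}[(\Id+K^*K)^{-1}]$, we have $EE^* = (\Id+KK^*)^{-1}$ and $HH^* = (\Id+K^*K)^{-1}$, and then a direct block computation of $S_\alpha(K) S_\alpha(K)^*$ using the identity $K(\Id + K^*K)^{-1} = (\Id + KK^*)^{-1}K$ shows all blocks come out right; alternatively, I can simply note that $S_\alpha(K) = Y_{\alpha}(B)$ for the contractive matrix $B = K\sqrtt_{\alpha(1)}[(\Id+K^*K)^{-1}]$ — indeed the upper right block is exactly this $B$, and one checks $\Id - BB^* = EE^*$ via the same push-through identity — so unitarity and $\det = 1$ follow for free from Theorem \ref{factorization-theorem}, and this also pins down $E, C, H$ uniquely, confirming $S_\alpha(K)$ is the correct parametrizing matrix.

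Next I would show the two maps compose to the identity in both directions. Given $K \in \mathcal M$, the upper right and lower right blocks of $S_\alpha(K)$ are $B = K\sqrtt_{\alpha(1)}[(\Id+K^*K)^{-1}]$ and $H = \sqrtt_{\alpha(1)}[(\Id+K^*K)^{-1}]$, so $BH^{-1} = K$, giving one direction. Conversely, starting from $\begin{pmatrix} E & F \\ G & H\end{pmatrix} \in SU_\alpha(2n)$ with $K := FH^{-1}$, I need $S_\alpha(K)$ to reproduce the original matrix. By Theorem \ref{factorization-theorem} applied to constant matrix functions, an element of $SU_\alpha(2n)$ is uniquely determined by its upper right block, so it suffices to check that the upper right block of $S_\alpha(FH^{-1})$ equals $F$. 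From the unitarity relations for $\begin{pmatrix} E & F \\ G & H\end{pmatrix}$ one gets $HH^* + GG^* = \Id$ and $EF^* + GH^* = 0$, hence $G = -EF^*H^{-*}$ and then $HH^* = \Id - GG^* = \Id - EF^*H^{-*}H^{-1}FE^*$; combining with $EE^* = \Id - FF^*$ (from $EE^* + FF^* = \Id$) one solves for $H$ in terms of $K = FH^{-1}$, obtaining $HH^* = (\Id + K^*K)^{-1}$, so $H = \sqrtt_{\alpha(1)}[(\Id+K^*K)^{-1}]$ by uniqueness of the Cholesky factor with the given triangular normalization, and then $F = KH$ matches the upper right block of $S_\alpha(K)$. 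Continuity of both maps is then clear since all operations (matrix products, inverses, Cholesky factorization via Lemma \ref{lem_cholesky}) are continuous, so this establishes the homeomorphism claim.

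For the Lipschitz bound, I restrict to $K$ with singular values bounded by $\epsilon^{-1}$. On this set, $\Id + KK^*$ and $\Id + K^*K$ have eigenvalues in $[1, 1 + \epsilon^{-2}]$, so their inverses have eigenvalues in $[(1+\epsilon^{-2})^{-1}, 1] \subset [\delta, \delta^{-1}]$ for $\delta := (1+\epsilon^{-2})^{-1}$. The maps $K \mapsto KK^*$, $K \mapsto K^*K$ are Lipschitz on this bounded set, the map $P \mapsto P^{-1}$ is Lipschitz on matrices with eigenvalues bounded below and above, and Cholesky factorization $\sqrtt_a$ is Lipschitz on positive definite matrices with eigenvalues in a fixed compact interval by Lemma \ref{lem_cholesky}. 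Composing these Lipschitz maps, together with the fact that $K \mapsto K\sqrtt_{\alpha(1)}[\cdots]$ is a product of a bounded Lipschitz factor with another, shows each of the four blocks of $S_\alpha(K)$ depends Lipschitz-continuously on $K$, with constant depending only on $\epsilon$ and $n$. The main obstacle is bookkeeping: there is nothing deep, but one must be careful that the intermediate quantities genuinely stay in the compact sets where Lemma \ref{lem_cholesky} applies (in particular that the inverses $\weight^{-1}$ have their eigenvalues uniformly bounded above, which uses the \emph{lower} bound $\Id + KK^* \ge \Id$ that holds regardless of the size of $K$), and to organize the algebraic identities — chiefly the push-through identity $K(\Id+K^*K)^{-1} = (\Id+KK^*)^{-1}K$ — so that the verification of unitarity and of the inverse relations is transparent rather than a brute-force block multiplication.
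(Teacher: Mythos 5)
Your overall strategy matches the paper's: verify that $S_\alpha$ lands in $SU_\alpha(2n)$, establish the two inverse relations via uniqueness of the Cholesky factor plus Theorem \ref{factorization-theorem}, and reduce the Lipschitz bound to Lemma \ref{lem_cholesky}. The observation that $S_\alpha(K) = Y_\alpha(B)$ with $B := K\,\sqrtt_{\alpha(1)}[(\Id + K^*K)^{-1}]$ is a pleasant shortcut around the paper's direct block computation of $S_\alpha(K)^*S_\alpha(K)$ and its appeal to Lemma \ref{lem:det_1_Y}; just note explicitly that this $B$ is contractive (indeed $BB^* = K(\Id+K^*K)^{-1}K^* = \Id - (\Id + KK^*)^{-1}$ has spectrum in $[0,1)$, so $\|B\|_\infty < 1$), since Theorem \ref{factorization-theorem} is only available for such $B$.

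There is, however, a genuine error in the left-inverse verification. The relation $EF^* + GH^* = 0$ that you invoke is not a consequence of unitarity when $n > 1$. The orthogonality relations from $M^*M = \Id$ and $MM^* = \Id$ are $E^*F + G^*H = 0$ and $EG^* + FH^* = 0$; your version happens to coincide with these only in the scalar case $n=1$ (where $E = H$ and everything commutes), which may be the source of the slip. Consequently $G = -EF^*H^{-*}$ and the ensuing formula $HH^* = \Id - EF^*H^{-*}H^{-1}FE^*$ are both wrong, and "solving for $H$" from that expression would not produce $HH^* = (\Id + K^*K)^{-1}$. The repair is to bypass $G$ entirely and use the lower-right block of $M^*M = \Id$, namely $H^*H + F^*F = \Id$, which gives directly
\[
\Id + K^*K \;=\; \Id + H^{-*}F^*F H^{-1} \;=\; H^{-*}\bigl(H^*H + F^*F\bigr)H^{-1} \;=\; (HH^*)^{-1}.
\]
This is the paper's one-line computation, and with it in place the rest of your argument (Cholesky uniqueness giving $H = H'$, hence $F = KH$ matching, hence all of $S_\alpha(K)$ matching by Theorem \ref{factorization-theorem}) goes through as you describe. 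Your Lipschitz argument is correct and aligns with the paper's.
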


\begin{proof}
By definition of $SU_{\alpha} (2n)$,  the matrix $H$ is triangular with positive entries along the diagonal, and therefore $H$ is invertible. Thus \eqref{eq:strip_cst_forward_2} is a well-defined map into $\mathcal{M}$.

We now show that $S_{\alpha}$ defines a map into $SU_{\alpha}(2n)$. Given $K \in  \mathcal{M}$, we label
\begin{equation}\label{eq:Talpha}
\begin{pmatrix}
    E' & F' \\ G' & H'
\end{pmatrix} := S_{\alpha} (K) \, ,
\end{equation}
and so we may then write
\begin{equation}\label{eq:B_C_strip_inverse}
F' = K H \, , \qquad  G' = -K^* E \, .
\end{equation}
We first check that $S_{\alpha} (K)$ is unitary. Using \eqref{eq:B_C_strip_inverse}, we compute
\begin{equation}\label{eq:S_unitary_check}
S_{\alpha} (K) ^* S_{\alpha}(K)  = \begin{pmatrix}
    (E')^* (\Id + K K^*) E' & 0 \\ 0 & (H')^* (\Id +  K^* K) H' \\
\end{pmatrix} = \Id \, ,
\end{equation}
where in the last step we used \eqref{e:sqrt}.
Thus $S_{\alpha} (K)$ is a.e. $U(2n)$-valued. Again by \eqref{e:sqrt}, we have that  $E' \in \grp_{\alpha(0)}$ and $H' \in \grp_{\alpha(1)}$, and Lemma \ref{lem:det_1_Y} shows that $S_{\alpha}(K)$ must then be a.e.\ $SU(2n)$-valued.

It is immediate that $S_{\alpha}$ is the right inverse of \eqref{eq:strip_cst_forward_2}, and so we now show that $S_{\alpha}$ is the left inverse of \eqref{eq:strip_cst_forward_2}. Given 
\begin{equation}
    \label{e:unitarity_assu}
\begin{pmatrix}
    E & F \\ G & H 
\end{pmatrix} \in SU_{\alpha} (2n) \, ,
\end{equation}
define
\begin{equation}\label{eq:defn_K}
K := F H^{-1} 
\end{equation}
and use the labeling of $S_{\alpha} (K)$ as in \eqref{eq:Talpha}. 
We have 
\[
    H' (H')^* = (\Id + K^* K)^{-1} = (H^{-*}(H^* H + F^* F) H^{-1})^{-1} = H H^*,
\]
where we used the unitarity assumption \eqref{e:unitarity_assu}. By uniqueness of the Cholesky factorization, $H = H'$. Since $F' = KH'$, this immediately gives $F = F'$, and as a simple corollary of Theorem \ref{factorization-theorem}, the $F$ block uniquely determines any $SU_\alpha$ matrix. 
Thus, $S_{\alpha}(F H^{-1})$ must equal \eqref{e:unitarity_assu}, i.e., $S_{\alpha}$ is the left inverse of \eqref{eq:strip_cst_forward_2}.

The continuity of the map \eqref{eq:strip_cst_forward_2} is immediate, whereas the continuity of $S_{\alpha}$ follows from Lemma \ref{lem:cty_spec_factors}. Thus both maps are homeomorphisms.

We now show $S_{\alpha}$ is Lipschitz continuous on the set of matrices $K$ with singular values at most $\epsilon^{-1}$. Because 
\[
\|K\|_{\infty} = \|K^*\|_{\infty} \leq \epsilon^{-1} \, ,
\]
it suffices to show Lipschitz continuity of the map
\begin{equation}\label{eq:K_to_OQ}
K \mapsto \sqrtt_{j} ( (\Id + K^* K)^{-1})
\end{equation}
for $j\in\{0, 1\}$. The singular values of $K$ belong to $[0, \epsilon^{-1}]$, hence  the singular values of $(\Id + K^* K)^{-1}$ lie in $[(1+\epsilon^2)^{-1},1]$. Lipschitz continuity of the map \eqref{eq:K_to_OQ} now follows from Lemma \ref{lem_cholesky}.
\end{proof}

The following is an immediate consequence of Lemma \ref{lem:layer_strip_recover} and Identity \eqref{eq:layerstripping_1}. 
\begin{lemma}
    \label{layerstripping}
Let $\alpha :\Z_2 \to \Z_2$. Suppose $F$ is a sequence of $n \times n$ contractive matrices with finite support within $[0, \infty)$, and let 
\[
\begin{pmatrix}
    A & B \\ C & D
\end{pmatrix} := \mathcal{F}_{\alpha} (F) \, .
\]
Then 
\begin{equation}\label{eq:stripping_def}
\begin{pmatrix}
    E_0 & F_0 \\ G_0 & H_0
\end{pmatrix}  = S_{\alpha} (B(0) D(0)^{-1}) \, .
\end{equation}
\end{lemma}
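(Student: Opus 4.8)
The plan is to obtain \eqref{eq:stripping_def} by directly composing two results already at our disposal: the layer-stripping identity \eqref{eq:layerstripping_1} from Lemma \ref{lem_MultilinearExpansion}, which computes $B(0)D(0)^{-1}$ in terms of the zeroth nonlinear Fourier coefficient, and Lemma \ref{lem:layer_strip_recover}, according to which $S_\alpha$ is the two-sided inverse of the block map $\begin{pmatrix} E & F \\ G & H \end{pmatrix}\mapsto FH^{-1}$ on $SU_\alpha(2n)$.

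First I would note that, since $F$ has finite support contained in $[0,\infty)$, it is supported on an interval $[0,d]$ with $d\ge 0$ (taking $d=0$ when $F\equiv 0$). In particular the left endpoint of this support interval may be taken to be $c=0$, so the hypothesis of the final assertion of Lemma \ref{lem_MultilinearExpansion} holds and \eqref{eq:layerstripping_1} yields
\[
B(0)\,D(0)^{-1}=F_0 H_0^{-1},
\]
where, per the notational convention \eqref{eq:notation_conv}, $\begin{pmatrix} E_0 & F_0 \\ G_0 & H_0 \end{pmatrix}=Y_\alpha(F_0)$; note $H_0$ is invertible since $H_0\in\grp_{\alpha(1)}$ has positive diagonal entries, and in the degenerate case $F_0=0$ one has $H_0=\Id$, so that both sides of this identity simply vanish (consistently with the fact that then $B$ has frequency support in $[1,d]$).

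Next, Theorem \ref{factorization-theorem} applied to constant matrix functions gives $Y_\alpha(F_0)\in SU_\alpha(2n)$, so it lies in the domain of the homeomorphism of Lemma \ref{lem:layer_strip_recover}, which sends it to $F_0H_0^{-1}$. Applying the inverse map $S_\alpha$ and using the previous step,
\[
S_\alpha\bigl(B(0)D(0)^{-1}\bigr)=S_\alpha\bigl(F_0H_0^{-1}\bigr)=Y_\alpha(F_0)=\begin{pmatrix} E_0 & F_0 \\ G_0 & H_0 \end{pmatrix},
\]
which is exactly \eqref{eq:stripping_def}.

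Since every ingredient is already established, I expect no genuine obstacle; the proof is a one-line composition. The only points that need (minor) attention are verifying that the normalization $c=0$ is legitimate — so that \eqref{eq:layerstripping_1} may be invoked, including in the degenerate cases $F_0=0$ and $F\equiv 0$ — and recalling that $Y_\alpha(F_0)$ genuinely lies in $SU_\alpha(2n)$, the set on which $S_\alpha$ is the inverse of the block map $\begin{pmatrix} E & F \\ G & H \end{pmatrix}\mapsto FH^{-1}$.
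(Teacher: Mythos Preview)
Your proof is correct and follows essentially the same approach as the paper, which simply states that the lemma is an immediate consequence of Lemma \ref{lem:layer_strip_recover} and Identity \eqref{eq:layerstripping_1}. You have just spelled out the composition in more detail, including the edge cases.
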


We will now define the Layer Stripping Algorithm, which, given the NLFT $M:\T \to U(2n)$  of some finitely supported sequence $F$  on $[0, \infty)$, returns $F$. The recovery of $F$ for general $M$ reduces to this case by shifting (Lemma \ref{lem:properties} \ref{item:translation}). We continue using the block labeling convention of  \eqref{eq:convention}.

\begin{algorithm2e}[H]
\vspace*{1mm}
\KwIn{An mvf $M: \T \to U(2n)$ satisfying the constraints 
\begin{equation} \label{eq:Mj_space}
M  \in \begin{pmatrix}
    H^2 (\D^*)  & H^2 (\D) \\ H^2 (\D^*) & H^2 (\D)
\end{pmatrix} \, , \qquad A^*(0) \in \mathcal{G}_{\alpha(0)} \, , \qquad D(0) \in \mathcal{G}_{\alpha(1)} \, .
\end{equation}}
\KwOut{A coefficient sequence $(F_j)_{j \in \Z}$ supported on $[0, \infty)$.}
\vspace*{2mm}

\begin{enumerate}\itemsep1pt
\item Set $M_0 := M$ and $F_j = 0$ for all $j <0$.\\[1mm]
\item \label{item:loop_alg} For $j \geq 0$ repeat the following:\\[1mm]
Suppose we are given $M_j:\T \to U(2n)$ satisfying  \eqref{eq:Mj_space}. 
\\[1mm] 
Set $F_j$ to be the upper right block of $S_{\alpha} (B_j(0) D_j(0)^{-1})$.\\[1mm]
Set
\begin{equation}\label{eq:Mj_to_next}
M_{j+1} (z) := \ad(z)^{-1}[ S_{\alpha} (B_j (0) D_j (0)^{-1})^{-1} M_j (z)] \, .
\end{equation}
\item Return the sequence $(F_j)_{j \in \mathbb{Z}}$.
\end{enumerate}
\caption{Layer Stripping Algorithm \label{algo:layer_stripping}
}
\end{algorithm2e}

The following lemma shows that the iteration of Step \ref{item:loop_alg} in the Layer Stripping Algorithm \ref{algo:layer_stripping} is well-defined, and eventually stabilizes if the input $M$ is a polynomial. 
\begin{lemma}
    \label{lem:layer_stripping_finite}
   Let $\alpha :\Z_2 \to \Z_2$. If $M_j$ is a map $\T \to U(2n)$ satisfying  \eqref{eq:Mj_space}, then so is $\nlf_{j+1}$, as defined in \eqref{eq:Mj_to_next}.
 
 If, additionally, $M_{j}$ is a Laurent polynomial of degree at most $d$ for some $d\geq 1$, then $M_{j+1}$ is a Laurent polynomial of degree at most $d-1$. If  $\nlf_j$ is constant unitary, then $\nlf_{j+1} = \Id$. 
\end{lemma}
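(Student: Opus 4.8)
The plan is to verify the three assertions of Lemma \ref{lem:layer_stripping_finite} in order, tracking how the block structure and the Fourier supports behave under the update \eqref{eq:Mj_to_next}. First I would show that $M_{j+1}$ again satisfies \eqref{eq:Mj_space}. Write $K_j := B_j(0) D_j(0)^{-1}$ and $S := S_\alpha(K_j) \in SU_\alpha(2n)$, so that $M_{j+1}(z) = \ad(z)^{-1}[S^{-1} M_j(z)]$. Since $S \in SU(2n)$ is constant and $M_j$ is a.e.\ $U(2n)$-valued, $S^{-1}M_j$ is a.e.\ $U(2n)$-valued, and conjugating by $Z$ preserves unitarity, so $M_{j+1}$ is a.e.\ $U(2n)$-valued. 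The key point is the analyticity constraint: I claim $S^{-1}M_j$ has the block structure $\begin{pmatrix} H^2(\D^*) & zH^2(\D) \\ H^2(\D^*) & zH^2(\D)\end{pmatrix}$ after multiplying the right-hand columns by $z$ — equivalently, that the right-hand blocks of $S^{-1}M_j$ vanish at $z=0$. Indeed, by Lemma \ref{lem:layer_strip_recover} and the identity \eqref{eq:layerstripping_1}-style computation, $S^{-1}$ is precisely $Y_\alpha(F_0)^{-1}$ with $F_0$ the recovered coefficient, and $S$ was chosen so that the upper-right block of $S^{-1}M_j$ at $z=0$ is $-K_j + K_j = 0$ (using unitarity of $S$ to write its inverse, and that the bottom-right block of $S^{-1}$ times $D_j(0)$ minus top-left block times $B_j(0)$ cancels). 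Once the right-hand blocks of $S^{-1}M_j$ vanish at the origin, they lie in $z H^2(\D)$, and applying $\ad(z)^{-1}$ (which multiplies the upper-right and lower-right blocks by $z^{-1}$ and the upper-left/lower-left by $z$) lands $M_{j+1}$ back in the space \eqref{eq:Mj_space}. The normalization conditions $A_{j+1}^*(0) \in \grp_{\alpha(0)}$, $D_{j+1}(0) \in \grp_{\alpha(1)}$ follow from \eqref{eq:constant_Fourier_A_D}-type bookkeeping: $D_{j+1}(0) = H_0^{-1} D_j(0)$ where $H_0$ is the bottom-right block of $S$, and $\grp_{\alpha(1)}$ is a group, so the product stays in $\grp_{\alpha(1)}$; similarly for $A_{j+1}$.

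Second, for the degree bound: if $M_j$ is a Laurent polynomial of degree at most $d$ (meaning each block's Fourier support lies in $[-d,d]$, consistent with \eqref{eq:Mj_space}), then $S^{-1}M_j$ is again such a Laurent polynomial of degree at most $d$, and we have just argued its right-hand blocks vanish at $z=0$, hence have Fourier support in $[1,d]$. Meanwhile the left-hand blocks of $S^{-1}M_j$, lying in $H^2(\D^*)$, have Fourier support in $[-d,0]$. Applying $\ad(z)^{-1}$ shifts the left blocks' supports to $[-d+1, 1]$ and the right blocks' supports to $[0, d-1]$; intersecting with the a priori constraints $H^2(\D^*)$ (supports $\le 0$) for the left blocks and $H^2(\D)$ (supports $\ge 0$) for the right blocks — which $M_{j+1}$ must satisfy by the first part — forces every block of $M_{j+1}$ to have Fourier support in $[-(d-1), d-1]$. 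So $M_{j+1}$ has degree at most $d-1$.

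Third, if $M_j$ is constant unitary, then since it satisfies \eqref{eq:Mj_space} with its left blocks in $H^2(\D^*)$ and right blocks in $H^2(\D)$, being constant it is in fact an element of $U(2n)$ with $A_j^*(0) \in \grp_{\alpha(0)}$ and $D_j(0) \in \grp_{\alpha(1)}$, i.e.\ $M_j \in SU_\alpha(2n)$ (using Lemma \ref{lem:det_1_Y}). Then $K_j = B_j(0) D_j(0)^{-1} = B_j D_j^{-1}$ is the layer-stripping datum of the constant matrix $M_j$, so by Lemma \ref{lem:layer_strip_recover} we have $S_\alpha(K_j) = M_j$; hence $S^{-1}M_j = \Id$ and $M_{j+1}(z) = \ad(z)^{-1}[\Id] = \Id$.

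I expect the main obstacle to be the bookkeeping in the first part: carefully justifying that the right-hand blocks of $S_\alpha(B_j(0)D_j(0)^{-1})^{-1} M_j$ vanish at $z = 0$. This is where the specific form of the Layer Stripping update interacts with unitarity — one needs to write $S^{-1}$ via $S^* $ (since $S$ is unitary), expand the top-right block of $S^* M_j$ evaluated at $0$, and see that the defining choice of $K_j$ makes it cancel. Equivalently, one can invoke Lemma \ref{layerstripping}: if $M_j$ were the NLFT of a finitely supported sequence starting at $0$, then $S_\alpha(B_j(0)D_j(0)^{-1}) = Y_\alpha(F_0)$ equals the $j=0$ factor $\mathcal{F}_\alpha((F_j\mathbf 1_{\{j=0\}}))$, and stripping it off via \eqref{eq:Mj_to_next} leaves the NLFT of the shifted tail, which manifestly satisfies \eqref{eq:Mj_space}; but since the present lemma is meant to apply to general $M$ not known a priori to be an NLFT, the argument must be phrased purely in terms of the hypotheses \eqref{eq:Mj_space} and the algebraic identity characterizing $S_\alpha$, which is the delicate point to get right. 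The other two parts are then short consequences of careful Fourier-support accounting and the uniqueness in Lemma \ref{lem:layer_strip_recover}.
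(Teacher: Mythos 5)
Your outline for the constant case is correct and matches the paper, but the first two parts of the proposal hinge on a misstatement of what $\ad(z)^{-1}$ does, and that error propagates into genuine gaps.

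\textbf{The action of $\ad(z)^{-1}$.} Since $\ad(z)^{-1}X = Z^{-1}XZ$ is a \emph{conjugation}, it leaves the diagonal blocks untouched and multiplies only the off-diagonal blocks: the upper-right block by $z^{-1}$ and the lower-left block by $z$. Your parenthetical ``multiplies the upper-right and lower-right blocks by $z^{-1}$ and the upper-left/lower-left by $z$'' describes a one-sided multiplication, not a conjugation, and is wrong.

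\textbf{Which blocks must vanish.} You claim the ``right-hand blocks'' of $S^{-1}M_j$ vanish at $z=0$. This is false for the lower-right block $D'$: one has $D'(0) = H_j^{-1}D_j(0) \in \grp_{\alpha(1)}$, which is invertible, not zero. What the argument actually requires, given the correct action of $\ad(z)^{-1}$, is that the \emph{upper-right} block $B'$ vanish at $0$ and the \emph{lower-left} block $C'$ vanish at $\infty$. The first is the cancellation you anticipate, and it does follow from the defining property of $S_\alpha$. The second is where the real content lies, and your proposal does not engage with it: showing $C'(\infty)=0$ reduces to the identity $C_j(\infty)A_j(\infty)^{-1} = -(B_j(0)D_j(0)^{-1})^*$, which is not a consequence of the choice of $S$ alone but of the \emph{full unitarity of $M_j$ on the circle}. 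One needs to observe that $B_j^*A_j + D_j^*C_j = 0$ on $\T$, note that this expression is analytic in $\D^*$, and evaluate it at $\infty$. This step cannot be phrased purely in terms of the algebra at $z=0$, which is exactly the delicacy you flagged but did not resolve.

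\textbf{The degree bound for the diagonal blocks.} Because $\ad(z)^{-1}$ does not move the frequency supports of $A'$ and $D'$, one must separately argue that these have degree at most $d-1$. Your bookkeeping relied essentially on the incorrect claim that $\ad(z)^{-1}$ shifts the diagonal blocks' supports; once that is corrected, the argument you give only yields support $[-d,0]$ for $A_{j+1}$ and $[0,d]$ for $D_{j+1}$, not the claimed degree reduction. The missing ingredient is again unitarity on the circle: from $(B')^*(z)B'(z) + (D')^*(z)D'(z) = \Id$ on $\T$, inspecting the coefficient of $z^d$ (and using that $D'(0)$ is invertible) forces the top-degree coefficient $D'_d$ of $D'$ to vanish, and a parallel argument using $(A')^*A' + (C')^*C' = \Id$ handles $A'$. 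Your proposal does not contain this step, and the route you outline cannot produce it.
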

\begin{proof}
We begin with the first statement of the lemma. We label the blocks of the mvfs $M_j$ and $M_{j+1}$ as in \eqref{eq:convention}.  Taking $E_j, F_j, G_j, H_j$ to be the blocks of $S_{\alpha} (B_j (0) D_j (0)^{-1})$ as in \eqref{eq:stripping_def}, we define  
\begin{equation}\label{eq_DefABCD''''}
\begin{pmatrix}    
A ' (z) &  B'(z) \\ C '(z)  & D ' (z)
\end{pmatrix} := \begin{pmatrix}
    E_j ^* & G_j ^* \\ F_j ^* & H_j ^*
\end{pmatrix} \begin{pmatrix}    
A_j (z) & B_j(z) \\ C_j(z) & D_j(z)
\end{pmatrix} 
\end{equation}
\[= \begin{pmatrix}    
E_j ^* A_j (z) + G_j ^* C_j(z) & E_j ^* B_j(z) + G_j ^* D_j(z) \\ F_j ^* A_j(z) + H_j ^* C_j(z) & F_j ^* B_j(z) + H_j ^* D_j(z)
\end{pmatrix} \, . 
\]
Thus $A'$, $C'$ have frequency support in $(-\infty,0]$, while $B'$, $D'$ have frequency support in $[0, \infty)$.
We claim that 
\begin{equation}\label{eq:vanishing_BC}
B'(0) = 0 \, , \qquad C'(\infty) = 0 \, .
\end{equation}
To see the first part of the claim,
\[
B'(0)=E_j ^* B_j(0) + G_j ^* D_j(0) = E_j ^* ( B_j(0) D_j(0)^{-1} +  E_j  ^{-*} G_j ^* ) D_j(0) \, , 
\]
which by unitarity of the matrix \eqref{eq:stripping_def}, and then the definition of $S_{\alpha}$, equals
\[
 E_j ^* ( B_j(0) D_j(0)^{-1} - F_j H_j  ^{-1} ) D_j(0) = 0 \, .
\] Similarly, 
\[
C'(\infty)=H_j ^* [  H_j ^{-*} F_j ^* + C_j(\infty) A_j(\infty)^{-1} ] A_j(\infty) \,
\]
which will then vanish if we show 
\begin{equation}\label{eq:AC_BD_cst_Fourier}
C_j(\infty) A_j(\infty)^{-1} = -(B_j(0) D_j(0)^{-1})^* \, .
\end{equation}
But because the matrix \eqref{eq:convention} is unitary,  

\[
 B_j^*(z)A_j(z)+D_j^*(z)C_j(z)=0 
\]
for all $z \in \T$. Observe the left side is analytic in $\D^*$. Evaluating at $z = \infty$ yields
\eqref{eq:AC_BD_cst_Fourier}.
Thus, $C' (\infty)=0$, completing the proof of Claim \eqref{eq:vanishing_BC}. It follows that $B'$ and $C'$ have frequency support in $[1,\infty)$ and $(-\infty, 1]$, respectively.

Next we check that $A'(\infty) \in \grp_{\alpha(0)}$ and $D'(0) \in \grp_{\alpha(1)}$.
Multiplying both sides of \eqref{eq_DefABCD''''} by the inverse of the constant unitary matrix and then evaluating the bottom right entries at $z=0$ reads $G_jB'(0)+H_jD'(0)=D_j(0)$.
We have already checked that $B'(0)=0$, and so $D'(0)=H_j^{-1}D_j(0)$ belongs to  $\grp_{\alpha(1)}$ because both $H_j$ and $D_j(0)$ do.
Reasoning similarly for the top left entry of \eqref{eq_DefABCD''''} and using that $C'(\infty)=0$, we also conclude that $A'(\infty)=E_j^{-1}A_j(\infty) \in \grp_{\alpha(0)}$.

Noting that 
\begin{equation}\label{eq:Mj+1_to_A'}
\nlf_{j+1}= \ad(z^{-1}) \begin{pmatrix}
    A' &  B' \\  C' &  D'
\end{pmatrix} \, ,
\end{equation}
we now see that  \eqref{eq:Mj_space} holds for $\nlf_{j+1}$. 
    Because $\nlf_j: \T \to U(2n)$, then $\nlf_{j+1}: \T \to U(2n)$.

We now check the polynomial statement. Assume that $\nlf_{j}$ is also a Laurent polynomial of degree at most $d$, for some $d \geq 1$. From \eqref{eq_DefABCD''''}, and the fact that $B'$ and $(C')^*$ have vanishing means, it follows that both have frequency support on $[1, d]$. Thus \eqref{eq:Mj+1_to_A'} implies $B$ and $C^*$ have frequency support on $[0, d-1]$. We now check that the degree of the Laurent polynomials $A',  D'$ is at most $d-1$. 
 Since the right side of \eqref{eq_DefABCD''''} is the product of two $U(2n)$ matrices, it follows that 
\[\begin{pmatrix}    
A' (z) & B'(z) \\ C'(z) & D' (z)
\end{pmatrix} \in U(2n)\text{ if }z\in\mathbb T,
\]
and so 
\[(B')^*(z)B'(z)+(D')^*(z)D'(z)=\Id \text{ if }z\in\mathbb T.\]
Writing $B'(z)=\sum_{\ell=0}^{d-1} B'_\ell z^{\ell}$ and 
$D'(z)=\sum_{\ell=0}^{d} D'_{\ell} z^{\ell}$, we see that for $z\in\mathbb T$
\[\sum_{k=0}^{d-1}\sum_{\ell=0}^{d-1} (B_{\ell} ')^* B_k' z^{k-\ell}+\sum_{k=0}^{d}\sum_{\ell=0}^{d} (D_{\ell}')^* D_k' z^{k-\ell}=\Id.\]
The left side coefficient of $z^d$ arises from choosing $(\ell,k)=(0,d)$ on the second sum, and by equating it to the right side we get $(D_0')^* D_d '=0$. Since $D_0'=D'(0)$ is invertible, it follows that $D_d'=0$ and so $D'$ has degree at most $d-1$.
Analogous reasoning via the identity
\[(A')^*(z)A'(z)+(C')^*(z)C'(z)=\Id \text{ if }z\in\mathbb T\]
reveals that the degree of $A'$ is also at most $d-1$. This proves the polynomial claim.

We are left with the statement for when $\nlf_j$ is constant. In this case, Lemma \ref{lem:det_1_Y} reveals that $\nlf_j$ is a constant $SU_{\alpha}(2n)$-valued mvf. Thus Lemma \ref{lem:layer_strip_recover} yields  
\[
\begin{pmatrix}
    E_j & F_j \\ G_j & H_j
\end{pmatrix} = S_{\alpha} \left (B_j (0) D_j  ^{-1}(0) \right ) = \nlf_j
\]
and so \eqref{eq:Mj_to_next} yields $\nlf_{j+1} = \Id$.
\end{proof}

\subsection{The image of finitely supported sequences}
 We continue the labeling convention \eqref{eq:convention} in what follows.
\begin{lemma} \label{lem:bijection_finite_supp}
Let $\alpha:\mathbb Z_2\to \mathbb Z_2$.
    Then $\mathcal{F}_{\alpha}$ is a bijection from the set of  finitely supported sequences  of contractive $n \times n$ matrices onto the set of Laurent polynomials 
    \begin{equation}\label{eq:unitary_valued}
    \nlf: \T \to U(2n)
    \end{equation}
    satisfying
   \[
   A(\infty)\in\mathcal G_{\alpha(0)} \, , \qquad  D(0)\in\mathcal G_{\alpha(1)} \, .
   \] 
  One may replace $U(2n)$ by $SU(2n)$ in \eqref{eq:unitary_valued}.
  Furthermore, if $B$ and $C^*$ are analytic, then $\mathcal{F}_{\alpha}$ has inverse given by the Layer Stripping Algorithm \ref{algo:layer_stripping}.
\end{lemma}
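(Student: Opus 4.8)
The plan is to deduce all assertions by combining the algebraic identities already in hand---ordered multiplicativity and translation invariance from Lemma~\ref{lem:properties}, and the single-step recovery formula of Lemma~\ref{layerstripping}---with the termination and invariance statements of Lemma~\ref{lem:layer_stripping_finite}. That $\mathcal{F}_{\alpha}(F)$ belongs to the stated set is immediate: by Lemma~\ref{lem_MultilinearExpansion} the entries of the finite product \eqref{def:NLFT_alpha} are Laurent polynomials, each factor $\ad(z)^{j}Y_{\alpha}(F_j)$ lies in $SU(2n)$, and \eqref{eq:constant_Fourier_A_D_2} supplies the normalization $A(\infty)\in\grp_{\alpha(0)}$, $D(0)\in\grp_{\alpha(1)}$. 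Since the $SU(2n)$-valued such polynomials form a subset of the $U(2n)$-valued ones, the claim that $U(2n)$ may be replaced by $SU(2n)$ will follow automatically once surjectivity onto the $U(2n)$-valued set is established: the image of $\mathcal{F}_{\alpha}$, the $SU(2n)$-valued set, and the $U(2n)$-valued set will then all coincide.

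For surjectivity, fix a Laurent polynomial $\nlf\colon\T\to U(2n)$ with $A(\infty)\in\grp_{\alpha(0)}$, $D(0)\in\grp_{\alpha(1)}$. Since $A(\infty)$ and $D(0)$ are well-defined and invertible, $A$ has only nonpositive and $D$ only nonnegative frequencies, so $A$ lies automatically in $H^2(\D^*)$ and $D$ in $H^2(\D)$; the only obstruction to $\nlf$ satisfying the input constraint \eqref{eq:Mj_space} of the Layer Stripping Algorithm~\ref{algo:layer_stripping} is thus analyticity of $B$ and $C^*$. I would remove it by passing to $\nlf':=\ad(z)^{N}\nlf$ for $N$ larger than the frequency spreads of $B$ and of $C$: conjugation by $Z^{N}$ fixes $A,D$ and sends $B,C$ to $z^{N}B,z^{-N}C$, so $\nlf'$ is again a $U(2n)$-valued Laurent polynomial, now satisfying all of \eqref{eq:Mj_space}. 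Feeding $\nlf'$ to the algorithm terminates by Lemma~\ref{lem:layer_stripping_finite}---the degree strictly decreases until $\nlf_j$ is constant unitary, after which $\nlf_{j+1}=\Id$---and outputs a finitely supported sequence $F'$ of contractive matrices, each $F_j'$ being an upper right block of some $S_{\alpha}(\cdot)$ (cf.\ \eqref{eq:inverse_strip}). Rewriting the recursion \eqref{eq:Mj_to_next} as $\nlf_j=Y_{\alpha}(F_j')\,\ad(z)\nlf_{j+1}$---legitimate because, by Lemma~\ref{lem:layer_strip_recover}, $S_{\alpha}(B_j(0)D_j(0)^{-1})$ is the unique element of $SU_{\alpha}(2n)$ with upper right block $F_j'$, hence equals $Y_{\alpha}(F_j')$---and iterating telescopes to $\nlf'=\nlf_0=\prod_{j}^{\curvearrowright}\ad(z)^{j}Y_{\alpha}(F_j')=\mathcal{F}_{\alpha}(F')$. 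Finally, by property \ref{item:translation} of Lemma~\ref{lem:properties}, $\nlf=\ad(z)^{-N}\nlf'=\mathcal{F}_{\alpha}(T^{-N}F')$, and $T^{-N}F'$ is again a finitely supported sequence of contractions.

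For injectivity it suffices to treat sequences supported in $[0,\infty)$, since any two finitely supported $F,\tilde F$ can be translated into $[0,\infty)$ by a common power of $T$, with $\ad(z)$ invertible. Let then $F$ be supported in $[0,\infty)$ and $\nlf:=\mathcal{F}_{\alpha}(F)$, which by Lemma~\ref{lem_MultilinearExpansion} is a Laurent polynomial satisfying \eqref{eq:Mj_space}; in particular $B$ and $C^*$ are analytic, which is exactly the hypothesis of the ``furthermore'' clause. I would show by induction on $j$ that the algorithm applied to $\nlf$ produces $\nlf_j=\mathcal{F}_{\alpha}(F^{(j)})$, where $F^{(j)}$ is the sequence with $(F^{(j)})_k=F_{k+j}$ for $k\ge0$ and $0$ otherwise: peeling $F_j=(F^{(j)})_0$ off the front of $F^{(j)}$ using ordered multiplicativity and the translation property gives $\mathcal{F}_{\alpha}(F^{(j)})=Y_{\alpha}(F_j)\,\ad(z)\mathcal{F}_{\alpha}(F^{(j+1)})$, Lemma~\ref{layerstripping} identifies the upper right block of $S_{\alpha}(B_j(0)D_j(0)^{-1})$ as $F_j$, and \eqref{eq:Mj_to_next} then yields $\nlf_{j+1}=\mathcal{F}_{\alpha}(F^{(j+1)})$. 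By termination the algorithm returns precisely $(F_j)_{j\ge0}=F$, so $\mathcal{F}_{\alpha}(F)=\mathcal{F}_{\alpha}(\tilde F)$ forces $F=\tilde F$. Together with the surjectivity computation above---which, applied directly (no modulation needed) to an $\nlf$ already satisfying \eqref{eq:Mj_space}, shows the algorithm is also a right inverse on the set of Laurent polynomials satisfying \eqref{eq:Mj_space}, a set that equals $\mathcal{F}_{\alpha}$ of the $[0,\infty)$-supported sequences---this proves that the Layer Stripping Algorithm inverts $\mathcal{F}_{\alpha}$ on the analytic part of the range.

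The chief difficulty here is bookkeeping rather than conceptual content. One must confirm that the input hypotheses \eqref{eq:Mj_space} are genuinely met: the analyticity of $B$ and $C^*$ is \emph{not} automatic for a general target, which is precisely what forces the modulation reduction $\nlf\mapsto\ad(z)^{N}\nlf$ together with the (routine) verification that conjugation by $Z^{N}$ preserves unitarity, the Laurent-polynomial property, and the diagonal-block normalizations. One must also check that ordered multiplicativity, the recovery identity \eqref{eq:stripping_def}, and the explicit form \eqref{eq:inverse_strip} of $S_{\alpha}$ assemble into the telescoping identity $\nlf_0=\prod^{\curvearrowright}\ad(z)^{j}Y_{\alpha}(F_j)$ without index errors, and that the induction for $\nlf_j=\mathcal{F}_{\alpha}(F^{(j)})$ is set up with the correct truncation so that each $F^{(j)}$ remains supported in $[0,\infty)$. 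Termination of the algorithm and contractivity of the stripped coefficients---both needed for the output to lie in the domain of $\mathcal{F}_{\alpha}$---are already packaged in Lemmas~\ref{lem:layer_stripping_finite} and~\ref{lem:layer_strip_recover}.
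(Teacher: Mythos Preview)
Your proposal is correct and follows essentially the same approach as the paper's proof: both reduce via translation/modulation by $\ad(z)^N$ to the case where $B,C^*$ are analytic, then run the Layer Stripping Algorithm and use the telescoping identity $\nlf_j=Y_\alpha(F_j)\,\ad(z)\nlf_{j+1}$ together with termination (Lemma~\ref{lem:layer_stripping_finite}) to establish surjectivity, and use the single-step recovery formula (Lemma~\ref{layerstripping}) with ordered multiplicativity to establish injectivity. The only cosmetic difference is that the paper phrases the surjectivity argument as a \emph{descending} induction on $j$ starting from $\nlf_{d+1}=\Id$, whereas you telescope \emph{forward}; these are the same computation.
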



\begin{proof}
   By Lemma \ref{lem_MultilinearExpansion},  $\mathcal{F}_{\alpha}$ maps into either of the spaces specified by the lemma. We now prove bijectivity.

    Given a $2n\times2n$ mvf $\nlf$ as in the statement of the lemma, recalling property \ref{item:translation} of Lemma \ref{lem:properties}, a shift reduces matters to the case when $B$ and $C^*$ are analytic. There exists $d$ sufficiently large so that the left and right block columns of $\nlf$, as in \eqref{eq:convention},  have frequency support on $[-d, 0]$ and $[0, d]$, respectively. We show that each such element has a unique preimage, namely the output of the Layer Stripping Algorithm \ref{algo:layer_stripping}. 

{\bf Existence.}
Given a matrix $\nlf_0:=M$ satisfying the above conditions, define $F$ to be the sequence output by the Layer Stripping Algorithm \ref{algo:layer_stripping}, and for $j \geq 1$, define $\nlf_j$ as in \eqref{eq:Mj_to_next}.  We now show by (descending) induction on $0 \leq j \leq d+1$ that 
\begin{equation}\label{eq:induct_nlft}
\nlf_{j} = \mathcal{F}_{\alpha} ( (F_{j+k} \mathbf{1}_{\{k \geq 0\} } )_{k \in \Z})  \, . 
\end{equation}
When $j=d+1$, both sides of  \eqref{eq:induct_nlft} equal $\Id$.
Indeed, Lemma \ref{lem:bijection_finite_supp} reveals $\nlf_{d+1} = \Id$ and $F_k =0$ for all $k \geq d+1$. Now assuming \eqref{eq:induct_nlft} holds for $0 < j\leq d+1$, the translation symmetry \ref{item:translation} of Lemma \ref{lem:properties}, the recurrence \eqref{eq:Mj_to_next} and the identity \eqref{eq:stripping_def} all reveal that \eqref{eq:induct_nlft} also holds for $j-1$.
 Taking $j=0$ in \eqref{eq:induct_nlft} then completes the existence proof.

{\bf Uniqueness.} Assume $\nlf_0 :=\nlf$ is an NLFT of some finitely supported sequence $F$. After shifting using the translation property \ref{item:translation} of Lemma \ref{lem:properties}, we can assume without loss of generality that $F$ is supported on $[0, d]$. By induction and using \eqref{eq:stripping_def} together with the translation property \ref{item:translation} of Lemma \ref{lem:properties}, it follows that for each $j \geq 0$, the mvf $\nlf_j$ defined in \eqref{eq:Mj_space} is the NLFT of $(F_{k+j+1} \mathbf{1}_{ \{k \geq 0\} })_{k \in \Z}$. Lemma \ref{layerstripping} then  reveals that $F_j$ is uniquely determined by $\nlf_j$. Since all the $\nlf_{j}$ are determined by $\nlf_0$, we obtain that $F$ is uniquely determined by $\nlf_0 = \nlf$.   
\end{proof}

\section{The \texorpdfstring{$\ell^2$}{l2} theory}

In this section, we fix a function $\alpha:\Z_2 \to \Z_2$. In the sequel, we will consider $2n\times 2n$ mvfs $\nlf$, which will also often, but not always, be the NLFT of some sequence. We continue to use the notational convention \eqref{eq:convention} to denote the blocks of such $\nlf$. 

\subsection{Norms, metrics  and spaces}

We recall that the Hilbert--Schmidt norm is given by
$$
    \|A\|^2_2 = \trc(A^* A) = \sum_{i = 1}^n \lambda_i(A)^2\,,
$$
where $(\lambda_i (A) )_{i=1}^n$ denotes the singular values of the matrix $A$, which are by definition nonnegative. 
We define the $\ell^2$ norm of a sequence $F=(F_j)_{j \in \mathbb{Z}}$ of matrices as
$$
    \| (F_j)_{j \in \mathbb{Z}} \|_{\ell^2}^2 := \sum_{j \in \mathbb{Z}} \|F_j\|^2_2 = \sum_{j \in \mathbb{Z}} \sum_{i = 1}^n \lambda_{i}(F_j)^2\,.
$$
To motivate the definition of the space of mvfs on the torus into which the NLFT maps, we start with the following Plancherel type lemma.

\begin{lemma}
    \label{lem plancherel}
    Let $\alpha: \Z_2 \to \Z_2$ and let $F $ be a finitely supported sequence of contractive matrices. Denote its \textup{NLFT} by 
    \[
        \begin{pmatrix}
      A & B \\ C & D 
  \end{pmatrix}:= \mathcal{F}_\alpha(F) \,.
   \]
   Let $z_1, \dotsc, z_k$ be the zeros of $\det A^*$ in $\D$, counted with multiplicity. Then 
   \begin{equation}
        \label{plancherel}
        \sum_{j \in \mathbb{Z}} \log \det E_j  = \sum_{j \in \mathbb{Z}} \log \det H_j  = \frac{1}{2}\sum_{j \in \mathbb{Z}} \log  \det(\Id - F_jF_j^*) 
   \end{equation}
   \begin{equation}
       \label{plancherel_ineq}
       = \log  \det(A(\infty)) =\int_{\mathbb{T}} \log \lvert\det(A(z))\rvert + \sum_{i = 1}^k \log |z_i|\,.
   \end{equation}
   
\end{lemma}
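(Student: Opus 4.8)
The proof splits into two essentially independent pieces: a purely algebraic identification of the various sums appearing in \eqref{plancherel} and the first expression in \eqref{plancherel_ineq}, followed by a Jensen-type computation for the last expression in \eqref{plancherel_ineq}. I would carry out the algebraic piece first.

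For the algebraic piece, I would invoke the product formulas \eqref{eq:constant_Fourier_A_D} of Lemma \ref{lem_MultilinearExpansion}, namely $A(\infty)=\prod_{j\in\Z}^{\curvearrowright}E_j$ and $D(0)=\prod_{j\in\Z}^{\curvearrowright}H_j$. Since these are finite products and the determinant is multiplicative and insensitive to the order of the factors, applying $\log\det$ gives $\log\det A(\infty)=\sum_j\log\det E_j$ and $\log\det D(0)=\sum_j\log\det H_j$. Specializing the defining relations \eqref{eq:spec_fact_A} and \eqref{eq:spec_fact_D} of $Y_\alpha$ to the constant matrix $B=F_j$ yields $E_jE_j^*=\Id-F_jF_j^*$ and $H_j^*H_j=\Id-F_j^*F_j$. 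Because $E_j\in\grp_{\alpha(0)}$ and $H_j\in\grp_{\alpha(1)}$ are triangular with positive diagonal, $\det E_j$ and $\det H_j$ are positive reals; taking determinants and using that $F_jF_j^*$ and $F_j^*F_j$ have the same eigenvalues (hence the same determinant), I get $2\log\det E_j=2\log\det H_j=\log\det(\Id-F_jF_j^*)$. Summing over the finitely many nonzero $j$ establishes \eqref{plancherel} together with the equality $\tfrac12\sum_j\log\det(\Id-F_jF_j^*)=\log\det A(\infty)$.

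For the last identity in \eqref{plancherel_ineq} I would apply Jensen's formula to the scalar function $g:=\det A^*$. Since $F$ is finitely supported, Lemma \ref{lem_MultilinearExpansion} shows that $A$ has frequency support in $(-\infty,0]$, so $A^*$ is a matrix polynomial in $z$, analytic on all of $\C$ and in particular on $\overline{\D}$, whence $g$ is a scalar polynomial. Its value at the origin is $g(0)=\det A^*(0)=\det\big(A(\infty)^*\big)=\overline{\det A(\infty)}=\det A(\infty)$, which is positive by the previous step; in particular $g\not\equiv 0$ and $\log|g(0)|=\log\det A(\infty)$. The zeros of $g$ in $\D$, counted with multiplicity, are exactly $z_1,\dots,z_k$, and on $\T$ the boundary value of $A^*$ is the pointwise adjoint of $A$, so $|g(z)|=|\det(A(z))^*|=|\det A(z)|$. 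Substituting into Jensen's formula $\log|g(0)|=\int_\T\log|g|+\sum_{i=1}^k\log|z_i|$ yields exactly $\log\det A(\infty)=\int_\T\log|\det A(z)|+\sum_{i=1}^k\log|z_i|$.

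The computation is largely routine; the only point demanding care is the last step, where one must confirm that $\det A^*$ extends to a genuine analytic (indeed polynomial) function on $\overline{\D}$ that is not identically zero, and correctly match $A^*(0)$ with $A(\infty)^*$ and $|\det A^*|$ with $|\det A|$ on $\T$, so that Jensen's formula applies with precisely the advertised set of zeros $z_1,\dots,z_k$.
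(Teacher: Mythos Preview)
Your proof is correct and follows essentially the same route as the paper. The only cosmetic difference is in the last step: the paper factors the polynomial $\det A^*$ as an outer function times a finite Blaschke product and then invokes the logarithmic mean value property of the outer part, whereas you invoke Jensen's formula directly. For a polynomial nonvanishing at $0$ these are literally the same computation, so there is no substantive distinction.
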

 \begin{remark}
    \label{rem:AD}
     By Lemma \ref{lem:det_holo_formula} and analytic continuation, we have 
    \[
        \det D = \det A^*
    \]
    on $\mathbb{D}$, so that analogs of \eqref{plancherel} and \eqref{plancherel_ineq} also hold with $D$ in place of $A$. 
 \end{remark}
 Further note that by contractivity and the maximum principle, all terms in the equations of Lemma \ref{lem plancherel} are nonpositive.

\begin{proof}
    The relations $E_jE_j^* + F_j F_j^* = \Id$ and $H_j^*H_j + F_j^*F_j = \Id$, which follow from unitarity of $Y_\alpha(F_j)$,
    imply \eqref{plancherel}. The equality with the first term of \eqref{plancherel_ineq} follows from \eqref{eq:constant_Fourier_A_D}.

    We turn to the final identity. Since the sequence $F$ is finitely supported, Lemma \ref{lem_MultilinearExpansion} shows that $\det A^*$ is a polynomial. Hence, it factors as an outer function $o$ times a finite Blaschke product, that is,
    \[
        \det A^* (z) =  o(z) \prod_{j=1}^k \frac{z - z_j}{1 - z \bar z_j} \, . 
    \]
     By the mean value property for $\log \lvert o\rvert$, which holds because $o$ is outer, and since Blaschke products are unimodular on $\T$, we have
    \[
        \log \lvert \det A(\infty) \rvert - \sum_{j=1}^k \log \lvert z_j \rvert = \log \lvert o(0) \rvert = \int_\mathbb{T} \log \lvert o(z)\rvert  = \int_\mathbb{T} \log \lvert \det A(z) \rvert \, . 
    \]
    Noting that $\lvert \det A(\infty)\rvert = \det A(\infty)$ then completes the proof. 
\end{proof}

\begin{remark}
    \label{rem:comp plan}
    The Plancherel identities \eqref{plancherel} and \eqref{plancherel_ineq} can be strengthened to a componentwise statement for the diagonal entries of $A$ and $D$. Suppose that we are in the setting of Lemma \ref{lem plancherel}, and fix some $1 \le m \le n$. Let $k_m$ be the number of zeros of $A_{mm}^*$ in the unit disc, and denote these zeros (with multiplicity) by $z_1^{(m)}, \dotsc, z_{k_m}^{(m)}$. Then
    $$
        \sum_{j \in \mathbb{Z}} \log (E_{j})_{mm} = \log A_{mm}(\infty) = \int_\mathbb{T} \log \lvert A_{mm}(z)\rvert  + \sum_{i=1}^{k_m} \log \lvert z_j^{(m)}\rvert\,,
    $$
    and a similar statement holds for $D$. 
\end{remark}


We now define the target space for the NLFT on $\ell^2$.
Let $\mathbf{L}_\alpha$ be the space of all functions $M: \mathbb{T} \to SU(2n)$,
\begin{equation}\label{eq:matrix_spaces_ntn}
    M = \begin{pmatrix}
        A&B\\
        C&D
    \end{pmatrix} 
    \in
    \begin{pmatrix}
        H^2(\mathbb{D}^*) & L^2(\mathbb{T})\\
        L^2(\mathbb{T}) & H^2(\mathbb{D})
    \end{pmatrix} \, ,
\end{equation}
such that $A^*(0) \in \mathcal{G}_{\alpha(0)}$ and $D(0) \in \mathcal{G}_{\alpha(1)}$. 
We equip $\mathbf{L}_\alpha$ with the metric $\metric$ introduced in Definition \ref{defH}.
We further denote by $\mathbf{L}^{+}_{\alpha}$ the subspace of all functions with $B, C^* \in H^2(\mathbb{D})$.

We will also need to work in the space $\mathbf{U}_\alpha^+$, which is nearly identically defined  as the space $\mathbf{H}_{\alpha} ^+$ in Definition \ref{defH}, except that we replace $SU(2n)$ by $U(2n)$ in \eqref{labeling_M_intro}. Note that all NLFTs in the space $\mathbf{U}_{\alpha} ^+$ are automatically in $\mathbf{H}_{\alpha} ^+$ because NLFTs are always $SU(2n)$-valued. It will turn out that both spaces are equal; see Corollary \ref{cor:H_equals_U}.  

\begin{lemma}
    \label{lem:complete}
    The metric space $(\mathbf{L}_\alpha, \metric)$ is complete, and its subspace $\mathbf{L}^{+}_{\alpha}$ is closed in $\mathbf{L}_\alpha$.
\end{lemma}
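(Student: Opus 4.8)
The plan is to prove completeness of $(\mathbf{L}_\alpha, \metric)$ first, and then deduce that $\mathbf{L}_\alpha^+$ is closed as a relatively easy consequence. Let me sketch each part.

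\textbf{Completeness of $(\mathbf{L}_\alpha, \metric)$.} Suppose $(M_k)_k$ is Cauchy in $\metric$. The metric $\metric$ dominates the $L^2(\T)$ distance on the matrix functions, so $(M_k)$ is Cauchy in $L^2(\T; \mathcal{M}_{2n})$, which is complete; let $M$ be the $L^2$ limit. Passing to a subsequence, $M_k \to M$ pointwise a.e.\ on $\T$, so $M$ is a.e.\ $SU(2n)$-valued (since $SU(2n)$ is closed). Next I need to check the Hardy-space membership conditions in \eqref{eq:matrix_spaces_ntn}: the blocks $A_k^*, D_k$ lie in $H^2(\D)$, and $H^2(\D)$ is a closed subspace of $L^2(\T)$ (via the vanishing of negative Fourier coefficients), so $A^*, D \in H^2(\D)$ as well; the off-diagonal blocks $B, C$ are automatically in $L^2(\T)$. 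This shows $M$ satisfies the structural conditions except possibly the normalizations at $0$ and $\infty$. For those, the second term in $\metric$ tells us $\log \det A_k^*(0)$ is Cauchy in $\C$, hence converges; I must argue that $\det A^*(0) \neq 0$ and that $\log \det A^*(0)$ equals this limit. Because $A_k^* \to A^*$ in $H^2(\D)$, evaluation at the interior point $0 \in \D$ is continuous (point evaluation is a bounded functional on $H^2$), so $A_k^*(0) \to A^*(0)$, and likewise $D_k(0) \to D(0)$; then $\det A_k^*(0) \to \det A^*(0)$ and $\det D_k(0) \to \det D(0)$. Since $\log \det A_k^*(0)$ converges in $\C$, its exponential $\det A_k^*(0)$ converges to a nonzero limit, forcing $\det A^*(0) \neq 0$, and continuity of $\log$ near that nonzero value identifies $\log \det A^*(0)$ with $\lim_k \log \det A_k^*(0)$. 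Finally the triangularity-with-positive-diagonal conditions $A^*(0) \in \grp_{\alpha(0)}$, $D(0) \in \grp_{\alpha(1)}$ pass to the limit because $\grp_a$ is closed in the set of invertible matrices (the entries below/above the diagonal stay zero, the diagonal entries stay $\geq 0$, and invertibility via the nonzero determinant forces them to stay $> 0$). Hence $M \in \mathbf{L}_\alpha$, and $\metric(M_k, M) \to 0$ because both terms of the metric go to zero. So $\mathbf{L}_\alpha$ is complete.

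\textbf{Closedness of $\mathbf{L}_\alpha^+$.} The subspace $\mathbf{L}_\alpha^+$ consists of those $M \in \mathbf{L}_\alpha$ with the additional requirement $B, C^* \in H^2(\D)$. If $M_k \to M$ in $\mathbf{L}_\alpha$ with all $M_k \in \mathbf{L}_\alpha^+$, then in particular $B_k \to B$ and $C_k^* \to C^*$ in $L^2(\T)$; since each $B_k, C_k^* \in H^2(\D)$ and $H^2(\D)$ is closed in $L^2(\T)$, the limits $B, C^*$ lie in $H^2(\D)$ as well, so $M \in \mathbf{L}_\alpha^+$. This is immediate once completeness is in hand.

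\textbf{Expected main obstacle.} The only genuinely delicate point is handling the second term $\lvert \log \det A(\infty) - \log \det A'(\infty)\rvert$ of the metric $\metric$ — specifically, confirming that the limiting function $M$ has $\det A^*(0) \neq 0$ (equivalently $\det A(\infty) \neq 0$) so that $\log \det A(\infty)$ is even defined, and that this log agrees with the limit of the scalar Cauchy sequence $\log \det A_k(\infty)$. The resolution rests on the continuity of point evaluation at an interior point of $\D$ on $H^2(\D)$, which upgrades $L^2(\T)$-convergence of $A_k^*$ to convergence of the values $A_k^*(0)$; once that is established, the rest is continuity of $\det$ and of $\log$ away from zero. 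Everything else (closedness of $SU(2n)$, of $H^2(\D)$ in $L^2(\T)$, of $\grp_a$) is routine.
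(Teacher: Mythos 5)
Your outline for $A^*(0)$ is sound and matches the paper's approach: the second term of the metric $\metric$ controls $\log\det A_k(\infty)$, whence $\det A(\infty) = \exp(\lim_k \log\det A_k(\infty)) > 0$, and combined with the fact that $A(\infty)$ lies in the closure of $\grp_{\alpha(0)}$ (which is the set of upper/lower triangular matrices with \emph{nonnegative} diagonal) this places $A(\infty)$ in $\grp_{\alpha(0)}$. The closedness of $\mathbf{L}^+_\alpha$ is also handled correctly.

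However, there is a genuine gap in your argument for $D(0)\in\grp_{\alpha(1)}$. You need $\det D(0) \neq 0$, but you never establish it: the metric $\metric$ only controls $\log\det A(\infty)$, not $\log\det D(0)$, and $L^2$ (or even $H^2$) convergence $D_k(0)\to D(0)$ certainly does not preserve invertibility of the limit by itself. Your final paragraph asserts that invertibility of $D(0)$ ``forces'' its diagonal entries to be positive, which is true, but you have not shown that $D(0)$ is invertible. The paper closes this gap by invoking Lemma~\ref{lem:det_holo_formula} (and Remark~\ref{rem:AD}): since each $M_\ell$ is $SU(2n)$-valued with invertible diagonal blocks, the identity $\det D_\ell = \det A_\ell^*$ holds on all of $\D$ by analytic continuation, in particular
\[
    \det D_\ell(0) = \det A_\ell^*(0) = \det A_\ell(\infty),
\]
where the last equality uses that $\det A_\ell(\infty)$ is a positive real. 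Passing to the limit yields $\det D(0) = \det A(\infty) > 0$, and only then does the argument about diagonal positivity apply to $D(0)$. You should add this step — your proof as written establishes only that $D(0)$ lies in the \emph{closure} of $\grp_{\alpha(1)}$, which is a strictly larger set.
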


\begin{proof}
    If $M_\ell$ is a Cauchy sequence in $(\mathbf{L}_\alpha, \metric)$, then the blocks $A_\ell ^*, D_\ell$ are Cauchy in $H^2 (\mathbb{D})$ and $B_\ell, C_\ell$ are Cauchy in $L^2(\mathbb{T})$. Thus there exists an $L^2$ limit 
    \[
        M := \begin{pmatrix}A&B\\C&D\end{pmatrix}
        \in
    \begin{pmatrix}
        H^2(\mathbb{D}^*) & L^2(\mathbb{T})\\
        L^2(\mathbb{T}) & H^2(\mathbb{D})
    \end{pmatrix}.
    \]
    Clearly, $M \in SU(2n)$ a.e.\ on $\mathbb{T}$. From $H^2(\mathbb{D}^*)$-convergence of $A_\ell$ to $A$ it follows that $A_\ell(\infty)$ converges to $A(\infty)$. So $A(\infty)$ belongs to the closure of $ \grp_{\alpha(0)}$, which consists of upper or lower triangular matrices, depending on value of $\alpha(0)$, with nonnegative diagonal. 
    Since
    \[
        \lvert\log \det A_\ell(\infty) - \log \det A_m(\infty)\rvert \le \metric(M_\ell, M_m),
    \]
    the sequence $\log \det A_\ell(\infty)$ is Cauchy and thus converges to some real number. Therefore  
    \[
        \det A(\infty) = \lim_{\ell \to \infty} \det A_\ell(\infty) = \exp( \lim_{\ell \to \infty} \log \det A_\ell(\infty)) > 0\,.
    \]
    In particular, $A(\infty)$ has nonzero diagonal, showing that in fact $A(\infty) \in \mathcal{G}_{\alpha(0)}$. By Lemma \ref{lem:det_holo_formula} and since $M_\ell(z)\in SU(2n)$ for $z \in \mathbb{T}$, we have that 
    \[
        \det D_\ell(0) = \det A_\ell(\infty)
    \]
    for all $\ell$, from which it also follows that 
    \[
        \det D(0) = \det A(\infty) > 0.
    \]
    Like $A(\infty)$, the matrix $D(0)$ is contained in the closure of $\mathcal{G}_{\alpha(1)}$ by $H^2(\mathbb{D})$ convergence, and since it has positive determinant it is in fact in $\mathcal{G}_{\alpha(1)}$.

    The space $\mathbf{L}_{\alpha}^{+}$ is closed in $\mathbf{L}_\alpha$ because $H^2(\mathbb{D})$ and $H^2(\mathbb{D}^*)$ are closed in $L^2(\mathbb{T})$.
\end{proof}

\begin{remark}
While the space $\mathbf{H}_\alpha ^+$ introduced in Definition \ref{defH} is a subspace of $\mathbf{L}_\alpha ^+$, it is not closed in $\mathbf{L}_\alpha ^+$. For the $SU(2)$ case, consider  
$$
    A_\ell^*(z) = D_\ell(z) = \tfrac1{10} (z + \tfrac12)\,, \qquad \ell \ge 0\,,
$$
and the Poisson extension of $\sqrt{1 - |D|^2}$ times a single Blaschke factor
$$
    C_\ell^*(z) = B_\ell(z) = \exp\left(\frac{1}{2} \int_\mathbb{T} \frac{z + \zeta}{z - \zeta} \log \lvert 1 - \tfrac1{100}|\zeta + \tfrac12|^2 \rvert \right) \frac{z - x_\ell}{1 - z\bar x_\ell}\,.
$$
Suppose that $x_\ell \to -\frac{1}{2}$ as $\ell\to\infty$, but that $x_\ell \ne -\frac{1}{2}$ for all $\ell \ge 0$. Then $M_\ell$ belongs to $\mathbf{H}_\alpha ^+$ for each $n$. However,  $M_\ell$ converges in $(\mathbf{L}_\alpha, \metric)$ to an mvf $\nlf$ where both $B, D$ vanish at $-\frac12$, and consequently have a common inner factor.
\end{remark}

\begin{lemma}
    \label{lem_continuous} Let $c \le d$. 
    The NLFT $\mathcal{F}_\alpha$ defines a continuous map from the space of sequences supported in $[c,d]$ with the $\ell^2(\mathbb{Z})$ topology into $\mathbf{L}_{\alpha}$. 
\end{lemma}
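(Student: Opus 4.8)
The plan is to exploit the fact that for a sequence $F=(F_j)$ supported in the finite set $[c,d]$, the NLFT is a \emph{finite} ordered product,
\[
\mathcal{F}_\alpha(F)(z) = \prod\limits_{c\le j\le d}^{\aarrow} \ad(z)^j\, Y_\alpha(F_j),
\]
and to estimate this product factor by factor. That $\mathcal{F}_\alpha(F)$ actually lands in $\mathbf{L}_\alpha$ when $F$ is supported in $[c,d]$ is already contained in Lemmas \ref{lem_MultilinearExpansion} and \ref{lem:bijection_finite_supp}. Since the index set $[c,d]$ is finite, a sequence of such data converges in the $\ell^2(\Z)$ topology if and only if each coordinate converges in $\con$; hence it suffices to show that $F_j^{(k)} \to F_j$ in $\con$ for every $j\in[c,d]$ implies $\metric(\mathcal{F}_\alpha(F^{(k)}), \mathcal{F}_\alpha(F)) \to 0$ as $k\to\infty$.

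The first ingredient is the continuity of the map $F\mapsto Y_\alpha(F)$ on $\con$. Specializing the construction in the proof of Theorem \ref{factorization-theorem} to constant matrices, the blocks of $Y_\alpha(F)$ are obtained from the triangular, positively normalized Cholesky factorizations of $\Id-FF^*$ and $\Id-F^*F$ together with the relation $C=-D^{-*}F^*A$. On any ball $\{\|F\|_\infty\le 1-\epsilon\}$ the eigenvalues of $\Id-FF^*$ and $\Id-F^*F$ lie in $[\epsilon,1]$, so Lemma \ref{lem_cholesky} shows these Cholesky factors---and hence $Y_\alpha$---are Lipschitz on that ball; in particular $Y_\alpha$ is continuous on all of $\con$.

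Now treat the two parts of $\metric$ separately. For the $L^2$-part, write $\mathcal{F}_\alpha(F)(z)=\prod_{j=c}^d X_j(z)$ and $\mathcal{F}_\alpha(F^{(k)})(z)=\prod_{j=c}^d X_j^{(k)}(z)$ with $X_j(z):=\ad(z)^j Y_\alpha(F_j)$ and $X_j^{(k)}$ defined analogously. Each factor is unitary, hence of operator norm $1$, and conjugation by the unitary $Z^j$ preserves the operator norm, so $\|X_j^{(k)}(z)-X_j(z)\|_\infty = \|Y_\alpha(F_j^{(k)})-Y_\alpha(F_j)\|_\infty$ for every $z\in\T$. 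A telescoping estimate (replacing one factor at a time) then gives
\[
\sup_{z\in\T}\ \| \mathcal{F}_\alpha(F^{(k)})(z)-\mathcal{F}_\alpha(F)(z)\|_\infty \ \le\ \sum_{j=c}^{d}\| Y_\alpha(F_j^{(k)})-Y_\alpha(F_j)\|_\infty,
\]
and the right side tends to $0$ as $k\to\infty$ by the previous paragraph; comparing the Hilbert--Schmidt norm with the operator norm and integrating over the uniform probability measure on $\T$ yields $\int_\T\|\mathcal{F}_\alpha(F^{(k)})-\mathcal{F}_\alpha(F)\|_2^2\to 0$. For the $\log\det$-part, formula \eqref{eq:constant_Fourier_A_D} gives $A(\infty)=\prod_{j=c}^d E_j$ where $E_j$ is the upper left block of $Y_\alpha(F_j)$, which is triangular with positive diagonal; thus $\det E_j>0$ and $\log\det A(\infty)=\sum_{j=c}^d\log\det E_j$. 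Since $E_j^{(k)}\to E_j$ (again by continuity of $Y_\alpha$) and $\log\det$ is continuous at matrices of positive determinant, this finite sum converges, i.e.\ $\log\det A^{(k)}(\infty)\to\log\det A(\infty)$. Adding the two estimates gives $\metric(\mathcal{F}_\alpha(F^{(k)}),\mathcal{F}_\alpha(F))\to 0$, which is the assertion.

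The only delicate point is that continuity of $Y_\alpha$---equivalently, of Cholesky factorization---degrades as $\|F\|_\infty\to 1$. This causes no trouble here: every element of the domain is strictly contractive, so any convergent sequence of data eventually lies in a single ball $\{\|F\|_\infty\le 1-\epsilon\}$, on which Lemma \ref{lem_cholesky} applies with a uniform Lipschitz constant. I expect this to be the only mild obstacle; everything else is routine.
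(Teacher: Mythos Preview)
Your argument is correct. The paper's own proof is a single sentence---``By definition, the NLFT and its value at $0$ are polynomial expressions in the $d-c+1$ coefficients''---so your approach (continuity of $Y_\alpha$ via Lemma \ref{lem_cholesky}, then a telescoping bound on the finite product, then \eqref{eq:constant_Fourier_A_D} for the $\log\det$ term) is really the same idea made explicit. If anything, your version is more careful: the NLFT is not literally a polynomial in the entries of the $F_j$, since $Y_\alpha(F_j)$ involves Cholesky factors of $\Id-F_jF_j^*$ and $\Id-F_j^*F_j$; your observation that a convergent sequence in $\con$ eventually sits in a ball $\{\|F\|_\infty\le 1-\epsilon\}$ where Lemma \ref{lem_cholesky} applies is exactly the point that makes this rigorous.
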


\begin{proof}
    By definition, the NLFT and its value at $0$ are polynomial expressions in the $d-c + 1$ coefficients. 
\end{proof}

\subsection{Extension to half-line \texorpdfstring{$\ell^2$}{l2} sequences}

Our goal here is to extend the NLFT to square summable sequences of matrices supported on the positive half-line. 
We start with two technical lemmas. 

\begin{lemma}
    \label{lem:dist_Id}
    For finitely supported sequences $F$, it holds that
    \begin{equation}
        \label{eq:dist_Id}
        \metric(\mathcal{F}_\alpha(F), \Id) \le \sum_{j \in \mathbb{Z}} \lvert\log  \det(E_j)\rvert + 2 \Big(\sum_{j \in \mathbb{Z}} \lvert\log  \det(E_j)\rvert\Big)^{\frac12}\, .
    \end{equation}
\end{lemma}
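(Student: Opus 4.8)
The plan is to bound the two terms of $\metric(\mathcal{F}_\alpha(F),\Id)$ separately, writing $\nlf:=\mathcal{F}_\alpha(F)$ throughout. For the determinant term, since $\det\Id=1$ it equals $\lvert\log\det A(\infty)\rvert$, and by \eqref{eq:constant_Fourier_A_D} we have $A(\infty)=\prod_j^{\curvearrowright}E_j$, so $\log\det A(\infty)=\sum_j\log\det E_j$. Each $E_j$ is triangular with positive diagonal and satisfies $E_jE_j^*=\Id-F_jF_j^*\preceq\Id$, hence $0<\det E_j\le 1$ and every summand is nonpositive; therefore $\lvert\log\det A(\infty)\rvert=\sum_j\lvert\log\det E_j\rvert$, which is precisely the first term on the right-hand side of \eqref{eq:dist_Id}.

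The substance of the argument is the bound on $\bigl(\int_\T\|\nlf-\Id\|_2^2\bigr)^{1/2}$. First I would use that $\nlf$ is unitary on $\T$ to expand $\|\nlf-\Id\|_2^2=\trc\bigl((\nlf-\Id)^*(\nlf-\Id)\bigr)=\trc(2\Id-\nlf-\nlf^*)=4n-2\Re\trc\nlf$. Since $F$ is finitely supported, $\nlf$ is a Laurent polynomial, and by Lemma \ref{lem_MultilinearExpansion} the block $A$ has frequency support in $(-\infty,0]$ and $D$ in $[0,\infty)$; hence integrating over $\T$ extracts the constant terms, giving $\int_\T\trc\nlf=\trc A(\infty)+\trc D(0)$. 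By \eqref{eq:constant_Fourier_A_D} these are traces of triangular matrices whose diagonal entries are $\prod_j(E_j)_{ii}$ and $\prod_j(H_j)_{ii}$, which are positive reals, so $\int_\T\|\nlf-\Id\|_2^2=2\sum_{i=1}^n\bigl(1-\prod_j(E_j)_{ii}\bigr)+2\sum_{i=1}^n\bigl(1-\prod_j(H_j)_{ii}\bigr)$.

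To conclude, I would invoke the elementary inequality $1-a\le-\log a$ for $a\in(0,1]$, applicable since the diagonal entries of each $E_j$ and $H_j$ lie in $(0,1]$ (positive, and bounded in modulus by the operator norm, which is at most $1$). This yields $\sum_i\bigl(1-\prod_j(E_j)_{ii}\bigr)\le-\sum_j\sum_i\log(E_j)_{ii}=-\sum_j\log\det E_j=\sum_j\lvert\log\det E_j\rvert$, and likewise for $H$, using that $(\det H_j)^2=\det(\Id-F_j^*F_j)=\det(\Id-F_jF_j^*)=(\det E_j)^2$ together with positivity forces $\det H_j=\det E_j$. Altogether $\int_\T\|\nlf-\Id\|_2^2\le 4\sum_j\lvert\log\det E_j\rvert$, so the $L^2$ term is at most $2\bigl(\sum_j\lvert\log\det E_j\rvert\bigr)^{1/2}$, and adding the determinant term gives \eqref{eq:dist_Id}.

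The one point that requires care, and the reason for routing the $L^2$ estimate through the constant Fourier coefficient of $\trc\nlf$ rather than telescoping the product $\prod_j Z^jY_\alpha(F_j)Z^{-j}$ directly, is the quality of the bound: a term-by-term triangle inequality in $L^2$ would only produce the weaker estimate $2\sum_j\lvert\log\det E_j\rvert^{1/2}$, whereas the orthogonality implicit in integration over $\T$ is exactly what converts a sum of square roots into the square root of a sum.
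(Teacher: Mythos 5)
Your proof is correct and follows essentially the same route as the paper's: both bound the logarithmic term by the telescoping identity for $A(\infty)$, and both handle the $L^2$ term via unitarity, the mean value property to extract $\trc A(\infty)+\trc D(0)$, and the elementary inequality $1-x\le-\log x$ applied to the (positive, $\le 1$) diagonal entries of the triangular matrices. The paper's phrasing in terms of $\lambda_i(A(\infty))$ implicitly means the eigenvalues (equivalently, diagonal entries) of the triangular matrix rather than the singular values announced earlier in the section, so your explicit use of $(E_j)_{ii}$ is if anything a cleaner rendering of the same computation.
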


\begin{proof}
    We denote the blocks of the nonlinear Fourier transform of $F$ by
    \[
        \begin{pmatrix}
            A&B\\C&D
        \end{pmatrix}:= \mathcal{F}_\alpha(F) \,.
    \]
    By \eqref{eq:constant_Fourier_A_D}, we have
    \[
        \lvert \log  \det A(\infty) - \log  \det \Id  \rvert = \sum_{j\in \mathbb{Z}} | \log  \det(E_j) | \, .
    \]
    This controls the second  term in $\metric(\mathcal{F}_\alpha(F), \Id)$ by the first summand in \eqref{eq:dist_Id}.
For the first summand in $\metric(\mathcal{F}_\alpha(F), \Id)$, we write using unitarity of $\mathcal{F}_{\alpha} (F)$ 
\[
\lVert\mathcal{F}_\alpha(F) - \Id\rVert_2^2 = \trc (2 \Id -  \mathcal{F}_\alpha(F) - \mathcal{F}_\alpha(F) ^* ) \, .
\]
Expressing this in terms of $A$ and $D$, and integrating on $\T$, yields
\[
\frac{1}{2} \int\limits_{\T} \lVert\mathcal{F}_\alpha(F) - \Id\rVert_2^2 =  \int\limits_{\T} \Re \trc (\Id -  A) + \int\limits_{\T} \Re \trc (\Id - D) \, .
\]
Using the mean value property of the entries of $A$ and $D$, this equals
\[
\Re \trc (\Id -  A (\infty) ) + \Re \trc (\Id - D (0)) \, .
\]
Since $A(\infty)$ is upper triangular with positive diagonal, and using that for all $x > 0$ we have $1 - x \le -\log x$, the first term is bounded by
\[
 \sum\limits_{i=1}^n 1 - \lambda_{i} (A(\infty)) \leq  \sum\limits_{i=1}^n  \lvert \log \lambda_{i} (A(\infty))\rvert = \lvert\log \det A(\infty)\rvert = \sum\limits_{j\in\mathbb Z} \lvert\log \det E_j \rvert \, .
\]
A similar argument applies to $D$.
\end{proof}

In what follows, we continue using the notational convention \eqref{eq:convention}.

\begin{lemma}
    \label{lem:submult}
Suppose that $M, M', MM' \in \mathbf{L}_\alpha$ and that
\begin{equation}\label{eq:mult_vanishing_prop}
        (BC') (\infty) = 0 \qquad \text{and} \qquad  (C B')(0) = 0\,.
    \end{equation}
Then
\begin{equation}\label{eq:dist_ids}
     \metric(MM', M) = \metric(M', \Id) \qquad \text{and} \qquad \metric(MM', M') = \metric(M, \Id)\,.
\end{equation}
In particular, \eqref{eq:mult_vanishing_prop} holds if $M = \mathcal{F}_{\alpha} (F)$ and $M' = \mathcal{F}_{\alpha} (F')$ for $F, F'$ finitely supported sequences such that the support of $F$ is entirely to the left of the support of $F'$.
\end{lemma}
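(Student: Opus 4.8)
The identities \eqref{eq:dist_ids} decompose into the $L^2$ part and the $\det A(\infty)$ part of the metric $\metric$, and I would handle these two contributions separately.

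For the determinant part, recall from \eqref{eq:constant_Fourier_A_D} — or rather from the block multiplication formula — that the upper left block of $MM'$ is $AA' + BC'$. Evaluating at $z=\infty$ and using the first hypothesis $(BC')(\infty)=0$ gives that the upper left block of $MM'$ at $\infty$ is $A(\infty)A'(\infty)$, so $\log\det$ of it is $\log\det A(\infty) + \log\det A'(\infty)$. Hence $\lvert \log\det(AA')(\infty) - \log\det A(\infty)\rvert = \lvert\log\det A'(\infty)\rvert = \lvert \log\det A'(\infty) - \log\det\Id\rvert$, which is exactly the determinant contribution to $\metric(M',\Id)$; symmetrically for the other identity.

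For the $L^2$ part, the key observation is that multiplication by a fixed $U(2n)$-valued function is an isometry of $L^2(\T; \mathcal M_{2n})$ in the Hilbert–Schmidt norm, pointwise on $\T$. Thus $\int_\T \|MM' - M\|_2^2 = \int_\T \|M(M'-\Id)\|_2^2 = \int_\T \|M'-\Id\|_2^2$, since $M(z)$ is unitary a.e., and similarly $\int_\T \|MM'-M'\|_2^2 = \int_\T \|(M-\Id)M'\|_2^2 = \int_\T \|M-\Id\|_2^2$ using unitarity of $M'(z)$ a.e. Combining with the determinant computation yields both equalities in \eqref{eq:dist_ids}. I do not expect any genuine obstacle here; the only point requiring a moment's care is confirming that the hypotheses $M, M', MM' \in \mathbf{L}_\alpha$ guarantee all the relevant blocks lie in the right Hardy spaces so that "evaluation at $\infty$" is meaningful, but this is built into membership in $\mathbf{L}_\alpha$.

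For the final sentence, suppose $M = \mathcal F_\alpha(F)$ and $M' = \mathcal F_\alpha(F')$ with $\operatorname{supp} F \subseteq (-\infty, m]$ and $\operatorname{supp} F' \subseteq [m+1, \infty)$ for some $m$ (after a harmless shift via Lemma \ref{lem:properties}\ref{item:translation}, which changes nothing in \eqref{eq:mult_vanishing_prop} since it conjugates blocks by powers of $z$, preserving vanishing at $0$ and $\infty$). By Lemma \ref{lem_MultilinearExpansion}, $B$ has frequency support in $(-\infty, m]$ while $C'$ has frequency support in $[-\,\infty$-side bounded below by... more precisely $C'$ has frequencies $\geq -(\text{last index of }F') \le -(m+1)$, wait — I should instead invoke directly that $C'$ has frequency support in $[m+1, \infty)$ up to sign; the clean statement from Lemma \ref{lem_MultilinearExpansion} is that for a sequence supported in $[c,d]$ the block $C$ has frequency support in $[-d,-c]$. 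So $C'$ has frequencies $\le -(m+1) < 0$, whence $(BC')$ has strictly negative frequencies and $(BC')(\infty)=0$; dually $C$ has frequencies $\ge -m$ and $B'$ has frequencies $\ge m+1 > 0$, so $(CB')$ has strictly positive frequencies and $(CB')(0)=0$. This gives \eqref{eq:mult_vanishing_prop}, and $MM' = \mathcal F_\alpha(F+F') \in \mathbf{L}_\alpha$ by ordered multiplicativity, so the hypotheses of the lemma are met.
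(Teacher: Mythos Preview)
Your argument is correct and matches the paper's proof for the two identities in \eqref{eq:dist_ids}: the $L^2$ part via unitarity of $M$ (resp.\ $M'$), and the determinant part via $A''(\infty)=A(\infty)A'(\infty)$ from the hypothesis $(BC')(\infty)=0$. For the final sentence, you argue directly that $BC'$ has only strictly negative frequencies and $CB'$ only strictly positive ones using the frequency-support statements in Lemma~\ref{lem_MultilinearExpansion}; the paper instead deduces $(BC')(\infty)=0$ and $(CB')(0)=0$ indirectly, by first using \eqref{eq:constant_Fourier_A_D} to get $A''(\infty)=A(\infty)A'(\infty)$ and $D''(0)=D(0)D'(0)$ and then subtracting. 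Both routes are short and rest on the same lemma; your direct computation is perhaps the more transparent of the two, though your parenthetical about shifts is unnecessary (no shift is needed once you pick $m$ separating the two supports).
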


\begin{proof}
    We only prove the first identity in \eqref{eq:dist_ids}, as the second follows similarly.
    Because multiplication by unitaries preserves singular values and hence the Hilbert--Schmidt norm,  noting that $M$ is unitary then yields
    \begin{equation}
        \label{eq product l2}
        \int\limits_{\mathbb{T}} \|MM' - M\|_2^2 =\int\limits_{\mathbb{T}} \lVert M' - \Id\rVert_2^2 \,.
    \end{equation}
    Because of the assumption \eqref{eq:mult_vanishing_prop}, the upper left block $A''$ of $MM'$ satisfies
    \[
        A''(\infty) = A(\infty) A'(\infty) + (B C')(\infty) = A(\infty) A'(\infty)\,.
    \]
    Hence we obtain, for the first logarithmic term in $d(MM', M)$,
    \begin{equation}
        \label{eq product log1}
        \lvert \log \det(A(\infty) A'(\infty))  - \log  \det(A(\infty)) \rvert = \lvert \log  \det(A'(\infty)) \rvert\,.
    \end{equation}
    Adding the square root of \eqref{eq product l2} to \eqref{eq product log1} yields the desired identity.

    For the last statement, if $F$ is supported to the left of $F'$, then $MM' = \mathcal{F}_{\alpha} (F+F')$ by Lemma \ref{lem:properties} \ref{item:ordered_mult}, and then by \eqref{eq:constant_Fourier_A_D}, we have
    \[
    A''(\infty) = A(\infty) A'(\infty) \, , \qquad D''(0) = D(0) D'(0) \, , 
    \]
    which implies \eqref{eq:mult_vanishing_prop}.
\end{proof}

As in the linear theory, the nonlinear Plancherel identity allows us to extend the NLFT to square summable sequences. 

\begin{lemma}
    \label{lem:ext_l2}
    The map $\mathcal{F}_{\alpha}$ extends to a continuous map from $\ell^2(\mathbb{Z}_{\geq 0} ;  \con)$ into $\mathbf{L}_{\alpha} ^{+}$.
\end{lemma}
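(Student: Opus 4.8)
The plan is to extend $\mathcal{F}_\alpha$ by continuity from the dense set of finitely supported sequences on $[0,\infty)$. Since $\mathbf{L}_\alpha^+$ is closed in the complete space $(\mathbf{L}_\alpha,\metric)$ by Lemma~\ref{lem:complete}, it is itself complete; so it is enough to show that for each $F\in\ell^2(\Z_{\ge0};\con)$ the truncations $F^{(N)}:=(F_j\mathbf{1}_{\{j\le N\}})_{j\in\Z}$ — which are finitely supported on $[0,\infty)$ and hence satisfy $\mathcal{F}_\alpha(F^{(N)})\in\mathbf{L}_\alpha^+$ by Lemma~\ref{lem_MultilinearExpansion} — form a $\metric$-Cauchy sequence, and that the resulting assignment $F\mapsto\lim_N\mathcal{F}_\alpha(F^{(N)})$ is continuous into $\mathbf{L}_\alpha^+$. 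Agreement with $\mathcal{F}_\alpha$ on finitely supported sequences is then automatic, since the truncations eventually stabilize.

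The first step is the key estimate. Because $\|F_j\|_2\to0$, there is $N_1$ with $\|F_j\|_\infty\le\tfrac{1}{2}$ for every $j>N_1$. Fix $N_1\le N<N'$ and let $G$ denote the restriction of $F$ to $(N,N']$. Ordered multiplicativity (Lemma~\ref{lem:properties}~\ref{item:ordered_mult}) gives $\mathcal{F}_\alpha(F^{(N')})=\mathcal{F}_\alpha(F^{(N)})\,\mathcal{F}_\alpha(G)$, and since the support of $F^{(N)}$ lies entirely to the left of that of $G$, Lemma~\ref{lem:submult} yields $\metric(\mathcal{F}_\alpha(F^{(N')}),\mathcal{F}_\alpha(F^{(N)}))=\metric(\mathcal{F}_\alpha(G),\Id)$. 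I would bound the right-hand side using Lemma~\ref{lem:dist_Id}: writing $E_j$ for the upper-left block of $Y_\alpha(G_j)$, the nonlinear Plancherel identity \eqref{plancherel} gives $\sum_j|\log\det E_j|=-\tfrac{1}{2}\sum_j\log\det(\Id-G_jG_j^*)$ (every term is nonpositive), and since the singular values of $G_j$ are at most $\tfrac{1}{2}$, the elementary inequality $-\log(1-x)\le\tfrac{4}{3}x$ on $[0,\tfrac{1}{4}]$ gives $-\log\det(\Id-G_jG_j^*)\le\tfrac{4}{3}\|G_j\|_2^2$; hence $\sum_j|\log\det E_j|\le\tfrac{2}{3}S$ with $S:=\sum_{j=N+1}^{N'}\|F_j\|_2^2$, so Lemma~\ref{lem:dist_Id} yields $\metric(\mathcal{F}_\alpha(F^{(N')}),\mathcal{F}_\alpha(F^{(N)}))\le\tfrac{2}{3}S+2\sqrt{\tfrac{2}{3}S}$. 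As $N,N'\to\infty$ we have $S\to0$, so $(\mathcal{F}_\alpha(F^{(N)}))_N$ is Cauchy and I define $\mathcal{F}_\alpha(F)$ to be its limit in $\mathbf{L}_\alpha^+$; letting $N'\to\infty$ and using that $\metric$ is continuous I also record the tail bound $\metric(\mathcal{F}_\alpha(F),\mathcal{F}_\alpha(F^{(N)}))\le\tfrac{2}{3}T_N+2\sqrt{\tfrac{2}{3}T_N}$ for $N\ge N_1$, where $T_N:=\sum_{j>N}\|F_j\|_2^2$.

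Continuity at a given $F$ I would prove by a three-term split. Given $\epsilon>0$, choose $N\ge N_1$ with $T_N$ small enough that the tail bound above is below $\epsilon/3$ and $T_N^{1/2}\le\tfrac{1}{4}$. For $F'$ with $\|F-F'\|_{\ell^2}\le\tfrac{1}{4}$ one has $(\sum_{j>N}\|F_j'\|_2^2)^{1/2}\le T_N^{1/2}+\tfrac{1}{4}\le\tfrac{1}{2}$, so $\|F_j'\|_\infty\le\tfrac{1}{2}$ for $j>N$ and the same tail bound applies to $F'$, making $\metric(\mathcal{F}_\alpha(F'),\mathcal{F}_\alpha(F'^{(N)}))$ small once $\|F-F'\|_{\ell^2}$ is small; meanwhile $\mathcal{F}_\alpha(F^{(N)})$ and $\mathcal{F}_\alpha(F'^{(N)})$ are NLFTs of sequences supported in $[0,N]$, so $\metric(\mathcal{F}_\alpha(F^{(N)}),\mathcal{F}_\alpha(F'^{(N)}))<\epsilon/3$ once $\|F^{(N)}-F'^{(N)}\|_{\ell^2}\le\|F-F'\|_{\ell^2}$ is small, by Lemma~\ref{lem_continuous}. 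Combining the three pieces via the triangle inequality gives $\metric(\mathcal{F}_\alpha(F),\mathcal{F}_\alpha(F'))<\epsilon$.

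The main obstacle — really the only delicate point — is the tension between the non-uniform constraint $\|F_j\|_\infty<1$ and $\ell^2$-convergence: the Plancherel-based bound on $\metric(\mathcal{F}_\alpha(G),\Id)$ degenerates as the singular values of the $G_j$ approach $1$, so the truncation level must be chosen large enough to force the tails of \emph{both} $F$ and every nearby $F'$ into a regime bounded away from $1$. This is precisely what the computation above arranges, since $\ell^2$-closeness transfers the smallness of the tail of $F$ to $F'$; the remaining steps are routine bookkeeping with Lemmas~\ref{lem:dist_Id}, \ref{lem:submult} and \ref{lem_continuous} together with the completeness of $\mathbf{L}_\alpha^+$.
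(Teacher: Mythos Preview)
Your proof is correct and follows essentially the same approach as the paper: use Lemmas~\ref{lem:submult} and~\ref{lem:dist_Id} to reduce the Cauchy estimate to bounding $\sum_j|\log\det E_j|$, control this by $\sum_j\|F_j\|_2^2$ via a convexity bound on $-\log(1-x)$ once the tails have small operator norm, invoke completeness of $\mathbf{L}_\alpha^+$, and prove continuity by the same three-term split using Lemma~\ref{lem_continuous} for the finite part. The only differences are cosmetic (you use $-\log(1-x)\le\tfrac{4}{3}x$ on $[0,\tfrac{1}{4}]$ where the paper uses $-\log(1-x)\le 2x$ on $[0,1-e^{-1}]$, and slightly different bookkeeping constants).
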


\begin{proof}
    Let $F \in \ell^2(\mathbb{Z}_{\geq 0}; \con)$.
    Combining the previous two lemmas and \eqref{plancherel} reveals that the sequence  $\mathcal{F}_\alpha(F\mathbf{1}_{[0,N]})$ is Cauchy in $(\mathbf{L}_{\alpha}^{+}, \metric)$. By completeness of $\mathbf{L}_{\alpha}^+$ as shown in  Lemma \ref{lem:complete}, the sequence has a limit, which we define to be the nonlinear Fourier transform of $F$. When $F$ is a finitely supported sequence, then $F \mathbf{1}_{[0,N]}$ is constant in $N$ for  sufficiently large $N$, and so our extension of the NLFT coincides with the previous definition \eqref{def:NLFT_alpha} for finitely supported sequences. 

    In order to prove continuity, the following consequence of the previous two lemmas is helpful. We have, by definition,
    \[
        \metric(\mathcal{F}_\alpha(F),\mathcal{F}_\alpha(F \mathbf{1}_{[0,N]}))= \lim_{M \to \infty} \metric(\mathcal{F}_\alpha(F\mathbf{1}_{[0,M]}), \mathcal{F}_\alpha(F \mathbf{1}_{[0,N]}))\,.
    \]
    Using property \ref{item:ordered_mult} from Lemma \ref{lem:properties} and Lemma \ref{lem:submult}, this is at most
    \[
        \lim_{M \to \infty} \metric(\mathcal{F}_\alpha(F\mathbf{1}_{(N,M]}),\Id)\,,
    \]
    and using  Lemma \ref{lem:dist_Id} we then bound this by 
    \begin{equation}
        \label{eq:tail}
        \frac 1 2 \sum_{j = N}^\infty \lvert \log  \det(\Id - F_j F_j^*) \rvert  +  \Big( \sum_{j = N}^\infty \lvert \log  \det(\Id - F_j F_j^*) \rvert  \Big)^{\frac12}\, .
    \end{equation}
     The function $z \mapsto -\log(1-z)$ is convex, so for $0 \le z \le 1 - e^{-1}$ we have 
    \[
        - \log(1 - z) \le (1 - e^{-1})^{-1} z \le 2z\,.
    \]
    Applying this to $z = \lambda_i(F_j)^2$ yields that, if $\|F_j\|_\infty \le (1 - e^{-1})^{\frac12}$, then 
    \begin{equation}
        \label{eq:det_to_HS}
        \lvert \log  \det(\Id - F_j F_j^*) \rvert = \sum_{i = 1}^n -\log(1 - \lambda_i(F_j)^2) \le 2 \|F_j\|_2^2\,.
    \end{equation}
    
    We now prove continuity. Fix $F$ and $0 < \epsilon < (1 - e^{-1})^{\frac12}$, and assume $F'$ satisfies
    \begin{equation}\label{eq:delta_cty}
    \|F-F'\|_{\ell^2} < \delta 
    \end{equation}
    for some $\delta >0$ to be specified later. Choose $N$ sufficiently large such that 
    \begin{equation}
        \label{eq F upper}
        \sum_{j = N}^\infty \|F_j\|_2^2 < \frac{\epsilon^2}{10^4}\,.
    \end{equation}
    We may now apply \eqref{eq:det_to_HS} to all $j \ge N$, and  combining it with \eqref{eq:tail} yields
    \begin{equation}
        \label{eq:tailF}
        \metric(\mathcal{F}_\alpha(F), \mathcal{F}_\alpha(F \mathbf{1}_{[0,N]})) \le  \sum_{j = N}^\infty \|F_j\|_2^2  + 2 \Big( \sum_{j = N}^\infty \|F_j\|_2^2 \Big)^{\frac12} \le \frac{\epsilon}{10}\,.
    \end{equation}
    By Lemma \ref{lem_continuous}, there exists $\delta$ sufficiently small so \eqref{eq:delta_cty} implies 
    \begin{equation}
        \label{eq F' upper}
        \sum_{j = N}^\infty \|F_j'\|_2^2 < \frac{\epsilon^2}{9\cdot 10^3}
    \end{equation}
    and
    \begin{equation}
        \label{eq:finite_cont}
        \metric(\mathcal{F}_\alpha(F \mathbf{1}_{[0,N]}), \mathcal{F}_\alpha(F' \mathbf{1}_{[0,N]})) < \frac{\epsilon}{10}\,.
    \end{equation}
    With the same argument used to prove \eqref{eq:tailF} but now starting from \eqref{eq F' upper} rather than \eqref{eq F upper}, we obtain
    \begin{equation}
        \label{eq:tailF'}
        \metric(\mathcal{F}_\alpha(F'), \mathcal{F}_\alpha(F' \mathbf{1}_{[0,N]})) \le \frac{\epsilon}{9}\,.
    \end{equation}
    The triangle inequality and \eqref{eq:tailF},\eqref{eq:finite_cont}, \eqref{eq:tailF'} together yield that 
    \[
        \metric(\mathcal{F}_\alpha(F), \mathcal{F}_\alpha(F')) < \epsilon
    \]
    whenever \eqref{eq:delta_cty} holds. This completes the proof of continuity. 
\end{proof}
Note that, in the previous proof, $\delta$  depended on $F$. This is necessary, since the map $\mathcal{F}_\alpha$ is not uniformly continuous. 
 Indeed, already for $n = 1$ and real-valued sequences supported only at $0$, the map 
\[
    T: F_0 \mapsto \begin{pmatrix} \sqrt{1 - F_0^2} & F_0\\ -F_0 & \sqrt{1 - F_0^2}\end{pmatrix}
\]
is not uniformly continuous from $(-1,1)$ into $\mathbf{L}_\alpha^+$. For example, for every $n \ge 1$
\[
    d(T(1 - 2^{-n}), T(1 - 2^{-n-1})) \ge c > 0.
\]

By continuity, the basic properties of the nonlinear Fourier transform extend to half-line sequences.
\begin{lemma}
    \label{lem:propertiesl2}
    The NLFT defined in Lemma \ref{lem:ext_l2} satisfies all the properties listed in Lemmas \ref{lem:properties} and  \ref{lem_MultilinearExpansion}. It satisfies analogs of the  Plancherel identities \eqref{plancherel} and  \eqref{plancherel_ineq}, namely
    \begin{equation}
        \label{eq hl plancherel}
    \sum\limits_{j \in \Z} \log  \det H_j=  \sum\limits_{j \in \Z} \log \det E_j= \log \det A(\infty) \leq  \int\limits_{\T} \log \lvert\det A(z)\rvert + \sum\limits_{i \in \Z} \log \lvert z_i\rvert \, ,
    \end{equation}
    where $(z_i)_{i \in \Z}$ denotes the zeros of $\det A^*$ and $\det D$. Equality in \eqref{eq hl plancherel} holds if and only if the singular inner factor of $\det A^*$ is trivial.
\end{lemma}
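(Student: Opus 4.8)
\textbf{Proof plan for Lemma \ref{lem:propertiesl2}.}

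The plan is to upgrade the properties of Lemmas \ref{lem:properties} and \ref{lem_MultilinearExpansion} and the Plancherel relations of Lemma \ref{lem plancherel} from finitely supported sequences to half-line $\ell^2$ sequences by a limiting argument, exploiting that $\mathcal{F}_\alpha(F \mathbf{1}_{[0,N]}) \to \mathcal{F}_\alpha(F)$ in $(\mathbf{L}_\alpha^+, \metric)$ as $N\to\infty$ and that $\metric$-convergence forces $L^2$-convergence of all four blocks on $\T$, $H^2$-convergence of $A^*, C^*, B, D$, hence pointwise convergence of the analytic extensions on $\D$ (and of $A$ on $\D^*$), and convergence of $\log\det A(\infty)$. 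For the algebraic identities in Lemma \ref{lem:properties}: each one is an equation between two continuous functions of the sequence (composition with $\ad$, conjugation, translation, etc.) and $\mathcal{F}_\alpha$ is continuous by Lemma \ref{lem:ext_l2}, so each identity passes to the limit from its validity on $F\mathbf{1}_{[0,N]}$; for ordered multiplicativity one also uses that if the support of $F$ lies to the left of that of $F'$ then the same is true for the truncations, and that multiplication $\mathbf{L}_\alpha\times\mathbf{L}_\alpha\to$ (matrix functions) is continuous in the relevant topology. For the frequency-support statements of Lemma \ref{lem_MultilinearExpansion}: each block of $\mathcal{F}_\alpha(F\mathbf{1}_{[0,N]})$ lies in the appropriate one-sided Hardy space, these spaces are closed in $L^2(\T)$, so the limit inherits the one-sided frequency support (for a half-line sequence the lower endpoints $c-d$, $-d$ etc.\ become $-\infty$, which is the content of the $H^2$ memberships in the definition of $\mathbf{L}_\alpha^+$); the product formulas \eqref{eq:constant_Fourier_A_D} for $A(\infty)$ and $D(0)$ pass to the limit because $A_N(\infty)\to A(\infty)$, and convergence of $\prod_{j=0}^N E_j$ is guaranteed by $\sum_j |\log\det E_j| < \infty$, which is \eqref{plancherel} applied to $F$, together with the fact that each $E_j \in \grp_{\alpha(0)}$ has positive diagonal. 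The layer-stripping identities \eqref{eq:layerstripping_1} similarly pass to the limit using $B_N(0)\to B(0)$, $D_N(0)\to D(0)$ and $D(0)$ invertible.

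For the Plancherel chain \eqref{eq hl plancherel}: the two equalities $\sum \log\det H_j = \sum\log\det E_j = \log\det A(\infty)$ follow by taking $N\to\infty$ in \eqref{plancherel} and \eqref{plancherel_ineq}, using that the partial sums converge (as above) and that $\log\det A_N(\infty)\to\log\det A(\infty)$ by definition of $\metric$. The inequality part requires comparing $\log\det A(\infty)$ with $\int_\T \log|\det A|$: since $A^* \in H^2(\D)$ and $\det A^*$ is a nonzero $H^{2/n}$ function (nonzero because $\det A(\infty)\neq 0$), its inner--outer factorization $\det A^* = b\, s\, o$ into a Blaschke product $b$, a singular inner function $s$, and an outer function $o$ gives, by the mean value property for $\log|o|$, the sub-mean-value inequality $\log|\det A(\infty)| = \log|b(0)| + \log|s(0)| + \log|o(0)| \le \log|o(0)| = \int_\T\log|\det A|$, with the deficit equal to $\log|b(0)| + \log|s(0)| = \sum_i \log|z_i| + \log|s(0)|$; here $(z_i)$ are the zeros of $\det A^*$, which by Remark \ref{rem:AD} coincide (with multiplicity) with those of $\det D$. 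Thus \eqref{eq hl plancherel} holds, and equality holds exactly when $\log|s(0)| = 0$, i.e.\ when the singular inner factor $s$ of $\det A^*$ is trivial.

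The main obstacle is the convergence of the infinite products $\prod_{j\ge 0}^{\curvearrowright} E_j$ and $\prod_{j\ge 0}^{\curvearrowright} H_j$ and the legitimacy of passing the Plancherel identity to the limit: one must check that $\sum_{j}|\log\det E_j|<\infty$ (equivalently $\sum_j|\log\det(\Id - F_jF_j^*)|<\infty$), which follows from $\eqref{eq:det_to_HS}$ and $F\in\ell^2$, and that this forces the noncommutative partial products $\prod_{j=0}^N E_j$ to converge in $\grp_{\alpha(0)}$ — this uses that the $E_j$ are triangular with positive diagonal and that $\|E_j - \Id\|$ is controlled by $\|F_j\|_2$ (from the explicit formula for $E_j = \sqrtt_{\alpha(0)}[(\Id + F_jF_j^*)^{-1}]$ and continuity of $\sqrtt$), so the product converges absolutely. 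A secondary technical point is justifying that the inner--outer factorization argument for $\det A^*$ applies even though $A^* \in H^2$ rather than $H^\infty$: this is fine because $\det A^*$ is a bounded-type function of class $H^{2/n} \subset N^+$ (Smirnov class) with nonzero boundary values a.e.\ and $\det A^*(0) \neq 0$, so the Riesz--Smirnov factorization theory applies and $\log|\det A^*| \in L^1(\T)$.
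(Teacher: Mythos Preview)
Your proposal is correct and follows essentially the same approach as the paper: pass all identities to the limit via continuity of $\mathcal{F}_\alpha$ (the paper dismisses this as ``straightforward limiting arguments''), and for the inequality use the inner--outer factorization $\det A^* = b\,s\,o$, noting that $\log|s(0)|\le 0$ with equality iff $s$ is trivial. Two small corrections: the explicit formula is $E_j = \sqrtt_{\alpha(0)}(\Id - F_jF_j^*)$, not $\sqrtt_{\alpha(0)}[(\Id + F_jF_j^*)^{-1}]$; and since $M$ is unitary-valued on $\T$, the block $A^*$ is actually in $H^\infty(\D)$, so $\det A^* \in H^\infty(\D)$ and your Smirnov-class caveat is unnecessary.
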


\begin{proof}
    Everything but the inequality in \eqref{eq hl plancherel} follows from straightforward limiting arguments. For the inequality, repeat the proof of Lemma \ref{lem plancherel}, but now using inner-outer factorization to write $\det A^*  = o b s$, where $o$ is outer, $b$ is an infinite Blaschke product, and  $s$ is singular inner. Then
    \[
    \log \lvert \det A (\infty)\rvert   =  \log \lvert o (\infty)\rvert + \log \lvert b(\infty)\rvert + \log \lvert s (\infty)\rvert  
    \]
    \[
    \leq \int\limits_{\T} \lvert \log o\rvert  + \sum\limits_{i\in\mathbb Z}  \log\lvert z_i\rvert  = \int\limits_{\T} \log \lvert \det A^* \rvert +  \sum\limits_{i\in\mathbb Z}  \log\lvert z_i\rvert \, . \qedhere
    \]
\end{proof}

\subsection{Layer stripping for the half-line}

Having defined the NLFT on $\ell^2(\mathbb{Z}_{\geq 0} ; \con)$, we now show that it is a homeomorphism between suitable spaces using the Layer Stripping Algorithm \ref{algo:layer_stripping}.

As a first step, we extend Lemmas \ref{layerstripping} and \ref{lem:layer_stripping_finite} to  infinite sequences.
\begin{lemma}\label{layerstripping_infinite}
    If
    \[
    \begin{pmatrix}
        A & B \\ C & D
    \end{pmatrix} = \mathcal{F}_{\alpha} (F)
    \]
    for some $F \in \ell^2 (\Z_{\geq 0}; \con)$, then \eqref{eq:stripping_def} holds.
\end{lemma}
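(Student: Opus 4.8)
The plan is to reduce the infinite-support case to the already-established finite-support case (Lemma~\ref{layerstripping}) by a continuity argument, using that $\mathcal{F}_\alpha$ is continuous on $\ell^2(\Z_{\geq 0};\con)$ (Lemma~\ref{lem:ext_l2}) and that the relevant evaluation maps $M \mapsto B(0)$, $M \mapsto D(0)$, and $M \mapsto D(0)^{-1}$ are continuous on the appropriate subset of $\mathbf{L}_\alpha^+$. First I would write $F^{(N)} := F\mathbf{1}_{[0,N]}$, so that $F^{(N)} \to F$ in $\ell^2$, and denote $M^{(N)} := \mathcal{F}_\alpha(F^{(N)})$ with blocks $A^{(N)}, B^{(N)}, C^{(N)}, D^{(N)}$, and $M := \mathcal{F}_\alpha(F)$ with blocks $A,B,C,D$. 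By Lemma~\ref{lem:ext_l2} we have $\metric(M^{(N)}, M) \to 0$, hence in particular $B^{(N)} \to B$ and $D^{(N)} \to D$ in $H^2(\D)$ (the off-diagonal blocks of elements of $\mathbf{L}_\alpha^+$ lie in $H^2(\D)$ by definition). Since $H^2(\D)$ convergence implies pointwise convergence on $\D$ (indeed locally uniform convergence, via the reproducing kernel / Cauchy integral bound $|f(z)| \lesssim_z \|f\|_{H^2}$), we get $B^{(N)}(0) \to B(0)$ and $D^{(N)}(0) \to D(0)$.

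The one point requiring a little care is passing to the limit in $D^{(N)}(0)^{-1}$. Here I would use that $D^{(N)}(0) \in \grp_{\alpha(1)}$ for every $N$, that $D(0) \in \grp_{\alpha(1)}$ as well (this is part of membership of $M$ in $\mathbf{L}_\alpha^+ \subseteq \mathbf{L}_\alpha$, guaranteed by Lemma~\ref{lem:complete} and its proof, where it is shown the limit lands in $\mathbf{L}_\alpha$ with $D(0)\in\grp_{\alpha(1)}$), and in particular $D(0)$ is invertible. Since matrix inversion is continuous at invertible matrices, $D^{(N)}(0)^{-1} \to D(0)^{-1}$, and therefore $B^{(N)}(0)D^{(N)}(0)^{-1} \to B(0)D(0)^{-1}$ in $\mathcal{M}$. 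By Lemma~\ref{lem:layer_strip_recover}, the map $S_\alpha$ is continuous (it is a homeomorphism onto $\mathcal{M}$), so
\[
S_\alpha\bigl(B^{(N)}(0)D^{(N)}(0)^{-1}\bigr) \longrightarrow S_\alpha\bigl(B(0)D(0)^{-1}\bigr)
\]
in $SU_\alpha(2n)$.

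Finally I would combine these limits. For each finite $N$, Lemma~\ref{layerstripping} applied to the finitely supported sequence $F^{(N)}$ (which is supported in $[0,N]\subseteq[0,\infty)$) gives
\[
\begin{pmatrix} E_0 & F_0 \\ G_0 & H_0 \end{pmatrix}
= S_\alpha\bigl(B^{(N)}(0)D^{(N)}(0)^{-1}\bigr),
\]
where the left-hand side is $Y_\alpha(F_0)$ — crucially independent of $N$, since $F^{(N)}$ and $F$ share the same zeroth entry $F_0$ for all $N \geq 0$. Letting $N \to \infty$, the left-hand side is constant while the right-hand side converges to $S_\alpha(B(0)D(0)^{-1})$; hence $Y_\alpha(F_0) = S_\alpha(B(0)D(0)^{-1})$, which is exactly \eqref{eq:stripping_def} for the NLFT of $F$. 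The main (very mild) obstacle is simply justifying the continuity of the evaluation-and-inversion map $M \mapsto B(0)D(0)^{-1}$ on $\mathbf{L}_\alpha^+$; everything else is an immediate appeal to Lemmas~\ref{lem:ext_l2}, \ref{lem:complete}, \ref{lem:layer_strip_recover}, and \ref{layerstripping}.
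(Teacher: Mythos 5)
Your proof is correct and follows essentially the same approach as the paper's: truncate $F$ to $F\mathbf{1}_{[0,N]}$, apply Lemma~\ref{layerstripping} to the truncations (noting the left side $Y_\alpha(F_0)$ is independent of $N$), and pass to the limit using continuity of $\mathcal{F}_\alpha$, convergence of the evaluations $B^{(N)}(0)\to B(0)$ and $D^{(N)}(0)\to D(0)$, invertibility of $D(0)$, and continuity of $S_\alpha$. The paper records the needed stability of the inverse by observing that $\det D^{(N)}(0)$ stays bounded away from zero (via the logarithmic term in the metric $\metric$), whereas you appeal directly to continuity of matrix inversion at the invertible limit $D(0)\in\grp_{\alpha(1)}$ — equivalent reasoning.
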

\begin{proof}
    By the definition of $\mathcal{F}_\alpha$ given in Lemma \ref{lem:ext_l2}, we have
    \begin{equation}
        \label{eq:dL_lmimit}
        \mathcal{F}_\alpha(F)= \lim_{\ell \to \infty} \mathcal{F}_\alpha(F \mathbf{1}_{[0,\ell]}) =: \lim_{\ell \to \infty} \begin{pmatrix} A_0 ^\ell & B_0 ^\ell \\ C_0 ^\ell & D_0 ^\ell\end{pmatrix}\,,
    \end{equation}
    where the limit is in $(\mathbf{L}_{\alpha}, \metric)$.
     Lemma \ref{layerstripping} states that
    $$
        Y_\alpha(F_0) = S_\alpha(B_0^{\ell} (0) D_0 ^{\ell}(0)^{-1})
    $$
    for all $\ell$. Then \eqref{eq:stripping_def} follows from continuity of $S_\alpha$, and  
    $$
        \lim_{\ell \to \infty} B_0 ^{\ell} (0) D_0 ^{\ell} (0)^{-1} = B_0(0) D_0 (0)^{-1}\,.
    $$
    The latter holds since by the convergence \eqref{eq:dL_lmimit} in $\mathbf{L}_\alpha$, we have that $B_0 ^{\ell} (0) \to B(0)$, that $D_0 ^{\ell} (0) \to D(0)$, and that $\det(D_{0} ^{\ell} (0))$ remains bounded away from zero as $\ell \to \infty$. 
\end{proof}

\begin{lemma}
    \label{lem:layer_stripping}
If $M_j \in \mathbf{U}_\alpha^+$, then $M_{j+1} \in \mathbf{U}_\alpha^+$, where $M_{j+1}$ is defined as in \eqref{eq:Mj_to_next}.
If $\nlf = \mathcal{F}_{\alpha} (F)$ for some $F \in \ell^2 (\Z_{\geq 0}; \con)$, then $F$ is the output of the Layer Stripping Algorithm \ref{algo:layer_stripping}.
\end{lemma}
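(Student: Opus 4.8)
The plan is to establish the two claims of Lemma \ref{lem:layer_stripping} in sequence. First I would prove that $\mathbf{U}_\alpha^+$ is preserved under the map $M_j \mapsto M_{j+1}$ of \eqref{eq:Mj_to_next}. The algebraic part of this computation has essentially already been carried out in the proof of Lemma \ref{lem:layer_stripping_finite}: taking $E_j,F_j,G_j,H_j$ to be the blocks of $S_\alpha(B_j(0)D_j(0)^{-1})$, defining $A',B',C',D'$ as in \eqref{eq_DefABCD''''}, and using unitarity of $M_j$ together with the definition of $S_\alpha$, one gets that $A',C'$ have frequency support in $(-\infty,0]$, $B',D'$ in $[0,\infty)$, and the vanishing relations \eqref{eq:vanishing_BC}; the difference now is that instead of Laurent polynomials we have $H^2$ blocks, but these are exactly the conditions in Property \ref{item:1st_prop_H} after applying $\ad(z^{-1})$ as in \eqref{eq:Mj+1_to_A'}. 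The normalization Property \ref{item:2nd_prop_H} follows verbatim from the argument there showing $A'(\infty) = E_j^{-1}A_j(\infty) \in \grp_{\alpha(0)}$ and $D'(0) = H_j^{-1}D_j(0) \in \grp_{\alpha(1)}$, using also that $A_j^*, D_j$ remain outer (multiplication by a constant invertible matrix preserves outerness). The new ingredient is verifying Property \ref{item:3rd_prop_H}, the absence of a common inner factor: I would argue that if $M_{j+1} = M_{j+1}' \diag(I_1,I_2)$ for inner $I_1^*, I_2$ and $M_{j+1}'$ satisfying Properties \ref{item:1st_prop_H}–\ref{item:2nd_prop_H}, then pulling back through \eqref{eq:Mj_to_next} — that is, setting $M_j' := S_\alpha(B_j(0)D_j(0)^{-1}) \, \ad(z)[M_{j+1}'] $ — one obtains $M_j = M_j' \diag(I_1, I_2)$ with $M_j'$ of the required form, so that Property \ref{item:3rd_prop_H} for $M_j$ forces $I_1 = I_2 = \Id$. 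One must check that $\ad(z)$ composed with multiplication by the constant unitary indeed sends the ``primed'' object back into the class where Properties \ref{item:1st_prop_H}–\ref{item:2nd_prop_H} hold; this is where the frequency-support bookkeeping and the vanishing relations \eqref{eq:vanishing_BC} are used, to ensure no negative (resp.\ positive) frequencies are created in the wrong block.

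For the second claim, suppose $\nlf = \mathcal{F}_\alpha(F)$ with $F \in \ell^2(\Z_{\geq 0};\con)$. I would show by induction on $j \geq 0$ that the iterate $M_j$ produced by the Layer Stripping Algorithm equals $\mathcal{F}_\alpha((F_{k+j}\mathbf{1}_{\{k\geq 0\}})_{k\in\Z})$, and hence lies in $\mathbf{U}_\alpha^+$ so that the algorithm is well-defined at every step. The base case $j=0$ is the hypothesis. For the inductive step, Lemma \ref{layerstripping_infinite} applied to $M_j$ gives that the block $S_\alpha(B_j(0)D_j(0)^{-1})$ equals $Y_\alpha(F_j)$; then by ordered multiplicativity (Lemma \ref{lem:properties}\ref{item:ordered_mult}, valid on the half-line by Lemma \ref{lem:propertiesl2}) and the translation property \ref{item:translation}, one has
\[
M_j = \mathcal{F}_\alpha((F_{k+j}\mathbf{1}_{\{k\geq 0\}})_k) = Y_\alpha(F_j)\, \ad(z)\big[\mathcal{F}_\alpha((F_{k+j+1}\mathbf{1}_{\{k\geq 0\}})_k)\big],
\]
which upon comparison with \eqref{eq:Mj_to_next} shows $M_{j+1} = \mathcal{F}_\alpha((F_{k+j+1}\mathbf{1}_{\{k\geq 0\}})_k)$, completing the induction. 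Since at each stage the recovered coefficient is the upper right block of $S_\alpha(B_j(0)D_j(0)^{-1}) = Y_\alpha(F_j)$, which is $F_j$, the algorithm outputs precisely $F$.

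The main obstacle I anticipate is the verification of Property \ref{item:3rd_prop_H} for $M_{j+1}$ — specifically, making rigorous the ``pull-back'' step that converts a hypothetical common inner factor of $M_{j+1}$ into one for $M_j$. One has to be careful that the operation inverse to \eqref{eq:Mj_to_next}, namely left multiplication by the constant $S_\alpha$-matrix followed by $\ad(z)$, actually lands $M_{j+1}'$ back in the space defined by Properties \ref{item:1st_prop_H}–\ref{item:2nd_prop_H}; this requires knowing that $M_{j+1}'$ inherits the vanishing-at-a-point relations analogous to \eqref{eq:vanishing_BC} (so that applying $\ad(z)$ does not push frequency support outside the Hardy spaces), which in turn should follow because $M_{j+1}'$ and $M_{j+1}$ differ only by the inner factor $\diag(I_1,I_2)$ on the right, and $I_1^*, I_2 \in H^2(\D)$ are unitary on $\T$. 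Everything else is bookkeeping already rehearsed in Lemmas \ref{lem:layer_stripping_finite} and \ref{layerstripping_infinite}, or a routine limiting/continuity argument.
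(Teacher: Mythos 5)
Your proof is correct and takes essentially the same route as the paper's: for the first claim, pull back a hypothetical inner factorization $M_{j+1}=M_{j+1}'\diag(I_1,I_2)$ to $M_j = Y_\alpha(F_j)\,\ad(z)(M_{j+1}')\,\diag(I_1,I_2)$, contradicting Property~\ref{item:3rd_prop_H} for $M_j$; for the second claim, show by induction, using Lemma~\ref{layerstripping_infinite} plus the multiplicativity and translation symmetries, that $M_j = \mathcal{F}_\alpha\bigl((F_{k+j}\mathbf{1}_{\{k\ge 0\}})_k\bigr)$. Two remarks on details. In the backward pull-back, the vanishing relations~\eqref{eq:vanishing_BC} play no role: they are what make the \emph{forward} application of $\ad(z^{-1})$ legal, whereas going backward you apply $\ad(z)$, which multiplies $B_{j+1}'$ by $z$ and $C_{j+1}'$ by $z^{-1}$, operations that always preserve the Hardy-space conditions of Property~\ref{item:1st_prop_H}. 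The actual content of the verification is just that $M_{j+1}'$ already satisfies Properties~\ref{item:1st_prop_H}--\ref{item:2nd_prop_H} by hypothesis (that is part of the definition of a competing factorization in Property~\ref{item:3rd_prop_H}), and that left-multiplication by the constant unitary $Y_\alpha(F_j)$ together with $\ad(z)$ preserves that class — a short direct computation that the paper also leaves implicit. Second, in the induction for the second claim you assert $M_j$ ``hence lies in $\mathbf{U}_\alpha^+$''; this is not available without circularity at this stage, since the fact that $\mathcal{F}_\alpha$ maps half-line $\ell^2$ sequences into $\mathbf{U}_\alpha^+$ is established only in Lemma~\ref{lem:injectivity_half_line}, which depends on the present lemma. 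Fortunately you do not need it: Lemma~\ref{lem:ext_l2} already places $M_j$ in $\mathbf{L}_\alpha^+$, which is exactly~\eqref{eq:Mj_space} and suffices for the algorithm step~\eqref{eq:Mj_to_next} to be well-defined.
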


\begin{proof}
    Let $\nlf_j \in \mathbf{U}_\alpha^+$, and assume to the contrary that $M_{j+1} \not\in \mathbf{U}_\alpha^+$. By Lemma \ref{lem:layer_stripping_finite}, $\nlf_{j+1}$ is unitary a.e.\ and Properties \ref{item:1st_prop_H}--\ref{item:2nd_prop_H} hold in Definition \ref{defH} for $\nlf=\nlf_{j+1}$. For our assumption to hold, Property \ref{item:3rd_prop_H} must then fail, i.e., there exists a factorization $M_{j+1} = M' I$ as in \eqref{eq:inner_factor}.
    Then, using that $\ad(z)$ is a group homomorphism which preserves the block diagonal matrix functions $I$,
    \begin{equation}
        \label{eq right multiplication}
        M_j =  Y_\alpha(F_j) \ad(z)(M_{j+1}) = Y_\alpha(F_j) \ad(z)(M' I) = Y_\alpha(F_j) \ad(z)(M') I.
    \end{equation}
    This is a factorization of the form \eqref{eq:inner_factor}, showing that $M_j \not\in \mathbf{U}_\alpha^+$, contradicting our assumption. Thus $\nlf_{j+1} \in \mathbf{U}_{\alpha} ^+$. 

We turn to the statement about the Layer Stripping Algorithm. By Lemma \ref{layerstripping_infinite}, it will follow from the claim that $M_j= \mathcal{F}_\alpha( (F_{k-j}\mathbf{1}_{[j,\infty)}(k))_{k \in \mathbb{Z}})$ for all $j\geq 0$. But the latter claim  follows by induction on $j \geq 0$, Lemma \ref{layerstripping} and the symmetries of Lemma \ref{lem:properties}. 
%
\end{proof}

By Lemma \ref{lem:layer_stripping}, the Layer Stripping Algorithm \ref{algo:layer_stripping}
has a well-defined output for any input $\nlf \in  \mathbf{U}_\alpha^+$. Thus, given such an $\nlf$
which \textit{a priori} is not known to be a nonlinear Fourier transform, 
we associate to it the sequence $F_0, F_1, \dotsc$ of coefficient matrices obtained by applying the Layer Stripping Algorithm \ref{algo:layer_stripping}.
We also define $M_0 :=M$ and, for $ j \geq 0$, define $M_{j+1}$ as in \eqref{eq:Mj_to_next}. We will use the definitions $F_j$ and $M_j$ in the remainder of the subsection without explicitly mentioning them again. We will need the inequality from the following lemma which, in combination with the Plancherel identity \eqref{plancherel}--\eqref{plancherel_ineq}, will yield that the only inner factors of a nonlinear Fourier transform as in \eqref{eq:inner_factor} must be trivial.

\begin{lemma}
   Let $\nlf \in \mathbf{U}_{\alpha} ^+$. For all $m \geq 0$, we have
    \begin{equation}
        \label{eq:plancherel_bound}
        \sum_{j =0}^{m-1} \lvert \log  \det E_j \rvert \le \lvert \log  \det A(\infty)\rvert\,.
    \end{equation}
\end{lemma}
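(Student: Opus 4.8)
The plan is to run the layer stripping recursion directly on $\nlf$ and track how $\det E_j$ relates to $\det A(\infty)$, without ever needing $(F_j)$ to be square summable. By Lemma~\ref{lem:layer_stripping} every iterate $\nlf_j$ (with $\nlf_0=\nlf$) lies in $\mathbf{U}_\alpha^+$, and since $S_\alpha(B_j(0)D_j(0)^{-1})$ is precisely the unique element $Y_\alpha(F_j)$ of $SU_\alpha(2n)$ with the prescribed upper right block $F_j$ (Lemma~\ref{lem:layer_strip_recover}), the recursion \eqref{eq:Mj_to_next} rearranges to
\[
\nlf_j = Y_\alpha(F_j)\,\ad(z)(\nlf_{j+1})\,,\qquad j\ge0\,,
\]
which is the first equality of \eqref{eq right multiplication}. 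First I would read off the upper left block of this identity with the conventions \eqref{eq:convention} and \eqref{eq:notation_conv}: since $\ad(z)$ multiplies the lower left block by $z^{-1}$, this reads $A_j = E_jA_{j+1}+F_j\,z^{-1}C_{j+1}$ on $\T$. Because $\nlf_{j+1}\in\mathbf{U}_\alpha^+$, Property~\ref{item:1st_prop_H} of Definition~\ref{defH} gives $C_{j+1}^*\in H^2(\D)$, i.e.\ $C_{j+1}$ has frequency support in $(-\infty,0]$, so $z^{-1}C_{j+1}$ has frequency support in $(-\infty,-1]$ and hence vanishes at $z=\infty$. Evaluating at $\infty$ then yields the multiplicative relation
\[
A_j(\infty) = E_j\,A_{j+1}(\infty)\,,\qquad j\ge0\,.
\]

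Next I would telescope this from $j=0$ to $j=m-1$, obtaining $A(\infty)=E_0E_1\cdots E_{m-1}\,A_m(\infty)$. Each of $A(\infty)$, $E_0,\dots,E_{m-1}$ and $A_m(\infty)$ is triangular with positive diagonal entries — the $E_j$ because $Y_\alpha(F_j)\in SU_\alpha(2n)$, and $A(\infty),A_m(\infty)$ by Property~\ref{item:2nd_prop_H} of Definition~\ref{defH} — so each has strictly positive determinant, and we may take real logarithms to get
\[
\log\det A(\infty) = \sum_{j=0}^{m-1}\log\det E_j + \log\det A_m(\infty)\,.
\]

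It then remains only to insert the signs. Unitarity of $Y_\alpha(F_j)$ gives $E_jE_j^* = \Id-F_jF_j^*$, a positive definite matrix of operator norm at most $1$, so $0<\det E_j\le1$ and $\log\det E_j\le0$; the same computation for the unitary matrix $\nlf$ on $\T$ gives $AA^*=\Id-BB^*$ there, hence $|\det A|\le1$ a.e.\ on $\T$, and as $\det A$ lies in some Hardy space $H^p(\D^*)$ with $\det A(\infty)>0$, the maximum principle forces $0<\det A(\infty)\le1$, i.e.\ $\log\det A(\infty)\le0$. Running the same argument for $\nlf_m$ gives $\log\det A_m(\infty)\le0$. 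Plugging these three facts into the telescoped identity,
\[
\sum_{j=0}^{m-1}\bigl|\log\det E_j\bigr| \;=\; \log\det A_m(\infty)-\log\det A(\infty) \;\le\; -\log\det A(\infty) \;=\; \bigl|\log\det A(\infty)\bigr|\,,
\]
which is \eqref{eq:plancherel_bound}. The only point requiring care is the frequency-support bookkeeping that kills the off-diagonal term $F_jz^{-1}C_{j+1}$ at $\infty$; once the relation $A_j(\infty)=E_jA_{j+1}(\infty)$ is in hand, the rest is routine telescoping and sign checking.
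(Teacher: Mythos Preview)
Your proof is correct and follows essentially the same route as the paper's: both establish the identity $A(\infty)=\bigl(\prod_{j=0}^{m-1}E_j\bigr)A_m(\infty)$ by killing the off-diagonal contribution at $\infty$ using the frequency support of $C_{j+1}$ (resp.\ $C_m$), then conclude via the sign observation that all three logarithms are nonpositive. The only cosmetic difference is that you telescope one layer at a time, whereas the paper writes $M=\mathcal{F}_\alpha(F\mathbf{1}_{[0,m)})\,\ad(z^m)(M_m)$ and reads off the upper-left block in a single step; the content is identical.
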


\begin{proof}
    We write
    \[
        M = \mathcal{F}_\alpha(F_j \mathbf{1}_{[0,m)}(j)) \ad(z^{m})(M_{m}) =: \begin{pmatrix} A' & B' \\ C' & D' \end{pmatrix}\begin{pmatrix} A_{m} & B_{m} z^{m} \\ C_{m}z^{-m} & D_{m} \end{pmatrix}.
    \]
    By Lemma \ref{lem_MultilinearExpansion}, the block $B'$ has frequency support in $[0,m-1]$. By Lemma \ref{lem:layer_stripping}, we have $\nlf_{m} \in \mathbf{U}_{\alpha} ^+$, and so $z^{-m} C_{m}(z)$ vanishes at $\infty$ to order $m$. Hence
    $$
        A(\infty) = A'(\infty) A_{m}(\infty) =   \left ( \prod_{j=0}^{m-1} E_j \right ) A_{m}(\infty) ,
    $$
    where we also used \eqref{eq:constant_Fourier_A_D}.
   Noting that all logarithms in \eqref{eq:plancherel_bound} are nonpositive, the lemma now follows by taking determinants together with the maximum principle which ensures $\det A_{m}( \infty) \le 1$.
\end{proof}

\begin{lemma}\label{lem:injectivity_half_line}
    The map $\mathcal{F}_\alpha$ is injective from $\ell^2(\mathbb{Z}_{\geq 0} ; \con)$ into $\mathbf{H}_\alpha^+$.
\end{lemma}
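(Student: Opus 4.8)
Injectivity is immediate from Lemma~\ref{lem:layer_stripping}: it states that $F$ is recovered from $\mathcal F_\alpha(F)$ by the Layer Stripping Algorithm~\ref{algo:layer_stripping}, so $\mathcal F_\alpha(F)=\mathcal F_\alpha(G)$ forces $F=G$. The real task is to show $\mathcal F_\alpha(F)\in\mathbf H_\alpha^+$. By Lemmas~\ref{lem:ext_l2} and~\ref{lem:propertiesl2}, $\mathcal F_\alpha(F)$ is $SU(2n)$-valued, lies in $\mathbf L_\alpha^+$, and satisfies $A(\infty)\in\grp_{\alpha(0)}$ and $D(0)\in\grp_{\alpha(1)}$, so Properties~\ref{item:1st_prop_H}--\ref{item:2nd_prop_H} of Definition~\ref{defH} hold. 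Hence everything reduces to Property~\ref{item:3rd_prop_H}: I must show that any factorization $\nlf:=\mathcal F_\alpha(F)=\nlf'\diag(I_1,I_2)$ as in \eqref{eq:inner_factor}, with $I_1^*,I_2\in H^2(\D)$ a.e.\ unitary and $\nlf'$ of the form \eqref{labeling_M_intro} satisfying Properties~\ref{item:1st_prop_H}--\ref{item:2nd_prop_H}, must be trivial, $I_1=I_2=\Id$.

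\textbf{Step 1 (propagate the inner factor through layer stripping).} Set $M_0=\nlf$, $M_0'=\nlf'$, and let $F_0,F_1,\dots$ and $M_1,M_2,\dots$ be produced by running the Layer Stripping Algorithm on $\nlf$. I would prove by induction on $j$ that $M_j=M_j'\diag(I_1,I_2)$, where each $M_j'$ is $U(2n)$-valued and satisfies the standing hypotheses \eqref{eq:Mj_space} of the algorithm (which are just Properties~\ref{item:1st_prop_H}--\ref{item:2nd_prop_H}). For the inductive step: from $B_j=B_j'I_2$, $D_j=D_j'I_2$ and invertibility of $I_2(0)=D'(0)^{-1}D(0)$ and of $D_j(0),D_j'(0)\in\grp_{\alpha(1)}$, one gets $B_j(0)D_j(0)^{-1}=B_j'(0)D_j'(0)^{-1}$, so the coefficient $F_j$ read off at stage $j$ is the same for $M_j$ and $M_j'$; thus $M_{j+1}':=\ad(z^{-1})[Y_\alpha(F_j)^{-1}M_j']$ is genuinely the layer-stripping iterate of $M_j'$. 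Lemma~\ref{lem:layer_stripping_finite} — whose proof only uses unitarity of the input together with \eqref{eq:Mj_space}, not that the input is an NLFT — then shows $M_{j+1}'$ again obeys \eqref{eq:Mj_space}, and since $\ad(z^{-1})$ is a group homomorphism fixing block-diagonal matrix functions, $M_{j+1}=\ad(z^{-1})[Y_\alpha(F_j)^{-1}M_j]=M_{j+1}'\diag(I_1,I_2)$, closing the induction.

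\textbf{Step 2 (let $j\to\infty$).} By Lemma~\ref{lem:layer_stripping}, $M_j=\mathcal F_\alpha((F_{j+k})_{k\ge0})$, and since $\sum_{i\ge j}\|F_i\|_2^2\to0$, continuity of $\mathcal F_\alpha$ at $0$ (Lemma~\ref{lem:ext_l2}; alternatively the bound of Lemma~\ref{lem:dist_Id}) gives $M_j\to\Id$ in $(\mathbf L_\alpha,\metric)$, so $A_j\to\Id$ and $D_j\to\Id$ in $L^2(\T)$. On $\T$ we have $A_j'=A_jI_1^*$ and $D_j'=D_jI_2^*$ (as $I_1,I_2$ are a.e.\ unitary), and $I_1^*,I_2^*$ are bounded on $\T$, so multiplying is $L^2$-continuous and $A_j'\to I_1^*$, $D_j'\to I_2^*$ in $L^2(\T)$. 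But $A_j'\in H^2(\D^*)$ and $D_j'\in H^2(\D)$ for all $j$, and these are closed subspaces of $L^2(\T)$, so $I_1^*\in H^2(\D^*)$ and $I_2^*\in H^2(\D)$; together with $I_1^*\in H^2(\D)$ and $I_2\in H^2(\D)$ (hence $I_2^*\in H^2(\D^*)$), this forces $I_1^*,I_2^*\in H^2(\D)\cap H^2(\D^*)=\{\text{constants}\}$. Thus $I_1,I_2$ are constant unitaries, and then $A(\infty)=A'(\infty)I_1$ with $A(\infty),A'(\infty)\in\grp_{\alpha(0)}$ gives $I_1\in\grp_{\alpha(0)}$, whence $I_1=\Id$ because a unitary triangular matrix with positive diagonal is the identity; likewise $D(0)=D'(0)I_2$ with $D(0),D'(0)\in\grp_{\alpha(1)}$ gives $I_2=\Id$.

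\textbf{Expected obstacle.} The delicate point is Step~1 — checking that the extracted coefficients do not feel the inner factor, so that $M_j'$ really is the layer-stripping orbit of $\nlf'$ and Lemma~\ref{lem:layer_stripping_finite} can be invoked verbatim, and hence that the factorization persists along the iteration. Once that is in place, the collapse is driven purely by the soft fact $H^2(\D)\cap H^2(\D^*)=\{\text{constants}\}$. The only genuinely routine nuisance is keeping the normalizations straight ($\grp_{\alpha(0)}$ versus $\grp_{1-\alpha(0)}$, value at $0$ versus value at $\infty$) across Definition~\ref{defH}, the space $\mathbf L_\alpha$, and the hypotheses \eqref{eq:Mj_space}.
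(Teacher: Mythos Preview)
Your proof is correct, and it takes a genuinely different route from the paper's in the crucial Step~2. Both arguments share your Step~1 (layer stripping is blind to the block-diagonal inner factor, so $M_j=M_j'\diag(I_1,I_2)$ persists), and both finish by the triangular normalization forcing a constant unitary in $\grp_{\alpha(0)}$ or $\grp_{\alpha(1)}$ to be the identity. The difference is in how constancy of $I_1,I_2$ is obtained. The paper applies the bound \eqref{eq:plancherel_bound} to $M'$ rather than $M$: since the stripped coefficients coincide, $\sum_j|\log\det E_j|\le|\log\det A'(\infty)|=|\log\det A(\infty)|-|\log|\det I_1(\infty)||$, and the Plancherel identity \eqref{eq hl plancherel} forces $|\det I_1(\infty)|=1$; then the maximum principle makes $\det I_1^*$ constant, and Lemma~\ref{lem:cst_det_matrix_inner} upgrades this to constancy of $I_1^*$ itself. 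Your argument instead sends $j\to\infty$, uses $M_j\to\Id$ in $L^2$ (from continuity of $\mathcal F_\alpha$ at $0$) to get $A_j'\to I_1^*$ and $D_j'\to I_2^*$ in $L^2$, and invokes closedness of $H^2(\D^*)$ and $H^2(\D)$ together with $H^2(\D)\cap H^2(\D^*)=\{\text{constants}\}$. Your route is softer---it bypasses Plancherel, the maximum principle, and the determinant lemma entirely---while the paper's route is more quantitative and ties into the Plancherel machinery developed throughout. Both are clean; yours has the minor advantage of not needing Lemma~\ref{lem:cst_det_matrix_inner} at this point.
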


\begin{proof}
We claim that $\mathcal{F}_{\alpha}$ maps $\ell^2 (\Z_{\geq 0}; \con)$ into $\mathbf{H}_{\alpha} ^+$. Then the claim and Lemma \ref{lem:layer_stripping} together show that Layer Stripping is a left inverse for $\mathcal{F}_{\alpha}$, implying the injectivity of the latter.

    We now show that $\mathcal{F}_{\alpha}$ maps $\ell^2 (\Z_{\geq 0}; \con)$ into $\mathbf{H}_{\alpha} ^+$. Let $\nlf = \mathcal{F}_\alpha(F)$. By definition, $\nlf$ is a limit of $SU(2n)$-valued functions, and so must be a.e.\ $SU(2n)$-valued. Again using limits, Property \ref{item:2nd_prop_H} of Definition \ref{defH} holds, whereas Property \ref{item:1st_prop_H} holds by Lemma \ref{lem_MultilinearExpansion} for $(c,d)=(0,\infty)$. We must now show Property \ref{item:3rd_prop_H} holds. Suppose that $M = M' I$ is a factorization as in \eqref{eq:inner_factor}. 
    
    Note that the Layer Stripping Algorithm \ref{algo:layer_stripping} applied to $M$ and $M'$ produces the same coefficient sequence. Indeed, the right side of \eqref{eq:stripping_def} clearly does not change if $M'$  is multiplied by the diagonal block matrix $I$. Furthermore, as the computation \eqref{eq right multiplication} shows, the stripping procedure also commutes with right multiplication by block diagonal matrix functions.

    It follows from \eqref{eq:plancherel_bound} that 
    $$
        \sum_{j =0}^\infty \lvert \log  \det E_j \rvert \le \lvert \log  \lvert\det A'(\infty)\rvert \rvert 
        = \lvert \log \lvert \det A(\infty) \rvert \rvert - \lvert \log \lvert \det I_1 (\infty) \rvert \rvert \,.
    $$
    By the Plancherel identity \eqref{eq hl plancherel},
    we must have $\lvert \det I_1 (\infty)\rvert = 1$. By the maximum principle, $\det I_1$ is constant. Since $I_1 ^*$ is an inner function, it follows  from Lemma \ref{lem:cst_det_matrix_inner} that $I_1$ is constant. Similarly, but using the version of \eqref{plancherel}--\eqref{plancherel_ineq} for $D$ as in Remark \ref{rem:AD}, we also obtain  that $I_2$ is constant. Since $A(\infty), A'(\infty) \in \grp_{\alpha(0)}$ and $D(0), D'(0) \in \grp_{\alpha(1)}$, it follows that $I_1 \in \grp_{\alpha(0)}$ and $I_2 \in \grp_{\alpha(1)}$, and so both must equal $\Id$.
\end{proof}

We next  verify surjectivity of the nonlinear Fourier transform onto $\mathbf{U}_\alpha^+$. 

\begin{lemma}\label{lem surj}
    The map $\mathcal{F}_{\alpha}$ is surjective from $\ell^2(\mathbb{Z}_{\geq 0} ; \con)$ onto $\mathbf{U}_\alpha^+$.
\end{lemma}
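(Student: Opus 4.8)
The plan is to prove surjectivity by a compactness and limiting argument, approximating a given $\nlf \in \mathbf{U}_\alpha^+$ by the partial NLFTs generated during the Layer Stripping Algorithm and showing these converge back to $\nlf$. Fix $\nlf \in \mathbf{U}_\alpha^+$, and run the Layer Stripping Algorithm to produce the coefficient sequence $(F_j)_{j \geq 0}$ and the iterates $M_j$, each of which lies in $\mathbf{U}_\alpha^+$ by Lemma \ref{lem:layer_stripping}. The first key step is to show $(F_j)_{j \geq 0} \in \ell^2(\Z_{\geq 0}; \con)$. Each $F_j$ is contractive since it is the upper right block of an element $S_\alpha(B_j(0)D_j(0)^{-1}) \in SU_\alpha(2n)$. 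For square-summability, I would apply the inequality \eqref{eq:plancherel_bound}: the partial sums $\sum_{j=0}^{m-1} |\log \det E_j|$ are bounded uniformly in $m$ by $|\log \det A(\infty)|$, which is finite because $A^* \in H^2(\D)$ is nonvanishing at $\infty$ (as $A(\infty) \in \grp_{\alpha(0)}$ is invertible). Hence $\sum_j |\log \det E_j| < \infty$; since $\det E_j E_j^* = \det(\Id - F_j F_j^*)$ and $-\log\det(\Id - F_j F_j^*) \gtrsim \|F_j\|_2^2$ once $\|F_j\|_\infty$ is bounded away from $1$ (which it is for all large $j$ once the tail sum is small), we conclude $(F_j) \in \ell^2$.

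Having $G := (F_j)_{j \geq 0} \in \ell^2(\Z_{\geq 0}; \con)$, I would set $N := \mathcal{F}_\alpha(G) \in \mathbf{H}_\alpha^+$ and aim to show $N = \nlf$. The idea is that $\mathcal{F}_\alpha(G \mathbf{1}_{[0,m)})$ is, by the layer-stripping recursion, precisely $\nlf \cdot \ad(z^m)(M_m)^{-1}$-type expression — more carefully, by construction $M = \mathcal{F}_\alpha(G\mathbf{1}_{[0,m)}) \cdot \ad(z^m)(M_m)$, i.e. $\mathcal{F}_\alpha(G\mathbf{1}_{[0,m)}) = M \cdot \ad(z^m)(M_m^{-1})$. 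As $m \to \infty$, the left side converges to $N$ in $(\mathbf{L}_\alpha, \metric)$ by definition of the $\ell^2$-extension. So it suffices to show $\ad(z^m)(M_m) \to \Id$, or at least that $M \cdot \ad(z^m)(M_m^{-1}) \to M$, in the relevant topology. The key point is that $M_m \in \mathbf{U}_\alpha^+$ has its blocks $A_m^*, C_m^*, B_m, D_m$ in $H^2(\D)$, and conjugation by $z^m$ pushes the frequency support of the off-diagonal blocks of $M_m$ out to $[m, \infty)$ and $(-\infty, -m]$; combined with an $L^2$ bound on $M_m$ that does not blow up — indeed $\|M_m\|_{L^2}^2 = 2n$ since $M_m$ is unitary — this forces the off-diagonal blocks of $\ad(z^m)(M_m)$ to tend to $0$ weakly, and one upgrades to the needed convergence using that $\log\det A_m(\infty) \to 0$ (from the convergent series $\sum_j \log \det E_j$ together with $A(\infty) = (\prod_{j=0}^{m-1} E_j) A_m(\infty)$). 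One then checks $A_m(\infty) \to \Id$, $D_m(0) \to \Id$, and that the Hilbert--Schmidt distance $\int_\T \|M_m - \Id\|_2^2 \to 0$, exactly as in the proof of Lemma \ref{lem:ext_l2}, so that $\metric(\mathcal{F}_\alpha(G \mathbf{1}_{[0,m)}), M) \to 0$, giving $N = M$.

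Finally, I should confirm this actually lands $\nlf$ in the image: we have produced $G \in \ell^2(\Z_{\geq 0}; \con)$ with $\mathcal{F}_\alpha(G) = \nlf$, which is precisely surjectivity onto $\mathbf{U}_\alpha^+$ (note $\nlf$ was an arbitrary element of $\mathbf{U}_\alpha^+$, and since NLFTs are automatically $SU(2n)$-valued, this also re-derives that $\mathbf{U}_\alpha^+ \subseteq \{$a.e.\ $SU(2n)$-valued$\}$, feeding into Corollary \ref{cor:H_equals_U}).

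I expect the main obstacle to be the convergence $\ad(z^m)(M_m) \to \Id$ in the metric $\metric$: the frequency-support argument gives weak-$L^2$ decay of the off-diagonal blocks for free, but promoting this to strong $L^2$ convergence of $M_m$ to $\Id$ genuinely uses the Plancherel bookkeeping — one must extract, from finiteness of $\sum_j |\log\det E_j|$ and its tail going to zero, a quantitative statement that $\int_\T \|M_m - \Id\|_2^2$ and $|\log\det A_m(\infty)|$ both vanish, which is where the telescoping identity $A(\infty) = (\prod_{j<m} E_j)\, A_m(\infty)$ and the Hilbert--Schmidt computation from Lemma \ref{lem:dist_Id} (applied to the ``tail'' transform $\ad(z^m)(M_m)$, suitably interpreted) have to be combined carefully. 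A secondary subtlety is ensuring all manipulations with $M_m^{-1}$ and with evaluations at $0$ and $\infty$ are legitimate, i.e.\ that $\det D_m(0)$ stays bounded away from $0$ along the iteration, which again follows from the convergent product $\prod_j \det H_j$.
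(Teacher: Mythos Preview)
Your approach has a genuine gap at the step where you claim $\log\det A_m(\infty)\to 0$. From the product formula $A(\infty)=(\prod_{j<m}E_j)\,A_m(\infty)$ you only get
\[
\log\det A_m(\infty)=\log\det A(\infty)-\sum_{j=0}^{m-1}\log\det E_j,
\]
which converges to $\log\det A(\infty)-\sum_{j\ge 0}\log\det E_j$; the bound \eqref{eq:plancherel_bound} says this limit is $\le 0$, not that it equals $0$. Equality here is precisely the statement that no mass has leaked into a nontrivial block-diagonal inner factor, and that is exactly what Property~\ref{item:3rd_prop_H} in the definition of $\mathbf{U}_\alpha^+$ is designed to rule out --- yet your argument never invokes it. Without that property the conclusion $M_m\to\Id$ (and hence $\metric(\mathcal{F}_\alpha(G\mathbf{1}_{[0,m)}),M)\to 0$) is simply not available: the Hilbert--Schmidt computation of Lemma~\ref{lem:dist_Id} applied to $M_m$ bounds $\int_\T\|M_m-\Id\|_2^2$ by $|\log\det A_m(\infty)|+|\log\det D_m(0)|$, so it reduces to the very quantity you cannot control.

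The paper instead compares $M$ with $M':=\mathcal{F}_\alpha(F)$. A frequency-support argument shows the upper-right block of ${M'}^{-1}M$ vanishes identically, which expresses each $B_m$ in terms of the \emph{NLFT} tail iterates $M'_m$; since $M'_m\to\Id$ strongly (here one \emph{can} use the $\ell^2$ theory, because $M'$ is an NLFT by construction), one gets $B_m,C_m\to 0$ strongly. Then $A_m^*,D_m$ are shown to be Cauchy in $H^2(\D)$, converging to unitary-valued $I_1^*,I_2$, so that $M=M'\,\mathrm{diag}(I_1,I_2)$. Only now is Property~\ref{item:3rd_prop_H} invoked to force $I_1=I_2=\Id$. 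In short: you tried to prove $M_m\to\Id$ directly, but the correct order is to first identify the limit as a possible inner factor and \emph{then} eliminate it using the defining property of $\mathbf{U}_\alpha^+$.
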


\begin{proof}
    Let $M \in \mathbf{U}_\alpha^+$, and let $F$ be the sequence produced by the layer stripping algorithm applied to $M$. By \eqref{eq:plancherel_bound}, we have 
    \begin{equation}
        \label{eq:F_squaresum}
        \sum_{j \in \mathbb{Z}\cap[0,\infty)} \frac{1}{2} \lvert \log \det(\Id - F_jF_j^*) \rvert = \sum_{j \in \mathbb{Z} \cap [0,\infty)} \lvert \log E_j \rvert < \infty\,,
    \end{equation}
    and hence $F \in \ell^2(\mathbb{Z}_{\geq 0}; \con)$. Defining
    \[
        M' := \mathcal{F}_\alpha(F),
    \]
     it then remains to show that $M' = M$. Introducing the notation 
    \[
        M_{\le N} := \mathcal{F}_\alpha(F\mathbf{1}_{[0,N]}),
    \]
     we then have $M_{N+1} = \ad(z^{-N-1})(M_{\le N}^{-1} M)$. We also denote analogously
    \[
        M'_{N+1} :=  \ad(z^{-N-1})(M_{\le N}^{-1} M') = \mathcal{F}_\alpha((F_{j + N + 1} \mathbf{1}_{\{j \geq 0\}})_{j \in \mathbb{Z}}),
    \]
    where the second equality follows from the multiplicativity property \ref{item:ordered_mult} of Lemma \ref{lem:properties}. Denote the blocks of $M_N '$ by $A_N '$, $B_N '$, $C_N '$ and $D_N '$. We have, for all $N$,
    \begin{equation}
        \label{eq surj matrix}
        {M'}^{-1} M = \ad(z^{N+1})({M'_{N}}^{-1} M_{N}) 
        = \begin{pmatrix}
            * & z^{N+1}({A'}_{N}^*B_{N} + {C'}_{N}^* D_{N})\\
            * & *
        \end{pmatrix}\,.
    \end{equation}
    Because $\nlf_{N} \in \mathbf{U}_{\alpha} ^+$ by Lemma \ref{lem:layer_stripping}, the matrices $B_{N}$ and $C_{N}^*$ have frequency support in $[0, \infty)$. Also recalling that $A_N '$ and $D_N '$ have frequency support in $(-\infty, 0]$ and $[0, \infty)$ by Lemmas \ref{lem:propertiesl2} and \ref{lem_MultilinearExpansion}, we then have that the upper right entry of the matrix \eqref{eq surj matrix} has frequency support in $[N+1, \infty)$. Since the left side of \eqref{eq surj matrix} is independent of $N$, it follows that its upper right entry is zero. 
    In particular, we have for all $N$:
    $$
        B_{N} = - {A'_{N}}^{-*} {C'_{N}}^* D_{N}\,.
    $$
    By continuity of $\mathcal{F}_\alpha$, the functions $M'_{N}$ converge to the identity in $L^2(\mathbb{T})$. In particular, $A_{N}'$ converges to the identity and $C'_{N}$ converges to zero. By passing to a subsequence if necessary, this convergence is pointwise a.e. Combining this with the boundedness of $D_N$ it follows that, along the chosen subsequence, pointwise a.e.\ we have 
    \begin{equation}\label{eq:BN_to_0}
        B_N  \to 0.
    \end{equation}
    This alongside with boundedness of $B_N$ and  dominated convergence yield that $B_{N}$ converges to zero in $L^2(\mathbb{T})$. A similar argument applies to $C_{N}$. 
    From the Layer Stripping Algorithm \ref{algo:layer_stripping}, it follows that for $N < L$ 
    \begin{equation}\label{eq:MN_to_ML}
        M_N = \mathcal{F}_\alpha( (F_{j+N} \mathbf{1}_{\{0\leq j < L-N\}})_{j \in \mathbb{Z}}) \ad(z^{L-N})(M_L). 
    \end{equation}
    By unitarity of $\ad(z^{L-N})(M_L)$, as in the proof of \eqref{eq product l2}, we have
    \[
        \Big(\int_{\T} \|M_N - \ad(z^{L-N})(M_L)\|_2^2 \Big)^{1/2} \le \metric(\mathcal{F}_\alpha( (F_{j+N} \mathbf{1}_{\{0\leq j < L-N\}})_{j \in \mathbb{Z}}), \Id) \, ,
    \]
    which, by Lemma \ref{lem:dist_Id}, is at most
    \[
        \sum_{j =N}^{L-1} \lvert\log\det(\Id - F_jF_j^*)\rvert + 2\Big(\sum_{j =N}^{L-1} \lvert\log\det(\Id - F_jF_j^*)\rvert\Big)^{\frac12} \, .
    \]
    Since the diagonal blocks of $\nlf_N$ and $\ad(z^{L-N}) \nlf_L$ are $A_N, D_N$ and $A_{L}, D_{L}$, respectively, then  \eqref{eq:F_squaresum} implies that $A_{N}^*$ and $D_{N}$ are Cauchy in $H^2(\mathbb{D})$. Hence they converge to $H^2 (\D)$ functions $I_1 ^*$ and $I_2$. Thus
    $$
        M_{N} \to \begin{pmatrix}
            I_1 & 0\\
            0 & I_2
        \end{pmatrix} \qquad \text{in $L^2(\mathbb{T})$.}
    $$
    Since all $M_{N}$ are unitary on the torus, the same applies to $I_1$ and $I_2$ a.e.  Taking limits in \eqref{eq surj matrix} and using that $M_N' \to \Id$ yields
    \begin{equation*}
        M = M' \begin{pmatrix}
            I_1 & 0\\
            0 & I_2
        \end{pmatrix} = M' I\,.
    \end{equation*}
    Recalling that $M \in \mathbf{U}_{\alpha}^+$, then Property \ref{item:3rd_prop_H} of  Definition \eqref{defH} must hold, i.e.,  $I_1 = I_2 = \Id$. Thus $M = M'$ as needed. 
\end{proof}

As a corollary, we obtain $\mathbf{U}_\alpha^+ = \mathbf{H}_\alpha^+$.
\begin{cor}
 \label{cor:H_equals_U}
   The spaces $\mathbf{H}_{\alpha} ^+$, $\mathbf{U}_{\alpha} ^+$ and $\mathcal{F}_{\alpha} (\ell^2 (\Z_{\geq 0} ; \con))$ are all equal. 
\end{cor}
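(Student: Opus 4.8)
The plan is to close a cycle of inclusions. Lemma~\ref{lem surj} gives $\mathbf{U}_\alpha^+ \subseteq \mathcal{F}_\alpha(\ell^2(\Z_{\geq 0};\con))$, and Lemma~\ref{lem:injectivity_half_line} gives $\mathcal{F}_\alpha(\ell^2(\Z_{\geq 0};\con)) \subseteq \mathbf{H}_\alpha^+$. So it suffices to prove the one remaining link $\mathbf{H}_\alpha^+ \subseteq \mathbf{U}_\alpha^+$; once this is in place the three sets coincide, and together with the injectivity from Lemma~\ref{lem:injectivity_half_line} we conclude that $\mathcal{F}_\alpha$ is a bijection of $\ell^2(\Z_{\geq 0};\con)$ onto this common space.

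The inclusion $\mathbf{H}_\alpha^+ \subseteq \mathbf{U}_\alpha^+$ is the only non-formal point: the two spaces differ solely in the determinant constraint imposed on the auxiliary function $M'$ in Property~\ref{item:3rd_prop_H} --- $SU(2n)$-valued for $\mathbf{H}_\alpha^+$, merely $U(2n)$-valued for $\mathbf{U}_\alpha^+$ --- so membership in $\mathbf{H}_\alpha^+$ only tests the factorization clause against a smaller class of competitors. I would handle this by revisiting the proofs of Lemmas~\ref{lem:injectivity_half_line} and~\ref{lem surj} and observing that they are insensitive to this distinction. Concretely: every invocation of $\mathbf{U}_\alpha^+$-membership in those proofs is used only through the Hardy-space and group conditions collected in \eqref{eq:Mj_space} (which every element of $\mathbf{H}_\alpha^+$ also satisfies, and which persist under the layer-stripping recursion by Lemma~\ref{lem:layer_stripping_finite}), together with the Plancherel bound \eqref{eq:plancherel_bound}, the Plancherel identity \eqref{eq hl plancherel}, the maximum principle, and Lemma~\ref{lem:cst_det_matrix_inner}; and the auxiliary function $M'$ that actually appears in the factorization \eqref{eq:inner_factor} in both proofs is itself an NLFT, hence automatically $SU(2n)$-valued, so Property~\ref{item:3rd_prop_H} exactly as stated for $\mathbf{H}_\alpha^+$ is precisely what is needed. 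Running the layer-stripping argument of Lemma~\ref{lem surj} on an arbitrary $M \in \mathbf{H}_\alpha^+$ then yields a sequence $F \in \ell^2(\Z_{\geq 0};\con)$ with $\mathcal{F}_\alpha(F) = M$, while the reasoning of Lemma~\ref{lem:injectivity_half_line} shows that any such $M = \mathcal{F}_\alpha(F)$ verifies Property~\ref{item:3rd_prop_H} even against $U(2n)$-valued competitors; together these give $M \in \mathbf{U}_\alpha^+$, closing the chain $\mathbf{U}_\alpha^+ \subseteq \mathcal{F}_\alpha(\ell^2(\Z_{\geq 0};\con)) \subseteq \mathbf{H}_\alpha^+ \subseteq \mathbf{U}_\alpha^+$.

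The main obstacle is entirely bookkeeping: one must audit the earlier proofs line by line to confirm that each appeal to $\mathbf{U}_\alpha^+$ rests only on properties shared by $\mathbf{H}_\alpha^+$, most importantly that the layer-stripping iterates $M_j$ of an element of $\mathbf{H}_\alpha^+$ stay in the class \eqref{eq:Mj_space}, which is what keeps the Hardy-space frequency-support statements used there (the vanishing of $z^{-m}C_m$ at $\infty$, analyticity of the relevant blocks, and so on) valid throughout. No genuinely new idea is required beyond this audit.
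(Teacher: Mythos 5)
Your proposal is correct, and it is actually more careful than the paper's own proof. The published argument begins by asserting that $\mathbf{H}_\alpha^+ \subseteq \mathbf{U}_\alpha^+$ ``by definition'' and then closes the cycle with Lemmas~\ref{lem:injectivity_half_line} and~\ref{lem surj}; but, as you rightly flag, that first step is not formally immediate. Since the competitor $\nlf'$ in Property~\ref{item:3rd_prop_H} is permitted to be merely $U(2n)$-valued in the definition of $\mathbf{U}_\alpha^+$, the ``no nontrivial right inner factor'' condition there is \emph{a priori} stronger than its $SU(2n)$ counterpart in $\mathbf{H}_\alpha^+$, so membership in the latter does not automatically give membership in the former. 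Your audit supplies precisely the missing justification: the layer-stripping argument of Lemma~\ref{lem surj} runs unchanged from an arbitrary $M \in \mathbf{H}_\alpha^+$, since the intermediate iterates $M_j$ are only needed in the Hardy-space class \eqref{eq:Mj_space} (preserved by Lemma~\ref{lem:layer_stripping_finite}) and the one appeal to Property~\ref{item:3rd_prop_H} is made against $\nlf' = \mathcal{F}_\alpha(F)$, which is $SU(2n)$-valued, giving $\mathbf{H}_\alpha^+ \subseteq \mathcal{F}_\alpha(\ell^2(\Z_{\geq 0};\con))$; while the Plancherel argument of Lemma~\ref{lem:injectivity_half_line} never uses $\det \nlf' = 1$, so it actually yields $\mathcal{F}_\alpha(\ell^2(\Z_{\geq 0};\con)) \subseteq \mathbf{U}_\alpha^+$. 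The chain $\mathbf{H}_\alpha^+ \subseteq \mathcal{F}_\alpha(\ell^2(\Z_{\geq 0};\con)) \subseteq \mathbf{U}_\alpha^+ \subseteq \mathcal{F}_\alpha(\ell^2(\Z_{\geq 0};\con)) \subseteq \mathbf{H}_\alpha^+$ then delivers the corollary without resting on the unsubstantiated ``by definition'' step, at the cost only of the bookkeeping you describe. This is a point worth raising with the authors.
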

\begin{proof}
    By definition, we have $\mathbf{H}_{\alpha} ^+ \subseteq \mathbf{U}_{\alpha} ^+$. But Lemmas \ref{lem:injectivity_half_line} and \ref{lem surj} yield
    \[
    \mathbf{U}_{\alpha} ^+ = \mathcal{F}_{\alpha} (\ell^2 (\Z_{\geq 0} ; \con)) \subseteq \mathbf{H}_{\alpha} ^+ \, .
    \]
    Thus all three spaces must be equal. 
\end{proof}

We finally show continuity of the inverse map on $\mathbf{H}_\alpha^+$.

\begin{lemma}\label{lem:inverse_NLFT_cts}
    The inverse map $\mathcal{F}_\alpha^{-1}$ is continuous from the space $\mathbf{H}_\alpha^+$ into $\ell^2(\mathbb{Z} ; \con)$.
\end{lemma}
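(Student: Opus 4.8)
By Lemmas \ref{lem:ext_l2}, \ref{lem:injectivity_half_line} and \ref{lem surj}, together with Corollary \ref{cor:H_equals_U}, the map $\mathcal{F}_\alpha$ is a continuous bijection from $\ell^2(\Z_{\geq 0};\con)$ onto $\mathbf{H}_\alpha^+=\mathbf{U}_\alpha^+$, and by Lemma \ref{lem:layer_stripping} its inverse is computed coefficientwise by the Layer Stripping Algorithm \ref{algo:layer_stripping}. So the plan is: fix $M\in\mathbf{H}_\alpha^+$ with $\mathcal{F}_\alpha^{-1}(M)=F$, take $M^{(k)}\to M$ in $\mathbf{H}_\alpha^+$ with $\mathcal{F}_\alpha^{-1}(M^{(k)})=F^{(k)}$, and show $F^{(k)}\to F$ in $\ell^2$. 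Write $M_j$, $M^{(k)}_j$ for the intermediate matrices produced by \eqref{eq:Mj_to_next} from $M$, $M^{(k)}$ (all lying in $\mathbf{H}_\alpha^+=\mathbf{U}_\alpha^+$ by Lemma \ref{lem:layer_stripping}), and $E_j$, $E^{(k)}_j$ for the upper left blocks of $Y_\alpha(F_j)$, $Y_\alpha(F^{(k)}_j)$.

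\textbf{Step 1: coefficientwise continuity.} First I would prove by induction on $j$ that $F^{(k)}_j\to F_j$ and $M^{(k)}_j\to M_j$ in $L^2(\T)$, with $A^{(k)}_j(\infty)\to A_j(\infty)$. Convergence of $M^{(k)}_j$ to $M_j$ in $\metric$ (resp.\ $L^2$) forces $B^{(k)}_j(0)\to B_j(0)$, $D^{(k)}_j(0)\to D_j(0)$ and $A^{(k)}_j(\infty)\to A_j(\infty)$, since the blocks lie in $H^2(\D)$ resp.\ $H^2(\D^*)$ and $H^2$-convergence is locally uniform on $\D$ resp.\ $\D^*$. As $D_j(0)\in\grp_{\alpha(1)}$ has positive determinant, $D^{(k)}_j(0)$ is invertible with uniformly bounded inverse for large $k$, so $B^{(k)}_j(0)D^{(k)}_j(0)^{-1}\to B_j(0)D_j(0)^{-1}$; then continuity of $S_\alpha$ (Lemma \ref{lem:layer_strip_recover}) gives $Y_\alpha(F^{(k)}_j)\to Y_\alpha(F_j)$, hence $F^{(k)}_j\to F_j$ and $E^{(k)}_j\to E_j$. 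Finally $M^{(k)}_{j+1}$ is $\ad(z^{-1})$ — an $L^2$-isometry on the blocks — applied to the product of $M^{(k)}_j$ with the convergent constant matrices $S_\alpha(B^{(k)}_j(0)D^{(k)}_j(0)^{-1})^{-1}$, so it converges to $M_{j+1}$ in $L^2$, with convergence of its upper left block at $\infty$; this closes the induction, the base case $j=0$ being $M^{(k)}_0=M^{(k)}\to M=M_0$.

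\textbf{Step 2: uniform tail estimate.} Since $F$ and each $F^{(k)}$ are genuine transforms, the Plancherel identity of Lemma \ref{lem:propertiesl2} gives $\sum_{j\geq 0}\lvert\log\det E_j\rvert=\lvert\log\det A(\infty)\rvert<\infty$, and likewise for each $M^{(k)}$. Given $\epsilon>0$, choose $N$ with $\sum_{j\geq N}\lvert\log\det E_j\rvert<\epsilon$. By Step 1, $\sum_{0\le j<N}\lvert\log\det E^{(k)}_j\rvert\to\sum_{0\le j<N}\lvert\log\det E_j\rvert$, and $\lvert\log\det A^{(k)}(\infty)\rvert\to\lvert\log\det A(\infty)\rvert$ because $\metric$ controls $\lvert\log\det A^{(k)}(\infty)-\log\det A(\infty)\rvert$; subtracting and invoking Plancherel for $M^{(k)}$ yields $\sum_{j\geq N}\lvert\log\det E^{(k)}_j\rvert<2\epsilon$ for all large $k$. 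Since $\lVert G\rVert_2^2=\sum_i\lambda_i(G)^2\le\sum_i-\log(1-\lambda_i(G)^2)=\lvert\log\det(\Id-GG^*)\rvert$ for any contraction $G$, and $\lvert\log\det(\Id-F_jF_j^*)\rvert=2\lvert\log\det E_j\rvert$ by unitarity of $Y_\alpha(F_j)$, we conclude $\sum_{j\geq N}\lVert F^{(k)}_j\rVert_2^2\le 4\epsilon$ for large $k$ and $\sum_{j\geq N}\lVert F_j\rVert_2^2\le 2\epsilon$.

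\textbf{Step 3: conclusion.} Finally I would combine the two steps: $\lVert F^{(k)}-F\rVert_{\ell^2}^2\le\sum_{0\le j<N}\lVert F^{(k)}_j-F_j\rVert_2^2+2\sum_{j\geq N}\lVert F^{(k)}_j\rVert_2^2+2\sum_{j\geq N}\lVert F_j\rVert_2^2\le\sum_{0\le j<N}\lVert F^{(k)}_j-F_j\rVert_2^2+12\epsilon$ for large $k$, and the finite sum tends to $0$ by Step 1, so $\limsup_k\lVert F^{(k)}-F\rVert_{\ell^2}^2\le 12\epsilon$; letting $\epsilon\to 0$ gives $F^{(k)}\to F$. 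I expect the main obstacle to be making Step 1 fully rigorous — tracking that each intermediate matrix $M^{(k)}_j$ converges in a topology strong enough for evaluation of the relevant blocks at $0$ and $\infty$ to be continuous, and that the stripping step is well-defined for $M^{(k)}$ uniformly in $k$ — whereas Step 2, once Plancherel is in hand, is comparatively soft.
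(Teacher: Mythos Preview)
Your proposal is correct and follows essentially the same approach as the paper: continuity of finite layer stripping (your Step 1, which the paper compresses to one sentence via continuity of $S_\alpha$), tail control by subtracting the finite-sum Plancherel identity from the full one (your Step 2, matching the paper's equations \eqref{eq small n 2}--\eqref{eq:homeo2}), and then the triangle-inequality combination. Your sequential phrasing and explicit induction in Step 1 are a bit more detailed than the paper's $\epsilon$--$\delta$ version, but the argument is the same.
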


\begin{proof}
    By the continuity of $S_\alpha$ from Lemma \ref{lem:layer_strip_recover}, the finite Layer Stripping Algorithm \ref{algo:layer_stripping} truncated after $N+1$ steps   
    \[
        \mathbf{H}_\alpha^+ \to \ell^2(\mathbb{Z} \cap [0,N]; \con), \quad M \mapsto (F_0, \dotsc, F_N)\, ,
    \]
    is continuous for each $N$. 

    Fix $\nlf \in \mathbf{H}_{\alpha} ^+$ and fix  $\epsilon > 0$. Let $\nlf' \in \mathbf{H}_{\alpha} ^+$, and assume $d(\nlf, \nlf')< \delta$. We show that for $\delta$ sufficiently small, that the layer-stripping outputs $F$ and $F'$ are close in $\ell^2$. First pick $N$ sufficiently large such that 
    \begin{equation}
        \label{eq:fj small}
        \Big(\sum_{j > N} \|F_j\|_2^2\Big)^{\frac12} < \epsilon\, .
    \end{equation}
    By the triangle inequality,  this implies
    \begin{equation} 
        \label{eq:homeo1}
        \Big(\sum_{j > N} \|F_j - F_j'\|_2^2\Big)^{\frac 12} <\epsilon + \Big(\sum_{j >N} \|F_j'\|_2^2\Big)^{\frac12}\,.
    \end{equation}
    By  continuity of the finite layer stripping algorithm, there exists $0 < \delta < \epsilon$ such that, whenever $\metric(M,M') < \delta$,  
    \begin{equation}
        \label{eq small n 2}
        \sum_{j \le N} \lvert\log \det(\Id - F_j F_j^*) - \log \det(\Id - F_j' {F_j'}^*) \rvert < \epsilon.
    \end{equation}
    By \eqref{plancherel} and the definition of the metric $\metric$, we also have when $\metric(M, M') < \delta$ 
    \[
        2\epsilon > 2\delta > \Big\lvert\sum_{j \ge 0} \log \det(\Id - F_j F_j^*) - \log \det(\Id - F_j' {F_j'}^*)\Big\rvert\,.
    \]
    Combining with \eqref{eq small n 2},
    \begin{equation}
        \label{eq:homeo2}
        3 \epsilon > \sum_{j > N} \lvert \log \det(\Id - F_j'{F_j'}^*)\rvert - \sum_{j > N} \lvert \log \det(\Id - F_j{F_j}^*)\rvert\,.
    \end{equation}
    For any contractive matrix $F'$, using that $e^x \ge 1 + x$ and that all singular values of $F'$ are at most $1$,
    $$
        \|F'\|_2^2 = \sum_{i=1}^n \lambda_i(F')^2 \le -  \sum_{i=1}^n \log(1 - \lambda_i(F')^2) = \lvert  \log \det(\Id - F'{F'}^*)\rvert\,.
    $$
    Applying this to the matrices $F_j'$ in \eqref{eq:homeo2} and combining with \eqref{eq:homeo1} yields
    $$
        \Big(\sum_{j > N} \|F_j - F_j'\|_2^2\Big)^{\frac12} \le \epsilon + \Big(3\epsilon + \sum_{j > N} \lvert \log \det(\Id - F_jF_j^*)\rvert\Big)^{\frac12}\,.
    $$
    By \eqref{eq:fj small}, we have $\|F_j\|_2 < \epsilon$ for $j > N$ . For $\epsilon$ sufficiently small, this implies 
    $$
        \lvert \log \det(\Id - F_j F_j^*)\rvert \le 2 \|F_j\|_2^2\,.
    $$
    Thus we finally obtain 
    $$
        \Big(\sum_{j > N} \|F_j - F_j'\|_2^2\Big)^{\frac12} \le \epsilon + \Big(3\epsilon + 2 \sum_{j > N} \|F_j\|_2^2 \Big)^{\frac12}
        < \epsilon + (3\epsilon + 2 \epsilon^2)^{\frac12}\,. 
    $$
    This completes the proof of continuity of the inverse map.    
\end{proof}
Computing the first nonlinear Fourier coefficient of an element $\nlf \in \mathbf{H}_{\alpha} ^+$ is a Lipschitz continuous process if we assume that the singular values of $D(0)$ are bounded from below.
\begin{lemma}\label{lem:first_layer_strip_Lip}
    Let $\epsilon > 0$, and let $\nlf, \nlf' \in \mathbf{H}_{\alpha} ^+$. If all the singular values of $D(0), D'(0)$ are at least $\epsilon$, then there exists a constant $C_{\epsilon, n}$ for which we have the Lipschitz bound
    \[
    \|F_0 - F_0 '\|_{\infty} \leq C_{\epsilon, n} \|\nlf - \nlf'\|_{L^2} \, .
    \]
\end{lemma}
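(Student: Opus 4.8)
The plan is to recover $F_0$ and $F_0'$ through the layer stripping map $S_\alpha$ of Lemma \ref{lem:layer_strip_recover} and exploit its Lipschitz continuity. Since $\mathbf{H}_\alpha^+ = \mathcal{F}_\alpha(\ell^2(\Z_{\geq 0};\con))$ by Corollary \ref{cor:H_equals_U}, both $\nlf$ and $\nlf'$ are nonlinear Fourier transforms, so Lemma \ref{layerstripping_infinite} and the identity \eqref{eq:stripping_def} show that $F_0$ is the upper right block of $S_\alpha(K)$ with $K := B(0)D(0)^{-1}$, and likewise $F_0'$ is the upper right block of $S_\alpha(K')$ with $K' := B'(0)D'(0)^{-1}$. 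As the upper right block of a matrix has operator norm no larger than that of the matrix, it suffices to carry out three steps: (i) verify $\|K\|_\infty, \|K'\|_\infty \leq \epsilon^{-1}$, so that the Lipschitz estimate from Lemma \ref{lem:layer_strip_recover} gives $\|F_0 - F_0'\|_\infty \leq C_{\epsilon,n}\|K - K'\|_\infty$; (ii) bound $\|K-K'\|_\infty$ in terms of $\|B(0) - B'(0)\|_\infty$ and $\|D(0) - D'(0)\|_\infty$; and (iii) bound those two quantities by $\|\nlf - \nlf'\|_{L^2}$.

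For step (i), I would use that $\nlf(z) \in U(2n)$ for a.e.\ $z \in \T$, so the $(2,2)$-block of $\nlf^*\nlf = \Id$ reads $B^*B + D^*D = \Id$ and forces $\|B(z)\|_\infty \leq 1$ a.e.; the mean value property $B(0) = \int_\T B$ then yields $\|B(0)\|_\infty \leq \int_\T \|B\|_\infty \leq 1$, and similarly $\|B'(0)\|_\infty \leq 1$. The hypothesis that the singular values of $D(0)$ and $D'(0)$ are at least $\epsilon$ gives $\|D(0)^{-1}\|_\infty, \|D'(0)^{-1}\|_\infty \leq \epsilon^{-1}$, whence $\|K\|_\infty \leq \|B(0)\|_\infty\|D(0)^{-1}\|_\infty \leq \epsilon^{-1}$, and likewise for $K'$. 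For step (ii), I would use the resolvent-type identity
\[
K - K' = (B(0) - B'(0))D(0)^{-1} + B'(0)D(0)^{-1}(D'(0) - D(0))D'(0)^{-1}
\]
together with the bounds from step (i) to get $\|K - K'\|_\infty \leq \epsilon^{-1}\|B(0) - B'(0)\|_\infty + \epsilon^{-2}\|D(0) - D'(0)\|_\infty$. For step (iii), the mean value property again gives $B(0) - B'(0) = \int_\T (B - B')$, and then by $\|\cdot\|_\infty \leq \|\cdot\|_2$ and Cauchy--Schwarz with respect to the uniform probability measure,
\[
\|B(0) - B'(0)\|_\infty \leq \int_\T \|B - B'\|_2 \leq \Big(\int_\T \|B - B'\|_2^2\Big)^{\frac12} \leq \|\nlf - \nlf'\|_{L^2},
\]
recalling the definition \eqref{defn:Lp_norm_def} of $\|\cdot\|_{L^2}$; the same bound holds with $D$ in place of $B$. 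Chaining (i)--(iii) produces the claimed inequality after renaming the constant.

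I do not expect a serious obstacle here: the argument is a composition of Lipschitz maps. The one place to be attentive is step (i), where the a priori bound $\|B(0)\|_\infty \leq 1$ (a consequence of unitarity on $\T$ together with the mean value property) must be combined with the hypothesis on $D(0)$ and $D'(0)$ in order to land $K$ and $K'$ in the region $\{\,\|\cdot\|_\infty \leq \epsilon^{-1}\,\}$ on which $S_\alpha$ is Lipschitz; without the lower bound on the singular values of the $D$-blocks the constant degenerates, which is precisely the restriction imposed in the statement.
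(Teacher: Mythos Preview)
Your proposal is correct and follows essentially the same approach as the paper's proof: both recover $F_0$ via $S_\alpha(B(0)D(0)^{-1})$, establish the uniform bound $\|B(0)D(0)^{-1}\|_\infty \le \epsilon^{-1}$ from unitarity and the mean value property, control the difference $K-K'$ by a resolvent-type identity, and pass from the point values at $0$ to $\|\nlf-\nlf'\|_{L^2}$ via the mean value property and Cauchy--Schwarz. The only cosmetic difference is that the paper routes the conclusion through $\|Y_\alpha(F_0)-Y_\alpha(F_0')\|_\infty$ before extracting $F_0$, whereas you take the upper right block of $S_\alpha$ directly.
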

\begin{proof}
In what follows, we let the constant $C_{\epsilon, n}$ change line to line.
By \eqref{eq:notation_conv}, the lemma will follow from 
\begin{equation}
    \label{e:Lip_Ya}
\|Y_{\alpha}(F_0) - Y_{\alpha}(F_0 ')\|_{\infty} \leq C_{\epsilon, n} \|\nlf - \nlf '\|_{L^2} \, ,
\end{equation}
 or, equivalently rewritten  using \eqref{eq:stripping_def}, 
\begin{equation}\label{eq:stripping to RH_factor}
\|S_{\alpha}( B (0) D (0)^{-1}) - S_{\alpha}( B ' (0) D ' (0)^{-1})\|_{\infty} \leq C_{\epsilon, n} \|\nlf - \nlf '\|_{L^2} \, .
\end{equation}
We claim the uniform bound
\begin{equation}\label{eq:uniform_bd}
\| B (0) D (0)^{-1}\|_{\infty}, \| B ' (0)   D ' (0)^{-1}\|_{\infty} \leq \epsilon^{-1} 
\end{equation}
and the difference bound
\begin{equation}\label{eq:difference_bd}
\| B (0) D (0)^{-1} -  B ' (0)   D ' (0)^{-1}\|_{\infty} \leq C_{\epsilon, n} \|\nlf - \nlf '\|_{L^2} \, . 
\end{equation}
Using the Lipschitz continuity of $S_{\alpha}$ on the set of matrices with singular values bounded by $C_{\epsilon, n}$, recall Lemma \ref{lem:layer_strip_recover}, estimates \eqref{eq:uniform_bd}--\eqref{eq:difference_bd} imply \eqref{eq:stripping to RH_factor}. 

To see \eqref{eq:uniform_bd}, the mean value property  applied to $B$ and unitarity of  $\nlf$ together imply
\[
\|B(0)D (0) ^{-1}\|_{\infty} \leq \|D (0) ^{-1}\|_{\infty} \leq \epsilon^{-1} \, ,
\]
and similarly for $B_+ (0) D_+ (0) ^{-1}$.

As for \eqref{eq:difference_bd}, its left side is at most 
\[
 \|B (0) - B '(0)\|_{\infty} \|D (0) ^{-1}\|_{\infty} +\|B ' (0)\|_{\infty} \|D (0) ^{-1} - D ' (0) ^{-1}\|_{\infty} \,.
\]
By unitarity of $\nlf '$ and the mean value property, we have $\|B ' (0)\|_{\infty} \leq 1$. Combined with rewriting the difference of inverses as the difference times the inverses, the left side of \eqref{eq:difference_bd} is at most
\[
 \|B (0) - B '(0)\|_{\infty} \|D(0)^{-1}\|_{\infty} + \|D(0)^{-1}\|_{\infty} \|D'  (0)  - D  (0) \|_{\infty} \|D' (0)^{-1}\|_{\infty}  \, .
\]
Using the lower bound on the singular values of $D(0)$ and $D'(0)$, this is at most 
\[
\epsilon^{-1} \|B (0) - B '(0)\|_{\infty} + \epsilon^{-2}  \|D ' (0)  - D  (0) \|_{\infty}  \, .
\]
Each term is then controlled using the mean value property, e.g.,
\[
\|B (0) - B '(0)\|_{\infty} \leq \int\limits_{\T} \|B - B ' \|_{\infty} \leq  \|B - B '\|_{L^2} \leq  \|\nlf - \nlf '\|_{L^2} \, .
\]
This completes the proof of the lemma.
\end{proof}

\subsection{The left half-line}

As a consequence of the reflection symmetry in Lemma \ref{item:reflection_2}, the results from the previous section apply just as well to sequences supported on the left half-line. For the convenience of the reader, we provide the resulting statements explicitly.  

Let $\mathbf{L}^-_{\alpha, 0}$ be the space of  functions 
\[
    M = \begin{pmatrix} A & B\\ C & D\end{pmatrix} \in \mathbf{L}_\alpha
\]
such that $B \in H^2(\mathbb{D}^*)$, and $C \in H^2(\mathbb{D})$, and $B(\infty) = C(0) = 0$. Equipping $\mathbf{L}^-_{\alpha, 0}$ with the metric $\metric$, it is then a closed subspace of $\mathbf{L}_\alpha$. 

    We define $\mathbf{H}_{\alpha,0} ^-$ to be the space of all matrix functions
    \begin{equation}\label{labeling_M_left}
        \nlf: \mathbb{T} \to SU(2n), \qquad M = \begin{pmatrix}
            A&B\\
            C&D
        \end{pmatrix}
    \end{equation}
    satisfying the following properties:
    \begin{enumerate}
        \item \label{item:1st_prop_H-} $A^*,z^{-1}B^*, z^{-1} C, D  \in H^2(\mathbb{D})$;
        \item \label{item:2nd_prop_H-} $A(\infty)\in \mathcal{G}_{\alpha(0)}$ and $D(0)\in \mathcal{G}_{\alpha(1)}$;
        \item  if there exist $I_1 ^*, I_2 \in H^2 (\D)$, both  unitary a.e.\ 
    on $\T$,
    and there exists $\nlf '$ of the form \eqref{labeling_M_intro} satisfying Properties \ref{item:1st_prop_H-} and \ref{item:2nd_prop_H-}, for which 
    \begin{equation}
        \label{eq:inner_factor_H-}
       \nlf = \begin{pmatrix} I_1 & 0 \\ 0 & I_2 \end{pmatrix} \nlf'  \, ,
    \end{equation}  
    then $J = K =\Id$. 
    \end{enumerate}
As with Corollary \ref{cor:H_equals_U}, the space $\mathbf{H}_{\alpha, 0} ^{-}$ may be equivalently defined by replacing $SU(2n)$ by $U(2n)$ in \eqref{labeling_M_left}. We also equip $\mathbf{H}^-_{\alpha, 0}$ with the metric $\metric$.


\begin{cor}
    \label{cor:left_half}
    The map $\mathcal F_\alpha$ extends to a homeomorphism 
    from $
        \ell^2(\mathbb{Z}_{<0} ; \con) $ to $ \mathbf{H}_{\alpha, 0} ^{-}\,.
    $
\end{cor}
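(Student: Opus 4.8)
The plan is to deduce Corollary \ref{cor:left_half} from Theorem \ref{thm:l2_extension} by transporting everything through the reflection symmetry of Lemma \ref{item:reflection_2}. Define the involution $R$ on matrix functions by $R(\nlf)(z) := \bigl(\begin{smallmatrix} 0 & \Id \\ \Id & 0\end{smallmatrix}\bigr)\, \nlf(z)^*\, \bigl(\begin{smallmatrix} 0 & \Id \\ \Id & 0\end{smallmatrix}\bigr)$, and on sequences the map $F \mapsto F_-^*$, where $(F_-)_j = F_{-j}$. With $\beta(x) := 1-\alpha(1-x)$, Lemma \ref{item:reflection_2} gives $R \circ \mathcal{F}_\alpha = \mathcal{F}_\beta \circ (F \mapsto F_-^*)$ on finitely supported sequences. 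The map $F \mapsto F_-^*$ is an isometric bijection from $\ell^2(\Z_{<0};\con)$ onto $\ell^2(\Z_{\ge 0};\con)$; since $\mathcal{F}_\alpha$ on the left half-line is defined by continuous extension from finitely supported sequences exactly as in Lemma \ref{lem:ext_l2}, the identity $R \circ \mathcal{F}_\alpha = \mathcal{F}_\beta \circ (F\mapsto F_-^*)$ persists on all of $\ell^2(\Z_{<0};\con)$ by density and continuity, once we check $R$ is continuous.

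First I would record the elementary properties of $R$: it is an isometric involution for the $L^2(\T)$-norm (conjugation and left/right multiplication by the fixed unitary $\bigl(\begin{smallmatrix} 0 & \Id \\ \Id & 0\end{smallmatrix}\bigr)$ preserve $\|\cdot\|_2$ pointwise), it swaps $U(2n)$ with itself and $SU(2n)$ with itself, and at the level of blocks it sends $\bigl(\begin{smallmatrix} A & B \\ C & D\end{smallmatrix}\bigr)$ to $\bigl(\begin{smallmatrix} D^* & C^* \\ B^* & A^*\end{smallmatrix}\bigr)$. Since $T^* \in H^2(\D)$ iff $T \in H^2(\D^*)$, applying $R$ interchanges the analyticity pattern of $\mathbf{H}_\beta^+$ (Definition \ref{defH}) with that of $\mathbf{H}_{\alpha,0}^-$: indeed $A^*,C^*,B,D \in H^2(\D)$ becomes $A, C, B^*, D^* \in H^2(\D^*)$, i.e. for the image the upper-left block's adjoint is in $H^2(\D)$, $D$ is in $H^2(\D)$, and $B^*, C$ are in $H^2(\D^*)$ — which after the relabeling forced by the block swap is exactly Property \ref{item:1st_prop_H-}, including the vanishing at $\infty$ resp.\ $0$ encoded there (this uses that in $\mathbf{H}_\beta^+$ the blocks $C^*$ and $B$ lie in $H^2(\D)$ with no constraint, whereas the swapped roles land in $z^{-1}H^2(\D^*)$ after accounting for which block becomes which). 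I would also check that $R$ sends the normalization $A(\infty) \in \grp_{\beta(0)}$, $D(0) \in \grp_{\beta(1)}$ to $D^*(0) = A(\infty)^* \in \grp_{\beta(0)}^* $... here one must be careful: the adjoint of an upper-triangular matrix with positive diagonal is lower-triangular with positive diagonal, so one needs $\grp_{\beta(x)}^* = \grp_{1-\beta(x)} = \grp_{\alpha(1-x)}$; tracing through the block swap (upper-left of the image is $D^*$, whose value at $\infty$ should lie in $\grp_{\alpha(0)}$, and indeed $D^*(\infty)=D(0)^* \in \grp_{\beta(1)}^* = \grp_{1-\beta(1)} = \grp_{\alpha(0)}$) confirms the normalizations match. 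Finally, the inner-factor condition Property \ref{item:3rd_prop_H} is carried by $R$ to Property (iii) of $\mathbf{H}_{\alpha,0}^-$, because $R(\nlf' \cdot \mathrm{diag}(I_1,I_2)) = \mathrm{diag}(I_2^*, I_1^*) \cdot R(\nlf')$ and $I^* $ is inner iff $I$ is, so right inner factors of $\nlf$ correspond to left inner factors of $R(\nlf)$, which is precisely the form \eqref{eq:inner_factor_H-}.

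Having established that $R$ restricts to an isometric bijection $\mathbf{H}_\beta^+ \to \mathbf{H}_{\alpha,0}^-$ respecting the metric $\metric$ (the logarithmic term $|\log\det A(\infty) - \log\det A'(\infty)|$ is preserved since $R$ replaces $A(\infty)$ by $D^*(\infty) = D(0)^*$ and $\log\det D(0) = \log\det A(\infty)$ by Lemma \ref{lem:det_holo_formula}, so the term is unchanged), the corollary follows: Theorem \ref{thm:l2_extension} says $\mathcal{F}_\beta\colon \ell^2(\Z_{\ge 0};\con) \to \mathbf{H}_\beta^+$ is a homeomorphism, the reflection $\sigma\colon F \mapsto F_-^*$ is a homeomorphism $\ell^2(\Z_{<0};\con) \to \ell^2(\Z_{\ge 0};\con)$, and $R$ is a homeomorphism $\mathbf{H}_\beta^+ \to \mathbf{H}_{\alpha,0}^-$; composing, $\mathcal{F}_\alpha = R^{-1} \circ \mathcal{F}_\beta \circ \sigma$ on $\ell^2(\Z_{<0};\con)$ is a homeomorphism onto $\mathbf{H}_{\alpha,0}^-$. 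The equivalence of the $SU(2n)$- and $U(2n)$-valued definitions of $\mathbf{H}_{\alpha,0}^-$ transfers from Corollary \ref{cor:H_equals_U} through $R$ as well.

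The main obstacle I anticipate is purely bookkeeping rather than conceptual: getting the block relabeling, the vanishing conditions $B(\infty)=C(0)=0$, and — most delicately — the index juggling $\beta(x) = 1-\alpha(1-x)$ together with the identity $\grp_a^* = \grp_{1-a}$ all to line up consistently across Properties \ref{item:1st_prop_H-}, \ref{item:2nd_prop_H-} and the inner-factor condition. One has to be scrupulous about whether "the upper-left block of the image" refers to $A$ or to $D^*$, and about which Hardy space ($H^2(\D)$ vs. $H^2(\D^*)$, with or without a factor of $z$) each swapped block lands in; a single sign error in $\alpha$ vs. $\beta$ or in the triangularity convention would break the statement. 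I would therefore write out one explicit $2\times 2$-block computation of $R$ applied to a generic element of $\mathbf{H}_\beta^+$ and verify term by term that it satisfies the defining conditions of $\mathbf{H}_{\alpha,0}^-$, rather than asserting it.
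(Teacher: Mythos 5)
Your strategy — transport Theorem \ref{thm:l2_extension} through the reflection symmetry of Lemma \ref{item:reflection_2} — is the paper's intended approach, but there is a genuine gap: the reflection is off by a shift, and the claim that $R$ restricts to a bijection $\mathbf{H}_\beta^+ \to \mathbf{H}_{\alpha,0}^-$ is false. Two points. First, a computational slip: $R\bigl(\begin{smallmatrix} A & B \\ C & D\end{smallmatrix}\bigr) = \bigl(\begin{smallmatrix} D^* & B^* \\ C^* & A^*\end{smallmatrix}\bigr)$, not $\bigl(\begin{smallmatrix} D^* & C^* \\ B^* & A^*\end{smallmatrix}\bigr)$ — the off-diagonal blocks keep their positions and only acquire a $*$, as one also reads off from $R(Y_\alpha(F))=Y_\beta(F^*)$ in the proof of Lemma \ref{item:reflection_2}, whose upper-right block is $F^*$. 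Second, and decisive: if $F$ is supported on $\Z_{<0}$ then $F_-^*$ is supported on $\Z_{>0}$, not $\Z_{\geq 0}$, so $F\mapsto F_-^*$ lands in the proper closed subspace $\{F:F_0=0\}$ of $\ell^2(\Z_{\geq 0};\con)$. Correspondingly, for $M=\bigl(\begin{smallmatrix} A & B \\ C & D\end{smallmatrix}\bigr)\in\mathbf{H}_\beta^+$ the upper-right block of $R(M)$ is $B^*$, and $B^*(\infty)=B(0)^*$ is in general nonzero (take $M=Y_\beta(F_0)$ with $F_0\neq 0$), so Property \ref{item:1st_prop_H-} — which forces the upper-right block of an element of $\mathbf{H}_{\alpha,0}^-$ to vanish at $\infty$ — fails. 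Your assertion that these vanishing conditions are "encoded" in the $H^2(\D)$ memberships of $\mathbf{H}_\beta^+$ is simply not true: $B\in H^2(\D)$ gives $B^*\in H^2(\D^*)$, not $z^{-1}H^2(\D^*)$, the discrepancy being exactly the constant Fourier coefficient $B(0)$.

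The repair is to insert one translation using Lemma \ref{lem:properties}\ref{item:translation}. Set $\sigma := T^{-1}\circ(F\mapsto F_-^*)$, a homeomorphism $\ell^2(\Z_{<0};\con)\to\ell^2(\Z_{\geq 0};\con)$; then the reflection and translation symmetries combine to give
\[
\mathcal{F}_\alpha(F) = R\bigl(\ad(z)\,\mathcal{F}_\beta(\sigma(F))\bigr)\qquad\text{on }\ell^2(\Z_{<0};\con)\,.
\]
Now $(R\circ\ad(z))\bigl(\begin{smallmatrix} A & B \\ C & D\end{smallmatrix}\bigr)=\bigl(\begin{smallmatrix} D^* & z^{-1}B^* \\ zC^* & A^*\end{smallmatrix}\bigr)$, and $z^{-1}(z^{-1}B^*)^*=B$ and $z^{-1}(zC^*)=C^*$ both lie in $H^2(\D)$, so Property \ref{item:1st_prop_H-} holds; your index bookkeeping $1-\beta(1)=\alpha(0)$, $1-\beta(0)=\alpha(1)$ then delivers Property \ref{item:2nd_prop_H-}, and the inner-factor condition transfers since $R\circ\ad(z)$ turns right block-diagonal inner factors into left ones. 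The map $R\circ\ad(z)$ is an isometric involution on $\mathbf{L}_\alpha$ preserving $\metric$ (the logarithmic term is unchanged by Lemma \ref{lem:det_holo_formula}, since $\det D(0)=\det A(\infty)$), so the composition is the desired homeomorphism. Your own suggestion in the last paragraph — to write out the explicit block computation rather than assert it — would have caught both issues.
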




\subsection{Extension to full line \texorpdfstring{$\ell^2$}{l2} sequences}
We define for a general sequence $F \in \ell^2(\mathbb{Z}; \con)$ its nonlinear Fourier transform by
\begin{equation}\label{def:NLFT_full_line}
    \mathcal{F}_\alpha(F) := \mathcal{F}_\alpha(F \mathbf{1}_{(-\infty,0)}) \mathcal{F}_\alpha(F \mathbf{1}_{[0,\infty)})\,,
\end{equation}
where the nonlinear Fourier transform on the right side is the one for half-line sequences  defined in the previous subsection.
This coincides with the previous definitions for half-line sequences and, in light of property \ref{item:ordered_mult} from Lemma \ref{lem:properties},  with the one  for finitely supported sequences. 

\begin{lemma}
    The map $\mathcal F_\alpha$ defined in \eqref{def:NLFT_full_line} is continuous from $\ell^2(\mathbb{Z}; \con)$ into $\mathbf{L}_\alpha$.
\end{lemma}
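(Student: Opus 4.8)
The plan is to deduce the claim from continuity of the constituent maps in \eqref{def:NLFT_full_line}. The splitting $F\mapsto(F\mathbf{1}_{(-\infty,0)},F\mathbf{1}_{[0,\infty)})$ is an isometry from $\ell^2(\Z;\con)$ into $\ell^2(\Z_{<0};\con)\times\ell^2(\Z_{\ge 0};\con)$, and the two half-line transforms are continuous --- into $\mathbf{L}^-_{\alpha,0}$ by Corollary \ref{cor:left_half}, and into $\mathbf{L}^+_\alpha$ by Lemma \ref{lem:ext_l2}. Thus it suffices to show that the matrix multiplication map
\[
\mathbf{L}^-_{\alpha,0}\times\mathbf{L}^+_\alpha\longrightarrow\mathbf{L}_\alpha,\qquad(M_-,M_+)\longmapsto M_-M_+,
\]
is well defined and continuous. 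The point that needs a little care is the membership of the product in $\mathbf{L}_\alpha$; the continuity bound is then immediate.

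For well-definedness I would argue as follows. The product $M:=M_-M_+$ is $SU(2n)$-valued a.e.\ on $\T$ because $M_\pm$ are, and every block of $M_\pm$ has operator norm at most $1$ a.e.\ on $\T$. Being bounded and analytic on the relevant disc, the blocks therefore lie in $H^\infty$: namely $A_-,B_-\in H^\infty(\D^*)$, $C_-,D_-\in H^\infty(\D)$, $A_+,C_+\in H^\infty(\D^*)$ and $B_+,D_+\in H^\infty(\D)$. Expanding the block product, the diagonal blocks $A=A_-A_+ + B_-C_+$ and $D=C_-B_+ + D_-D_+$ then lie in $H^\infty(\D^*)\subseteq H^2(\D^*)$ and $H^\infty(\D)\subseteq H^2(\D)$ respectively, while the off-diagonal blocks are bounded and hence in $L^2(\T)$. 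Since $B_-(\infty)=0$ and $C_-(0)=0$, the boundary evaluations simplify to $A(\infty)=A_-(\infty)A_+(\infty)$ and $D(0)=D_-(0)D_+(0)$; as $\grp_{\alpha(0)}$ and $\grp_{\alpha(1)}$ are closed under products, this gives $A(\infty)\in\grp_{\alpha(0)}$ and $D(0)\in\grp_{\alpha(1)}$, so $M\in\mathbf{L}_\alpha$.

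For continuity, fix $M_-,M_-'\in\mathbf{L}^-_{\alpha,0}$ and $M_+,M_+'\in\mathbf{L}^+_\alpha$ and bound the two terms of the metric $\metric$ separately. Writing $M_-M_+-M_-'M_+'=(M_--M_-')M_+ + M_-'(M_+-M_+')$, using $\|XY\|_2\le\|X\|_2\|Y\|_\infty$ and that $M_+(z)$, $M_-'(z)$ are unitary a.e., one gets, pointwise on $\T$ and then by the triangle inequality in $L^2(\T)$,
\[
\Big(\int_{\T}\|M_-M_+-M_-'M_+'\|_2^2\Big)^{1/2}\le\Big(\int_{\T}\|M_--M_-'\|_2^2\Big)^{1/2}+\Big(\int_{\T}\|M_+-M_+'\|_2^2\Big)^{1/2}.
\]
For the logarithmic term, $A(\infty)=A_-(\infty)A_+(\infty)$ with $\det A_\pm(\infty)>0$ gives $\log\det A(\infty)=\log\det A_-(\infty)+\log\det A_+(\infty)$, and the same for the primed functions, so the difference of the log-determinant terms for the two products is dominated by the sum of the corresponding differences for $M_\pm$. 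Adding the two bounds shows $\metric(M_-M_+,M_-'M_+')\le\metric(M_-,M_-')+\metric(M_+,M_+')$; in particular the multiplication map is $1$-Lipschitz, hence continuous, which completes the proof. The only real obstacle is the bookkeeping in the well-definedness step --- using unitarity on $\T$ to promote the $H^2$ blocks to $H^\infty$ (a product of two $H^2$ functions being only $H^1$ a priori) and tracking the vanishing of $B_-$ and $C_-$ so that the diagonal blocks of the product keep the correct triangular normalization at the distinguished evaluation points.
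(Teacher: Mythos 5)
Your proof is correct and follows essentially the same route as the paper: reduce to continuity of the multiplication map $(M_-,M_+)\mapsto M_-M_+$, then derive the $1$-Lipschitz estimate $\metric(M_-M_+,M_-'M_+')\le\metric(M_-,M_-')+\metric(M_+,M_+')$ by splitting the difference into two one-sided terms, using unitary invariance of the Hilbert--Schmidt norm for the $L^2$ part and the multiplicativity $A(\infty)=A_-(\infty)A_+(\infty)$ (a consequence of $B_-(\infty)=C_-(0)=0$) for the log-determinant part. The minor cosmetic difference is the factoring: you write $(M_--M_-')M_+ + M_-'(M_+-M_+')$ whereas the paper factors out $M_-$ on the left and $M_+'$ on the right; either works. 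Your explicit well-definedness step -- upgrading the $H^2$ blocks to $H^\infty$ via the a.e.\ boundedness coming from unitarity, so that block-products stay in the right Hardy classes and the evaluations $A(\infty)$, $D(0)$ make sense and land in $\grp_{\alpha(0)}$, $\grp_{\alpha(1)}$ -- is a worthwhile addition that the paper leaves implicit (it only invokes ``frequency supports'' in passing). One small side-benefit of your phrasing is that it establishes $M_-M_+\in\mathbf{L}_\alpha$ for any $(M_-,M_+)\in\mathbf{L}^-_{\alpha,0}\times\mathbf{L}^+_\alpha$, not just for those coming from the spaces $\mathbf{H}$, which is slightly more general than what the lemma strictly requires.
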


\begin{proof}
    Because of Definition \eqref{def:NLFT_full_line}, Lemma \ref{lem:ext_l2} and Corollary \ref{cor:left_half}, it only remains to verify that  multiplication defines a continuous map from $\mathbf{H}_{\alpha,0}^- \times \mathbf{H}_\alpha^0$ into $\mathbf{L}_\alpha$.
    Let $(M_-, M_+)$ and $(M_-', M_+')$ be elements of $\mathbf{H}_{\alpha,0}^- \times \mathbf{H}_\alpha^0$. Then we estimate
    $$
        \Big(\int_\mathbb{T} \|M_- M_+ - M_-' M_+'\|_2^2\Big)^{\frac12} 
    $$
    $$
        \le \Big(\int_\mathbb{T} \|M_- M_+ - M_- M_+'\|_2^2 \Big)^{\frac12} + \Big(\int_\mathbb{T} \|M_- M_+' - M_-' M_+'\|_2^2 \Big)^{\frac12}.
    $$
    Since the Hilbert--Schmidt norm remains invariant under multiplication by unitary matrices, this equals
    \begin{equation}
        \label{eq:full_line_cont}
        \Big(\int_\mathbb{T} \|M_+ - M_+'\|_2^2 \Big)^{\frac12} + \Big(\int_\mathbb{T} \|M_-  - M_-'\|_2^2 \Big)^{\frac12}.
    \end{equation}
    Denote
    $$
         \begin{pmatrix} A & B\\ C& D \end{pmatrix}:= M_- M_+ \, .
    $$
    Then, by definition of the spaces $\mathbf{H}_{\alpha, 0} ^{-}$ and $\mathbf{H}_{\alpha} ^{+}$, we have $B_-(\infty) = C_-(0) = 0$ and $B_+(0)$ and $C_+(\infty)$ are finite. Matrix multiplication along with the frequency supports dictated by the spaces $\mathbf{H}_{\alpha, 0} ^{-}$ and $\mathbf{H}_{\alpha} ^{+}$ then gives the identity
    \begin{equation}\label{eq:mult_mean_value}
        A(\infty) = A_-(\infty)A_+(\infty).
    \end{equation}
    Thus, adapting similar notation  for the functions $M_+'$ and $M_-'$,
    $$
        \lvert \log\det(A(\infty)) - \log \det(A'(\infty)) \rvert
    $$
    \begin{equation}
        \label{eq:full_line_cont2}
        \le \lvert \log\det(A_-(\infty))- \log\det(A'_-(\infty))\rvert + \lvert \log\det(A_+(\infty))- \log\det(A'_+(\infty))\rvert\,.
    \end{equation}
    Adding \eqref{eq:full_line_cont} and \eqref{eq:full_line_cont2} shows that
    \[
        \metric(M_-M_+, M'_-M_+') \le \metric(M_-, M_-') + \metric(M_+, M_+'),
    \]
    which yields the required continuity. 
\end{proof}

\begin{remark}
    The analog of Lemma \ref{lem:propertiesl2} holds for the map $\mathcal{F}_{\alpha}$ on $\ell^2 (\Z; \con)$. 
\end{remark}

\section{The Riemann--Hilbert factorization problem}\label{sec:RH}

Our goal in this section is to prove Theorem \ref{thm:NLFT_outer_inverse}. Namely, we show that every $\nlf \in \mathbf{B}_{\alpha}$ is the NLFT of a unique sequence $F \in \ell^2 (\Z; \contract)$; then, for each $\epsilon >0$, we show that the inverse map $M\mapsto F$ is Lipschitz continuous on the set $\mathbf{B}_\alpha ^{\epsilon}$ of $\nlf \in \mathbf{B}_{\alpha}$
whose upper right block $B$ satisfies $\|B\|_{L^{\infty}} < 1- \epsilon$.

Let $L^2 (\T ; \C^{m^2})$ denote the Hilbert space of $m \times m$ mvfs $X$ for which 
\[
\|X\|_{L^2} ^2 = \int\limits_{\T} \|X\|_2 ^2 < \infty \, .
\]
The inner product of $X, Y \in L^2 (\T; \C^{m^2})$ is given by
\begin{equation}\label{eq:inner_prod_def}
\langle X, Y \rangle = \int\limits_{\T} \sum\limits_{1 \leq i,j \leq m} X_{i j} \overline{Y_{i j}} \, .
\end{equation}
Given an mvf $Q$ in $L^{\infty} (\T; \C^{m^2})$, the following duality formulae hold:
\begin{equation}\label{eq:duality}
\langle X, Q Y \rangle = \langle Q^* X , Y \rangle \, , \qquad \langle X, Y Q \rangle = \langle X Q^*  , Y \rangle \, .
\end{equation}

\subsection{Overview of the proof of Theorem \ref{thm:NLFT_outer_inverse}}

By the definition of the NLFT \eqref{def:NLFT_full_line} and its injectivity on the half-lines, we achieve the aforementioned  goals by showing that $\nlf \in \mathbf{B}_{\alpha}$ factors uniquely as a product
\begin{equation}\label{eq:factorization_0}
\nlf = \nlf_{-} \nlf_+ \, ,
\end{equation}
for $\nlf_{-} \in \mathbf{H}_{\alpha, 0} ^-$ and $\nlf_+ \in \mathbf{H}_{\alpha} ^+$. This  is known as a Riemann--Hilbert factorization problem. 

In Subsection \ref{subsection:defn_M} below, we define an unbounded operator $\op$ on a certain Hilbert space $\spce$ and prove that $i\op$ is self-adjoint. As a consequence, the equation
\[
    (\Id + \op) X = \Id
\]
has a unique solution $X \in \spce$. We will then obtain $\nlf_+$ by solving
\[
X = \nlf_+ ^* \begin{pmatrix}
    A_+ (\infty) & 0 \\ 0 & D_+ (\infty)
\end{pmatrix} \, ,
\]
where $A_+$ and $D_+$ are defined as in the convention in \eqref{eq:convention}.
Our main goal for the remainder of the section is to show that $\nlf_+$ and $\nlf_- := \nlf \nlf_+^*$ are in $\mathbf{H}_\alpha^+$ and $\mathbf{H}_{\alpha, 0} ^-$.

In the special case when $\nlf \in \mathbf{B}_\alpha ^{\epsilon}$, we can take 
\begin{equation}\label{eq:M_defn_L2}
\op = \phl \begin{pmatrix}
   0 & A^{-1} B \\ -(A^{-1}B)^* & 0 
\end{pmatrix} \, ,
\end{equation}
where $\phl$ denotes the projection onto the space 
\[
\spce := \begin{pmatrix}
    H^2 (\D) & H^2 (\D) \\ H^2 (\D^*) & H^2 (\D^*)
\end{pmatrix} \, .
\]
From \eqref{eq:M_defn_L2} it is clear that $i\op$ is self-adjoint as an operator on $\spce$.

Things become more technical for general $\nlf \in \mathbf{B}_{\alpha}$. If $\nlf$ does not belong to $\mathbf{B}_\alpha ^{\epsilon}$ for any $\epsilon>0$, then we define $\op$ as an unbounded, densely defined operator, which will equal the operator in \eqref{eq:M_defn_L2} on a dense subspace $\dense$.

We point out that while the following subsection is involved, it becomes trivial once $\nlf \in \mathbf{B}_\alpha ^{\epsilon}$ for some $\epsilon > 0$. We also refer the reader to \cite[Section 4]{Alexis+25} for the simpler proof in the case $n=1$.

\subsection{Definition and properties of the operator \texorpdfstring{$\op$}{}}\label{subsection:defn_M}

Given $\nlf \in \mathbf{B}_{\alpha}$, we would like to define the operator $\op$ as in \eqref{eq:M_defn_L2}. 
However, if $A^{-1}B$ is unbounded, then the composition of multiplication by $A^{-1}B$ with the Fourier projection operator $\phl$ is not well-defined. Instead, we  rewrite the operator $\op$ as follows.

By the adjugate formula \eqref{eq:adjugate_2},
we may formally write the operator $\op$ in \eqref{eq:M_defn_L2} as
\begin{equation}\label{eq:formula_op_1}
\phl \begin{pmatrix}
    0 &  \frac{1}{\of^*} J^*  \\ - \frac{1}{\of} J  & 0
\end{pmatrix} \, ,
\end{equation}
where we fix 
\begin{equation}\label{def:J_o}
J := B^* (\adj A)^*  \, , \qquad  \of := \det A^* \, .
\end{equation}
We formally rewrite \eqref{eq:formula_op_1} as the limit
\[
\lim\limits_{\eta \to 0 ^+} \phl \begin{pmatrix}
    0 & \frac{1}{\of _{\eta} ^* } J^*  \\ - \frac{1}{\of_{\eta} } J & 0
\end{pmatrix} \, ,
\]
where, given a threshold $\eta >0$, we let $\of_{\eta}$ be the outer function on $\D$ whose absolute value on $\T$ satisfies
\begin{equation}\label{eq:omega_eta}
\log |\of_{\eta}| = \mathbf{1}_{\{ |\of| > \eta\}} \log |\of|  \, . 
\end{equation}

Define the set $\ddense$ consisting of elements $X \in \spce$ for which the limit
\begin{equation}\label{eq:unbded_op}
\op X := \lim\limits_{\eta \to 0} \phl \begin{pmatrix}
    0 & \frac{1}{\of _{\eta} ^* } J^*  \\ - \frac{1}{\of_{\eta}}  J  & 0
\end{pmatrix} X 
\end{equation}
exists in the weak sense on 
$L^2 := L^2 (\T ; \C^{4 n^2})$. This means that, for all $Z \in L^2 $, 
\[
\lim\limits_{\eta \to 0} \langle \phl \begin{pmatrix}
    0 & \frac{1}{\of _{\eta} ^* } J^*  \\ - \frac{1}{\of_{\eta}}J  & 0
\end{pmatrix} X , Z \rangle =  \langle \op X , Z \rangle \, .
\]
The map  $X \mapsto \op X$ in \eqref{eq:unbded_op} defines a linear operator $\op$ with domain $\mathcal{D} (\op) := \ddense$. We also define the space \[
\dense := \begin{pmatrix}
    \of & 0 \\ 0 & \of^*
\end{pmatrix} \spce \, .
\]

\begin{remark}\label{rmk:easy_case_spaces}
If $\nlf \in \mathbf{B} _{\alpha} ^{\epsilon}$, then $|\of|$ is bounded from below and hence
\[
\op = \phl \begin{pmatrix}
    0 & \frac{1}{\of ^*} J^*  \\ - \frac{1}{\of} J  & 0
\end{pmatrix} \, .
\]
In other words, the limit in $\eta$ disappears. In particular, $\dense = \ddense = \spce$, and so the operator $\op$ is defined and bounded on the whole space $\spce$. 
\end{remark}

\begin{lemma}\label{lem:density}

\begin{enumerate}[label=\arabic*)]
    \item \label{item:ratio_outer_conv_measure}  For every $\eta>0$, the function 
\begin{equation}\label{eq omega quotient}
    |{\of }{\of_{\eta} }^{-1}  -1|
\end{equation}
is bounded by $2$, and tends to zero in measure
as $\eta\to 0$.

\item \label{item:ratio_outer_conv_weak} For every $f\in L^2(\T)$, the function
\begin{equation}\label{eq omega quotient f}
    |{\of }{\of_{\eta} }^{-1}  -1|f
\end{equation}
converges to $0$ in $L^2(\T)$ as $\eta \to 0$.

\item \label{item:formula_D_op} For every $Y$ in $\spce$,
  \begin{equation}\label{eq strong Y}
\left \| \phl \begin{pmatrix}
    0 & \frac{\of ^*}{\of_{\eta} ^* } J^*  \\ - \frac{\of}{\of_{\eta} } J  & 0
\end{pmatrix} Y -\phl \begin{pmatrix}
    0 & J^* \\ -J & 0
\end{pmatrix} Y \right \|_{L^2}      
  \end{equation}
converges to $0$ as $\eta\to 0$. In particular, for every $Y \in \spce$, we have 
\begin{equation}\label{eq:strong_Y_formula}
\op \begin{pmatrix}
    \of & 0   \\ 0 & \of^*
\end{pmatrix} Y =\phl \begin{pmatrix}
    0 & J^* \\ -J & 0
\end{pmatrix} Y \, .
\end{equation}

\item \label{item:D_dense} Finally, 
   \[
   \dense \subset \ddense \subset \spce \, .
   \]
In particular, $\op$ is densely defined.

\end{enumerate}

\end{lemma}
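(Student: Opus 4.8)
## Proof proposal for Lemma~\ref{lem:density}

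\textbf{Part 1).} The bound $|\of \of_\eta^{-1} - 1| \le 2$ should follow from the fact that $|\of_\eta| \ge |\of|$ pointwise a.e.\ on $\T$ (since $\log|\of_\eta| = \mathbf{1}_{\{|\of|>\eta\}}\log|\of| \ge \log|\of|$, using $|\of| = |\det A^*| \le 1$ by contractivity, so $\log|\of| \le 0$), hence $|\of \of_\eta^{-1}| \le 1$ on $\T$ and the difference with $1$ is at most $2$. For convergence in measure: on the set $\{|\of| > \eta\}$ one has $\log|\of_\eta| = \log|\of|$, so the quotient $\of \of_\eta^{-1}$ is an inner times outer with unimodular... actually more carefully, $\of \of_\eta^{-1}$ is an outer function (ratio of outers) whose modulus on $\T$ is $\exp(\mathbf{1}_{\{|\of|\le\eta\}}\log|\of|)$, which equals $1$ on $\{|\of|>\eta\}$ and equals $|\of|\le \eta$ on $\{|\of|\le\eta\}$. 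Wait — that is not quite right either since the outer function with given boundary modulus need not have that modulus; but in fact the boundary modulus of the outer function with prescribed $\log|\cdot|$ data \emph{is} that data a.e. So $|\of \of_\eta^{-1}| = \exp(\mathbf 1_{\{|\of|\le \eta\}}\log|\of|)$ a.e.\ on $\T$. This is $1$ off $\{|\of|\le\eta\}$, and as $\eta \to 0$ the sets $\{|\of|\le\eta\}$ decrease to $\{|\of| = 0\}$, which has measure zero by the Szeg\H{o} condition $\int_\T \log|\of| > -\infty$. Hence $|\of\of_\eta^{-1} - 1| \to 0$ in measure (indeed a.e.).

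\textbf{Part 2).} This is dominated convergence: $|(\of\of_\eta^{-1} - 1)f|^2 \le 4|f|^2 \in L^1$ by Part~1, and the integrand tends to $0$ a.e.\ (or in measure, which suffices after passing to subsequences and using uniform integrability of $4|f|^2$), so $\|(\of\of_\eta^{-1}-1)f\|_{L^2} \to 0$.

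\textbf{Part 3).} Here is the crux. First rewrite $\frac{\of^*}{\of_\eta^*}J^* = \overline{(\of\of_\eta^{-1})}\,J^*$ and $\frac{\of}{\of_\eta}J = (\of\of_\eta^{-1})J$. The difference inside $\phl$ is
\[
\begin{pmatrix} 0 & (\overline{\of\of_\eta^{-1}} - 1)J^* \\ -(\of\of_\eta^{-1} - 1)J & 0 \end{pmatrix}Y.
\]
Since $\phl$ is an orthogonal projection, hence norm-$1$ on $L^2$, the expression \eqref{eq strong Y} is bounded by the $L^2$ norm of this matrix applied to $Y$. Each entry is a product of a bounded factor ($|\of\of_\eta^{-1}-1|\le 2$), a fixed $L^\infty$ or $L^2$ function ($J = B^*(\adj A)^*$, which lies in $L^2$ since $B$ is bounded and $\adj A \in L^2$ as $A^* \in H^2$... one must check $J \in L^2$, or rather that the products $J^* Y$, $JY$ make sense — here $Y \in \spce \subset L^2$, and $J$ is a product of bounded matrix functions except $\adj A$; since $A$ is bounded, $\adj A$ is a polynomial in bounded entries, hence bounded, so $J$ is bounded). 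Then apply Part~2) entrywise with $f$ the relevant entry of $JY$ or $J^*Y$ to conclude the $L^2$ norm tends to $0$. Consequently, for $X = \begin{pmatrix}\of & 0 \\ 0 & \of^*\end{pmatrix}Y \in \dense$, the defining limit \eqref{eq:unbded_op} exists (even strongly in $L^2$, not just weakly) and equals $\phl\begin{pmatrix}0 & J^* \\ -J & 0\end{pmatrix}Y$, establishing both \eqref{eq strong Y} and \eqref{eq:strong_Y_formula}, and showing $X \in \ddense$.

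\textbf{Part 4).} The inclusion $\ddense \subset \spce$ is immediate from the definition of $\ddense$. The inclusion $\dense \subset \ddense$ is exactly what Part~3) proves: every element of $\dense$ is of the form $\begin{pmatrix}\of & 0\\0 & \of^*\end{pmatrix}Y$ with $Y \in \spce$, and we showed the limit \eqref{eq:unbded_op} exists for such $X$. Finally, $\dense$ is dense in $\spce$: multiplication by $\begin{pmatrix}\of & 0\\0&\of^*\end{pmatrix}$ is a bounded injective operator on $\spce$ (bounded since $|\of|\le 1$; it maps $\spce$ to $\spce$ because $\of = \det A^* \in H^\infty(\D)$ preserves $H^2(\D)$ and $\of^* \in H^\infty(\D^*)$ preserves $H^2(\D^*)$) with dense range — density of the range follows because $\of$ is outer, hence $\of H^2(\D)$ is dense in $H^2(\D)$ (a standard fact: outer functions are cyclic for the shift), and likewise $\of^* H^2(\D^*)$ is dense in $H^2(\D^*)$. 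Therefore $\dense$ is dense in $\spce$, and since $\dense \subset \mathcal D(\op) = \ddense$, the operator $\op$ is densely defined.

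\textbf{Main obstacle.} The only genuinely delicate point is Part~1), specifically justifying that the outer function $\of_\eta$ with prescribed boundary log-modulus actually has that modulus a.e.\ on $\T$ and that the resulting quotient $\of\of_\eta^{-1}$ has the stated boundary behavior; once the identity $|\of\of_\eta^{-1}| = \exp(\mathbf 1_{\{|\of|\le\eta\}}\log|\of|)$ a.e.\ is in hand, everything else is dominated convergence and the standard cyclicity of outer functions. One should double-check that $J$ (and hence $JY$, $J^*Y$) is in $L^2$, which here reduces to boundedness of $\adj A$ — true since $A \in H^\infty$.
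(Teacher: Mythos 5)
Your Parts 2)--4) are essentially correct and follow the same route as the paper (Part 4) invokes cyclicity of outer functions, which is the same Beurling fact the paper cites). But there is a genuine gap in Part 1), and it is precisely the point you flag as the ``only genuinely delicate point'' yet then dismiss.

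You correctly compute that $|\of\of_\eta^{-1}| = \exp(\mathbf 1_{\{|\of|\le\eta\}}\log|\of|)$ a.e.\ on $\T$, and that this modulus converges to $1$ in measure (even a.e.). From this you conclude $|\of\of_\eta^{-1}-1|\to 0$ in measure. That inference is invalid: knowing $|\of\of_\eta^{-1}|\to 1$ controls only the \emph{modulus} of the boundary values, not the \emph{phase}. The function $\of\of_\eta^{-1}$ is outer, so on $\T$ we have $\log(\of\of_\eta^{-1}) = (v - v_\eta) + iH(v-v_\eta)$ with $v=\log|\of|$, $v_\eta=\log|\of_\eta|$ and $H$ the Hilbert transform. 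The set where $|\of\of_\eta^{-1}|=1$ is exactly where $v-v_\eta=0$, but $H(v-v_\eta)$ need not vanish there; it is a nonlocal operator. Since $v-v_\eta\to 0$ only in $L^1$ and $H$ is merely of weak type $(1,1)$, the conjugate function $H(v-v_\eta)$ converges to $0$ in measure (not a.e., contrary to your parenthetical claim, and not trivially). Supplying this Hilbert-transform step — which is exactly what the paper does — is necessary to get $\log(\of\of_\eta^{-1})\to 0$ in measure, and only then does continuity of $\exp$ give $|\of\of_\eta^{-1}-1|\to 0$ in measure. Your argument as written would wrongly conclude, for instance, that any sequence of inner functions converges to $1$.

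A secondary remark: your reduction in Part 2) via dominated convergence after passing to subsequences works but is slightly informal about extracting a single limit in measure; the paper's truncation $\min(|f|,N)$ avoids that issue cleanly. This is a stylistic point, not a gap.
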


\begin{proof}
\ref{item:ratio_outer_conv_measure} By construction, $|\of|\le |\of_\eta|$, so  \eqref{eq omega quotient} is
bounded by $2$.
By dominated convergence, $v_\eta:=\log |\of_\eta|$ converges to $ v:=\log |\of|$ in $L^1$. By
outerness, we have  $\log \of_\eta=v_\eta+iHv_\eta$ and $ \log \of= v+iHv$, where $H$ denotes  the Hilbert transform.
The weak-$(1,1)$ boundedness of the Hilbert transform then implies that 
$\log \of_\eta$ converges to $ \log \of$ in measure.
Taking exponentials and using continuity of the exponential function at $0$, it follows that  
\eqref{eq omega quotient} also tends to $0$ in measure, which was the second claim regarding \eqref{eq omega quotient}.

\ref{item:ratio_outer_conv_weak} As the first factor in \eqref{eq omega quotient f} is bounded, by an approximation argument
 it suffices to show that, for every  $N>0$,  
\begin{equation}
 \left |{\of }{\of_{\eta} }^{-1}  -1 \right | \min(|f|,N) 
\end{equation}
converges to zero in $L^2(\T)$.  As both factors are now bounded, this follows from
convergence  to zero in measure of \eqref{eq omega quotient}.

\ref{item:formula_D_op} As for convergence of \eqref{eq strong Y},
we estimate it by
\[
\left \| \begin{pmatrix}
    0 & (\frac{\of ^*}{\of_{\eta} ^*}  -1 ) J^*  \\ - (\frac{\of}{\of_{\eta}}-1)J  & 0
\end{pmatrix} Y \right \|_{L^2} 
 \leq \|J\|_{L^\infty} \left \| \left |\frac{\of }{\of_{\eta} }  -1 \right | Y \right \|_{L^2} \, ,
\]
where we used that projections have operator norm $1$ on $L^2$.
Convergence to zero of the right side then follows from the previous item. Trivially,  \eqref{eq:strong_Y_formula} follows from \eqref{eq strong Y}.

\ref{item:D_dense} If $X\in \dense$, then by definition there exists $Y\in  \spce$ with 
  \begin{equation}\label{eq:X_to_Y_def}
   X = \begin{pmatrix}
       \of & 0 \\ 0 & \of ^*
   \end{pmatrix} Y  \, .
\end{equation}
Applying the strong convergence \eqref{eq strong Y} to $Y$ shows that $X$ is in the domain of $\op$. Hence $\dense \subset \ddense$.
 
As $\of$ is outer, Beurling's theorem on invariant subspaces \cite[Corollary 7.3, Chapter 2]{garnett} implies $\dense$ is dense in $\spce$ and hence $\op$ is densely defined.
\end{proof}

\begin{lemma}
    \label{lem:selfadjoint}
    The unbounded operator $i\op$ is self-adjoint.
\end{lemma}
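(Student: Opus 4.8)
\emph{Plan.}\ \ The plan is to dispose of the easy case $\nlf\in\mathbf{B}_\alpha^\epsilon$ at once, then establish that $i\op$ is symmetric in general, and finally reduce self-adjointness to a surjectivity statement for two bounded operators — the latter being the crux. If $\nlf\in\mathbf{B}_\alpha^\epsilon$, then by Remark \ref{rmk:easy_case_spaces} we have $\op=\phl N|_{\spce}$, where $N:=\begin{pmatrix}0&A^{-1}B\\-(A^{-1}B)^*&0\end{pmatrix}$ is multiplication by a bounded, pointwise skew-Hermitian mvf. Since the orthogonal projection $\phl$ fixes $\spce$ pointwise, for $X\in\spce$ we may write $i\op X=\phl(iN)\phl X$; the compression $\phl(iN)\phl$ is a bounded self-adjoint operator on $L^2(\T;\C^{4n^2})$ that vanishes on $\spce^\perp$, so its restriction $i\op$ to $\spce$ is self-adjoint. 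From now on $\nlf$ is a general element of $\mathbf{B}_\alpha$.

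Next I would prove that $i\op$ is symmetric. For each $\eta>0$ the mvf appearing in \eqref{eq:unbded_op} is bounded, and one checks directly that it is pointwise skew-Hermitian; let $T_\eta$ denote the corresponding bounded multiplication operator. Using the duality relations \eqref{eq:duality} together with the fact that $\phl$ is self-adjoint and acts as the identity on $\spce$, for all $X,Z\in\spce$ we get
\[
\langle\phl T_\eta X,Z\rangle=\langle T_\eta X,Z\rangle=-\langle X,T_\eta Z\rangle=-\langle X,\phl T_\eta Z\rangle\,.
\]
If moreover $X,Z\in\ddense$, then letting $\eta\to0$ and invoking the weak limit defining $\op$ (with test vectors $Z$, respectively $X$, in $L^2$) yields $\langle\op X,Z\rangle=-\langle X,\op Z\rangle$, i.e.\ $\langle i\op X,Z\rangle=\langle X,i\op Z\rangle$; since $\ddense$ is dense by Lemma \ref{lem:density}, $i\op$ is a densely defined symmetric operator.

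For self-adjointness I would use the criterion that a symmetric operator $T$ is self-adjoint as soon as $\operatorname{ran}(T+i)$ and $\operatorname{ran}(T-i)$ are the whole space; applied to $T=i\op$, and noting $\operatorname{ran}(i\op\pm i)=\operatorname{ran}(\op\pm\Id)$, it suffices to show $\operatorname{ran}(\op+\Id)=\operatorname{ran}(\op-\Id)=\spce$. Set $V:=\begin{pmatrix}\of&0\\0&\of^*\end{pmatrix}$ and $W:=\phl\begin{pmatrix}0&J^*\\-J&0\end{pmatrix}$, which are bounded operators on $\spce$ because $\of=\det A^*\in H^\infty(\D)$ and $J\in L^\infty(\T)$. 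By Lemma \ref{lem:density}, part \ref{item:formula_D_op}, we have $\op(VY)=WY$ for every $Y\in\spce$, so $(\op\pm\Id)(VY)=(W\pm V)Y$; since $V\spce=\dense\subseteq\ddense$ by part \ref{item:D_dense}, it follows that $\operatorname{ran}(\op\pm\Id)\supseteq(W\pm V)\spce$, and it is enough to prove that $W+V$ and $W-V$ each map $\spce$ onto $\spce$. (Note that skew-symmetry already forces $\|(\op\pm\Id)X\|_{L^2}\ge\|X\|_{L^2}$, so once surjectivity is in hand $\Id+\op$ is boundedly invertible, as asserted in the overview.)

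Surjectivity of $W\pm V$ is the main obstacle. Writing the equation $(W\pm V)Y=Z$ in terms of the two block rows of $Y$ produces a coupled pair of equations in which one must eventually divide by the scalar outer function $\of$; as $\of$ need not be invertible in $H^\infty(\D)$, the delicate point is to verify that the resulting quotient genuinely lies in $H^2(\D)$ rather than merely in the Smirnov class. I expect to handle this using the identity $J^*/\of^*=A^{-1}B$ together with the spectral relation $AA^*=\Id-BB^*$ to control the relevant quotients, and the $\of_\eta$-regularization of Lemma \ref{lem:density} to justify the passage $\eta\to0$; alternatively, one can solve $(\op_\eta+\Id)X_\eta=Z$ using the bounded self-adjoint approximations $\op_\eta:=\phl T_\eta|_{\spce}$ (which satisfy $\|X_\eta\|_{L^2}\le\|Z\|_{L^2}$), extract a weakly convergent subsequence, and identify its limit with the desired preimage, once more using Lemma \ref{lem:density} to take limits. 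With the surjectivity of $W\pm V$ established, combining it with the symmetry proved above and the self-adjointness criterion completes the argument.
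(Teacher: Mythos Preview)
Your treatment of the bounded case and of symmetry is correct and matches the paper. The gap is in the self-adjointness step: you correctly reduce, via the range criterion, to the surjectivity of the bounded operators $W\pm V$ on $\spce$, but you do not prove it. Your first sketch (solving the coupled $H^2$ system directly via $J^*/\of^*=A^{-1}B$ and $AA^*=\Id-BB^*$) would require dividing by the outer function $\of$, and you offer no mechanism to keep the resulting quotient in $H^2$ rather than merely in the Smirnov class. Your second sketch (weak limits of $X_\eta=(\Id+\op_\eta)^{-1}Z$) is more promising, but the crucial step --- showing that the weak limit $X$ actually lies in $\ddense$ --- is precisely where the content lies; weak convergence of $\op_\eta X_\eta$ does not by itself give weak convergence of $\op_{\eta'}X$ as $\eta'\to0$.

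The paper bypasses the range criterion entirely and shows $\mathcal{D}(\op^*)\subseteq\mathcal{D}(\op)$ directly. The key observation, implicit in Lemma~\ref{lem:density} part~\ref{item:formula_D_op}, is that for every $x\in\spce$ one has $\op_\eta x=\op(M_\eta x)$, where $M_\eta x:=\begin{pmatrix}\of/\of_\eta&0\\0&\of^*/\of_\eta^*\end{pmatrix}x\in\dense$ and $M_\eta x\to x$ strongly by Lemma~\ref{lem:density} part~\ref{item:ratio_outer_conv_weak}. Given $y\in\mathcal{D}(\op^*)$ and any $x\in\spce$, one then computes
\[
\langle\op_\eta y,x\rangle=-\langle y,\op_\eta x\rangle=-\langle y,\op(M_\eta x)\rangle=-\langle\op^*y,M_\eta x\rangle\longrightarrow-\langle\op^*y,x\rangle\,,
\]
so $\op_\eta y$ converges weakly and hence $y\in\ddense$. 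Note that this same identity $\op_\eta=\op M_\eta$ is exactly what would be needed to complete your weak-limit approach (to verify $X\in\ddense$), so the two routes ultimately rest on the same observation; the paper's is simply more direct.
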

\begin{proof}
The proof consist of upgrading the observation that, for each $\eta>0$, the bounded operator        \[
        A_\eta  := i\phl \begin{pmatrix} 
       0 & \frac{1}{\of_{\eta} ^*} J^*  \\ -  \frac{1}{\of _{\eta}} J & 0 
    \end{pmatrix} =\phl \begin{pmatrix} 
       0 & \frac{i}{\of_{\eta} ^*} J^*  \\ -  \frac{i}{\of _{\eta}} J & 0 
    \end{pmatrix} \phl
    \]
is evidently self-adjoint on the Hilbert space $\spce$.

We first observe that $i\op$ is symmetric, i.e., for all $x,y \in \mathcal{D} (\op) = \ddense$,
\begin{equation}\label{eq:symmetric}
\langle i \op x, y\rangle = \lim_{\eta \to 0} \langle A_\eta x, y\rangle = \lim_{\eta \to 0} \langle  x, A_\eta y\rangle = \langle x, i \op y \rangle\, .
\end{equation}
 In particular, this implies $\mathcal{D} (\op)  \subset\mathcal{D}(\op^*)$. We must now argue the two sets are equal.

To see that $D(\op^*)\subset D(\op)=\ddense$, fix $y\in D(\op^*)$. By definition, this means that there exists $\op^* y \in \spce$ such that, for all $x \in \ddense$,
\begin{equation}\label{eq ext bdly}
    \langle \op x, y\rangle = \langle x, \op ^* y\rangle \, .
\end{equation}
By definition of $\op$, it suffices to show  that, for every $x\in \spce$, 
\begin{equation}\label{eq x A eta y}
    \lim\limits_{\eta \to 0} \langle x, A_\eta y\rangle = \langle x, -i \op^* y \rangle \, .
\end{equation}
 So we write for 
\eqref{eq x A eta y}
\begin{equation}\label{eq d a star}
    \langle x, A_\eta y\rangle = \langle A_\eta x,  y\rangle =  \langle i \op M_\eta x,  y\rangle = \langle M_\eta x, -i \op ^* y \rangle \, ,
\end{equation}
where 
  \[
        M_\eta x = \begin{pmatrix}
        \frac{\of}{\of_{\eta} } & 0 \\ 0 & \frac{\of ^*}{\of_{\eta} ^*}
    \end{pmatrix} x \,,
    \]
and in the last step we applied \eqref{eq ext bdly} while noting that $M_{\eta} x\in \ddense$.  
Using that $M_\eta x$ has $L^2$-limit $x$ by Lemma \ref{lem:density}  and recalling \eqref{eq ext bdly}, we see that the right side of 
\eqref{eq d a star} converges  to the right side of \eqref{eq x A eta y} as $\eta\to 0$, which completes the proof that $D(\op^*)\subset D(\op)$. We conclude that $D(\op) =  D(\op^*)$ and that $i \op$ is self-adjoint.
\end{proof}

\begin{lemma}\label{lem:inverse_bd}
    If $\lambda \in \R$, then the densely defined operator 
    \begin{equation}\label{eq i l a}
     \Id + \lambda \op : \ddense \to \spce     
    \end{equation}
    is invertible, with operator norm bound
   \begin{equation}\label{eq:antisymmetric_inverse_bd}
    \left \| (\Id + \lambda \op)^{-1} \right \|_{\spce \to \spce} \leq 1 \, .
    \end{equation}
\end{lemma}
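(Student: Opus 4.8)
The plan is to deduce both the invertibility and the norm bound \eqref{eq:antisymmetric_inverse_bd} directly from the self-adjointness of $i\op$ established in Lemma \ref{lem:selfadjoint}. Since $(i\op)^{*}=i\op$ and $(i\op)^{*}=-i\op^{*}$, we have $\op^{*}=-\op$ and $\mathcal{D}(\op^{*})=\mathcal{D}(\op)=\ddense$; in particular $\op$, being the adjoint of a densely defined operator, is closed. We may assume $\lambda\neq 0$, the case $\lambda=0$ being the identity and hence trivial. Below, $\|\cdot\|$ denotes the norm of $\spce$ inherited from $L^{2}$.

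\emph{Step 1 (lower bound and closed range).} Taking $x=y=X\in\ddense$ in \eqref{eq:symmetric} shows that $\langle i\op X,X\rangle$ is real, hence $\Re\langle\op X,X\rangle=0$. As $\lambda\in\R$, expanding the inner product gives
\[
\|(\Id+\lambda\op)X\|^{2}=\|X\|^{2}+2\lambda\,\Re\langle\op X,X\rangle+\lambda^{2}\|\op X\|^{2}=\|X\|^{2}+\lambda^{2}\|\op X\|^{2}\geq\|X\|^{2}.
\]
Thus $\Id+\lambda\op$ is injective. Moreover, if $(\Id+\lambda\op)X_{m}\to Z$ in $\spce$, the displayed inequality forces $(X_{m})$ to be Cauchy, so $X_{m}\to X$ in $\spce$; then $\lambda\op X_{m}=(\Id+\lambda\op)X_{m}-X_{m}$ converges, and closedness of $\op$ gives $X\in\ddense$ with $(\Id+\lambda\op)X=Z$. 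Hence the range of $\Id+\lambda\op$ is closed.

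\emph{Step 2 (dense range).} Let $Y\in\spce$ satisfy $\langle(\Id+\lambda\op)X,Y\rangle=0$ for every $X\in\ddense$. Then $\langle\op X,Y\rangle=-\tfrac1\lambda\langle X,Y\rangle$ for all $X\in\ddense$, which means precisely that $Y\in\mathcal{D}(\op^{*})$ with $\op^{*}Y=-\tfrac1\lambda Y$; since $\op^{*}=-\op$, this reads $\op Y=\tfrac1\lambda Y$. Therefore $\langle\op Y,Y\rangle=\tfrac1\lambda\|Y\|^{2}$ is real, while Step 1 gives $\Re\langle\op Y,Y\rangle=0$; hence $\|Y\|=0$. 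So the range of $\Id+\lambda\op$ is dense, and being closed it equals $\spce$. Combining the two steps, $\Id+\lambda\op\colon\ddense\to\spce$ is a bijection, and the estimate $\|(\Id+\lambda\op)X\|\geq\|X\|$ from Step 1 is exactly \eqref{eq:antisymmetric_inverse_bd}.

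There is no genuine obstacle here once Lemma \ref{lem:selfadjoint} is available; one could equally invoke the standard resolvent estimate for self-adjoint operators, namely that $S-\mu$ is bijective with $\|(S-\mu)^{-1}\|\leq|\Im\mu|^{-1}$ whenever $S$ is self-adjoint and $\mu\notin\R$, applied with $S=i\lambda\op$ and $\mu=-i$ in view of $\Id+\lambda\op=-i(S+i\,\Id)$. The only point demanding attention is the domain bookkeeping: one must know that $\op$ is closed and that $\mathcal{D}(\op^{*})=\mathcal{D}(\op)$, both of which are immediate from the self-adjointness of $i\op$, so that Step 1's Cauchy-sequence argument and Step 2's identification of $Y$ with an eigenvector of $\op^{*}$ are legitimate.
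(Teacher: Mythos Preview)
Your proof is correct. The norm computation in Step~1 is identical to the paper's, and for invertibility the paper simply invokes that $i\op$ self-adjoint implies $\lambda\op$ has purely imaginary spectrum so that $1$ lies in its resolvent set, whereas you unpack this by hand via injectivity, closed range, and dense range; you even note the spectral-theoretic shortcut yourself at the end.
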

\begin{proof}
 Since $i\op$ is self-adjoint,  $\lambda \op$ has purely imaginary spectrum and $1$ belongs to its resolvent set, see \cite[Definition 13.26]{rudin}, which implies \eqref{eq i l a}.
As for the bound \eqref{eq:antisymmetric_inverse_bd}, we compute, for any $V \in \ddense$, that
\[
\left \| (\Id + \lambda \op)V\right \| ^2 = \|V\|^2 + \|\lambda \op V\|^2  \geq \|V\|^2 ,
\]
where mixed terms cancelled because $i\op$ is self-adjoint.
This implies \eqref{eq:antisymmetric_inverse_bd}. 
\end{proof}


\subsection{Uniqueness of the Factorization \texorpdfstring{\ref{eq:factorization_0}}{}}
We prove the uniqueness part of Theorem \ref{thm:NLFT_outer_inverse}, which is a consequence of the following lemma.  
\begin{lemma}\label{lem:RH_nec} 
    If $\nlf \in \mathbf{B} _{\alpha}$, then there exists at most one factorization 
    \eqref{eq:factorization_0}
    where $\nlf_{-} \in \mathbf{H}_{\alpha, 0} ^-$ and $\nlf_+ \in \mathbf{H}_{\alpha} ^+$.
\end{lemma}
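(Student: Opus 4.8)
The plan is to reduce uniqueness of the factorization \eqref{eq:factorization_0} to a statement that both factors $\nlf_+$ and $\nlf_+'$ (the upper-triangular-type right factors of two putative factorizations) differ by a block-diagonal inner factor, and then invoke Property \ref{item:3rd_prop_H} of the definition of $\mathbf{H}_\alpha^+$ to conclude they are equal. Concretely, suppose $\nlf = \nlf_- \nlf_+ = \nlf_-' \nlf_+'$ are two factorizations with $\nlf_\pm, \nlf_\pm'$ in the respective spaces. Set $G := \nlf_+ (\nlf_+')^{-1} = \nlf_-^{-1} \nlf_-'$, which a priori is only defined a.e.\ on $\T$ and is $SU(2n)$-valued there. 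The first task is to show $G$ is block-diagonal with inner (i.e., bounded analytic and a.e.\ unitary) diagonal blocks; the second is to show that block-diagonal inner factors are forbidden by the normalization in Definition \ref{defH}.

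First I would analyze the frequency supports. Write the blocks of $\nlf_+$, $\nlf_+'$, $\nlf_-$, $\nlf_-'$ using the convention \eqref{eq:convention}. From $G = \nlf_+ (\nlf_+')^{-1}$ and the fact that for a unitary mvf the inverse is the conjugate transpose, one has $(\nlf_+')^{-1} = (\nlf_+')^*$ a.e.\ on $\T$, whose blocks are $(A_+')^*, (C_+')^*, (B_+')^*, (D_+')^*$ arranged appropriately. Using Property \ref{item:1st_prop_H} (for $\mathbf{H}_\alpha^+$: $A^*, C^*, B, D \in H^2(\D)$) and the analogous statement for $\mathbf{H}_{\alpha,0}^-$ (Property \ref{item:1st_prop_H-}: $A^*, z^{-1}B^*, z^{-1}C, D \in H^2(\D)$), I would compute the frequency supports of the four blocks of $G$ in two ways --- once from the product $\nlf_+ (\nlf_+')^{-1}$ and once from $\nlf_-^{-1} \nlf_-'$ --- and intersect. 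The product of two "$+$"-type factors should land each block of $G$ in a Hardy space of one sign, while the product of two "$-$"-type factors forces the opposite sign, so the off-diagonal blocks of $G$ are forced to vanish and the diagonal blocks $G = \operatorname{diag}(I_1, I_2)$ have $I_1^*, I_2 \in H^2(\D)$. Since $G$ is unitary a.e.\ on $\T$, the diagonal blocks $I_1, I_2$ are unitary a.e., hence (being $H^2$ and a.e.\ unitary) they are inner. One must also track the normalization: from $A(\infty) = A_-(\infty) A_+(\infty) = A_-'(\infty) A_+'(\infty)$ and the group constraints $A_\pm(\infty), A_\pm'(\infty) \in \grp_{\alpha(0)}$ and similarly at $0$ for $D$, I would extract that $I_1(\infty)$ and $I_2(0)$ are triangular with positive diagonal, which will matter at the end.

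Having written $\nlf_+ = \operatorname{diag}(I_1, I_2)^{-1} \nlf_+'$, or equivalently $\nlf_+' = \nlf_+ \operatorname{diag}(I_1, I_2)$, this is precisely a factorization of the form \eqref{eq:inner_factor} with $\nlf' = \nlf_+$ (which satisfies Properties \ref{item:1st_prop_H}--\ref{item:2nd_prop_H} by hypothesis, and one checks the normalization passes to $\nlf_+'$ via the triangularity of $I_1(\infty)$, $I_2(0)$ just noted). Property \ref{item:3rd_prop_H} applied to $\nlf_+' \in \mathbf{H}_\alpha^+$ then forces $I_1 = I_2 = \Id$, whence $\nlf_+ = \nlf_+'$ and therefore $\nlf_- = \nlf_-'$. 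I expect the main obstacle to be the bookkeeping in the frequency-support argument: one has to be careful that taking inverses/adjoints of $2n \times 2n$ unitary mvfs interchanges the roles of the blocks correctly, and that multiplying an $H^2$ function by a bounded ($H^\infty$, or at least bounded a.e.) function stays in $H^2$ --- here the a.e.\ unitarity of $\nlf_+'$ gives the needed $L^\infty$ bound on its blocks on $\T$, so the products are genuinely in $L^2$ with the claimed one-sided frequency support. A clean way to organize this is to first prove the off-diagonal blocks of $G$ are both analytic and co-analytic with vanishing mean (hence zero), mirroring the computations in the proof of Lemma \ref{lem:layer_stripping_finite} around \eqref{eq:vanishing_BC}, and then separately handle the diagonal blocks.
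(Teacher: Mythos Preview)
Your approach has a genuine gap in the frequency-support argument. Write out the blocks of $G = \nlf_+ (\nlf_+')^*$: for instance, the upper right block is $A_+(C_+')^* + B_+(D_+')^*$. Since $A_+ \in H^2(\D^*)$ while $(C_+')^* \in H^2(\D)$, and $B_+ \in H^2(\D)$ while $(D_+')^* \in H^2(\D^*)$, each term is a product of functions from \emph{opposite} Hardy spaces, so the Fourier support is all of $\Z$ and you get no one-sided structure. The same happens in the expression $G = \nlf_-^* \nlf_-'$: e.g.\ $A_-^* A_-' + C_-^* C_-'$ pairs $H^2(\D)$ with $H^2(\D^*)$. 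This is a genuine feature of the $SU(2n)$ setup (as opposed to $SU(1,1)$, where both $a$ and $b$ live in the same Hardy space and the argument you sketch does go through): here the left column of $\nlf_+$ is anti-analytic and the right column is analytic, so taking an adjoint flips the pattern and products are mixed. There is also a secondary issue: even granting $G=\diag(I_1,I_2)$, the relation you derive is $\nlf_+ = G\,\nlf_+'$, i.e.\ \emph{left} multiplication by $G$, whereas Property~\ref{item:3rd_prop_H} of $\mathbf{H}_\alpha^+$ concerns \emph{right} inner factors; your claimed ``equivalently $\nlf_+' = \nlf_+\diag(I_1,I_2)$'' is not equivalent, since $G$ does not commute with $\nlf_+'$.

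The paper proceeds by an entirely different route. It rewrites $\nlf_- = \nlf \nlf_+^*$, left-multiplies by $\diag(A^{-1},D^{-1})$, and uses the unitarity relation $D^{-1}C = -(A^{-1}B)^*$ to obtain an identity of the form \eqref{eq:RH_pre_proj}. Projecting onto $\spce$ and exploiting the holomorphicity of the left-hand side shows that $X := \nlf_+^* \diag(A_+(\infty), D_+(0))$ satisfies $(\Id+\op)X = \Id$ with $\op$ the (possibly unbounded) operator of Section~\ref{subsection:defn_M}. Since $i\op$ is self-adjoint, $\Id+\op$ is invertible (Lemma~\ref{lem:inverse_bd}), so $X$ is uniquely determined; then $A_+(\infty)$ and $D_+(0)$ are recovered from $X^*X$ via uniqueness of the Cholesky factorization, and $\nlf_+$ follows. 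The outerness of $A^*$ and $D$ (i.e.\ the hypothesis $\nlf\in\mathbf{B}_\alpha$) enters crucially in making $\op$ well-defined with domain dense in $\spce$.
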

\begin{proof} Assume a factorization exists. We show it is unique by using outerness  of the diagonal blocks of $\nlf$ and  holomorphicity to show that $M_+$ must satisfy an equation involving the invertible operator $\Id + \op$, which then has at most one solution. 

We  turn to the details.
First note that $\nlf_{-}$ is uniquely determined by the unitary $\nlf_{+}$ from  \eqref{eq:factorization_0}. We now show $\nlf_+$ is uniquely determined. 
Following our usual conventions, we rewrite \eqref{eq:factorization_0} as 
\[
\begin{pmatrix}
A_{-} & B_{-} \\ C_{-} & D_{-}   
\end{pmatrix} = \begin{pmatrix}
A & B \\ C & D    
\end{pmatrix} \begin{pmatrix}
A_{+} ^* & C_{+} ^* \\ B_{+} ^* & D_{+} ^*   
\end{pmatrix} \, .  
\]
Multiply both sides on the left by the block diagonal matrix 
\[
\begin{pmatrix}
A^{-1} & 0 \\ 0 & D^{-1}    
\end{pmatrix}
\]
to get
\begin{equation}\label{eq:RH_pre_proj}
\begin{pmatrix}
A^{-1} A_{-} & A^{-1} B_{-} \\ D^{-1} C_{-} & D^{-1} D_{-}   
\end{pmatrix} = \begin{pmatrix}
\Id & A^{-1} B \\ D^{-1} C & \Id    
\end{pmatrix} \begin{pmatrix}
A_{+} ^* & C_{+} ^* \\ B_{+} ^* & D_{+} ^*   
\end{pmatrix} \, .
\end{equation}
Because $\nlf$
is unitary, then 
\begin{equation}\label{eq:AB_to_CD}
(D^{-1} C)^* = - A^{-1} B\,.
\end{equation}
Combining this with the adjugate formula \eqref{eq:adjugate_2} and  taking $J, \of$ as in \eqref{def:J_o},   
we may thus rewrite \eqref{eq:RH_pre_proj} as
\[
\begin{pmatrix}
A^{-1} A_{-} & A^{-1} B_{-} \\ D^{-1} C_{-} & D^{-1} D_{-}   
\end{pmatrix} = \begin{pmatrix}
\Id & \frac{1}{\of ^*} J^*  \\ - \frac{1}{\of}J  & \Id    
\end{pmatrix} \begin{pmatrix}
A_{+} ^* & C_{+} ^* \\ B_{+} ^* & D_{+} ^*   
\end{pmatrix} \, .
\]
   Taking $\dense, \ddense$ and $\op$ as in Section \ref{subsection:defn_M},  for all $W \in \dense$
we have that
\[
\langle \begin{pmatrix}
A^{-1} A_{-} & A^{-1} B_{-} \\ D^{-1} C_{-} & D^{-1} D_{-}   
\end{pmatrix}, W \rangle = \langle \begin{pmatrix}
\Id & \frac{1}{\of ^*} J^*  \\ - \frac{1}{\of}J  & \Id    
\end{pmatrix} \begin{pmatrix}
A_{+} ^* & C_{+} ^* \\ B_{+} ^* & D_{+} ^*   
\end{pmatrix} , W \rangle \, ,
\]
and the integral within the inner products from \eqref{eq:inner_prod_def} is well-defined because $W \in \dense$.  By duality and then applying \eqref{eq strong Y} for $\op$ on elements of $\dense$, we may write this last inner product as
\begin{equation}\label{eq:inner_prods}
\langle \begin{pmatrix}
A^{-1} A_{-} & A^{-1} B_{-} \\ D^{-1} C_{-} & D^{-1} D_{-}   
\end{pmatrix}, W \rangle=\langle  \begin{pmatrix}
A_{+} ^* & C_{+} ^* \\ B_{+} ^* & D_{+} ^*   
\end{pmatrix}, (\Id -\op) W \rangle = \langle  \nlf_+ ^*, (\Id -\op) W \rangle \, .
\end{equation}
Observe that in the leftmost matrix of \eqref{eq:inner_prods}, the first row is antiholomorphic, and the upper right entry vanishes at $\infty$. Similarly, the second row is holomorphic, and the lower left entry vanishes at $0$. It follows from the definition of the inner product $\langle\cdot,\cdot \rangle$ that the projection of such functions onto $\spce$ equals the projection onto the space of constant diagonal block matrices. Since $W \in \spce$, we can apply this reasoning to the leftmost inner product of \eqref{eq:inner_prods} and, combined with \eqref{eq:mult_mean_value}, we obtain
\[
\langle \begin{pmatrix}
    A_+ (\infty)^{-1} & 0 \\ 0 & D_+ (0)^{-1} 
\end{pmatrix}, W \rangle = \langle  \nlf_+ ^* , (\Id -\op) W \rangle \, .
\]
Replacing $W$ by $W \begin{pmatrix}
    A_+ (\infty) ^*  & 0 \\ 0 & D_+ (0) ^* 
\end{pmatrix}$ and applying duality as in \eqref{eq:duality} yields 
\begin{equation}
\label{e:ZD}
\langle \begin{pmatrix}
    \Id & 0 \\ 0 & \Id
\end{pmatrix}, W \rangle = \langle  X, (\Id -\op) W \rangle \, ,
\end{equation}
for all $W \in \dense$, where we define
\begin{equation}\label{eq:X_Y+_unique}
X := \nlf_+ ^* \begin{pmatrix}
    A_+ (\infty) & 0 \\ 0 & D_+ (\infty)  
\end{pmatrix} .
\end{equation}
Equation \eqref{e:ZD} in fact continues to hold for all $W' \in \ddense$. To see this, fix such $W'$, let $\epsilon > 0$ and set 
\[
W_\epsilon :=\begin{pmatrix}
    \frac{\of}{\of_{\epsilon}} & 0 \\ 0 & \frac{\of ^*}{\of_{\epsilon} ^*}  
\end{pmatrix}  W' \in \dense \, .
\]
Apply \eqref{e:ZD} to $W_\epsilon$ and take $\epsilon \to 0$, using dominated convergence and the fact that $W' \in \ddense$ to obtain \eqref{e:ZD} for $W'$.
By Lemma \ref{lem:inverse_bd}, as $W'$ ranges across $\ddense$, then $(\Id - \op) W'$ ranges across $\spce$. Thus for all $V \in \spce$, we have
\[
\langle (\Id + \op)^{-1}\begin{pmatrix}
    \Id & 0 \\ 0 & \Id
\end{pmatrix},   V \rangle = \langle \begin{pmatrix}
    \Id & 0 \\ 0 & \Id
\end{pmatrix},  (\Id - \op)^{-1} V \rangle = \langle  X, V  \rangle \, ,
\]
where we invoked Lemma \ref{lem:inverse_bd} to invert $\Id + \op$.
Thus
\[
X = (\Id + \op)^{-1} \begin{pmatrix}
    \Id & 0 \\ 0 & \Id
\end{pmatrix} 
\]
 is uniquely determined. Because $\nlf_+$ is unitary, then by \eqref{eq:X_Y+_unique} we have
\[
X^* X = \begin{pmatrix}
    A_+ (\infty) A_+ (\infty) ^* & 0 \\ 0 &  D_+ (0) D_+ (0) ^*
\end{pmatrix}
\]
is a uniquely determined positive definite matrix. By uniqueness of the Cholesky factorization, both $A_+ (\infty) \in \grp_{\alpha(0)}$ and $D_+ (0) \in \grp_{\alpha(1)}$ are unique. From \eqref{eq:X_Y+_unique} it follows that $\nlf_+$ is uniquely determined.
\end{proof}

\subsection{Construction of the Factorization \texorpdfstring{\ref{eq:factorization_0}}{}}

Motivated by the uniqueness proof, we construct a unitary-valued candidate for $\nlf_+ \in \mathbf{L}_+$, where $\mathbf{L}_+$ denotes the mvfs $\nlf$ on $\T$ for which $M^* \in \spce$. In what follows, we continue using the convention \eqref{eq:convention} and denote the diagonal blocks of $\nlf_+$ by $A_+$ and $D_+$.

\begin{lemma}\label{lem:solve_fixed_pt_eqn}
    Let $\alpha: \Z_2 \to \Z_2$ and let $\nlf \in \mathbf{B}_{\alpha}$. If 
    \begin{equation}\label{eq:RH_Hilbert_eqn}
 X := (\Id + \op )^{-1} \Id \, ,
    \end{equation}
then there exists a unique $U(2n)$-valued function $\nlf_+ \in \mathbf{L}_+$ satisfying $A_+(\infty) \in \grp_{\alpha(0)}$ and $D_+(0) \in \grp_{\alpha(1)}$, and
\begin{equation}\label{eq:X_to_Y_+}
X = \nlf_+ ^* \begin{pmatrix}
    A_+ (\infty) & 0 \\ 0 & D_+ (0)
\end{pmatrix} \, .
\end{equation}

\end{lemma}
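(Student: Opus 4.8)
The plan is to extract $\nlf_+$ from the solution $X$ of the fixed-point equation \eqref{eq:RH_Hilbert_eqn} by first identifying the ``constant part'' of $X$ and then showing $X$ is, up to that constant, the adjoint of a unitary mvf. First I would record what equation $X$ satisfies: from $(\Id+\op)X=\Id$ and the duality pairing \eqref{eq:inner_prod_def}, for every $W\in\dense$ we have $\langle X,(\Id-\op)W\rangle=\langle\diag(\Id,\Id),W\rangle$, reversing the computation in the proof of Lemma~\ref{lem:RH_nec}. Unwinding the definition of $\op$ via \eqref{eq:strong_Y_formula} and reintroducing $\of,J$ through the adjugate formula \eqref{eq:adjugate_2}, this says that the block matrix
\[
\begin{pmatrix} \Id & A^{-1}B \\ D^{-1}C & \Id \end{pmatrix} X
\]
has the property that its Fourier projection $\phl$ of the appropriate block pattern equals a constant diagonal block matrix. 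Concretely, writing $X=\begin{pmatrix} X_{11} & X_{12} \\ X_{21} & X_{22}\end{pmatrix}$ with $X_{11},X_{12}\in H^2(\D)$ and $X_{21},X_{22}\in H^2(\D^*)$, one reads off that $X_{11}-\frac1\of J^* X_{21}$ (first block column, first row, antiholomorphic part) and similarly the other entries are forced to be constants; more precisely the full matrix $\nlf\,\widetilde X$, where $\widetilde X$ is $X$ with its two block-columns adjointed into the right shape, is a constant diagonal block matrix $\diag(P_1,P_2)$.

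Next I would define $\nlf_+$ by inverting this relation. Set $\widetilde X := \begin{pmatrix} X_{11}^* & X_{21}^* \\ X_{12}^* & X_{22}^*\end{pmatrix}$ — that is, arrange $X$ so that $X=\nlf_+^*\diag(A_+(\infty),D_+(0))$ would read $\nlf\,\widetilde X^*=\diag(\cdot,\cdot)$ in the notation of \eqref{eq:X_Y+_unique} — and show $\widetilde X$ is invertible a.e.\ on $\T$ with $\nlf_+:=\diag(P_1,P_2)^{1/2}\cdots$; the cleanest route is: from $\nlf\widetilde X=\diag(P_1,P_2)$ and unitarity of $\nlf$ on $\T$ we get $\widetilde X^*\widetilde X=\diag(P_1^*P_1,P_2^*P_2)$ a.e., so $\widetilde X$ is a.e.\ invertible provided $P_1,P_2$ are invertible. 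Invertibility of $P_1,P_2$ follows from $X\ne 0$ together with the structure: $P_i$ are the constant matrices, and one computes $P_1 = A_+(\infty)$-candidate etc.; to pin this down I would test \eqref{e:ZD} against suitable $W$ to show $P_i$ cannot be singular (if $P_i v=0$ for some vector $v$, then $\widetilde X v$ would be a nonzero element of $H^2$ vanishing in a way incompatible with $X=(\Id+\op)^{-1}\Id\neq 0$). Having $P_1,P_2$ invertible, perform a QR/Cholesky factorization $P_i^*P_i=R_i^*R_i$ with $R_1\in\grp_{\alpha(0)}$, $R_2\in\grp_{\alpha(1)}$ as in Lemma~\ref{lem:QR}, and define $\nlf_+ := (\widetilde X\, \diag(R_1,R_2)^{-1})^*$, equivalently $\nlf_+^* = \widetilde X\,\diag(R_1,R_2)^{-1}$. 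Then $\nlf_+^*\nlf_+ = \diag(R_1,R_2)^{-*}\widetilde X^*\widetilde X\diag(R_1,R_2)^{-1} = \diag(R_1,R_2)^{-*}\diag(P_1^*P_1,P_2^*P_2)\diag(R_1,R_2)^{-1}=\Id$ a.e.\ on $\T$, so $\nlf_+$ is unitary; and $\nlf_+^*\in\spce$ since each $X_{ij}$ lies in the correct Hardy space and right multiplication by a constant matrix preserves these spaces, so $\nlf_+\in\mathbf L_+$. Finally $A_+(\infty)$ and $D_+(0)$: evaluating \eqref{eq:X_to_Y_+} at the appropriate boundary points, $A_+(\infty) = R_1\in\grp_{\alpha(0)}$ and $D_+(0)=R_2\in\grp_{\alpha(1)}$ by construction, so \eqref{eq:X_to_Y_+} holds with these choices.

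For uniqueness of $\nlf_+$: if $\nlf_+$ and $\nlf_+'$ both satisfy \eqref{eq:X_to_Y_+} with the stated normalizations, then $\nlf_+^*\diag(A_+(\infty),D_+(0)) = (\nlf_+')^*\diag(A_+'(\infty),D_+'(0))$, so $\nlf_+(\nlf_+')^{-*}$-type algebra gives $(\nlf_+')^{-1}\nlf_+$ is a constant diagonal block matrix (using unitarity on $\T$ and matching the constant factors), and matching the normalizations $A_+(\infty),A_+'(\infty)\in\grp_{\alpha(0)}$ and $D_+(0),D_+'(0)\in\grp_{\alpha(1)}$ together with uniqueness of Cholesky (Lemma~\ref{lem:QR}) forces $A_+(\infty)=A_+'(\infty)$, $D_+(0)=D_+'(0)$, hence $\nlf_+=\nlf_+'$.

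I expect the main obstacle to be the bookkeeping in the first step: carefully justifying that the defining relation $\langle X,(\Id-\op)W\rangle=\langle\diag(\Id,\Id),W\rangle$ for $W\in\dense$ (extended to $\ddense$ exactly as in Lemma~\ref{lem:RH_nec} via the $W_\epsilon$ approximation) really does force $\nlf\widetilde X$ to be a \emph{constant} diagonal block matrix, rather than merely having vanishing projection onto $\spce$. This requires identifying which Fourier modes of $\nlf\widetilde X$ are seen by pairing against all of $\dense$ (equivalently all of $\spce$, by density), and using the frequency-support/holomorphicity structure of the blocks of $\nlf$ (antiholomorphic first row vanishing at $\infty$ in the relevant corner, holomorphic second row vanishing at $0$) exactly as in the passage following \eqref{eq:inner_prods} — but now run in reverse to \emph{produce} $\nlf_+$ rather than to constrain it. The invertibility of the constant matrices $P_1,P_2$ is the other point needing care; it ultimately rests on $X=(\Id+\op)^{-1}\Id$ being a genuine (nonzero) solution, which is guaranteed by Lemma~\ref{lem:inverse_bd}.
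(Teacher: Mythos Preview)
Your approach contains a genuine gap, and it is precisely the step you flag as ``the main obstacle.'' You want to show that $\nlf\,\widetilde X$ (equivalently $\nlf X$, since your $\widetilde X=X^*$) is a \emph{constant} diagonal block matrix, and then deduce $X^*X$ constant from unitarity of $\nlf$. But this claim is false. From $(\Id+\op)X=\Id$ you only get $\phl\bigl[(\Id+K)X\bigr]=\Id$ where $K$ is the multiplication kernel; equivalently $(\Id+K)X-\Id$ lies in $\spce^\perp$, not that $(\Id+K)X$ is constant. Left-multiplying by $\diag(A,D)$ to recover $\nlf$ does not help: if a factorization $\nlf=\nlf_-\nlf_+$ exists, then $\nlf X=\nlf_-\diag(A_+(\infty),D_+(0))$, which is genuinely non-constant whenever $\nlf_-$ is. The passage after \eqref{eq:inner_prods} that you cite works in the other direction precisely because the left-hand matrix there has first row in $H^2(\D^*)$ and second row in $H^2(\D)$, so pairing against $\spce$ sees only the constant part---it never claims that matrix is constant.

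The paper's route is different and avoids $\nlf$ entirely at this stage: it computes $X^*X$ directly from the fixed-point equation $X=\Id-\op X$. Writing the blocks of $X$ as $Q,R,S,T$, one substitutes the block formulas for $X$ and $X^*$ coming from \eqref{eq:unbded_op} into each block of $X^*X$, and after adding and subtracting terms of the form $\frac{1}{\of_\eta^*}Q^*J^*S$, each block is seen to be a weak $L^1$ limit of products of $H^2(\D^*)$ (resp.\ $H^2(\D)$) functions, forcing it into $H^1(\D^*)$ (resp.\ $H^1(\D)$); hermitian symmetry then makes the diagonal blocks constant and the off-diagonal blocks zero. Positive definiteness of $X^*X$ comes from the observation that $X(z)v\equiv 0$ and the fixed-point equation force $v=0$. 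Only then does Cholesky and the definition $\nlf_+^*:=X\diag(U^{-1},V^{-1})$ enter. Your Cholesky/uniqueness endgame is essentially correct, but it rests on a premise you cannot establish by your proposed method.
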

\begin{proof}
\underline{\textbf{Step 1:} $X^* X$ is block diagonal and constant.}
Because 
\[
\Id + \op: \ddense \to \spce
\]
is invertible by Lemma \ref{lem:inverse_bd}, then $X \in \ddense \subset \spce$. We claim that
\begin{equation}\label{eq:X_star_X_formula}
X^* X = \begin{pmatrix}
    Q(0) & 0 \\ 0 & T(\infty)
\end{pmatrix} \, ,
\end{equation}
where $Q, T$ are as in the following labeling 
\begin{equation}\label{eq:comps_X}
\begin{pmatrix}
    Q & R \\ S  & T 
\end{pmatrix} := X \, .
\end{equation}
 Indeed, from this labeling and \eqref{eq:RH_Hilbert_eqn}, or equivalently
 \begin{equation}\label{eq:fixed_pt_eqn}
X = \Id -\op X \, ,
\end{equation}
it follows that 
\begin{equation}\label{eq:X_comps_forms}
X = \begin{pmatrix}
    Q & R\\ S & T
\end{pmatrix}= \begin{pmatrix}
    \Id - \lim\limits_{\eta \to 0} P_{\D} \frac{1}{\of_{\eta} ^ *} J ^*   S & -  \lim\limits_{\eta \to 0}P_{\D} \frac{1}{\of _{\eta} ^*} J ^*  T \\ \lim\limits_{\eta \to 0} P_{\D^*}    \frac{1}{\of_{\eta}} J  Q & \Id + \lim\limits_{\eta \to 0} P_{\D^*}   \frac{1}{\of_{\eta}} J R
\end{pmatrix} \, ,
\end{equation}
where the limit is in the weak sense. 
Taking conjugate transposes yields
\begin{equation} \label{eq:X_star_comps_forms}
X^* = \begin{pmatrix}
    Q^* & S^* \\ R^* & T^*
\end{pmatrix} = \begin{pmatrix}
    \Id - \lim\limits_{\eta \to 0} P_{\D^*} \frac{1}{\of_{\eta} } S^* J & \lim\limits_{\eta \to 0} P_{\D} \frac{1}{\of_{\eta} ^*} Q^* J ^*  \\ -  \lim\limits_{\eta \to 0} P_{\D^*} \frac{1}{\of_{\eta}} T^* J  & \Id + \lim\limits_{\eta \to 0} P_{\D} \frac{1}{\of_{\eta} ^*} R^* J ^*
\end{pmatrix} \, .
\end{equation}

We now verify one by one that the blocks of $X^*X$ are given by \eqref{eq:X_star_X_formula}, and begin with the diagonal blocks. For the upper left block $Q^* Q + S^* S$ of $X^* X$, substituting $Q$ and $S^*$ via \eqref{eq:X_comps_forms}--\eqref{eq:X_star_comps_forms}, we get
\begin{equation}\label{eq:QQSS_lim}
 Q^* Q + S^* S = \lim\limits_{\eta \to 0} Q^* (\Id - P_{\D} \frac{1}{\of _{\eta} ^*} J ^*  S ) + (P_{\D}  \frac{1}{\of _{\eta} ^*} Q^* J^* ) S\, , 
\end{equation}
where the limit is in the weak sense in $L^1(\T)$. We add and subtract $ \frac{1}{\of_{\eta} ^*}Q^* J^*  S$ to see that the right side of  \eqref{eq:QQSS_lim} equals 
\begin{equation}\label{eq:GG*+JJ*_useful}
 \lim\limits_{\eta \to 0} Q^* (\Id + (\Id -  P_{\D}) \frac{1}{\of_{\eta} ^*} J^*  S ) - ((\Id - P_{\D}) \frac{1}{\of_{\eta} ^*} Q^* J^*    ) S \, .
\end{equation}
Because the image of the operator $\Id - P_{\D}$ equals the subspace $H^2 _0 (\D^*)$ of mean zero functions in $H^2 (\D^*)$, then \eqref{eq:GG*+JJ*_useful} is the weak limit of a sum of products of $H^2 (\D^* )$ functions and so is the weak limit of a sequence in $H^1 (\D^* )$. Thus 
\[
Q^* Q + S^* S \in H^1 (\D^*) \, .
\]
Applying the $*$-operation and noting that this matrix is pointwise hermitian, we get it also belongs to $H^1 (\D)$. Thus it must be constant. Using that every function in the image of  $\Id-P_{\D}$ has mean zero, then  \eqref{eq:GG*+JJ*_useful} evaluated at $z = \infty$ equals
$Q^* (\infty)$.
 Thus the upper left entry of $X^*  X$ is given by $Q^*( \infty)$. Similarly, the lower-right block $R^* R  + T^* T$ of $X^* X$ equals $T(\infty)$.

We turn to the off-diagonal blocks. By substituting $Q$ and $T^*$ by their limits in \eqref{eq:X_comps_forms}--\eqref{eq:X_star_comps_forms}, we may write the bottom left block of $X^* X$ as
\[
R^*Q + T^* S
=\lim\limits_{\eta \to 0} R^* (\Id - P_{\D})  \frac{1}{\of_{\eta} ^*}J^* S  +  ( (P_{\D} - \Id) \frac{1}{\of_{\eta} ^*} R^*  J^*  )S   + R^* + S\, .
\]
The right side is clearly an element of $H^1 (\D^*)$. On the other hand, substituting $R^*$ and $S$ by their limits in \eqref{eq:X_comps_forms}--\eqref{eq:X_star_comps_forms} reveals that
\[
R^*Q + T^* S 
= \lim\limits_{\eta \to 0} ((\Id - P_{\D^*}) \frac{1}{\of_{\eta} } T^* J  ) Q - T^*  (\Id - P_{\D^*})  \frac{1}{\of_{\eta} } J Q  
\]
 is an element of the subspace $H^1 _0 (\D)$ of mean zero functions in $H^1 (\D)$. Since $R^* Q + T^* S$ belongs to both $H^1 (\D^*)$ and $H^1 _0 (\D)$, it must be identically zero. Similarly, the upper right block is also zero. 
 This proves Claim \eqref{eq:X_star_X_formula} and completes Step 1 of the proof.

\underline{\textbf{Step 2:}  $X^* X$ is positive definite  for all $z \in \T$.}  
Note that if $v$ is in the kernel of $W^*W$ for any matrix $W$, then $v$ is also already in the kernel of $W$. Suppose that  $v$ belongs to the kernel of the constant matrix $X^*X$ (recall \eqref{eq:X_star_X_formula}). Taking $W = X(z)$ for any $z \in \mathbb{T}$, it follows that 
\[
    X(z) v = 0
\]
for all $z \in \mathbb{T}$.
But multiplying both sides of \eqref{eq:fixed_pt_eqn} on the right by $v$ yields
\[
0  = v - (\op X) v \, ,
\]
or rather using that $v$ is constant in $z$ and slightly abusing notation\footnote{ 
 This abuse of notation can be made rigorous as follows. We have that $(\op X)v$ vanishes if and only if $(\op X) (v, \ldots, v)$ vanishes, where $(v,\ldots, v)$ denotes the $2n \times 2n$ matrix with each column given by $v$, and then using that $Xv = 0$, we have 
\[
(\op X) (v, \ldots, v) = \op (X (v, \ldots, v)) = \op (0) = 0 \, .
\]}, 
\begin{equation}\label{eq:v_vanishes_1}
v = (\op X) v  = \op (Xv) = 0 \, .
\end{equation}
Therefore $v=0$ by \eqref{eq:v_vanishes_1}.
Hence $X^* X$ is positive definite.

\underline{\textbf{Step 3:} defining $\nlf_+$.} By Step 2, the right side of \eqref{eq:X_star_X_formula} is positive definite.
Let $\beta_0, \beta_1 \in \{0, 1\}$ be numbers that will be chosen later. By the Cholesky factorization of Lemma \ref{lem_cholesky}, there exist a unique pair $(U,V) \in \grp_{\beta_0} \times \grp_{\beta_1}$ for which
\begin{equation}\label{eq:def_UV}
\begin{pmatrix}
    Q(0) & 0 \\
    0 & T (\infty)
\end{pmatrix} = \begin{pmatrix}
    U^* & 0 \\
    0 & V^*
\end{pmatrix} \begin{pmatrix}
    U & 0 \\
    0 & V
\end{pmatrix} \, .
\end{equation}
For this choice of $(U,V)$, we then define
\begin{equation}\label{eq:defn_Y+}
\nlf_+ ^* := X \begin{pmatrix}
    U^{-1} & 0 \\ 0 & V ^{- 1} 
\end{pmatrix} \, .
\end{equation}
Thus $\nlf_+ \in \mathbf{L} _+$ and is a.e.\ unitary by \eqref{eq:X_star_X_formula}.
Furthermore,  
\begin{equation}\label{eq:UV_to_AD}
 \begin{pmatrix}
A_+ ^* (0) & 0 \\ 0 & D_{+} ^* (\infty)    
\end{pmatrix} = \begin{pmatrix}
    U^* & 0 \\ 0 & V^*
\end{pmatrix}\ ,
\end{equation}
and so \eqref{eq:X_to_Y_+} holds. Finally, since $U^* \in \grp_{1-\beta_0}$ and $V^* \in \grp_{1-\beta_1}$, the proof is complete once we set $\beta_0 = 1-\alpha(0)$ and $\beta_1 = 1-\alpha(1)$, for then $A_+ (\infty) \in \grp_{\alpha(0)}$ and $D(0) \in  \grp_{\alpha(1)}$ by \eqref{eq:UV_to_AD}. 
\end{proof}

The following now shows existence of a factorization as in \eqref{eq:factorization_0}.
\begin{lemma}\label{lem:RH_ex} 
    If $\nlf \in \mathbf{B} _{\alpha}$, then there exists a factorization \eqref{eq:factorization_0}
    where $\nlf_{-} \in \mathbf{H}_{\alpha, 0} ^-$ and $\nlf_+ \in \mathbf{H}_{\alpha} ^+$.
\end{lemma}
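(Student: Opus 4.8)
The plan is to take the unitary function $\nlf_+ \in \mathbf{L}_+$ constructed in Lemma \ref{lem:solve_fixed_pt_eqn} from the solution $X = (\Id + \op)^{-1}\Id$, set $\nlf_- := \nlf \nlf_+^*$, and verify that these two functions lie in $\mathbf{H}_\alpha^+$ and $\mathbf{H}_{\alpha,0}^-$ respectively. Since $\nlf$ and $\nlf_+$ are a.e.\ unitary, so is $\nlf_-$, and \eqref{eq:factorization_0} holds by construction; the content is entirely in checking the analytic membership conditions (Properties \ref{item:1st_prop_H}--\ref{item:3rd_prop_H} of Definition \ref{defH} and their analogues for $\mathbf{H}_{\alpha,0}^-$).

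First I would establish the holomorphicity/frequency-support statements. Writing $X$ in block form as in \eqref{eq:comps_X}, the fixed point equation \eqref{eq:X_comps_forms} exhibits $Q - \Id$ and $R$ as (weak limits of) functions in $H^2(\D)$, and $S$ and $T - \Id$ as functions in $H^2(\D^*)$; moreover $R(\infty) = 0$ and $S(0) = 0$ since these come from the ranges of $P_{\D}$ composed with multiplication operators. Since $\nlf_+^* = X \begin{pmatrix} U^{-1} & 0 \\ 0 & V^{-1}\end{pmatrix}$ with $U, V$ constant invertible, the blocks $A_+^*, C_+^*$ of $\nlf_+$ lie in $H^2(\D)$ with $C_+^*(\infty)= 0$, while $B_+^*, D_+^*$ lie in $H^2(\D^*)$ with $B_+^*(0) = 0$; equivalently $A_+^*, C_+^*, B_+, D_+ \in H^2(\D)$, which is Property \ref{item:1st_prop_H}. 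The normalization Property \ref{item:2nd_prop_H} is exactly the content of \eqref{eq:X_to_Y_+} together with the choice $\beta_0 = 1 - \alpha(0)$, $\beta_1 = 1 - \alpha(1)$ made at the end of Lemma \ref{lem:solve_fixed_pt_eqn}. For $\nlf_- = \nlf \nlf_+^*$, I would read off the frequency supports from the product of the block structures of $\nlf \in \mathbf{B}_\alpha$ and $\nlf_+^*$: the diagonal blocks $A_-, D_-$ are products $A A_+^* + B B_+^*$ and $C C_+^* + D D_+^*$, which one checks are respectively anti-holomorphic and holomorphic, while the off-diagonal blocks $B_- = A C_+^* + B D_+^*$ and $C_- = C A_+^* + D B_+^*$ must additionally vanish at $\infty$ and $0$ respectively — the vanishing is the crucial point and should follow from the defining equation \eqref{e:ZD}/\eqref{eq:fixed_pt_eqn} for $X$ (this is precisely where outerness of the diagonal blocks of $\nlf$, hence invertibility of $A, D$, and the mean-value computation behind $\op$ enter). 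The normalization for $\nlf_-$ follows from \eqref{eq:mult_mean_value}-type identities, giving $A_-(\infty) = A(\infty)A_+(\infty)^{-*}\cdot(\text{stuff})$; more carefully one uses $A_-(\infty) = A(\infty) A_+^*(\infty) + B(\infty) B_+^*(\infty)$ and the fact that $B_+^*(0) = 0$, resp.\ outerness, to conclude $A_-(\infty) \in \grp_{\alpha(0)}$ and $D_-(0) \in \grp_{\alpha(1)}$.

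The main obstacle I expect is verifying Property \ref{item:3rd_prop_H} — that $\nlf_+$ (and $\nlf_-$) admit no nontrivial inner factor on the correct side. Here I would argue by contradiction: given a factorization $\nlf_+ = \nlf_+' \begin{pmatrix} I_1 & 0 \\ 0 & I_2\end{pmatrix}$ with $I_1^*, I_2 \in H^2(\D)$ unitary a.e.\ and $\nlf_+'$ satisfying Properties \ref{item:1st_prop_H}--\ref{item:2nd_prop_H}, I would show that $X' := (\nlf_+')^* \begin{pmatrix} A_+'(\infty) & 0 \\ 0 & D_+'(\infty)\end{pmatrix}$ still satisfies the fixed point equation $(\Id + \op)X' = c$ for a constant diagonal block matrix $c$, by running the computation of Lemma \ref{lem:RH_nec} in reverse: the holomorphicity of $\nlf_+'$, the relation $\nlf_- I^{-1}\nlf_+' = \nlf$... wait, rather $\nlf = \nlf_- \nlf_+ = \nlf_- \begin{pmatrix} I_1 & 0\\0&I_2\end{pmatrix}\nlf_+'$, so $\nlf_-\begin{pmatrix} I_1&0\\0&I_2\end{pmatrix}$ would have to be in $\mathbf{H}_{\alpha,0}^-$ — but inner factors on the left of an $\mathbf{H}_{\alpha,0}^-$ element were exactly excluded there, or one uses the uniqueness of $X$ from Lemma \ref{lem:inverse_bd} to force $I_1 = I_2 = \Id$ after matching normalizations. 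The cleanest route may in fact be to invoke Lemma \ref{lem:RH_nec} indirectly: once existence of \emph{some} factorization is shown (modulo Property \ref{item:3rd_prop_H}), combine with the injectivity results (Lemma \ref{lem:injectivity_half_line} and Corollary \ref{cor:left_half}) to deduce that $\nlf_\pm$ are NLFTs of $\ell^2$ half-line sequences, which automatically lie in $\mathbf{H}_\alpha^+$ and $\mathbf{H}_{\alpha,0}^-$. I would carry out the frequency-support and normalization checks first (routine), then handle the no-inner-factor condition last, expecting that the self-adjointness of $i\op$ and the resulting uniqueness in Lemma \ref{lem:inverse_bd} is the key tool that rules out nontrivial inner factors.
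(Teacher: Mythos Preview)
Your overall strategy---take $\nlf_+$ from Lemma \ref{lem:solve_fixed_pt_eqn}, set $\nlf_- := \nlf\nlf_+^*$, and verify membership---is exactly the paper's. But there are two genuine gaps.

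\textbf{The holomorphicity of $\nlf_-$ is not routine.} You write that $A_- = A A_+^* + B B_+^*$ is ``anti-holomorphic'' by reading off block structures, and later compute $A_-(\infty) = A(\infty)A_+^*(\infty) + B(\infty)B_+^*(\infty)$. But $B$ is only Szeg\H{o}, not in any Hardy space, so $B(\infty)$ is meaningless and the product $A A_+^*$ of an $H^\infty(\D^*)$ function with an $H^2(\D)$ function need not lie in $H^2(\D^*)$. The paper resolves this by inserting the fixed-point expression for $A_+^*$ from \eqref{eq:X_comps_forms} and using the algebraic identity $\frac{1}{\of^*} A J^* = B$ (equivalently $\frac{1}{\of} D J = -C$ via \eqref{eq:AB_to_CD}), which produces an exact cancellation of the $B B_+^*$ term against part of $A A_+^*$. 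What remains is a weak limit of products of $H^2(\D^*)$ functions, placing $A_-$ in $H^2(\D^*)$ and yielding $A_-(\infty) = A(\infty)A_+(\infty)^{-1}$. The same mechanism handles $B_-, C_-, D_-$ and their required vanishing at $\infty, 0$. You correctly sense that outerness enters here, but the actual argument is this specific cancellation, not a general frequency-support bookkeeping.

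\textbf{Your approaches to Property \ref{item:3rd_prop_H} do not work.} Running the computation of Lemma \ref{lem:RH_nec} on $\nlf_+'$ requires knowing that $\nlf_-\begin{pmatrix}I_1&0\\0&I_2\end{pmatrix}$ satisfies the $\mathbf{H}_{\alpha,0}^-$ holomorphicity conditions, which is what you are trying to prove. Invoking the injectivity/surjectivity lemmas is circular: Lemma \ref{lem surj} gives surjectivity onto $\mathbf{U}_\alpha^+$, whose definition already includes Property \ref{item:3rd_prop_H}. The paper's argument is direct and different: suppose $A_+^*, C_+^* \in I_1 H^2(\D)$. From the factorization $\nlf = \nlf_-\nlf_+$ one reads off $A^* = A_+^* A_-^* + C_+^* B_-^*$, so $A^* \in I_1 H^\infty(\D)$. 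Since $\det A^*$ is \emph{outer} (this is the hypothesis $\nlf \in \mathbf{B}_\alpha$), the inequality in \eqref{outer_func_mean_value_fails} forces $|\det I_1| \equiv 1$ on $\D$; then Lemma \ref{lem:cst_det_matrix_inner} makes $I_1$ constant, and the normalization in $\grp_{\alpha(0)}$ forces $I_1 = \Id$. The same works for $I_2$ via $D$, and symmetrically for the left inner factor of $\nlf_-$. This is the key idea you are missing: outerness of the diagonal blocks of $\nlf$ is used not through invertibility or mean values, but to rule out inner divisors of $A^*$ and $D$ directly.
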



\begin{proof}
We take advantage of the  functional analysis done in this section to define $\nlf_+$ and $\nlf_{-}$. The labor here is checking that $\nlf_{-}$ and $ \nlf_+$ belong to the mandated spaces by verifying holomorphicity properties, normalizations and nonexistence of inner factorizations of type \eqref{eq:inner_factor}.

We turn to the details. Let $J$ and $\of$ be as in \eqref{def:J_o}, and let $\nlf_+ \in \mathbf{L}_+$ be the function from Lemma \ref{lem:solve_fixed_pt_eqn}, so that $A_+ (\infty) \in \grp_{\alpha(0)} $ and $D_+(0) \in \grp_{\alpha(1)}$. 

Define the $U(2n)$-valued function
\[
\nlf_{-} := \nlf \nlf_+ ^{-1} = \nlf \nlf_+ ^* \, .
\]
Rewriting this using our labeling conventions from \eqref{eq:convention} yields
\[
 \begin{pmatrix}
    A_- & B_- \\ C_- & D_-
\end{pmatrix}= \begin{pmatrix}
    A & B \\ C & D
\end{pmatrix} \begin{pmatrix}
    A_+ ^* & C_+ ^* \\ B_+ ^* & D_+ ^*
\end{pmatrix}= \begin{pmatrix}
    A A_+ ^* + B B_+ ^* & A C_+ ^* + B D_+ ^* \\ C A_+ ^* + D B_+ ^* & C C_+ ^* + D D_+ ^*
\end{pmatrix} \, . 
\]
Let us check that $\nlf_{-}$ has the correct holomorphicity conditions.
Using \eqref{eq:X_to_Y_+} and \eqref{eq:X_comps_forms}, we see that
\[
\begin{pmatrix}
    A_+ ^* & C_+ ^* \\ B_+ ^* & D_+ ^*
\end{pmatrix} = \begin{pmatrix}
    A_+ (\infty)^{-1} - \lim\limits_{\eta \to 0} P_{\D} \frac{1}{\of_{\eta} *} J^*   B_+ ^* & -  \lim\limits_{\eta \to 0}P_{\D}  \frac{1}{\of _{\eta} ^*} J^* D_+ ^* \\ \lim\limits_{\eta \to 0} P_{\D^*} \frac{1}{\of_{\eta}} J   A_+ ^* & D_+ (0)^{-1} + \lim\limits_{\eta \to 0} P_{\D^*}  \frac{1}{\of_{\eta}} J  C_+ ^*
\end{pmatrix} \, .
\]
To see that $A_- \in H^2 (\D^*)$, we write
\[
A_- = A A_+ ^* + B B_+ ^*
=\lim\limits_{\eta \to 0} A ( A_+ (\infty) ^{-1} - P_{\D} \frac{1}{\of ^* _{\eta}} J ^*  B_+ ^*) +  B  B_+ ^* 
\]
\[= A A_+ (\infty)^{-1} + \lim\limits_{\eta \to 0} A(\Id - P_{\D})  \frac{1}{\of ^* _{\eta}} J^* B_+ ^* - \frac{1}{\of_{\eta} ^*} A   J^* B_+ ^* +  B B_+ ^* \, .
 \]
 By \eqref{def:J_o} and Lemma \ref{lem:density}, we have
 \begin{equation}\label{eq:weak_limit_AJ}
   \frac{1}{\of_{\eta} ^*} A  J^* =  \frac{\of ^*}{\of_{\eta} ^*} B  \to B \, ,
 \end{equation}
 strongly in  $L^2$ as $\eta \to 0$. Thus 
 \[
 A_{-} = A A_{+} (\infty)^{-1} + \lim\limits_{\eta \to 0} A(\Id - P_{\D}) \frac{1}{\of ^* _{\eta}} J^*  B_+ ^*
 \]
 is a weak limit of $H^2 (\D^*)$ functions and so is in $H^2 (\D^*)$. Evaluating both sides at $\infty$ yields
 \[
A_{-}( \infty) =A (\infty) A_+ (\infty)^{-1} \, . 
 \]
Each term on the right side belongs to $\grp_{\alpha(0)}$ and hence $A_{-} (\infty) \in \grp_{\alpha(0)}$.
 
 Similarly, 
 \[
 D_{-} = C C_+ ^* + D D_+ ^* = C C_+ ^* + D (D_+ (0)^{-1} + \lim\limits_{\eta \to 0} P_{\D^*} \frac{1}{\of_{\eta} } J  C_+ ^* ) 
 \]
 \[
 =D D_+ (0)^{-1} + \lim\limits_{\eta \to 0} D (\Id -  P_{\D^*})   \frac{1}{\of_{\eta}} J C_+ ^* + \lim\limits_{\eta \to 0}   \frac{1}{\of_{\eta}} D J C_+ ^*  + C C_+ ^*  \, . 
 \]
From \eqref{def:J_o} and \eqref{eq:AB_to_CD}, we have  
\begin{equation}\label{eq:strong_limit_DJ}
\frac{1}{\of_{\eta}} D J = \frac{\of }{\of_{\eta}} D \left ( \frac{1}{\of} J \right )   = - \frac{\of }{\of_{\eta}} D(A^{-1}B)^{*}   = -  \frac{\of }{\of_{\eta}} C  \to -C  \, ,
\end{equation}
strongly in  $L^2$  as $\eta \to 0$, 
and hence
\[
D_{-} =D D_+(0)^{-1} + \lim\limits_{\eta \to 0} D (\Id -  P_{\D^*}) J^*  \frac{1}{\of_{\eta}} C_+ ^* \, .
\]
Therefore $D_{-}$ is a weak limit of $H^2 (\D)$ functions and hence belongs to $H^2 (\D)$. Evaluating both sides at $0$ yields
\[
D_{-} (0) = D(0) D_+ (0)^{-1} \, .
\]
The right side is a product of terms in $\grp_{\alpha(1)}$ and so $D_{-} (0) \in \grp_{\alpha(1)}$. 

As for $B_{-}$, we use the strong limit \eqref{eq:weak_limit_AJ} to write 
\[
B_{-} = A C_+ ^* + B D_+ ^* = -\lim\limits_{\eta \to 0} A P_{\D} \frac{1}{\of ^* _{\eta}} J  D_+ ^* + B D_+ ^*  = \lim\limits_{\eta \to 0} A( \Id - P_{\D}) \frac{1}{\of ^* _{\eta}}  J  D_+ ^* \, , 
\]
which is a weak limit of $H^2 _0 (\D^*)$ functions  and so belongs to $H^2 _0 (\D^*) $. Similarly, using the strong limit \eqref{eq:strong_limit_DJ}, we have
\[
C_{-} = C A_+ ^* + D B_+ ^* = C A_+ ^* - \lim\limits_{\eta \to 0} D P_{\D^*} J^* \frac{1}{\of_{\eta} } A_+ ^*=   \lim\limits_{\eta \to 0} D(\Id -  P_{\D^*}) J^* \frac{1}{\of_{\eta} } A_+ ^* \, , 
\]
which is a weak limit of $H^2 _0 (\D)$ functions and so is again in $H^2 _0 (\D)$.

Thus Properties \ref{item:1st_prop_H}--\ref{item:2nd_prop_H} of Definition \ref{defH} hold for $\nlf_+$, and their analogs for $\nlf_{-}$. We now turn to Property \ref{item:3rd_prop_H} of Definition \ref{defH} and its analog for $\nlf_{-}$, and show there are no nontrivial factorizations \eqref{eq:inner_factor} and \eqref{eq:inner_factor_H-}. Let a factorization like \eqref{eq:inner_factor} be given, so that for an inner factor $I_1$ 
\[
     A_+ ^* \, ,  C_+ ^* \in I_1 H^2 (\D) \, .
\]
Then, from examining the upper left block of the matrix equation 
\[
\nlf = \nlf_{-} \nlf_{+}\,,
\]
we obtain
\[
    A^* = A_+ ^* A_{-} ^*  + C_+ ^* B_{-} ^*  \in I_1 H^\infty(\D) \, .
\]
We claim that $\lvert \det I_1 \rvert = 1$ on all of $\D$: if not, then because $I_1 ^{-1} A^* \in H^{\infty} (\D)$, we estimate 
\begin{equation}\label{outer_func_mean_value_fails}
\log \lvert \det A^* (0) \rvert < \log \lvert \det I_1 ^{-1}(0) A^* (0) \rvert
\leq  \int\limits_{\T}  \log \lvert \det I_1 ^{-1} A^* \rvert = \int\limits_{\T}  \log \lvert \det A^* \rvert \, ,
\end{equation}
which contradicts the fact that $\det A ^*$ is outer on $\D$. Because $\lvert \det I_1 \rvert =1$ on $\D$, then by Lemma \ref{lem:cst_det_matrix_inner}  it follows that $I_1$ is constant on $\D$. But, for $A(\infty)$ and $ A (\infty) I_1 ^* (\infty)$ to both be elements of $\grp_{\alpha(0)}$, it must hold that $I_1= \Id$. Similar reasoning with $B_+$ and $D_+$ yields $I_2 = \Id$.  Analogously, the only way the factorization in  \eqref{eq:inner_factor_H-} can hold is if $I_1 = I_2 = \Id$.

Thus $\nlf_+$ is a.e.\ $U(2n)$-valued, and satisfies properties \ref{item:1st_prop_H}--\ref{item:3rd_prop_H} of Definition \ref{defH}.  By Corollary \ref{cor:H_equals_U}, we then have $\nlf_{+} \in \mathbf{H}_{\alpha} ^+$. Similarly, one sees that $\nlf_{-} \in \mathbf{H}_{\alpha, 0} ^-$. This concludes the proof of  existence.
\end{proof}

\subsection{Lipschitz bounds for the factorization}

If we restrict ourselves to the elements $\nlf \in \mathbf{B} _{\alpha}$ arising from some $B \in \mathbf{S}_{\epsilon}$, then we in fact have Lipschitz continuity of the Riemann--Hilbert factorization \eqref{eq:factorization_0}.

\begin{lemma}\label{lem:Lip_cty_RH}
    Let $\alpha:\Z_2 \to \Z_2$. For every $\epsilon > 0$, there exists a constant $C_{\epsilon, n} < \infty $ for which we have the Lipschitz bound
    \[
    \|\nlf_+ - \nlf_+ '\|_{L^2} \leq C_{\epsilon, n} \|B - B'\|_{L^2}
    \]
    for all $B, B' \in \mathbf{S}_{\epsilon}$,  where $\nlf := Y_{\alpha} (B)$, $\nlf' := Y_{\alpha} (B')$, and $\nlf_+$ and $\nlf_+ '$ are the resulting right factors of \eqref{eq:factorization_0} in $\mathbf{H}_{\alpha} ^+$. 
\end{lemma}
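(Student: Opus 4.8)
The plan is to reduce to the bounded‑operator regime of Remark~\ref{rmk:easy_case_spaces}, prove a Lipschitz bound for the solution $X=(\Id+\op)^{-1}\Id$ of the Riemann--Hilbert equation \eqref{eq:RH_Hilbert_eqn}, and then transfer it to $\nlf_+$ through the Cholesky normalization \eqref{eq:X_to_Y_+}. First, since $B\in\mathbf{S}_\epsilon$, the matrix function $\Id-BB^*$ has eigenvalues in $[\epsilon,1]$ a.e., so $Y_\alpha(B)\in\mathbf{B}_\alpha^\epsilon$, the function $|\of|=|\det A^*|$ is bounded below, and Remark~\ref{rmk:easy_case_spaces} gives that $\op=\phl\begin{pmatrix}0 & A^{-1}B\\ -(A^{-1}B)^* & 0\end{pmatrix}$ is bounded on $\spce$ with $X=(\Id+\op)^{-1}\Id$. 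I would record the uniform bounds $\|A^{-1}\|_{L^\infty},\|D^{-1}\|_{L^\infty},\|A^{-1}B\|_{L^\infty}\le C_\epsilon$, immediate from the eigenvalue bounds, together with the Lipschitz estimate
\[
\|A-A'\|_{L^2}+\|D-D'\|_{L^2}+\|A^{-1}B-A'^{-1}B'\|_{L^2}\le C_{\epsilon,n}\|B-B'\|_{L^2},
\]
where the first two terms follow from Lemma~\ref{lem:cty_spec_factors} applied to $\weight=\Id-BB^*$ and $\weight=\Id-B^*B$, using $\|BB^*-B'B'^*\|_{L^1}\le(\|B\|_{L^2}+\|B'\|_{L^2})\|B-B'\|_{L^2}\le 2\|B-B'\|_{L^2}$, and the third is obtained from $A^{-1}B-A'^{-1}B'=A^{-1}(B-B')+(A^{-1}-A'^{-1})B'$ together with $A^{-1}-A'^{-1}=A^{-1}(A'-A)A'^{-1}$ and the uniform bounds.

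Next I would use the resolvent identity $X-X'=(\Id+\op)^{-1}(\op'-\op)X'$ and the operator‑norm bound $\|(\Id+\op)^{-1}\|_{\spce\to\spce}\le 1$ from Lemma~\ref{lem:inverse_bd}, so that it suffices to estimate $(\op'-\op)X'=\phl(GX')$ with $G=\begin{pmatrix}0 & A'^{-1}B'-A^{-1}B\\ -(A'^{-1}B'-A^{-1}B)^* & 0\end{pmatrix}$. The decisive point is to place the $L^2$ norm on $G$ and the $L^\infty$ norm on $X'$: since $\nlf_+'\in\mathbf{H}_\alpha^+$ by Lemma~\ref{lem:RH_ex} is unitary‑valued a.e.\ and the mean value property gives $\|A_+'(\infty)\|_\infty,\|D_+'(0)\|_\infty\le 1$, the representation $X'=(\nlf_+')^*\diag(A_+'(\infty),D_+'(0))$ yields $\|X'\|_{L^\infty}\le 1$. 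Using that the projection has norm $1$, this produces
\[
\|X-X'\|_{L^2}\le\|G\|_{L^2}\|X'\|_{L^\infty}\le C_{\epsilon,n}\|B-B'\|_{L^2}.
\]

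To pass from $X$ to $\nlf_+$, I would bound the eigenvalues of the constant positive matrix $X^*X=\diag(A_+(\infty)^*A_+(\infty),D_+(0)^*D_+(0))$ uniformly away from $0$ and $\infty$: the upper bound $1$ is as above, while $\det A_+(\infty)=\det A(\infty)/\det A_-(\infty)\ge\det A(\infty)=\exp\!\big(\tfrac12\int_\T\log\det(\Id-BB^*)\big)\ge\epsilon^{n/2}$, using outerness of $\det A^*$ (Theorem~\ref{factorization-theorem}), the mean value property, and $0<\det A_-(\infty)\le 1$, and similarly for $D_+(0)$ via Remark~\ref{rem:AD}; combined with the operator‑norm bound this forces all eigenvalues of $X^*X$ into $[\epsilon^n,1]$. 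Then $A_+(\infty)$ and $D_+(0)$ are the Cholesky factors of the diagonal blocks of $X^*X$, so Lemma~\ref{lem_cholesky} together with Cauchy--Schwarz for the constant matrix $X^*X-X'^*X'$ gives $\|A_+(\infty)-A_+'(\infty)\|+\|D_+(0)-D_+'(0)\|\le C_{\epsilon,n}\|X-X'\|_{L^2}\le C_{\epsilon,n}\|B-B'\|_{L^2}$. Finally, from $\nlf_+^*=X\diag(A_+(\infty)^{-1},D_+(0)^{-1})$ (a rearrangement of \eqref{eq:X_to_Y_+}), the uniform bounds $\|A_+(\infty)^{-1}\|,\|D_+(0)^{-1}\|\le\epsilon^{-n/2}$, the difference‑of‑inverses identity, and $\|\nlf_+-\nlf_+'\|_{L^2}=\|\nlf_+^*-(\nlf_+')^*\|_{L^2}$, one assembles the claimed bound.

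I expect the main obstacle to be the $L^2$‑versus‑$L^\infty$ bookkeeping in the resolvent step: the naive estimate of $(\op'-\op)X'$ only gives $\|B-B'\|_{L^\infty}$ on the right, and landing instead on $\|B-B'\|_{L^2}$ requires exploiting that $X'$, being a unitary times a constant matrix of operator norm at most $1$, is uniformly bounded in $L^\infty$. A secondary technical point is the uniform lower bound on the eigenvalues of $X^*X$, which routes through outerness of $\det A^*$ (equivalently, the nonlinear Plancherel identity).
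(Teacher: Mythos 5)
Your proposal is correct and follows the same broad strategy as the paper: reduce to the bounded regime of Remark~\ref{rmk:easy_case_spaces}, control $K=A^{-1}B$ via Lemma~\ref{lem:cty_spec_factors} (your preliminary Lipschitz estimate is the paper's Step~3), use the resolvent identity together with $\|(\Id+\op)^{-1}\|_{\spce\to\spce}\le 1$ to control $X-X'$, and transfer to $\nlf_+-\nlf_+'$ by Lipschitz continuity of the Cholesky factorization (Lemma~\ref{lem_cholesky}). There are two local points where your route differs from the paper's. First, you estimate $\|X-X'\|_{L^2}$ directly from $(\Id+\op)^{-1}(\op'-\op)X'$ together with the a priori bound $\|X'\|_{L^\infty}\le 1$; the paper instead works with the product $(X-X')\operatorname{diag}(U^{-1},V^{-1}) = (\Id+\op')^{-1}\phl(K'-K)\nlf_+^*$, where unitarity of $\nlf_+^*$ makes the $L^2$ bound exact (and only afterwards deduces the $\|X-X'\|_{L^2}$ bound). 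Second, and more substantively, you obtain the uniform lower bound on the eigenvalues of $X^*X=\operatorname{diag}(A_+(\infty)^*A_+(\infty),D_+(0)^*D_+(0))$ from $\det A_+(\infty)=\det A(\infty)/\det A_-(\infty)\ge\det A(\infty)=\exp\big(\tfrac12\int_{\T}\log\det(\Id-BB^*)\big)\ge\epsilon^{n/2}$, combined with all eigenvalues of $A_+(\infty)^*A_+(\infty)$ being at most $1$; this routes through outerness of $\det A^*$ (equivalently the nonlinear Plancherel identity) and the Szeg\H o condition. The paper derives the same lower bound by a purely operator-theoretic argument, multiplying the fixed-point equation $X=\Id-\op X$ by a constant vector $v$ and estimating $\|v\|_2\le\|\Id+\op\|\,\|Xv\|_{L^2}$, and separately bounds $\|U^{-1}\|_\infty$ via the mean value property of the holomorphic function $A^{-1}$. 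Your determinant argument is arguably more conceptual (it makes the role of the Szeg\H o condition visible), while the paper's stays self-contained within the operator framework; both yield the same $\epsilon$-dependent constants up to unimportant changes.
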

\begin{proof}
In what follows, $C_{\epsilon}$ and $c_{\epsilon}$ denote positive constants that may change from instance to instance.

Let $B, B' \in \mathbf{S}_{\epsilon}$. Then $\nlf, \nlf ' \in \mathbf{B}_{\alpha} ^{\epsilon}$.
Define 
    \[
 J : = B^* (\adj A) ^* \, , \qquad J' :=  (B')^* (\adj A')^* \, , 
 \]
 and 
 \[
 \of := \det A^* \, , \qquad \of ' := \det (A')^* \, , 
 \]
  where $A$ and $A'$ are the top left blocks of $Y_{\alpha} (B)$ and $Y_{\alpha} (B')$. 
By Remark \ref{rmk:easy_case_spaces}, the operators $\op$ and $\op'$, as defined using data $J, \of$ and $J', \of'$, respectively, are bounded  on $\spce$.
 Taking $U, V$ and $U', V'$ as defined in \eqref{eq:def_UV}, we compute using \eqref{eq:defn_Y+},
   \[
  (\nlf_+ - \nlf_+ ')^* =  X \begin{pmatrix}
    U ^{-1} & 0 \\ 0 & V ^{-1} 
\end{pmatrix} - X' \begin{pmatrix}
    (U ') ^{-1} & 0 \\ 0 & (V')^{-1} 
\end{pmatrix}
   \]
   \begin{equation}\label{eq:diff_M}
    = (X - X') \begin{pmatrix}
    U ^{-1} & 0 \\ 0 & V ^{-1} 
\end{pmatrix} + X'  \begin{pmatrix}
    U^{-1} - (U') ^{-1} & 0 \\ 0 & V^{-1}- (V ')^{-1} 
\end{pmatrix} \, .
   \end{equation}
To estimate this difference, we proceed in three steps. In the first two steps, we bound each summand in \eqref{eq:diff_M} by $C_{\epsilon} \|K- K'\|_{L^2 }$, where
\begin{equation}\label{eq:def_KK'}
K:= \begin{pmatrix}
   0 & A^{-1}B \\
   -(A^{-1}B)^* & 0
\end{pmatrix}  \, , \qquad K':= \begin{pmatrix}
   0 & (A')^{-1}B' \\
   -((A')^{-1}B')^* & 0
\end{pmatrix} \, ,
\end{equation}
and then in the last step we show 
   \[
   \|K-K'\|_{L^2 } \leq C_{\epsilon} \|B- B'\|_{L^2} \, .
   \]

\underline{   \textbf{Step 1:}  the first summand of \eqref{eq:diff_M} has norm at most $\| K- K'\|_{L^2}$.} Indeed, for the first summand, 
\begin{equation}\label{eq:X_diff}
\left (X- X' \right ) \begin{pmatrix}
    U ^{-1} & 0 \\ 0 & V^{-1} 
\end{pmatrix} = \left \{ \left ((\Id + \op)^{-1} - (\Id + \op')^{-1} \right ) \Id  \right \} \begin{pmatrix}
    U^{-1} & 0 \\ 0 & V^{-1} 
\end{pmatrix}  
\end{equation}
\[
=(\Id + \op')^{-1} (\op' - \op)  \left \{ (\Id + \op)^{-1} \Id \right \} \begin{pmatrix}
    U ^{-1}& 0 \\ 0 & V ^{-1} 
\end{pmatrix}
\]
\[
=(\Id + \op')^{-1} (\op' - \op) \nlf_{ +} ^* =(\Id + \op')^{-1} \phl(K' - K) \nlf_{ +} ^* \, , 
\]
where the last equality follows from \eqref{eq:M_defn_L2}.

Combining \eqref{eq:X_diff} with the operator bound \eqref{eq:antisymmetric_inverse_bd} on $(\Id + \op)^{-1}$ and on the projection $\phl$, we obtain the $L^2$ estimate 
\begin{equation}\label{eq:X_lip_bd}
\left \| \left (X - X' \right ) \begin{pmatrix}
    U^{-1} & 0 \\ 0 & V ^{-1} 
\end{pmatrix} \right \|_{L^2} \leq  \left \|   (K'-K)\nlf_{+} \right \|_{L^2}  = \left \| K'-K \right \|_{L^2} \, , 
\end{equation}
where in the last step we used unitarity of $\nlf_+$.

\underline{\textbf{Step 2:} the second summand of \eqref{eq:diff_M} has norm at most $C_{\epsilon} \|K-K'\|_{L^2}$.} We write
\[
X' \begin{pmatrix}
    U^{-1} - (U')^{-1} & 0 \\ 0 & V^{-1} - (V') ^{-1}
\end{pmatrix} 
\]
\[
= X' \begin{pmatrix}
    (U')^{-1} & 0 \\ 0 & (V') ^{-1}
\end{pmatrix} \begin{pmatrix}
    U'  U ^{-1}- \Id & 0 \\ 0 & V'  V ^{-1}- \Id  
\end{pmatrix} 
\]
\begin{equation}\label{eq:second_term_diff}
    = (\nlf_{+} ')^*  \begin{pmatrix}
    U'  U ^{-1} - \Id & 0 \\ 0 & V '  V ^{-1} - \Id  
\end{pmatrix} \, .
\end{equation}
Since $M_+'$ is unitary, we obtain
\[
\left \|X' \begin{pmatrix}
    U ^{-1} - (U') ^{-1} & 0 \\ 0 & V ^{-1} - (V') ^{-1}
\end{pmatrix} \right \|_{L^2} \leq \left \| \begin{pmatrix}
    U'  U  ^{-1} - \Id & 0 \\ 0 & V'  V ^{-1} - \Id  
\end{pmatrix} \right \|_{L^2}  
\]
\[
\leq \left \| U' U ^{-1} - \Id \right \|_{L^2} + \left \| V' V ^{-1} - \Id \right \|_{L^2} \, .
\]
We only estimate the term involving $U$, since the term involving $V$ is  similar. Using \eqref{eq:UV_to_AD} and \eqref{eq:mult_mean_value}, we have
\[
U = A_{+}  (\infty)  = A_{-}  (\infty) ^{-1} A  (\infty)  \, .
\]
Applying the mean value property to the outer matrix function $A$ then yields
\begin{equation}\label{eq:U_inv_est1}
\|U^{-1}\|_{\infty} \leq \| A_{-} (\infty)\|_{\infty} \|A (\infty) ^{-1}\|_{\infty}  \leq  \|A (\infty) ^{-1}\|_{\infty} \leq \int\limits_{\T} \| A ^{-1}\|_{\infty} \, , 
\end{equation}
which, because $\nlf \in \mathbf{B}_{\alpha}^{\epsilon}$, may then be estimated by
\begin{equation} \label{eq:U_inv_est2}
  \int\limits_{\T} \sqrt{ \| (A^* A) ^{-1}\|_{\infty}}= \int\limits_{\T} \sqrt{ \| (\Id - B^* B) ^{-1}\|_{\infty}} \leq C_{\epsilon} \, .
\end{equation}
Therefore, $U^{-1} $ has operator norm at most $C_{\epsilon}$, and so we have
\[
\left \| U' U ^{-1} - \Id \right \|_{2} \leq \left \|U^{-1} \right \|_{\infty} \left \| U'  - U \right \|_{2}  \leq C_{\epsilon} \left \| U'  - U  \right \|_{2} \, .
\]
Because $\nlf_+$ and $\nlf_+'$ are unitary, then by \eqref{eq:X_to_Y_+} we must have that $X$ and $X'$ have norm at most $1$. In particular, $Q(0)$ and $Q'(0)$ have norm at most $1$. We claim that their eigenvalues are bounded below by some $c_{\epsilon} > 0$. To see this, given a \emph{constant} vector $v \in \C^{2n}$,  write
\begin{equation}\label{eq:Qv_to_Xv}
\left \langle \begin{pmatrix}
    Q(0) & 0 \\ 0 & T(\infty)
\end{pmatrix} v, v \right \rangle = \|X(z) v\|_2 ^2
\end{equation}
for every $z \in \T$,  where $\|u\|_2$ denotes the usual euclidean norm of a vector.
Then multiplying \eqref{eq:fixed_pt_eqn} by $v$ on the right, we get
\[
v = \left [ (\Id +\op)(X v) \right ](z) 
\]
for every  $ z \in\T$, where we use the same abuse of notation as in \eqref{eq:v_vanishes_1}.
Because $B, B' \in \mathbf{S}_{\epsilon}$, then $A^{-1}$ and $(A')^{-1}$ have operator norms bounded by a constant $C_{\epsilon}$, and so we may estimate
\begin{equation}\label{eq:v_to_Xv}
\|v\|_2 = \left ( \int\limits_{\T} \|v\|_2 ^2 \right )^{\frac 1 2} \leq \|\Id + \op\|_{L^2 \to L^2} \|Xv \|_{L^2} \leq C_{\epsilon}\|Xv\|_{L^2} \, .
\end{equation}
Combining \eqref{eq:v_to_Xv} and \eqref{eq:Qv_to_Xv} yields 
\[
\left \langle \begin{pmatrix}
    Q(0) & 0 \\ 0 & T(\infty)
\end{pmatrix} v, v \right \rangle \geq c_{\epsilon} \|v\|^2 _2 \, ,
\]
which implies that all eigenvalues of $Q(0)$ are at least $c_{\epsilon}$. Similarly for  $Q'(0)$.

Because all eigenvalues of $Q(0)$ and $Q'(0)$ are between $c_{\epsilon}$ and $1$, then by the Lipschitz continuity of the Cholesky factorization as in Lemma \ref{lem_cholesky}, we get
\begin{equation}\label{eq:U_to_X_Lip}
\|U-U'\|_2 \leq  C_{\epsilon} \|Q(0) - Q'(0)\|_2 \leq C_{\epsilon} \int\limits_{\T} \|Q - Q'\|_2 \leq C_{\epsilon} \|X-X'\|_{L^2} \,.
\end{equation}
Multiplying \eqref{eq:X_diff} on the right by $\begin{pmatrix}
    U & 0 \\ 0 & V
\end{pmatrix}$ yields
\[
X - X' = (\Id + \op')^{-1} \phl (K' - K) X\,,
\]
and following the estimates in \eqref{eq:X_lip_bd}, we get that
\[
\|X - X'\|_{L^2} \leq  \|K - K'\|_{L^2 } \, . 
\]
Combining all estimates yields
\[
\left \|\nlf_+ - \nlf_+ ' \right \|_{L^2} \leq C_{\epsilon} \|K - K'\|_{L^2} \, .
\]
   
   \underline{\textbf{Step 3:} showing  $\|K - K'\|_{L^2} \leq C_{\epsilon} \|B-B'\|_{L^2}$.} By  \eqref{eq:def_KK'}, 
   \[
  \|K - K'\|_{L^2} =
  \sqrt{2} \|A^{-1} B - (A')^{-1}B'\|_{L^{2}} \, .
   \]
   Because $B, B'$ have operator norm at most $1-\epsilon$, then $A^{-1}$ and $(A ')^{-1}$ both have operator norm at most $C_{\epsilon}$ and so by the Lipschitz continuity of spectral factors provided by Lemma \ref{lem:cty_spec_factors}, we have 
   \[
   \| A^{-1} B - (A')^{-1} B'\|_{L^2} 
   \leq \|A^{-1} (B-B')\|_{L^2} + \|A^{-1} - (A')^{-1}\|_{L^2} \|B'\|_{L^\infty}
   \]
   \[ 
   \leq C_{\epsilon} \|B- B'\|_{L^2} + C_{\epsilon} \|A' - A\|_{L^2}    \leq C_{\epsilon,n} \|B-B'\|_{L^2} \, ,
   \]
   which completes the proof. 
\end{proof}

\begin{cor}\label{cor:final_Lip_bd}
    Let $\alpha: \Z_2 \to \Z_2$, let $\epsilon > 0$ and let $B, B' \in \mathbf{S}_{\epsilon}$ be the upper right entries of the NLFTs of the square summable sequences $F, F'$. Then the Lipschitz bound \eqref{eq:Lip_bds} holds. 
\end{cor}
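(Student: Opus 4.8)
The plan is to chain together the Lipschitz bound for the Riemann--Hilbert factorization (Lemma \ref{lem:Lip_cty_RH}) with the Lipschitz bound for recovering the first nonlinear Fourier coefficient (Lemma \ref{lem:first_layer_strip_Lip}), iterated down the half-line via the Layer Stripping Algorithm \ref{algo:layer_stripping}. Concretely, by Theorem \ref{thm:NLFT_outer_inverse} the sequences $F, F'$ are the (unique) layer stripping outputs of $\nlf := Y_\alpha(B)$ and $\nlf' := Y_\alpha(B')$; recall that $\nlf, \nlf' \in \mathbf{B}_\alpha^\epsilon$ since $B, B' \in \mathbf{S}_\epsilon = \mathbf{S}^\epsilon$. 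Writing $\nlf = \nlf_- \nlf_+$ and $\nlf' = \nlf_-' \nlf_+'$ for the factorizations of \eqref{eq:factorization_0}, with $\nlf_\pm \in \mathbf{H}_{\alpha,0}^-$, $\mathbf{H}_\alpha^+$, and similarly for the primed objects, we have by the definition \eqref{def:NLFT_full_line} that $F\mathbf{1}_{[0,\infty)}$ and $F'\mathbf{1}_{[0,\infty)}$ are the inverse NLFTs of $\nlf_+$ and $\nlf_+'$, while $F\mathbf{1}_{(-\infty,0)}$ and $F'\mathbf{1}_{(-\infty,0)}$ are those of $\nlf_-$, $\nlf_-'$. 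So it suffices to bound $\sup_j \|F_j - F_j'\|_\infty$ for $j \geq 0$ in terms of $\|\nlf_+ - \nlf_+'\|_{L^2}$, and symmetrically for $j < 0$ in terms of $\|\nlf_- - \nlf_-'\|_{L^2}$; the latter quantities are controlled by $\|B - B'\|_{L^2}$ through Lemma \ref{lem:Lip_cty_RH} and its obvious left-half-line analogue (which follows via the reflection symmetry of Lemma \ref{item:reflection_2}, exactly as Corollary \ref{cor:left_half} is deduced).

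For the half-line bound, I would run the Layer Stripping Algorithm on $\nlf_+$ and $\nlf_+'$ simultaneously, producing sequences $\nlf_{+,j}$ and $\nlf_{+,j}'$ with $\nlf_{+,0} = \nlf_+$, etc. The key point is a uniform lower bound on the singular values of the bottom-right blocks $D_{+,j}(0)$ and $D_{+,j}'(0)$: since $\nlf_{+,j} = \mathcal{F}_\alpha((F_{k+j}\mathbf{1}_{\{k\ge0\}})_k)$ and the diagonal block $D_{+,j}$ is outer, the mean value property gives $D_{+,j}(0) = \prod_{k\ge j}^{\curvearrowright} H_k$; using the componentwise Plancherel identity of Remark \ref{rem:comp plan} together with $\|F_k\|_\infty \le 1-\epsilon$ and $\sum_k |\log\det(\Id - F_kF_k^*)| = |\int_\T \log\det(\Id - BB^*)| \le C_{\epsilon,n}$, one sees $\|D_{+,j}(0)^{-1}\|_\infty \le C_{\epsilon,n}$ uniformly in $j$, hence all singular values of $D_{+,j}(0)$ exceed some $c_{\epsilon,n}>0$ (and similarly for the primed blocks). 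Lemma \ref{lem:first_layer_strip_Lip} then applies at each step $j$ to give
\[
\|F_j - F_j'\|_\infty \le C_{\epsilon,n} \|\nlf_{+,j} - \nlf_{+,j}'\|_{L^2}.
\]
It remains to propagate the $L^2$-distance through the recursion \eqref{eq:Mj_to_next}: since $\nlf_{+,j+1} = \ad(z^{-1})[S_\alpha(B_{+,j}(0)D_{+,j}(0)^{-1})^{-1}\nlf_{+,j}]$, unitarity of $\ad(z^{-1})$ and of $Y_\alpha(F_j)^{-1} = S_\alpha(\cdots)^{-1}$ lets us write $\|\nlf_{+,j+1} - \nlf_{+,j+1}'\|_{L^2} \le \|\nlf_{+,j} - \nlf_{+,j}'\|_{L^2} + \|Y_\alpha(F_j)^{-1} - Y_\alpha(F_j')^{-1}\|_\infty$, and the last term is again $\le C_{\epsilon,n}\|\nlf_{+,j} - \nlf_{+,j}'\|_{L^2}$ by Lemma \ref{lem:first_layer_strip_Lip} (together with Lipschitz continuity of matrix inversion on matrices with operator norm bounded in terms of $\epsilon,n$, valid since $\|Y_\alpha(F_j)\|_\infty = 1$). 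This gives a bound of the shape $\|\nlf_{+,j+1} - \nlf_{+,j+1}'\|_{L^2} \le (1 + C_{\epsilon,n})\|\nlf_{+,j} - \nlf_{+,j}'\|_{L^2}$, which after finitely many steps is useless.

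The main obstacle, therefore, is that a naive step-by-step propagation loses a constant factor at each layer and there are infinitely many layers. To fix this I would not iterate the crude bound but instead argue: for any fixed $j$, the truncated object $\nlf_{+,j}$ is recovered from $\nlf_+$ by \emph{finitely many} ($j$) stripping steps, and for the remaining coefficients $k \ge N$ with $N$ large the tail is small \emph{uniformly} in $\|B-B'\|_{L^2}$ because $\sum_{k\ge N}\|F_k\|_2^2 \to 0$ and $\sum_{k\ge N}\|F_k'\|_2^2 \to 0$ with a rate controlled by the (uniformly bounded) Plancherel mass, as in the proof of Lemma \ref{lem:inverse_NLFT_cts}. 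Splitting $\sup_j = \max(\sup_{j\le N}, \sup_{j>N})$: for $j > N$ one has $\|F_j\|_\infty, \|F_j'\|_\infty$ small so $\|F_j - F_j'\|_\infty$ is small absolutely (this handles the tail without needing Lipschitz control there), while for $j \le N$ one applies the finitely-iterated bound, which is Lipschitz with constant $(1+C_{\epsilon,n})^N$ — a constant depending only on $\epsilon, n$ once $N = N(\epsilon,n)$ is fixed. More precisely, given the target accuracy one chooses $N$ depending only on $\epsilon,n$ so that the tail contribution is acceptable, and then the finitely many initial steps are handled by the iterated Lemma \ref{lem:first_layer_strip_Lip}; absorbing everything yields $\sup_j \|F_j - F_j'\|_\infty \le C_{\epsilon,n}\|B-B'\|_{L^2}$, which combined with the identical treatment of the left half-line (via Lemma \ref{item:reflection_2} and the left-half-line analogue of Lemma \ref{lem:Lip_cty_RH}) is exactly \eqref{eq:Lip_bds}. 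Care is needed to make the choice of $N$ truly uniform — it works because the total Plancherel mass $|\int_\T \log\det(\Id - BB^*)|$ is bounded by a constant depending only on $\epsilon, n$ for $B \in \mathbf{S}_\epsilon$, owing to $\|B\|_\infty \le 1-\epsilon$ and $\log\det(\Id - BB^*) \ge n\log(1-(1-\epsilon)^2)$ pointwise, uniformly.
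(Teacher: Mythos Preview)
Your approach has a genuine gap at the tail argument. You claim that one can choose $N = N(\epsilon,n)$, uniform over all $B \in \mathbf{S}_\epsilon$, such that $\|F_j\|_\infty$ is small for all $j > N$. But a uniform bound on the total Plancherel mass $\sum_{j}\lvert\log\det(\Id - F_jF_j^*)\rvert \le C_{\epsilon,n}$ does \emph{not} control where that mass sits: for any $N_0$ the sequence $F_j = (1-\epsilon)\mathbf{1}_{\{j = N_0\}}$ arises from some $B \in \mathbf{S}_\epsilon$, so no uniform tail cutoff exists. Consequently your split $\sup_j = \max(\sup_{j\le N}, \sup_{j>N})$ with fixed $N$ cannot work, and allowing $N$ to depend on $B,B'$ destroys the Lipschitz form of the bound (you would only recover continuity, with the blown-up constant $(1+C_{\epsilon,n})^N$ competing against the tail).

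The paper avoids iteration entirely by a one-line observation you missed: translation invariance. By Lemma~\ref{lem:properties}~\ref{item:translation}, shifting $F$ by $j$ corresponds to multiplying $B$ by $z^{-j}$, which leaves both $\|B\|_{L^\infty}$ and $\|B-B'\|_{L^2}$ unchanged. Hence it suffices to prove $\|F_0 - F_0'\|_\infty \le C_{\epsilon,n}\|B-B'\|_{L^2}$, i.e.\ a \emph{single} layer-stripping step after the Riemann--Hilbert factorization. For that one step, the paper bounds $\|D_+(0)^{-1}\|_\infty$ directly from outerness of $D$ and $\|B\|_{L^\infty}\le 1-\epsilon$ (via $\|D_+(0)^{-1}\|_\infty \le \|D(0)^{-1}\|_\infty \le \int_\T \|(\Id - B^*B)^{-1}\|_\infty^{1/2} \le C_\epsilon$, using \eqref{eq:mult_mean_value} and the mean value property for the outer $D^{-1}$), and then chains Lemma~\ref{lem:first_layer_strip_Lip} with Lemma~\ref{lem:Lip_cty_RH}. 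No propagation, no tail, no loss of constants.
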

\begin{proof}
By the translation symmetry \ref{item:translation} of the NLFT in Lemma \ref{lem:properties}, it suffices to show
\begin{equation}
\|F_0 - F_0 '\|_{\infty} \leq C_{\epsilon, n} \|B-B'\|_{L^2}
\end{equation}
for all $B, B' \in \mathbf{S}_{\epsilon}$.
 Let $\nlf_+, \nlf_+ ' \in \mathbf{H}_{\alpha} ^+$ denote the right Riemann--Hilbert factors of the $\mathbf{B}_{\alpha} ^{\epsilon}$ elements 
\[
\nlf := Y_{\alpha} (B) \, , \qquad  \nlf' :=Y_{\alpha} (B') \, . 
\]
We claim it suffices to show $D_+(0), D_+ '(0)$ have singular values bounded below by some $c_{\epsilon} > 0$. Indeed, assuming this claim, then Lemma \ref{lem:first_layer_strip_Lip} yields
\[
\|F_0 - F_0 '\|_{\infty} \leq C_{\epsilon, n} \|\nlf_{+} - \nlf_+ '\|_{L^2} \, ,
\]
which by Lemma \ref{lem:Lip_cty_RH} is at most $C_{\epsilon, n} \|B-B'\|_{L^2}$, which is exactly what we want to show.

We now show the claim. By the analog of \eqref{eq:mult_mean_value} for $D$ and then the mean value property for $D^{-1}$, which is holomorphic because $D$ is outer, we have 
\begin{equation}\label{eq:estimate_D+_inv_0_1}
\|D_+ (0) ^{-1}\|_{\infty} =  \| D(0)^{-1} D_{-} (0) \|_{\infty} \leq \|D(0)^{-1}\|_{\infty} \leq \int\limits_{\T} \| D  ^{-1}\|_{\infty} \, .
\end{equation}
By unitary of $\nlf$ and the fact that $\|B\|_{L^{\infty}} \leq  1 - \epsilon$, term \eqref{eq:estimate_D+_inv_0_1} is at most
\begin{equation} \label{eq:estimate_D+_inv_0_2}
\int\limits_{\T} \| D^{-1} (D ^*)^{-1}\|^{ \frac 1 2} _{\infty}= \int\limits_{\T} \|(\Id - B^* B)^{-1}\|_{\infty} ^{\frac 1 2} \leq C_{\epsilon} \, .
\end{equation} The proof for $D_+ '(0)$ goes similarly. 
\end{proof}

\subsection{Completing the proof of Theorem \ref{thm:NLFT_outer_inverse}: the Plancherel identity}

We now complete the proof of Theorem \ref{thm:NLFT_outer_inverse}. 

Let $\alpha:\Z_2 \to \Z_2$ and let $B \in \mathbf{S}$. Existence and uniqueness of $F$ follows from existence and uniqueness of the factorization \eqref{eq:factorization_0} for $\nlf_+ \in \mathbf{H}_{\alpha} ^+$ and $\nlf_{-} \in \mathbf{H}_{\alpha, 0} ^-$, which in turn follows from Lemmas \ref{lem:RH_ex} and \ref{lem:RH_nec}. The Lipschitz bound \eqref{eq:Lip_bds} follows from Corollary \ref{cor:final_Lip_bd}. 

We are left with proving \eqref{eq:Plancherel_QSP}. Let $M \in \mathbf{B}_{\alpha}$. By the existence and uniqueness part of Theorem \ref{thm:NLFT_outer_inverse}, there exists a unique sequence $F \in \ell^2 (\Z ; \con)$ for which $
\nlf = \mathcal{F}_{\alpha} (F)$.
By the Plancherel identity \eqref{plancherel}--\eqref{plancherel_ineq}, we have
\[
\frac{1}{2} \sum\limits_{j \in \Z} \log \det (\Id - F_j F_j ^*) = \log  \det A (\infty) = \int\limits_{\T} \log | \det A(z)|  \, ,
\]
where in the last step we applied Lemma \ref{lem:propertiesl2} and the fact that $\det A^*$ is outer and hence has only trivial inner factors.
Then \eqref{eq:Plancherel_QSP} follows by the identity
\[
\lvert \det A \rvert^2 = \det A A^* =  \det (\Id - B B^*) \, ,
\]
where in the last step we used unitarity of $\nlf$.

\appendix

\section{Relation to Quantum Signal Processing}
\label{section:qsv}

When $n=1$, the NLFT was identified with QSP in \cite{Alexis+24}, and in \cite{Alexis+25} the spectral factorization of Theorem \ref{factorization-theorem} was combined with the  Riemann--Hilbert solution of Theorem \ref{thm:NLFT_outer_inverse} to generate the first provably numerically stable algorithm for computing phase factors in QSP. The runtime of the algorithm was subsequently improved in \cite{ni2024fast}, and and then in \cite{ni2025inverse} with a near linear runtime.

QSP in higher dimensions has been far less studied. There are a few natural ways to increase the dimension. One way would be to replace $z \in \T$, which may be considered a $1 \times 1$ unitary matrix, by an $n \times n$ unitary. When $n \geq 2$, this is known as the quantum singular value transformation or quantum eigenvalue transformation, which was shown in  \cite{Gily_2019} to reduce to (one-dimensional) QSP, and so we do not study it here.  Another option is to increase the number of variables, as in the multivariate QSP introduced in \cite{rossi2022multivariable}. This does not correspond to our $SU(2n)$-valued NLFT, which rather increases the dimension of the codomain as the QSP protocols of \cite{La24, lu2024quantum}. However, following \cite{lu2024quantum}, we present an application of our main theorems to multivariate QSP. 

We first turn to the $SU(2^s)$ QSP protocol of \cite[Theorem 3, Corollary 4, Theorem 5]{La24}. In this protocol, one is given a finitely supported sequence $\Psi$ of matrices in $ SU(2^s)$, with which one defines the $SU(2^s)$-matrix valued function
\[
U_{d, \Psi} (z):=\Psi_0 Z^2 \Psi_1 Z^2 \ldots Z^2 \Psi_d \, ,
\]
where we recall 
\[
Z := \begin{pmatrix}
   z^{\frac 1 2 } \Id & 0 \\ 0 & z^{- \frac 1 2} \Id 
\end{pmatrix} \, .
\]
We first note that our NLFT fits within this $SU(2^s)$ QSP protocol.

\begin{lemma}\label{lem:transferrence_QSP_laneve}
Let $\alpha :\Z_2 \to \Z_2$, let $s \geq 0$, and let $F$ be a sequence of $SU_{\alpha} (2^s)$ matrices,  supported on $[-d,d]$. Then
\begin{equation}\label{eq:transferrence_QSP_laneve_NLFT}
  \mathcal{F}_{\alpha} (F) (z^2) = Z^{-2d} U_{d, Y_{\alpha} (F)} (z) Z^{2d} \, , 
\end{equation}
where the sequence $Y_{\alpha}(F)$ is defined by  
\[
Y_{\alpha}(F)_j := Y_{\alpha} (F_j) \, .
\]
\end{lemma}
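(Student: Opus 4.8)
The plan is to unwind the definition of the NLFT $\mathcal{F}_\alpha$ in \eqref{def:NLFT_alpha} and compare it term by term with the finite product defining $U_{d,\Psi}$, tracking carefully how the fractional powers of $z$ are distributed. First I would write out the left-hand side of \eqref{eq:transferrence_QSP_laneve_NLFT} using the definition:
\[
\mathcal{F}_\alpha(F)(w) = \prod_{j=-d}^{d}{}^{\curvearrowright} W^j Y_\alpha(F_j) W^{-j}, \qquad W := \begin{pmatrix} w^{1/2}\Id & 0 \\ 0 & w^{-1/2}\Id\end{pmatrix},
\]
and then substitute $w = z^2$, so that $W = Z^2$ in the notation of the appendix. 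Thus $\mathcal{F}_\alpha(F)(z^2) = \prod_{j=-d}^{d}{}^{\curvearrowright} Z^{2j} Y_\alpha(F_j) Z^{-2j}$. The telescoping structure is the key: in the ordered product, consecutive factors contribute $Z^{-2j}\cdot Z^{2(j+1)} = Z^{2}$ between $Y_\alpha(F_j)$ and $Y_\alpha(F_{j+1})$, while the outermost factors leave a leftover $Z^{2\cdot(-d)} = Z^{-2d}$ on the far left and $Z^{-2d}$ on the far right (from $Z^{-2d}$ at the $j=d$ end). Hence
\[
\mathcal{F}_\alpha(F)(z^2) = Z^{-2d}\Big(\prod_{j=-d}^{d}{}^{\curvearrowright} Y_\alpha(F_j) Z^2\Big) Z^{-2d} = Z^{-2d}\, U_{d, Y_\alpha(F)}(z)\, Z^{2d},
\]
where in the last equality I re-index so that $U_{d,\Psi}(z) = \Psi_0 Z^2 \Psi_1 Z^2 \cdots Z^2 \Psi_d$ corresponds to the reflected/shifted sequence $\Psi_k := Y_\alpha(F_{k-d})$, matching the $SU(2^s)$ QSP protocol's indexing convention from $0$ to $d$; one should double-check that the paper's convention for $U_{d,\Psi}$ and the labeling of $Y_\alpha(F)$ agree on this shift, absorbing it into the definition $Y_\alpha(F)_j := Y_\alpha(F_j)$ as stated.

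The only subtlety worth care is the bookkeeping of the half-integer powers $z^{\pm 1/2}$: the substitution $w = z^2$ is precisely what makes $W^j = Z^{2j}$ have only integer powers of $z$, so the formal fractional-power issue flagged after \eqref{defineZ} does not actually arise here, and every matrix appearing is a genuine Laurent polynomial in $z$. I would remark explicitly that this substitution is the reason the $SU(2^s)$ QSP protocol uses $Z^2$ rather than $Z$ as its interleaving factor. I would also note that since $F$ takes values in $SU_\alpha(2^s)$, each $Y_\alpha(F_j)$ is well-defined by the parametrization of $SU_\alpha(2n)$ (here $n = 2^{s-1}$, or rather the block structure with $2n = 2^s$), so $Y_\alpha(F)$ is a legitimate sequence for the QSP protocol.

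The main obstacle — such as it is — is purely notational: making sure the ordered noncommutative product $\prod^{\curvearrowright}$ telescopes in the direction consistent with both the NLFT convention (increasing $j$ from left to right) and the QSP convention for $U_{d,\Psi}$, and that the reflection $j \mapsto j - d$ or $j \mapsto d - j$ implicit in passing between a sequence indexed on $[-d,d]$ and one indexed on $[0,d]$ is handled correctly and matches the statement. This is a finite induction on $d$ if one wants to be fully rigorous, but the telescoping identity $Z^{-2j} Z^{2(j+1)} = Z^2$ makes a direct computation cleaner than induction; I expect no genuine difficulty beyond carefully aligning the two indexing conventions.
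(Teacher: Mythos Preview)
Your approach is correct and essentially identical to the paper's: both substitute $w=z^2$ into the defining product \eqref{def:NLFT_alpha} and telescope the adjacent $Z$-powers via $Z^{-2j}Z^{2(j+1)}=Z^2$. One bookkeeping slip to clean up: your displayed chain has $Z^{-2d}\big(\prod_{j=-d}^{d}Y_\alpha(F_j)Z^2\big)Z^{-2d}$ in the middle but $Z^{2d}$ on the far right in the final expression, and the inner product carries one $Z^2$ too many relative to $U_{d,\Psi}=\Psi_0 Z^2\cdots Z^2\Psi_d$; the telescoping really gives $Z^{-2d}\,Y_\alpha(F_{-d})Z^2\cdots Z^2 Y_\alpha(F_d)\,Z^{-2d}$ (as your prose correctly states), and your flag about the $[-d,d]\to[0,2d]$ reindexing needed to match the $U_{d,\Psi}$ convention is exactly the residual thing to straighten out.
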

\begin{proof}
    We write
    \[
    \mathcal{F}_{\alpha} (F)(z^2) = \prod\limits_{j =-d} ^d Z^{2 j}  Y_{\alpha} (F_j)Z^{-2j} = Z ^{-2d} Y_{\alpha} (F_{-d}) \left ( \prod\limits_{j=-d+1} ^d Z^2 Y_{\alpha}(F_j) \right ) Z^{2 d} \, , 
    \]
    which we recognize as the right side of \eqref{eq:transferrence_QSP_laneve_NLFT}.
\end{proof}

\cite[Corollary 4]{La24} characterizes the vector-valued Laurent polynomials 
\[
P(z) := \begin{pmatrix}
    P_1(z) & \ldots & P_{2^s} (z)
\end{pmatrix}^T
\]
of degree at most $d$ for which there is a sequence $\Psi$ supported on $[-d,d]$ such that $P(z)$ occurs as the first column of $U_{d, \Psi} (z)$. While this generalizes results in the $SU(2)$ QSP literature about finding complementary polynomials pairs, we provide an alternate generalization of the complementary polynomials problem. In particular, Lemma \ref{lem:bijection_finite_supp} characterizes which matrix polynomials $\nlf$ can be realized as the NLFT of a finitely supported sequence which, when translated to the QSP protocol via Lemma \ref{lem:transferrence_QSP_laneve}, yields a wide variety of polynomial matrices which can be realized as $U_{d, \Psi} (z)$ for some $d \geq 0$ and $\Psi$. However, since we do not know whether every $SU(2^s)$ QSP protocol is representable as an NLFT protocol,  we do not have a characterization of all the possible polynomials. 

The model of \cite{lu2024quantum} is similar to that of \cite{La24}. However, the latter paper contains an interesting application to bivariate QSP that we here translate into our language of NLFTs. Let $z, w$ be two variables in $\T$. We define the \emph{QSP protocols on $s$ qubits} inductively by the following axioms:
\begin{itemize}
    \item For every $\alpha: \Z_2 \to \Z_2$ and $m \geq 0$, all matrices in $SU_{\alpha} (2^s)$ are QSP protocols.
    \item The $2^{s} \times 2^s$ matrices 
    \[
    Z := \begin{pmatrix}
        z^{\frac 1 2}\Id & 0 \\ 0 & z^{-\frac 1 2} \Id
    \end{pmatrix} \, , \qquad W := \begin{pmatrix}
        w^{\frac 1 2}\Id  & 0 \\ 0 & w^{-\frac 1 2} \Id
    \end{pmatrix}
    \]
    and their inverses are QSP protocols.
    \item If $M$ is a QSP protocol on $s$ qubits, then tensoring $M$ with the $2 \times 2$ matrix $\Id$ yields a QSP protocol on $s+1$ qubits. 
\end{itemize}
A question arising from multivariate QSP asks the following: When can an mvf $B(w,z)$ be represented as the top right block of a finite product of QSP protocols over $s$ qubits, for some $s \geq 1$? We can extend this question to the case of infinite products by taking limits, as we did for the NLFT. We call such mvfs {\it $s$-attainable}. The following lemma is from \cite{lu2024quantum} and  originates in \cite{Gily_2019}.
\begin{lemma}
    If $B_1(z,w)$ and $B_2 (z,w)$ are $s$-attainable mvfs taking values in $\mathcal{M}$, then $B_1 (z,w) B_2 (z,w)$ is $(s+1)$-attainable. 
\end{lemma}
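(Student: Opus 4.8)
The plan is to reduce the claim to a statement about ordinary products of QSP protocols by using the standard ``tensor with a qubit'' trick from \cite{Gily_2019, lu2024quantum}, and then invoke the multiplicativity that is already built into the definition of a QSP protocol. First I would unwind the definition of $s$-attainability: $B_i(z,w)$ is the upper right block of a (possibly infinite, i.e.\ limiting) product $\Pi_i$ of QSP protocols on $s$ qubits. The key observation is that the block-off-diagonal position (upper right) of a $2\times 2$ block matrix behaves like scalar multiplication once we embed into one more qubit. Concretely, writing $\sigma_+ = \begin{pmatrix} 0 & \Id \\ 0 & 0\end{pmatrix}$ and $\sigma_- = \begin{pmatrix} 0 & 0 \\ \Id & 0\end{pmatrix}$ for the $2\times 2$ block shift matrices (tensored with $\Id_{2^s}$ as needed), one has the identity
\[
\begin{pmatrix} A_1 & B_1 \\ C_1 & D_1\end{pmatrix} \otimes \Id_2 \cdot \big(\Id_{2^{s}} \otimes \sigma_x\big) \cdot \begin{pmatrix} A_2 & B_2 \\ C_2 & D_2\end{pmatrix} \otimes \Id_2 \cdot \big(\Id_{2^{s}} \otimes \sigma_x\big)
\]
whose appropriate block picks out $B_1 B_2$; the point is that $\Id \otimes \sigma_x$ is itself expressible through the allowed generators (it is a constant unitary, hence after normalization lies in $SU_\alpha(2^{s+1})$ for a suitable $\alpha$, up to a scalar we can absorb), and tensoring each factor of $\Pi_1$ and $\Pi_2$ with $\Id_2$ keeps them QSP protocols on $s+1$ qubits by the third axiom.

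The key steps, in order, are: (1) express $B_i$ as the upper right block of a finite product $\Pi_i = \prod_k G_{i,k}$ of QSP protocols on $s$ qubits (passing to the infinite case at the end by the same limiting procedure used for the NLFT, since Hilbert--Schmidt convergence of the factors passes to the product and hence to the upper right block); (2) tensor everything with $\Id_2$ to land in $s+1$ qubits, so that $\Pi_i \otimes \Id_2$ has $B_i \otimes \Id_2$ (equivalently $B_i$, placed appropriately) in the relevant block; (3) interleave the swap matrix $\Id_{2^s}\otimes \sigma_x$ (a constant in $SU_{\alpha'}(2^{s+1})$ after the harmless determinant normalization, hence a legal QSP protocol on $s+1$ qubits by the first axiom) between $\Pi_1 \otimes \Id_2$ and $\Pi_2 \otimes \Id_2$ and verify by direct block multiplication that the upper right block of the resulting product on $s+1$ qubits equals $B_1 B_2$; (4) conclude that $B_1 B_2$ is $(s+1)$-attainable, and upgrade to infinite products by taking limits. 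Much of this is the bookkeeping already implicit in \cite{lu2024quantum} and \cite{Gily_2019}, so I would keep it brief and cite those sources for the underlying identity.

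The main obstacle I anticipate is purely bookkeeping: making sure the constant swap matrix $\Id_{2^s}\otimes\sigma_x$, which naively has determinant $\pm 1$ and off-diagonal (not triangular) blocks, can genuinely be realized as a product of the allowed generators — i.e.\ of matrices in various $SU_\alpha(2^{s+1})$ together with the $Z^{\pm 1}, W^{\pm 1}$ factors — so that it legitimately counts as a QSP protocol on $s+1$ qubits. One clean way around this is to note that $SU_\alpha(2^m)$ for varying $\alpha$ already contains enough constant unitaries (any constant unitary is, up to a diagonal phase correction which can be pushed into the $Z, W$ factors, a product of triangular-normalized pieces via $QR$-type factorization, cf.\ Lemma \ref{lem:QR}); alternatively one absorbs the swap directly into a single generator on the $s+1$ qubit level. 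A second, minor point to check carefully is that the block of the triple product that we extract really is $B_1 B_2$ and not, say, $B_1 D_2$ or $A_1 B_2$: this is a finite matrix computation and I would verify it once explicitly. Finally, the limiting argument for infinite products should be stated but can simply refer back to the convergence machinery of Section~4 (completeness of $(\mathbf{L}_\alpha, \metric)$ and continuity of multiplication), since the $s$-attainable mvfs are defined as limits in exactly that sense.
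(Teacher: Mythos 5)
Your approach is genuinely different from the paper's and, as written, has a real gap at exactly the point you flag as bookkeeping.

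The paper needs no swap gate. It tensors $M_1$ and $M_2$ with $\Id_2$ in the \emph{two different orders}, obtaining $\Id_2 \otimes M_1$ (block diagonal in the coarse structure, so $B_1$ sits in the $(1,2)$ and $(3,4)$ positions of the $4\times 4$ fine block grid) and $M_2 \otimes \Id_2$ (interleaved, so $B_2$ sits in the $(1,3)$ and $(2,4)$ positions). Both embeddings are QSP protocols on $s+1$ qubits by the third axiom, multiplying them places $B_1 B_2$ in the $(1,4)$ corner, and the proof is a one-line block computation with no additional generators required.

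Your plan instead uses a single tensor order and interleaves $\Id_{2^s}\otimes\sigma_x$, so you must show that this constant swap is itself (a product of) QSP protocols on $s+1$ qubits. This is not bookkeeping; it is the one step where your argument diverges from the paper's, and neither of your proposed workarounds closes it. The claim that the swap, ``after normalization, lies in $SU_\alpha(2^{s+1})$ for a suitable $\alpha$'' is false: membership in $SU_\alpha$ forces the $2^s\times 2^s$ diagonal blocks to be triangular with positive diagonal, whereas the swap's diagonal blocks are permutation-type with zero diagonal, so it belongs to no $SU_\alpha$. The $QR$ factorization of Lemma~\ref{lem:QR} also gives nothing here, since applied to a unitary matrix it simply returns $R=\Id$ and $Q$ equal to the matrix itself. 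One could plausibly write a swap-like matrix as a \emph{product} of several $SU_\alpha$ generators after fixing the determinant sign (note $\det\sigma_x=-1$, so $\sigma_x$ itself is not even in $SU(2)$), but that would require an extra lemma you have not supplied, and the two-tensoring trick the paper uses makes the whole question moot.
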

\begin{proof}
Because $B_1$ and $B_2$ are $s$-attainable, then each of the matrices 
\[
\begin{pmatrix}
    * & B_1 (z,w) \\ * & *
\end{pmatrix} \, , \qquad \begin{pmatrix}
    * & B_2 (z,w) \\ * & *
\end{pmatrix} 
\]
can be written as an (infinite) product of QSP protocols over $s$ qubits. By tensoring with the $2 \times 2$ identity matrix in two different ways, we get that the matrices
\[
\begin{pmatrix}
    * & B_1 & 0 &0 \\
    * & * & 0 & 0 \\
    0 & 0 & * & B_1 \\
    0 & 0 & * & *
\end{pmatrix} \, , \qquad \begin{pmatrix}
    * & 0 & B_2 & 0 \\
    0 & * & 0 & B_2 \\
    * & 0 & * & 0 \\
    0 & * & 0 & *
\end{pmatrix}
\]
are  products of QSP protocols over $s+1$ qubits. Multiplying both matrices together, we see the top right block equals $B_1 B_2$, and so $B_1 B_2$ is $(s+1)$-attainable. 
\end{proof}

By Theorem \ref{thm:NLFT_outer_inverse}, if $B$ is a $2^{s-1} \times 2^{s-1}$ mvf of one variable satisfying the Szeg\H o condition, then $B$ is $s$-attainable. Taking $B_1, B_2$ in the previous lemma to be Szeg\H o functions in one variable, we then obtain the following corollary.
\begin{cor} \label{cor:prods_attain}
    Let $n = 2^{s-1}$. If $B_1, B_2 \in \mathbf{S}$, then $B_1 (z) B_2 (w)$ is $(s+1)$-attainable. 
\end{cor}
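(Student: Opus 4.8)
The proof of Corollary \ref{cor:prods_attain} is essentially a one-line deduction from the preceding lemma once we supply the input hypotheses. The plan is as follows. First I would observe that, by Theorem \ref{thm:NLFT_outer_inverse} applied with the dimension parameter $n = 2^{s-1}$, any $B \in \mathbf{S}$ arises as the upper right block of $Y_\alpha(B) = \mathcal{F}_\alpha(F)$ for a suitable $F \in \ell^2(\Z;\con)$. Since the NLFT of a finitely supported sequence is a finite product of the matrices $Z^j Y_\alpha(F_j) Z^{-j}$, and each such factor is (a conjugate by powers of $Z$ of) an element of $SU_\alpha(2n)$ interlaced with the diagonal matrices $Z^{\pm 1}$, it is a finite product of QSP protocols over $s$ qubits in the sense of the axioms above. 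Passing to the $\ell^2$ limit as in the definition of $s$-attainability, we conclude that every $B \in \mathbf{S}$, viewed as a function of the single variable $z$ (respectively $w$), is $s$-attainable.

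Next I would apply this to $B_1$ as a function of $z$ and to $B_2$ as a function of $w$: both are $s$-attainable mvfs in the two variables $(z,w)$ (each simply ignoring one of the two variables). The previous lemma then immediately gives that the product $B_1(z) B_2(w)$ is $(s+1)$-attainable, which is exactly the assertion of the corollary.

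The only point requiring a little care—and the step I expect to be the mild obstacle—is checking that the NLFT $\mathcal{F}_\alpha(F)$ of a (finitely supported, then $\ell^2$) sequence genuinely fits the inductive definition of a QSP protocol on $s$ qubits: one must verify that the building blocks $Y_\alpha(F_j) \in SU_\alpha(2n)$ and the diagonal matrices $Z^{\pm 1}$ are among the allowed generators, and that finite products and limits thereof stay within the class. This is precisely the content of Lemma \ref{lem:transferrence_QSP_laneve}, which rewrites $\mathcal{F}_\alpha(F)(z^2)$ as a conjugate of $U_{d,Y_\alpha(F)}(z)$, together with the fact that $s$-attainability is by definition closed under limits. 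Since no genuinely new estimate is needed, the proof is short; I would write it as a direct invocation of Theorem \ref{thm:NLFT_outer_inverse} to establish $s$-attainability of single-variable Szeg\H o functions, followed by one application of the preceding lemma.

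\begin{proof}[Proof of Corollary \ref{cor:prods_attain}]
Let $n = 2^{s-1}$ and let $B_1, B_2 \in \mathbf{S}$. By Theorem \ref{thm:NLFT_outer_inverse}, there exists $F \in \ell^2(\Z;\con)$ with $\mathcal{F}_\alpha(F) = Y_\alpha(B_1)$, so that $B_1$ is the upper right block of $\mathcal{F}_\alpha(F)$. For finitely supported $F$, the NLFT $\mathcal{F}_\alpha(F)$ is a finite product of the matrices $Z^j Y_\alpha(F_j) Z^{-j}$; by Lemma \ref{lem:transferrence_QSP_laneve} such a product is, up to conjugation by a power of $Z$, of the form $U_{d,Y_\alpha(F)}$, hence a finite product of QSP protocols over $s$ qubits, since each $Y_\alpha(F_j) \in SU_\alpha(2^s)$ and each $Z^{\pm 1}$ are QSP protocols. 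Passing to the $\ell^2$ limit in the definition of $\mathcal{F}_\alpha$ shows that $B_1$, regarded as a function of the variable $z$ alone, is $s$-attainable; the same applies to $B_2$ as a function of the variable $w$ alone. By the previous lemma, the product $B_1(z) B_2(w)$ is then $(s+1)$-attainable.
\end{proof}
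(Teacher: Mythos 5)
Your proof is correct and follows essentially the same outline the paper sketches in the prose preceding the corollary: invoke Theorem \ref{thm:NLFT_outer_inverse} to realize each $B_i \in \mathbf{S}$ as the upper-right block of an NLFT, note that the NLFT is a (limit of) finite product(s) of allowed generators and hence each $B_i$ is $s$-attainable, then apply the preceding lemma.

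One small remark on the auxiliary step you flagged as the ``mild obstacle.'' The citation of Lemma \ref{lem:transferrence_QSP_laneve} is a slight detour: that lemma concerns the translation to the $U_{d,\Psi}$ protocol of Laneve, which is a different formalism from the inductively-defined ``QSP protocols on $s$ qubits'' used in the notion of $s$-attainability. The direct argument is simpler and needs no intermediary: expanding
\[
\mathcal{F}_\alpha(F)(z) \;=\; Z^{j_0}\, Y_\alpha(F_{j_0})\, Z\, Y_\alpha(F_{j_0+1})\, Z \cdots Y_\alpha(F_{j_1})\, Z^{-j_1}
\]
exhibits $\mathcal{F}_\alpha(F)$ as a finite product in which every factor is either $Z^{\pm 1}$ or some $Y_\alpha(F_j) \in SU_\alpha(2^s)$, both of which are allowed generators by the axioms. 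So the finite-support case follows directly from the axioms, and the $\ell^2$ case follows by taking limits, as you say. Your proof is therefore sound, but the appeal to Lemma \ref{lem:transferrence_QSP_laneve} could be replaced by this more direct observation.
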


In \cite{lu2024quantum}, the authors considered the case $B(z,w)$ was a $1 \times 1$ mvf, and worked with an even larger set of protocols than we do in this appendix. Since not all such functions $B(z,w)$ are $s$-attainable for any $s$, and because they lacked an $\ell^2$ theory for QSP, the authors limited themselves to the case where
\begin{equation}\label{eq:scalar_poly}
B(z,w) = \sum\limits_{j=1} ^{2 ^{s-2}} p_j (z) q_j (w) \, ,
\end{equation}
where $p_j$, $q_j$ are polynomials in one variable, both satisfying
\[
\sum\limits_{j=1} ^{2^{s-2}} |p_j (z)| \leq 1 \, , \qquad \sum\limits_{j=1} ^{2 ^{s-2}} |q_j (w)| \leq 1 \, . 
\]
They showed that $B(z,w)$ was $s$-attainable using a finite number of their QSP protocols. Their strategy was to first set $B_1(z)$ to be the $2^{s-2} \times 2^{s-2} $ mvf whose first row is $(p_1 (z), \ldots , p_{2^{s-2}}(z))$ and whose remaining entries are all zero, and to set $B_2$ to be the $2^{s-2} \times 2^{s-2} $ mvf whose last column is \[\sum\limits_{j=1} ^{2^{s-2}} q_j (z) e_j,\] and whose remaining entries are all zero. Then applying the analog of Corollary \ref{cor:prods_attain} for their enlarged QSP protocols yields that the product $B_1(z) B_2 (w)$ has upper left entry exactly the scalar function \eqref{eq:scalar_poly}, showing \eqref{eq:scalar_poly} is attainable. Our Corollary \ref{cor:prods_attain} recovers their result whenever the functions $p_j$ and $q_j$, $j=1, \ldots, 2^{s-2}$ are not constant, but its true novelty   is that it allows for non-polynomial functions.

\section*{Acknowledgements}

The first author is thankful to Lorenzo Laneve for helpful conversations which helped form the appendix of this paper.


MA and CT were supported by Deutsche Forschungsgemeinschaft (DFG,
German Research Foundation) under DFG CRC 1060 – 211504053 and Germany’s Excellence Strategy project 390685813 – Hausdorff Center for Mathematics. CT was additionally supported under DFG CRC 1720 – 539309657.
DOS was supported by the Portuguese government through FCT -- Fundação para a Ciência e a Tecnologia, I.P., project UIDB/04459/2020 with DOI identifier 10-54499/UIDP/04459/2020 (CAMGSD), and project 2023.17881.ICDT with DOI identifier 10.54499/2023.17881.ICDT (project SHADE), as well as by the Deutsche Forschungsgemeinschaft 
 (DFG, German Research Foundation) under Germany's Excellence Strategy – EXC-2047/1 – 390685813.



\bibliographystyle{plain} 
\bibliography{biblio}

\newpage
\section*{Glossary}
\label{sec:glossary}
\addcontentsline{toc}{section}{\nameref{sec:glossary}}


\begin{tabular}{l p{0.75\textwidth}}
$\|\cdot \|_{p}$ & Matrix Schatten $p$-norm \\
$\|\cdot \|_{L^p}$ & Matrix $L^p$ norm, $\|M\|_{L^p} := (\int\limits_{\T} \|M\|_p ^p)^{\frac1p}$\\
$\mathcal{M}$ & The set of $n \times n$ matrices \\
$\con$ & The set of $n \times n$ matrices $F$ for which $\|F\|_{\infty} < 1$\\
$\ad(z)$ & Formally, $\ad(z) X := \begin{pmatrix}
    z^{\frac 1 2} & 0 \\ 0 & z^{- \frac 1 2}
\end{pmatrix} X \begin{pmatrix}
    z^{-\frac 1 2} & 0 \\ 0 & z^{\frac 1 2}
\end{pmatrix}$\\ 
$Z$ & Formally, $Z:= \begin{pmatrix}
    z ^{\frac 1 2}  & 0 \\ 0 & z^{- \frac 1 2}
\end{pmatrix}$ \\
    $\grp_{0}$ (or $\grp_1$) & Group of $n \times n$ upper (or lower) triangular matrices with positive diagonal entries \\
    $U(m)$ & $m \times m$ unitary matrices \\
   $SU(m)$ & $m \times m$ unitary matrices with determinant $1$ \\ 
   $\outr_{a} (\weight)$ & Outer  mvf solving $\weight = \outr_a (\weight) ^* \outr_a (\weight)$ with  $\outr_a (\weight) (0) \in \grp_a$\\
   $\sqrtt_a (P)$ & Matrix solving $\sqrtt_a (P) \sqrtt_a (P) ^* = P$ and $\sqrtt_a (P) \in \grp_a$.\\
    $SU_{\alpha} (2n)$ & $SU(2n)$-matrices with upper left and lower right diagonal blocks in $\grp_{\alpha(0)}$ and $\grp_{\alpha(1)}$, respectively \\
          
$\mathbf{L}_{\alpha}$  & $SU(2n)$-valued functions $\begin{pmatrix}
    A & B \\ C & D
\end{pmatrix} \in \begingroup\renewcommand{\arraystretch}{1.0} \begin{pmatrix}
    H^2 (\D^*) & L^2 (\T) \\ L^2 (\T) & H^2 (\D)
\end{pmatrix} \endgroup$ such that $A (\infty) \in \grp_{\alpha(0)}$ and $D(0) \in \grp_{\alpha(1)}$ \\
 
    $\mathbf{B}_{\alpha}$ (or $\mathbf{B}_{\alpha} ^{\epsilon})$ & $\mathbf{L}_{\alpha}$-functions for which $A, D$ are outer (and $\|B\|_{L^{\infty}} < 1- \epsilon$) \\

$\mathbf{S}$  & Functions $B: \T \to \mathcal{M}$ with $\|B\|_{L^\infty} \leq 1$ satisfying the Szeg\H o condition $\int_{\T} \log \det (\Id - B B^*) > - \infty$ \\
      $\mathbf{S}^{\epsilon}$  & Functions $B: \T \to \mathcal{M}$ for which $\|B\|_{L^\infty} < 1 - \epsilon$ \\
      $Y_{\alpha}$ & The unique map $\mathbf{S} \to \mathbf{B}_{\alpha}$ which embeds $B \in \mathbf{S}$ as the upper right block of $Y_{\alpha}(B) \in \mathbf{B}_{\alpha}$ \\
      $\mathcal{F}_{\alpha}$ & The $\alpha$-$SU(2n)$ nonlinear Fourier transform \\
      $\mathbf{H}_{\alpha}^+$ (or $\mathbf{H}_{\alpha, 0} ^{-}$)  & Image of $\ell^2 (\Z_{\geq 0}; \con)$ (or $\ell^2(\Z_{<0}; \con)$) under $\mathcal{F}_{\alpha}$\\
      $\mathbf{U}_{\alpha} ^+$ (or $\mathbf{U}_{\alpha, 0} ^{-}$) & Space equal to $\mathbf{H}_{\alpha} ^+$ (or $\mathbf{H}_{\alpha, 0} ^{-}$), but defined by \emph{a priori} weaker constraints \\
      $\mathbf{L}_{\alpha} ^+$ (or $\mathbf{L}_{\alpha, 0} ^-$)  & Intersection of $\mathbf{L}_{\alpha}$ with $ \begin{pmatrix}
      H^2 (\D^*) & H^2  (\D) \\ H^2  (\D^*) & H^2 (\D)
  \end{pmatrix}$ (or  $\begin{pmatrix}
      H^2 (\D^*) & H^2 _0 (\D^*) \\ H^2 _0 (\D) & H^2 (\D)
  \end{pmatrix}$) \\ 
$\mathbf{L}^+$ and $\spce$ & The spaces $\begin{pmatrix}
    H^2 (\D^*) & H^2 (\D) \\ H^2 (\D ^*) & H^2 (\D)
\end{pmatrix}$ and  $\begin{pmatrix}
    H^2 (\D) & H^2 (\D) \\ H^2 (\D ^*) & H^2 (\D^*)
\end{pmatrix}$ \\
$\phl$ & Hilbert space projection $L^2 (\T) \to \spce$\\
$P_{\D}$ (or $P_{\D^*}$) & Fourier projection $L^2 (\T) \to H^2 (\D)$ (or $L^2 (\T) \to H^2 (\D^*)$) \\
$\op$ & Densely defined operator on $\spce$ \\
$\ddense$ & Domain of definition of the densely defined operator $\op$ \\
$\dense$ & The space $\begin{pmatrix}
    \of & 0 \\ 0 & \of ^*
\end{pmatrix} \spce$, dense in $\ddense$ and $\spce$ \\

  $\weights$ & Positive semidefinite matrix-valued functions satisfying $\int\limits_{\T} \log \det P > - \infty$
\end{tabular}
\end{document}